\renewcommand{\epsilon}{{\varepsilon}}
\numberwithin{equation}{section}
\newtheorem{theorem}{Theorem}[section]
\newtheorem{lemma}[theorem]{Lemma}
\newtheorem{remark}[theorem]{Remark}
\newtheorem{proposition}[theorem]{Proposition}
\newtheorem{corollary}[theorem]{Corollary}
\newtheorem{claim}[theorem]{Claim}
\def\leq{\leqslant}
\def\geq{\geqslant}
\newcommand{\C}{\mathbb C}
\newcommand{\R}{\mathbb R}
\newcommand{\N}{\mathbb N}
\def\({\left(}
\def\){\right)}
\def\<{\left\langle}
\def\>{\right\rangle}
\def\Sch{{\mathcal S}}
\def\A{\mathcal A}
\def\B{\mathcal B}
\def\F{\mathcal F}
\def\K{\mathcal K}
\def\TT{\mathcal T}
\def\Q{\mathcal Q}
\def\G{\mathcal G}
\def\L{\mathcal L}
\def\EE{\mathcal E}
\def\kk{\kappa}
\def\M{\mathcal M}
\def\Lm{\rm L}
\DeclareMathOperator{\RE}{Re}
\DeclareMathOperator{\IM}{Im}
\newcommand{\qtq}[1]{\quad\text{#1}\quad}
\newcommand{\ve}[1]{\textbf{#1}}
\begin{document}

\title[Dynamic of threshold solutions]{Threshold solutions for the energy-critical NLS system with quadratic interaction}

\author[Alex H. Ardila]{Alex H. Ardila}
\address{Department of Mathematics, Universidad del Valle, Colombia} 
\email{ardila@impa.br}

\author[Liliana Cely]{Liliana Cely}
\address{Escuela de Matem\'aticas, Universidad Industrial de Santander}
\email{mlcelpri@uis.edu.do}

\author[Fanfei Meng]{Fanfei Meng}
\address{Qiyuan Lab, Tsinghua University}
\email{mengfanfei17@gscaep.ac.cn}

\begin{abstract} 
In this paper, we study the Cauchy problem for a quadratic nonlinear Schr\"{o}dinger system in dimension six. In~\cite{GaoMengXuZheng}, the authors classified the behavior of solutions under the energy constraint $ E(\ve{u}) < E(\Q)$, where $ \Q$ denotes the ground state. In this work, we classify the dynamics of radial solutions at the threshold energy $E(\ve{u}) = E(\Q)$.
\end{abstract}

\subjclass[2010]{35Q55}
\keywords{Energy-critical NLS; quadratic-type interactions; scattering; blow-up; modulation stability; spectral theory}

\maketitle

\medskip

\section{Introduction}
\label{sec:intro}

In this paper, we consider the Cauchy problem for the following quadratic nonlinear  Schr\"odinger system related to the Raman amplification in a plasma,
\begin{equation}\label{SNLS}\tag{NLS system}
\left\{
\begin{array}{ccc}
i\partial_t u + \Delta u +\overline{u}v &=& 0, \\
i\partial_t v +  \kk\Delta v+u^{2} &=& 0 \\
(u(0,x),v(0,x)) &=&(u_0,v_0),
\end{array}
\right.	
\end{equation}
where $u, v: \R \times \R^6 \rightarrow \C$, $u_0,v_0: \R^6 \rightarrow \C$, and $\kk>0$.
This system \eqref{SNLS} has applications in various physical problems, especially appears in the study of laser-plasma interactions; see \cite{CCC, CoDiSa, SCO} and references therein for a discussion of the physics of the problem.

In what follows, we use the vectorial notation $\ve{u}=(u_{1}, u_{2})$, and it is 
considered to be a column vector. The local well-posedness for the Cauchy problem of \eqref{SNLS} can be found in \cite{GaoMengXuZheng}.
Namely, if $\ve{u}_{0}\in \dot{H}^{1}(\R^{6})\times \dot{H}^{1}(\R^{6})$, then there exists a unique solution of \eqref{SNLS} defined on a maximal interval $I=(-T_{-}(u), T_{+}(u))$ and   satisfies conservation of energy and momentum:
\begin{align*}
E(\ve{u}(t)) &= \tfrac{1}{2} H(\ve{u}(t))  - \tfrac{1}{2} P(\ve{u}(t)) = E(\ve{u}_{0}), \\
\M(\ve{u}(t))&= \IM\int_{\R^6} \overline{u_{1}}\nabla u_{1}+\tfrac{1}{2}\overline{u_{2}}\nabla u_{2}\, dx=\M(\ve{u}_{0}),
\end{align*}
where
\begin{align*}
H(\ve{u}) &:= \|\nabla u_{1}\|^2_{L^2(\R^6)} + \tfrac{\kk}{2}\|\nabla u_{2}\|^2_{L^2(\R^6)}, & P(\ve{u}) &:= \RE\int_{\R^6} u_{1}^{2}\overline{u_{2}}  dx.
\end{align*}
In addition, Eq. \eqref{SNLS} enjoys several symmetries: If $\ve{u}=(u_{1}, u_{2})$ is a solution to \eqref{SNLS}, then 
\begin{enumerate}[label=\rm{(\roman*)}]
\item  by scaling: so is $\lambda^{-2}\ve{u}(\lambda^{-2}t,\lambda^{-1}x)$ with $\lambda>0$;
\item  by time translation invariance: so is $\ve{u}(t+t_{0},x)$ for $t_{0}\in\mathbb{R}$;
\item  by spatial translation invariance: so is $\ve{u}(t,x+x_{0})$ for $x_{0}\in\mathbb{R}^{6}$;
\item  by phase rotation invariance: so is $(e^{i\theta_{0}}u_{1}(t,x), e^{2i\theta_{0}}u_{2}(t,x))$ with $\theta_{0}\in\mathbb{R}$;
\item  by time reversal invariance: so is $\overline{\ve{u}(-t,x)}$.
\end{enumerate}

Let $Q$ be the radial, vanishing at infinity and positive solution of the nonlinear elliptic equation
\begin{align}\label{groundS}
	\Delta Q+Q^{2}=0.
\end{align}
We set $\Q:=(\sqrt{\kk}Q, Q)$. The ground state $\Q$ (it is unique up to translations and dilations, see \cite[Theorem 7.2]{NayaOzaTana}) play an important role in the study of the dynamics of the equation \eqref{SNLS}. In fact, we can write $ Q $ analytically
\begin{equation}\label{ground state-W}
Q(x) = \frac{1}{\left( 1 + \frac{\vert x \vert^2}{24} \right)^2} \in \dot{H}^1(\R^6).
\end{equation}
The study of nonlinear Schr\"{o}dinger systems with quadratic nonlinearities and their related models has garnered significant attention from physicists and mathematicians in recent years. Numerous studies focus on the well-posedness, orbital stability, and dynamic behavior of solutions with initial data below the ground state threshold. Three-wave interactions have been studied in various works; see, for example \cite{COCOOh, MESXu, MEXu, Pas2015, AA2018, GaoMengXuZheng}, while quadratic-type interactions are addressed in \cite{CoDiSa, DIFO2021, HaInuiNi, Ha2018, HaLiNa2011,InuiKisNI2019, NOPa2020, NPPA2021, WaYa2019, AeDiFo2021, AdemiNoguera2022}. 

In~\cite{GaoMengXuZheng}, the authors classified the behavior of radial solutions of \eqref{SNLS} satisfying $E(\ve{u}) < E(\Q)$ and established the following results.

\begin{theorem}[Sub-threshold scattering/blows up, \cite{GaoMengXuZheng, AdemiNoguera2022}]\label{Th1}
Fix $\kk>0$. Let $\ve{u}(t)$ be the corresponding solution to \eqref{SNLS} with radial initial data $\ve{u}_{0}\in \dot{H}^{1}(\R^{6})\times \dot{H}^{1}(\R^{6})$. 
\begin{enumerate}[label=\rm{(\roman*)}]
\item If $\ve{u}_{0}$ obeys $E(\ve{u}_0)<E(\Q)$ and  $H(\ve{u}_0)<H(\Q)$, then the solution $\ve{u}(t)$ exists globally and scatters at $t\to \pm\infty$.
\item If $\ve{u}_{0}$ obeys $E(\ve{u}_0)<E(\Q)$ and  $H(\ve{u}_0)>H(\Q)$, then the solution $\ve{u}(t)$ blows up in finite time for  
$\ve{u}_{0}$ radial with $|x|\ve{u}_{0}\in L^{2}(\R^{6})\times L^{2}(\R^{6})$ or 
$\ve{u}_{0}\in H^{1}(\R^{6})\times H^{1}(\R^{6})$.
\end{enumerate}
\end{theorem}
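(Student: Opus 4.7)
The plan is to deploy the Kenig-Merle concentration-compactness/rigidity roadmap for part~(i) and a (possibly truncated) virial argument for part~(ii); both rest on a sharp variational characterization of $\Q$. First, I would apply Pohozaev identities to $\Delta Q + Q^2 = 0$ in $\R^6$ to get $\|\nabla Q\|_{L^2}^2 = \int_{\R^6} Q^3\, dx$, and evaluate the functionals on $\Q = (\sqrt{\kk}Q, Q)$ to obtain $H(\Q) = \tfrac{3\kk}{2}\|\nabla Q\|_{L^2}^2$, $P(\Q) = \kk\|\nabla Q\|_{L^2}^2$, and the crucial identity $E(\Q) = \tfrac{1}{6}H(\Q)$. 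Using the sharp $\dot H^1 \hookrightarrow L^3$ embedding on $\R^6$ one derives a Gagliardo-Nirenberg-type inequality $P(\ve{u}) \leq C_\ast H(\ve{u})^{3/2}$ with $\Q$ as extremizer. Combining with $E(\ve{u}) = \tfrac{1}{2}(H(\ve{u}) - P(\ve{u}))$ yields the coercive dichotomy: on $\{E(\ve{u}) < E(\Q),\, H(\ve{u}) < H(\Q)\}$ one has $H(\ve{u}) - P(\ve{u}) \geq \delta H(\ve{u})$ for uniform $\delta > 0$, and by continuity of $t \mapsto H(\ve{u}(t))$ the set is flow-invariant; symmetrically $\{H > H(\Q),\, E < E(\Q)\}$ is invariant. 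This yields global existence in~(i) and persistent trapping $H(\ve{u}(t)) > H(\Q)$ in~(ii).

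For (i), small-data scattering follows from Strichartz estimates for the linear groups $e^{it\Delta}$ and $e^{it\kk\Delta}$. I would then run the Kenig-Merle contradiction: assuming scattering fails below $E(\Q)$, let $E_c \in (0, E(\Q))$ be the infimum of energies at which scattering fails. A linear profile decomposition for radial $\dot H^1 \times \dot H^1$ data (radiality eliminating translations and leaving scaling as the only residual noncompact symmetry), together with Pythagorean decompositions of $E$ and $H$ and a nonlinear perturbation lemma, produces a minimal non-scattering critical element $\ve{u}^\ast$ whose trajectory is precompact in $\dot H^1 \times \dot H^1$ modulo a scaling path $\lambda(t)$. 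The rigidity step applies a localized virial against a radial weight $\phi_R(x) \sim |x|^2$ truncated at the compactness scale: the coercivity gives $\ddot V_R \leq -c\, H(\ve{u}^\ast) + o(1)$ with $c > 0$ as $R \to \infty$, while the first derivative stays bounded by precompactness, and integration in time yields $\ve{u}^\ast \equiv 0$, a contradiction.

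For~(ii), I would compute the virial identity on a weighted variance $V(t) = \int |x|^2 \bigl(|u_1|^2 + \tfrac{1}{2\kk}|u_2|^2\bigr) dx$ (the weight $\tfrac{1}{2\kk}$ chosen so that the kinetic contribution collapses to $8H$), yielding by analogy with the scalar energy-critical case in $\R^6$
\[
\ddot V(t) = 24\, E(\ve{u}_0) - 4\, H(\ve{u}(t)).
\]
The trapping $H(\ve{u}(t)) > H(\Q) = 6 E(\Q)$ together with $E(\ve{u}_0) < E(\Q)$ then forces $\ddot V(t) \leq 24(E(\ve{u}_0) - E(\Q)) < 0$ uniformly; convexity drives $V$ negative in finite time, contradicting positivity, so $T_\pm(\ve{u}) < \infty$. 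For radial $H^1$ data without finite variance, I would replace $|x|^2$ by a smooth radial cutoff $\phi_R$ equal to $|x|^2$ on $\{|x| \leq R\}$ and constant beyond $\{|x| \geq 2R\}$; in $\R^6$ the cutoff errors are controlled via the radial Sobolev embedding $|u(r)| \lesssim r^{-5/2}\|\nabla u\|_{L^2}$, recovering the Ogawa-Tsutsumi blow-up mechanism.

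The main obstacle is the rigidity step in~(i). The system's two components evolve under distinct Schr\"odinger groups with ratios $1$ and $\kk$, which breaks commutation with a single linear group and forces the profile decomposition to track \emph{coordinated} scaling parameters across both components; Strichartz orthogonality must be re-derived in each group. Equally delicate is the sign of the localized virial: the truncation to $\phi_R$ generates error terms from the quadratic coupling $P(\ve{u})$ that must be absorbed using the precompactness modulus of $\ve{u}^\ast$, and the variational coercivity must be quantitative enough to dominate them uniformly as $R \to \infty$. The interplay between the asymmetric dispersion $\kk \neq 1$ and the $\tfrac{1}{2}$-weight in the momentum/phase-symmetry further constrains the choice of virial weights, making the whole rigidity machinery finicky.
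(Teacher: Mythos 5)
The paper does not prove Theorem~\ref{Th1}; it is cited from \cite{GaoMengXuZheng, AdemiNoguera2022}, so there is no internal proof to compare against. Your sketch of part~(i) — Gagliardo--Nirenberg with $\Q$ as extremizer, flow-invariance of the sub-threshold region, Kenig--Merle profile decomposition plus rigidity for radial data — matches the roadmap of those references, and your Pohozaev bookkeeping ($H(\Q)=\tfrac{3\kk}{2}\|\nabla Q\|^2$, $P(\Q)=\kk\|\nabla Q\|^2$, $E(\Q)=\tfrac{1}{6}H(\Q)$) is correct.

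There is, however, a concrete gap in part~(ii). You choose $V(t)=\int|x|^2\bigl(|u_1|^2+\tfrac{1}{2\kk}|u_2|^2\bigr)\,dx$ and assert that the kinetic contribution collapses to $8H$. That part is true, but a direct computation of $\dot V$ from $i\partial_t u_1=-\Delta u_1-\overline{u_1}u_2$ and $i\partial_t u_2=-\kk\Delta u_2-u_1^2$ gives, with the weight pair $(a,b)=(1,\tfrac{1}{2\kk})$,
\[
\dot V \;=\; 4\,\mathrm{Im}\!\int \overline{u_1}\,x\cdot\nabla u_1 \;+\; 2\,\mathrm{Im}\!\int \overline{u_2}\,x\cdot\nabla u_2 \;+\;\Bigl(2-\tfrac{1}{\kk}\Bigr)\,\mathrm{Im}\!\int |x|^2 u_1^2\overline{u_2}\,.
\]
The last term is a nonlinear commutator that vanishes only when $\kk=\tfrac12$; for general $\kk$ it survives, and its time derivative produces quartic integrals such as $\int|x|^2|u_1|^4$ that are \emph{not} controlled by $E$ and $H$. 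Consequently the clean ODE $\ddot V=24E_0-4H(t)$ that you quote from the scalar energy-critical model does not hold for the system with asymmetric dispersion, and the Glassey convexity argument as written breaks. Note that the conserved $L^2$ charge here is $\|u_1\|_{L^2}^2+\|u_2\|_{L^2}^2$ (weight $(1,1)$), which is the only weight that kills the commutator; but with weight $(1,1)$ the resulting kinetic flux has ratio $1:\kk$ rather than the $2:1$ ratio appearing in the paper's momentum functional $I_R$, so neither choice simultaneously removes the commutator and reproduces $8H$.

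The fix, and what the cited works actually do, is to bypass the mass variance entirely and work directly with the momentum-type virial $I_R[\ve{u}]=2\kk\,\mathrm{Im}\int\nabla w_R\cdot(2\overline{u_1}\nabla u_1+\overline{u_2}\nabla u_2)$, for which the paper's Lemma~\ref{VirialIden} gives the exact identity $\tfrac{d}{dt}I_R=F_R$ with $F_\infty=8\kk\bigl(6E(\ve{u})-H(\ve{u})\bigr)$. Under $E(\ve{u}_0)<E(\Q)$ and $H(\ve{u}(t))>H(\Q)=6E(\Q)$ one gets the uniform bound $\tfrac{d}{dt}I_R\le -c<0$ on the scale $R$ where the truncation errors are controlled (by radiality and Strauss, as you describe), while Hardy's inequality bounds $|I_R|\lesssim R^2\,H(\ve{u})$; integrating the strict monotonicity of the bounded quantity $I_R$ over a finite-time window then forces finite-time blow-up, without invoking any second-derivative identity for a mass-weighted variance.
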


Our goal in this paper is to investigate the behavior of radial solutions of \eqref{SNLS} at the energy threshold $E(\ve{u}_0) = E(\Q)$.
We present our first result. In this result, we construct two special solutions that do not satisfy neither conclusion of Theorem~\ref{Th1}.

\begin{theorem}\label{Gcharc}
Fix $\kk>0$. There exist two solutions  $\G^{\pm}$ of \eqref{SNLS} with initial data $\G_0^{\pm}$ satisfying the following:
\begin{enumerate}[label=\rm{(\roman*)}]
\item $E(\G^{\pm})=E(\Q)$, $T_{+}(\G^{\pm})=\infty$ and 
\[
\lim_{t \to +\infty}\G^{\pm}(t)=\Q \quad\text{in $\dot{H}^{1}(\R^{6})\times \dot{H}^{1}(\R^{6})$;}
\]
\item $H(\G_{0}^{-})<H(\Q)$, $T_{-}(\G^{-})=\infty$ and $\G^{-}$ scatters in negative time.
\item $H(\G_{0}^{+})>H(\Q)$ and  $T_{-}(\G^{+})<\infty$.
\end{enumerate}
\end{theorem}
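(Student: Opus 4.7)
The plan is to implement the Duyckaerts--Merle strategy for threshold solutions, adapted to this two-component quadratic system.

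\textbf{Step 1 (Spectral analysis).} Write $\ve{u}(t)=\Q+\ve{h}(t)$ and separate each component into real and imaginary parts. Linearization of \eqref{SNLS} at $\Q$ produces a matrix operator of Hamiltonian block type $\L = \bigl(\begin{smallmatrix} 0 & -L_- \\ L_+ & 0 \end{smallmatrix}\bigr)$, where $L_\pm$ are $2\times 2$ matrix Schr\"odinger-type operators with potentials built from $Q$, self-adjoint in the weighted inner product dictated by $H(\cdot)$. The crucial input---and the main obstacle---is to show that $\L$ admits a unique pair of real, simple eigenvalues $\pm e_0$, $e_0>0$, with radial Schwartz eigenfunctions $\ve{Y}^{\pm}$, the rest of the spectrum being imaginary, and that the Hessian of the energy is coercive on the orthogonal complement of $\operatorname{span}\{\ve{Y}^\pm\}$ and of the generalized null space spanned by the symmetries (in the radial class: scaling and phase rotation). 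This will combine the non-degeneracy of $\Q$ from \cite[Theorem 7.2]{NayaOzaTana}, a Perron--Frobenius argument on $L_-$, and a Grillakis--Shatah--Strauss type computation; the matrix nature of $L_\pm$ and the coupling through $\overline u v$ and $u^2$ make this substantially more delicate than the scalar case.

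\textbf{Step 2 (Approximate and genuine special solutions).} Build formal expansions
\[
\ve{u}^{(k)}(t) = \Q + \sum_{j=1}^{k} e^{-j e_0 t}\, \ve{V}_j,\qquad \ve{V}_1=\ve{Y}^+,
\]
where $\ve{V}_j$ for $j\geq 2$ is the unique Schwartz solution of $(\L - je_0 I)\ve{V}_j = \ve{F}_j(\ve{V}_1,\ldots,\ve{V}_{j-1})$, solvable since $je_0\notin \sigma(\L)$ for $j\geq 2$. The residual in \eqref{SNLS} is then $O(e^{-(k+1)e_0 t})$ in the natural norms. Setting $\ve{u}=\ve{u}^{(k)}+\ve{w}$ with $k$ large and running a contraction argument for $\ve{w}$ backward from $t=+\infty$ in a weighted Strichartz space---using the linearized Strichartz theory around $\Q$ and the spectral gap from Step~1---yields genuine radial solutions $\G^{+}$ (from $+\ve{Y}^+$) and $\G^-$ (from $-\ve{Y}^+$) converging to $\Q$ in $\dot H^1\times \dot H^1$ at rate $e^{-e_0 t}$; the scaling symmetry (i) trivializes the amplitude of the leading coefficient. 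Conservation of energy then gives $E(\G^\pm)=E(\Q)$, establishing (i).

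\textbf{Step 3 (Sign of $H$ and negative-time dynamics).} The leading-order expansion yields
\[
H(\G^{\pm}(t))-H(\Q) = \pm c_0\, e^{-e_0 t} + O(e^{-2e_0 t}),\qquad c_0>0,
\]
with the sign fixed by orienting $\ve{Y}^+$ so that $\langle H'(\Q),\ve{Y}^+\rangle > 0$; after a time-translation absorbed into the initial data we may take $H(\G_0^-)<H(\Q)<H(\G_0^+)$. Since any solution with $E=E(\Q)$ and $H=H(\Q)$ must coincide with $\Q$ up to symmetries by the variational characterization of the ground state, a continuity argument propagates these strict inequalities to the whole negative maximal interval of $\G^\pm$. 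For $\G^-$, this gives the sub-threshold kinetic condition on any negative time; scattering as $t\to-\infty$ then follows from a Kenig--Merle concentration-compactness/rigidity argument at threshold adapted from \cite{GaoMengXuZheng}: if backward scattering failed, a minimal non-scattering critical element with the compactness property would be extracted, and at $E=E(\Q)$ the rigidity lemma would force it to coincide with $\Q$ up to symmetries, contradicting $H(\G^-(t))<H(\Q)$. For $\G^+$, the uniform gap $H(\G^+(t))-H(\Q)\geq \delta>0$ feeds into the radial localized virial identity used in \cite{GaoMengXuZheng} to prove Theorem~\ref{Th1}(ii), yielding strict concavity of $t\mapsto \int \chi_R(x)|x|^2(|u_1|^2+|u_2|^2)\,dx$ and hence $T_-(\G^+)<\infty$, which is (iii).

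The principal obstacle is the spectral analysis of Step~1 in the vectorial setting: establishing the exponential instability direction and the coercivity estimate simultaneously, with a matrix-valued, non-self-adjoint $\L$ coupled through both nonlinearities. A secondary difficulty is the threshold-level rigidity for the backward scattering of $\G^-$, which requires a full profile decomposition for \eqref{SNLS} at energy $E(\Q)$---one step more delicate than in the sub-threshold classification of \cite{GaoMengXuZheng}, but along the same lines.
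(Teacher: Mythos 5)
Your overall architecture---spectral analysis, approximate solutions by formal expansion, contraction backward from $t=+\infty$, then sign of $H$ fixed by the unstable direction and a time-translation---is the right one and matches the paper's strategy, which follows Duyckaerts--Merle. But there are two concrete gaps.

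\textbf{The $J$-operator obstruction.} Your Step~1 writes the linearization at $\Q$ as a Hamiltonian block operator $\L=\bigl(\begin{smallmatrix} 0 & -L_-\\ L_+ & 0\end{smallmatrix}\bigr)$ and appeals to self-adjointness ``in the weighted inner product dictated by $H$.'' This glosses over the actual structure: because the phase symmetry acts as $(u,v)\mapsto(e^{i\theta}u, e^{2i\theta}v)$ with unequal weights, the linearized equation around $\Q$ for \eqref{SNLS} is
\[
\partial_t\ve{h}+J\L\ve{h}=iR(\ve{h}),\qquad J=\mathrm{diag}(1,2),
\]
and it is $J\L$, not $\L$, that generates the linearized flow. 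This extra diagonal factor $J$ is precisely what prevents the direct import of the Duyckaerts--Merle spectral machinery (Perron--Frobenius on $L_-$, coercivity of the Hessian on the complement of $\mathrm{span}\{\ve Y^\pm\}$ plus the symmetry directions, and the eigenvalue pair $\pm e_0$), and your proposal does not address it. The paper's route is to conjugate at the level of the solution: the rescaling $T(u_1,u_2)=(u_1/\sqrt2,u_2/2)$ sends \eqref{SNLS} to the auxiliary system $i\partial_t u+\Delta u+2\bar u v=0$, $i\partial_t v+\kappa\Delta v+u^2=0$, whose linearization at $T(\Q)$ is $\partial_t\ve{k}+\EE\ve{k}=iN(\ve{k})$ without any $J$. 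All of the spectral analysis, the expansion, and the fixed-point construction are carried out for that system, producing $W^{\pm1}$ near $T(\Q)$, and only at the very end does one set $\G^\pm=T^{-1}(W^{\pm1}(\cdot+t_0))$. Without this step (or an equivalent explicit treatment of $J\L$), your Steps~1 and~2 are not complete.

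\textbf{The blow-up of $\G^+$.} You invoke ``the uniform gap $H(\G^+(t))-H(\Q)\ge\delta>0$'' to feed into the sub-threshold virial argument of Theorem~\ref{Th1}(ii). But by (i) one has $\G^+(t)\to\Q$ in $\dot H^1$, hence $H(\G^+(t))\to H(\Q)$; there is no uniform gap, only a strict one that degenerates exponentially as $t\to+\infty$. Consequently the quantity $F_\infty[\G^+(t)]=-8\kappa\,\delta(t)$ appearing in the localized virial is not bounded away from zero, and the sub-threshold blow-up argument breaks. The paper resolves this with Proposition~\ref{SupercriQ}: one first proves $\G^+(0)\in L^2$ (the argument of Duyckaerts--Merle p.~38), then shows that the localized mass functional $V_R(t)$ is positive, strictly increasing, and strictly concave in $t$, with $\tfrac{d}{dt}V_R\to 0$ forward in time; a contradiction argument via time-reversal then rules out backward global existence. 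This threshold blow-up lemma is qualitatively different from the uniform-gap virial you cite.

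The remaining parts of your plan are sound and match the paper: the sign of $H(\G_0^\pm)-H(\Q)$ is indeed fixed by $\langle H'(\Q),\ve Y^+\rangle\ne0$ and a time-translation (Remark~\ref{PE1} plays this role), and backward scattering for $\G^-$ does follow from a threshold rigidity statement (the paper's Corollary~\ref{ClassC}, which is proved via Proposition~\ref{CompacDeca}).
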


Using the solutions $\G^{\pm}$  from Theorem~\ref{Gcharc}, we can classify 
all possible behaviors of solutions with $E(\ve{u}_0) = E(\Q)$.

\begin{theorem}\label{TH22}
Fix $\kk>0$.
Let $\ve{u}(t)$ be the corresponding solution to \eqref{SNLS} with initial radial data 
$\ve{u}_{0}\in  \dot{H}^{1}(\R^{6})\times \dot{H}^{1}(\R^{6})$ and such that
\[
E(\ve{u}_{0})=E(\Q).
\]
\begin{enumerate}[label=\rm{(\roman*)}]
\item If $H(\ve{u}_{0})<H(\Q)$, 
then the solution $\ve{u}(t)$ to \eqref{SNLS} is global and, either $\ve{u}=\G^{-}$ up to the symmetries of the equation, or $\ve{u}(t)$scatters in both directions.
\item If  $H(\ve{u}_{0})=H(\Q)$, then $\ve{u}=\Q$ up to the symmetries of the equation.
\item If $H(\ve{u}_{0})>H(\Q)$, and $\ve{u}_{0}\in L^{2}(\R^{6})\times L^{2}(\R^{6})$
then ether $\ve{u}=\G^{+}$ or the solution $\ve{u}(t)$ to \eqref{SNLS} blows up in finite time.
\end{enumerate}
\end{theorem}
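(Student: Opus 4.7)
The plan follows the Duyckaerts--Merle framework for threshold dynamics, here adapted to the quadratic system \eqref{SNLS}. The three cases rest on three ingredients: (a) a rigidity result at $H=H(\Q)$, (b) a concentration-compactness step showing that every non-generic threshold solution converges to $\Q$ (up to the symmetries) as $t$ tends to one endpoint of its maximal interval, and (c) a uniqueness result identifying such solutions with $\G^{\pm}$. Part (ii) is the rigidity and is the cleanest: the sharp vectorial Gagliardo--Nirenberg-type inequality underlying Theorem~\ref{Th1} controls $E(\ve{u})$ in terms of $H(\ve{u})$ and $P(\ve{u})$ with equality only at ground states. The joint equalities $E(\ve{u}_{0})=E(\Q)$ and $H(\ve{u}_{0})=H(\Q)$ saturate it, so $\ve{u}_{0}$ is an extremizer; the uniqueness of the positive radial solution $Q$ of \eqref{groundS} cited in \cite[Theorem 7.2]{NayaOzaTana} then forces $\ve{u}_{0}=\Q$ modulo scaling, phase and translation, and the conservation laws propagate this for all $t$.

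For part (i), conservation of $E$ and the sub-threshold coercivity estimate give $H(\ve{u}(t))<H(\Q)$ along the whole maximal interval, hence global existence. For the dichotomy, I would assume the solution does not scatter as $t\to+\infty$ and apply a linear profile decomposition to $\ve{u}(t_{n})$ along a sequence $t_{n}\to+\infty$ on which the scattering norm blows up. The Pythagorean expansions of $E$ and $H$, together with the threshold constraint $E(\ve{u})=E(\Q)$ and the strict sub-threshold condition on $H$, allow only a single nontrivial profile, and that profile must match $\Q$ up to the symmetries. One upgrades this subsequential convergence to a full limit $\ve{u}(t)\to\Q$ in $\dot H^{1}\times\dot H^{1}$ (modulo symmetries) by writing $\ve{u}(t)=e^{i\Theta(t)}\lambda(t)^{-2}\bigl[\Q+\ve{\epsilon}(t)\bigr](\lambda(t)^{-1}\cdot\,-x(t))$ with orthogonality conditions fixing $(\lambda,\Theta,x)$, deriving coupled modulation ODEs, and combining them with a virial/Morawetz-type quantity that coercively controls $\|\ve{\epsilon}(t)\|_{\dot H^{1}}$. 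Finally, a uniqueness statement for solutions converging to $\Q$ in the half-space $\{H<H(\Q)\}$, based on the spectral structure of the linearized matrix operator around $\Q$ and an ODE-type contraction on $(\lambda,\Theta,x,\ve{\epsilon})$, identifies $\ve{u}$ with $\G^{-}$ up to symmetries.

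Part (iii) runs in parallel: the localized virial identity for radial $L^{2}$ data (the same input as in Theorem~\ref{Th1}(ii)) forces blow-up in finite time unless one is in a measure-zero exceptional set, and in that exceptional case the profile decomposition/modulation machinery of (i) applies verbatim in positive time to yield $\ve{u}(t)\to\Q$ and hence $\ve{u}=\G^{+}$ up to symmetries. The main obstacle across (i) and (iii) is the uniqueness/rigidity step: one must show that, inside each of the strata $\{H<H(\Q)\}$ and $\{H>H(\Q)\}$, the set of threshold solutions converging to $\Q$ as $t\to+\infty$ is a \emph{single} orbit of the symmetry group. This requires a full spectral analysis of the matrix-valued linearization $\L$ of \eqref{SNLS} at $\Q$, the identification of a unique pair of unstable/stable eigenmodes (whose existence is precisely the ingredient used in Theorem~\ref{Gcharc} to construct $\G^{\pm}$), control of the generalized kernel coming from the symmetries, and a sharp coercivity estimate on the orthogonal complement. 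Translating this spectral picture into a contraction argument on the perturbation and the modulation parameters is, as in the scalar case, the most delicate and technical step of the proof.
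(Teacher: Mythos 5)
Your roadmap is the right one at the level of structure, and parts (i)--(iii) are assembled in essentially the same order as the paper: rigidity via saturation of the Gagliardo--Nirenberg inequality for (ii); global existence from Lemma~\ref{GlobalW}, then modulation plus a localized virial estimate (Propositions~\ref{CompacDeca} and~\ref{SupercriQ}) to force exponential convergence of non-generic solutions to $\Q$ up to symmetry; and finally a uniqueness statement for such solutions that identifies the orbit with $\G^{\pm}$. Part (ii) also matches: conservation laws give $P(\ve{u}_{0})=P(\Q)$, so $\ve{u}_{0}$ is a GN extremizer and Proposition~\ref{TheGN} (rather than uniqueness of the scalar $Q$ alone) pins it down as $\Q$ modulo the symmetry group.

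However, there is a genuine gap in the final identification step of (i) and (iii). You treat the construction and uniqueness of the stable/unstable manifolds as a routine adaptation of the Duyckaerts--Merle spectral-contraction argument applied to ``the linearization $\L$''. But the equation satisfied by $\ve{h}=\ve{u}-\Q$ is $\partial_{t}\ve{h}+J\L\ve{h}=iR(\ve{h})$ with $J=\mathrm{diag}(1,2)$ (see \eqref{Decomh}), and it is $J\L$, not $\L$, whose spectral theory governs the dynamics. Because of the multiplicative factor $J$, the quadratic form $\F$ does not enjoy the skew relation $\F(J\L\ve{g},\ve{h})=-\F(\ve{g},J\L\ve{h})$ on which the pairing of eigenvalues, the finite-dimensional reduction, and the contraction estimates in \cite{DuyMerle2009} all hinge; a direct adaptation therefore fails. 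The paper's crucial device, which your proposal misses, is the rescaling $T(u_{1},u_{2})=(u_{1}/\sqrt{2},u_{2}/2)$, which conjugates \eqref{SNLS} into \eqref{ANLS}; the resulting linearization $\EE$ satisfies $\F_{E}(\EE\ve{g},\ve{h})=-\F_{E}(\ve{g},\EE\ve{h})$ (Remark~\ref{PLf}) and carries a single pair of simple real eigenvalues $\pm\lambda_{1}$ (Lemma~\ref{SpecLL}). Existence (Propositions~\ref{ApproxSo}, \ref{ContracA}) and uniqueness (Proposition~\ref{UniqueU}, Corollary~\ref{coroCla}) of the special solutions are proved for the transformed system and then transported back via $T^{-1}$. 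Without this conjugation, or an equivalent way to symmetrize the linearized flow, the step you flag as ``the most delicate'' does not in fact go through.
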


Theorem~\ref{TH22} is analogous to the threshold classification results found in studies on pure power NLS (see, \cite{DuyMerle2009, DuyckaertsRou2010}). Similarly, \cite{CAPA2022} classifies the dynamics at the mass-energy threshold for a cubic NLS system.  To the best of our knowledge, this is the first work that classifies the behavior of solutions at the energy threshold for an energy-critical NLS system.

By Theorem~\ref{TH22}, we obtain the following dynamical characterization of the ground state $\Q$.
\begin{corollary}
The ground state $\Q$ is the unique (up to the symmetries of the equation) radial solution with $ E(\ve{u}_{0}) = E(\Q) $ and $ H(\ve{u}_{0}) \leq H(\Q)$ that does not scatter in $\dot{H}^{1}(\R^{6})\times \dot{H}^{1}(\R^{6})$ for either positive nor negative times.
\end{corollary}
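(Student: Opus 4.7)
The corollary is a bookkeeping consequence of Theorem~\ref{TH22} combined with the description of $\G^{\pm}$ in Theorem~\ref{Gcharc}, so my plan is simply to run through the trichotomy.

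First I would verify that $\Q$ itself genuinely fails to scatter: substituting $(u_1,u_2)=(\sqrt{\kk}Q,Q)$ into \eqref{SNLS} and using $\Delta Q+Q^2=0$ shows that $\Q$ is a stationary (time-independent) solution of the system; since a nonzero stationary state cannot be asymptotic to a free linear evolution, $\Q$ does not scatter as $t\to\pm\infty$. Because the scaling, translation, phase, and time-reversal symmetries all map non-scattering solutions to non-scattering solutions, every solution equal to $\Q$ up to symmetries also fails to scatter in both time directions. This gives existence.

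For uniqueness, let $\ve{u}$ be a radial solution of \eqref{SNLS} with $E(\ve{u}_0)=E(\Q)$ and $H(\ve{u}_0)\leq H(\Q)$ which does not scatter in either time direction. I would split according to which alternative of Theorem~\ref{TH22} applies:
\begin{itemize}
\item If $H(\ve{u}_0)<H(\Q)$, Theorem~\ref{TH22}(i) forces $\ve{u}$ to be either $\G^-$ up to the symmetries of \eqref{SNLS} or a solution scattering in both directions. The second option is ruled out by hypothesis. The first option is also impossible, because by Theorem~\ref{Gcharc}(ii) the solution $\G^-$ scatters in negative time, and applying any element of the symmetry group (in particular, time reversal, which is the only symmetry that can swap the direction of scattering) produces a solution that still scatters in at least one time direction. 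Hence this case cannot occur.
\item If $H(\ve{u}_0)=H(\Q)$, Theorem~\ref{TH22}(ii) concludes immediately that $\ve{u}=\Q$ up to symmetries.
\end{itemize}

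There is essentially no analytical obstacle here — the only point to be mildly careful about is the remark that the symmetry group preserves the property of scattering in at least one direction, which rules out the $\G^-$ alternative without having to inspect the initial data of the image solutions individually.
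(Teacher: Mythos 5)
Your proof is correct and is essentially the argument the paper intends (the paper simply states the corollary as an immediate consequence of Theorem~\ref{TH22} without writing out the details). The one point worth highlighting is that the case $H(\ve{u}_0)<H(\Q)$ could also be dispatched a step earlier by invoking Corollary~\ref{ClassC} directly, which says there is no radial solution with $E=E(\Q)$, $H<H(\Q)$, and infinite $L^4_{t,x}$ norm in both time directions; your route through Theorem~\ref{TH22}(i) and the scattering behaviour of $\G^-$ under the symmetry group reaches the same conclusion and is equally valid.
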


To obtain Theorems~\ref{Gcharc} and \ref{TH22}, we follow the strategy developed in \cite{DuyMerle2009}. Indeed, if we define the orbit $\G$ of the  ground state $\Q$,
\[
\G:= \left\{ \Q_{[\theta, \lambda]} :\,\, \theta \in \mathbb{R}, \lambda >0 \right\},
\]
where
\[
\Q_{[\theta, \lambda]}
:=\(\tfrac{\sqrt{\kappa}}{\lambda^{2}}e^{i\theta}{Q}\big(\tfrac{\cdot}{\lambda}\big), \tfrac{1}{\lambda^{2}}e^{2i\theta}{Q}\big(\tfrac{\cdot}{\lambda}\big)\)
\]
then we show that all solutions  $\ve{u}(t)$ of \eqref{SNLS} with initial data $\ve{u}_{0} \in \dot{H}^{1}(\mathbb{R}^{6}) \times \dot{H}^{1}(\mathbb{R}^{6})$ that lie on the threshold $E(\ve{u}) = E(\Q)$ exhibit one of the following six scenarios:
\begin{enumerate}
    \item Scattering for both $t \to \pm \infty$;
    \item Trapped by $\G$ for $t \to +\infty$ and scattering for $t \to -\infty$;
    \item Trapped by $\G$ for $t \to -\infty$ and scattering for $t \to +\infty$;
    \item Finite-time blow-up on both sides;
    \item Trapped by $\G$ for $t \to +\infty$ and finite-time blow-up for $t < 0$;
    \item Trapped by $\G$ for $t \to -\infty$ and finite-time blow-up for $t > 0$;
\end{enumerate}
Here, ``trapped by $\G$'' means that the solution remains within an $\mathcal{O}(\varepsilon)$ neighborhood of $\G$ with respect to the $\dot{H}^{1}(\mathbb{R}^{6}) \times \dot{H}^{1}(\mathbb{R}^{6})$ norm after some time (or before some time); see Propositions~\ref{CompacDeca} and \ref{SupercriQ} and Corollary~\ref{ClassC}.

Subsequently, we characterize the sets (2), (3), (5), and (6). However, we cannot directly apply the argument from \cite{DuyMerle2009} to obtain such a characterization. The main difficulty lies in the fact that, due to the asymmetry of the nonlinearity $(\overline{u}v, u^{2})$, the operator $J$ appears, multiplying the self-adjoint operator $\mathcal{L}$ in the linearized equation (see Section~\ref{S:Spectral}):
\begin{equation}\label{LIL}
\partial_{t}\ve{h} + J\L \ve{h} = \epsilon, \quad \text{where} \quad
\L := \begin{pmatrix}
0 & -L_{I} \\
L_{R} & 0
\end{pmatrix} \quad \text{and} \quad
J := \begin{pmatrix}
1 & 0 \\
0 & 2
\end{pmatrix},
\end{equation}
this operator $J$ prevents the application of the arguments developed in \cite{DuyMerle2009} to obtain the especial solutions to \eqref{SNLS} converging to the ground state $\Q$. To overcome this difficulty, we set the transformation
\[
T(\ve{u}) := T(u_{1}, u_{2}) = \left( \frac{u_{1}}{\sqrt{2}}, \frac{u_{2}}{{2}} \right).
\]
Note that if $\ve{u}$ is a solution to \eqref{SNLS}, then $T(\ve{u})$ is a solution to the system:
\begin{equation}\label{ANLS}
\left\{
\begin{array}{ccc}
i\partial_t u + \Delta u +2\overline{u}v &=& 0, \\
i\partial_t v +  \kk\Delta v+u^{2} &=& 0. \\
\end{array}
\right.	
\end{equation}
We show the existence and uniqueness of special solutions for the system \eqref{ANLS}; in this system, the operator $J$ does not appear in the linearized equation. Then, by applying $T^{-1}$, we can derive the existence and uniqueness of special solutions for the system \eqref{SNLS} and characterize the sets (2), (3), (5), and (6).

\subsection*{Outline}
In the rest of this introduction, we arrange the structure of the paper as follows. In Section \ref{S1:preli}, we revisit the Cauchy problem and discuss some variational characterizations of the ground state. Additionally, we recall the virial identities associated with \eqref{SNLS}.
In Section~\ref{S:Spectral}, we establish the spectral properties of the linearized operator  around the ground state $\mathcal{Q}$. In particular, we derive the quadratic form associated with the operator $\mathcal{L}$ and prove its coercivity. Using the coercivity of the quadratic form, we establish modulational stability in Section~\ref{S:Modula}. With modulational stability in hand, and following the argument developed in \cite[Section 3]{DuyMerle2009}, in Section~\ref{S: SubThres} we prove scattering and convergence to $\Q$ in the subcritical case (i.e., Proposition~\ref{CompacDeca}). Making use of the virial argument and following the approach as in \cite[Section 4]{DuyMerle2009}, in Section~\ref{S: SuperThres}, we prove  convergence to $\Q$ in the supercritical case (Proposition~\ref{SupercriQ}).  
In Sections~\ref{S:Existence} and \ref{S:uniq}, we establish the existence and uniqueness of special solutions for the system \eqref{ANLS}. In Section~\ref{S:proof}, we prove Theorem~\ref{TH22}. Finally, Appendices~\ref{S:A} and \ref{S:A2} are devoted to establishing the spectral properties of the real part and the imaginary part of the linearized operator.

\subsection*{Notation}
For any $s \geq 0$ we used $\dot{H}^{s}$ to denote $\dot{H}^{s}(\R^{6}:\C)\times \dot{H}^{s}(\R^{6}:\C)$.
We write ${H}^{s}$ to denote ${H}^{s}(\R^{6}:\C)\times {H}^{s}(\R^{6}:\C)$. Similarly, for each $p\geq1$ we write
$L^{p}$ to denote $L^{p}(\R^{6}:\C)\times L^{p}(\R^{6}:\C)$.

\section{Preliminaries}\label{S1:preli} 
In this section, we provide some results that will be useful throughout the paper.
First, we recall the well-posedness result for \eqref{SNLS}.

\subsection*{Local theory}

The following results can be found in \cite{GaoMengXuZheng}.
\begin{theorem}[see Theorem 2.5 in \cite{GaoMengXuZheng}]\label{LCP}
Fix $\ve{u}_{0}\in \dot{H}^{1}$. Then the following hold:
 \begin{enumerate}[label=\rm{(\roman*)}]

\item There exists $T_{+}(\ve{u}_{0})>0$, $T_{-}(\ve{u}_{0})>0$ and a unique solution 
$\ve{u}: (-T_{-}(\ve{u}_{0}), T_{+}(\ve{u}_{0}))\times \R^{6}\to \C$ to \eqref{SNLS} with initial data $\ve{u}(0)=\ve{u}_{0}$.
\item If $T_{+}=T_{+}(\ve{u}_{0})<+\infty$, then $\|\ve{u}\|_{\Lm^{4}_{t, x}((0, T_{+})\times \R^{6})}=+\infty$. Analogous statement holds in negative time.
\end{enumerate}
\end{theorem}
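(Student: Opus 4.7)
The plan is a standard Picard/Kato fixed-point argument in Strichartz spaces, treating $(\bar u_1 u_2, u_1^2)$ as an energy-critical nonlinearity on $\R^6$ whose scaling-invariant scattering norm is $L^4_{t,x}$.

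For part (i), I would first record the Strichartz estimates for $e^{it\Delta}$ and $e^{i\kk t\Delta}$ on $\R^6$; since $\kk>0$ is fixed, both groups satisfy identical Strichartz inequalities up to $\kk$-dependent constants. The critical admissible pair I would work with is $(q,r)=(4,12/5)$, whose Sobolev embedding yields $\|u\|_{L^4_{t,x}}\lesssim \|\nabla u\|_{L^4_t L^{12/5}_x}$. On a time interval $[0,T]$ chosen so that the two free evolutions have sufficiently small $L^4_{t,x}$-norm, I would contract the Duhamel map on the ball of the Strichartz space consisting of $\ve{u}\in L^\infty_t\dot{H}^1([0,T])$ with $\nabla\ve{u}\in L^4_t L^{12/5}_x$. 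The key bilinear estimate,
\[
\|\nabla(\bar u_1 u_2)\|_{L^{4/3}_t L^{12/7}_x}+\|\nabla u_1^2\|_{L^{4/3}_t L^{12/7}_x}\lesssim \|\ve{u}\|_{L^4_{t,x}}\,\|\nabla\ve{u}\|_{L^4_t L^{12/5}_x},
\]
follows from Leibniz, H\"older, and the Sobolev embedding above, with the dual pair $(4/3,12/7)$ matching the admissible $(4,12/5)$. This yields local existence, uniqueness, and continuous dependence in $\dot{H}^1$.

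For the blow-up criterion in (ii), I would argue by contradiction: if $T_+<\infty$ but $\|\ve{u}\|_{L^4_{t,x}((0,T_+)\times\R^6)}<\infty$, partition $(0,T_+)$ into finitely many subintervals on which the $L^4_{t,x}$-norm of $\ve{u}$ falls below the small-data threshold from the fixed-point step. On each subinterval, Strichartz combined with the bilinear estimate above produces a uniform bound on $\|\nabla\ve{u}\|_{L^4_t L^{12/5}_x}$ in terms of $\|\ve{u}\|_{\dot{H}^1}$ at the starting time, and iterating across the finite partition yields a finite bound on $(0,T_+)$. Consequently $\ve{u}(t)$ has a strong limit in $\dot{H}^1$ as $t\uparrow T_+$, and restarting the Cauchy problem from this limit extends $\ve{u}$ past $T_+$, contradicting maximality. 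Negative times are handled symmetrically via the time-reversal invariance (v).

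The main obstacle is the bilinear estimate: because the nonlinearity is quadratic and energy-critical in $d=6$, one cannot afford any slack in the H\"older/Sobolev exponents, and each derivative must fall on the factor controlling $\dot{H}^1$. The mixed off-diagonal term $\bar u_1 u_2$ does not introduce new difficulties once both components are estimated in the same Strichartz space, and the two distinct dispersion constants $1$ and $\kk$ affect only the Strichartz constants, which is harmless since $\kk>0$ is fixed throughout the paper.
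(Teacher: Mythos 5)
Your overall strategy is the standard one and matches what the cited reference does: this theorem is quoted from \cite{GaoMengXuZheng} and the paper gives no internal proof, but the energy-critical Kato/Picard iteration in Strichartz spaces with the $L^4_{t,x}$ scattering norm, followed by the subinterval/partition argument for the blow-up criterion, is exactly the right framework for a quadratic (hence energy-critical in $d=6$) nonlinearity.

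However, your specific bilinear estimate does not close as written, and this is a genuine exponent error rather than a notational slip. You claim
\[
\|\nabla(\bar u_1 u_2)\|_{L^{4/3}_t L^{12/7}_x}\lesssim \|\ve{u}\|_{L^4_{t,x}}\,\|\nabla\ve{u}\|_{L^4_t L^{12/5}_x},
\]
but after Leibniz and H\"older in $x$ (say $\tfrac{7}{12}=\tfrac{5}{12}+\tfrac14$), the H\"older step in time requires $\tfrac34=\tfrac14+\tfrac{1}{q}$, i.e.\ the non-derivative factor in $L^2_t$, not $L^4_t$; equivalently, multiplying an $L^4_t$ function by an $L^4_t$ function lands you in $L^2_t$, not $L^{4/3}_t$. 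As stated, the inequality only holds with an extra factor $|I|^{1/4}$, which breaks scale invariance and undercuts the energy-critical bookkeeping you invoke. The clean, scale-invariant fix is to put the nonlinearity in the endpoint dual $L^2_t L^{3/2}_x$ (the pair dual to the endpoint admissible $(2,3)$ in $d=6$), i.e.
\[
\|\nabla(\bar u_1 u_2)\|_{L^2_t L^{3/2}_x}\lesssim \|\nabla\ve{u}\|_{L^4_t L^{12/5}_x}\,\|\ve{u}\|_{L^4_{t,x}},
\]
where now $\tfrac12=\tfrac14+\tfrac14$ in time and $\tfrac23=\tfrac{5}{12}+\tfrac14$ in space. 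This is precisely the pair of spaces $N(I)=L^2_t L^{3/2}_x$ and $Z(I)=L^4_t L^{12/5}_x$ that the paper itself introduces in Section~\ref{S:Existence}, so that Lemma~\ref{LL1} and Lemma~\ref{LN1} encode the estimates you need; alternatively you could keep $(4/3,12/7)$ on the left and use $\|\nabla\ve{u}\|_{L^2_t L^3_x}$ (another admissible Strichartz norm) on the right. With that correction, the contraction mapping and the finite-$L^4_{t,x}$ extension argument for (ii) go through as you outline.
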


 The proof of the following result can be found in \cite[Remark 1.4]{GaoMengXuZheng}.

\begin{proposition}[Sufficient condition for scattering]
Any $\dot{H}^{1}$ global solution $u(t)$ in positive time ($T_{+}=+\infty$) that remains uniformly bounded in $L_{t,x}^{4}$, i.e.,
\[
\|u\|_{L^{4}_{t, x}([0, T_{+}) \times \R^{6})} < \infty,
\]
scatters in $\dot{H}^{1}$.
\end{proposition}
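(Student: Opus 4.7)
The plan is standard: promote the spacetime $L^4$-bound to a global $\dot H^1$-Strichartz control via a partitioning/bootstrap argument, and then deduce scattering by showing that the linear profile is Cauchy in $\dot H^{1}$.

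First, fix a $\dot H^{1}$-admissible Strichartz pair in $\R^{6}$ that matches the hypothesis; the natural choice is $(q,r)=(4,4)$, which is $\dot H^{1}$-admissible since $\tfrac{2}{4}+\tfrac{6}{4}=2$. Writing each component of \eqref{SNLS} via Duhamel,
\[
u_{1}(t)=e^{i(t-t_{0})\Delta}u_{1}(t_{0})+i\!\int_{t_{0}}^{t}\!e^{i(t-s)\Delta}\overline{u_{1}}u_{2}\,ds,\qquad u_{2}(t)=e^{i(t-t_{0})\kk\Delta}u_{2}(t_{0})+i\!\int_{t_{0}}^{t}\!e^{i(t-s)\kk\Delta}u_{1}^{2}\,ds,
\]
applying $\nabla$, and combining (inhomogeneous) Strichartz with the bilinear estimate (proved via Leibniz, H\"older and the Sobolev embedding $\dot H^{1}(\R^{6})\hookrightarrow L^{3}(\R^{6})$) that controls $\nabla F_{j}(\ve{u})$ in an appropriate dual norm by $\|\ve{u}\|_{L^{4}_{t,x}(I)}\|\nabla\ve{u}\|_{L^{4}_{t,x}(I)}$, one obtains the a priori inequality
\[
\|\nabla\ve{u}\|_{L^{\infty}_{t}L^{2}_{x}(I)}+\|\nabla\ve{u}\|_{L^{4}_{t,x}(I)}\leq C_{0}\|\nabla\ve{u}(t_{0})\|_{L^{2}}+C_{0}\|\ve{u}\|_{L^{4}_{t,x}(I)}\|\nabla\ve{u}\|_{L^{4}_{t,x}(I)}
\]
for every interval $I\ni t_{0}$.

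Set $M:=\|\ve{u}\|_{L^{4}_{t,x}([0,\infty)\times\R^{6})}<\infty$, fix $\eta>0$ with $C_{0}\eta\leq 1/2$, and partition $[0,\infty)$ into finitely many consecutive subintervals $I_{j}=[t_{j-1},t_{j}]$, $j=1,\dots,N\lesssim(M/\eta)^{4}$, on each of which $\|\ve{u}\|_{L^{4}_{t,x}(I_{j})}<\eta$. On each $I_{j}$ the nonlinear term is absorbed into the left-hand side, yielding $\|\nabla\ve{u}\|_{L^{\infty}_{t}L^{2}_{x}(I_{j})}+\|\nabla\ve{u}\|_{L^{4}_{t,x}(I_{j})}\leq 2C_{0}\|\nabla\ve{u}(t_{j-1})\|_{L^{2}}$. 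Iterating this bound from $j=1$ to $j=N$ keeps $\|\nabla\ve{u}(t_{j})\|_{L^{2}}$ finite at each endpoint, and summing the $L^{4}_{t,x}$-pieces over the finite collection $\{I_{j}\}$ produces a global bound $\|\nabla\ve{u}\|_{L^{4}_{t,x}([0,\infty)\times\R^{6})}\leq K<\infty$.

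With these bounds in hand, define the scattering profile $\ve{w}(t):=(e^{-it\Delta}u_{1}(t),e^{-it\kk\Delta}u_{2}(t))$. Duhamel, Strichartz, and the same bilinear estimate give
\[
\|\ve{w}(t_{2})-\ve{w}(t_{1})\|_{\dot H^{1}}\lesssim \|\ve{u}\|_{L^{4}_{t,x}([t_{1},t_{2}])}\|\nabla\ve{u}\|_{L^{4}_{t,x}([t_{1},t_{2}])},
\]
which tends to $0$ as $t_{1},t_{2}\to+\infty$ because both factors are tails of globally finite norms. Hence $\ve{w}(t)$ converges to some $\ve{w}_{+}\in\dot H^{1}$, which is the desired scattering. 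The principal subtlety is Step 2: $\eta$ must be chosen purely in terms of the Strichartz constant (independent of $M$) so the bootstrap closes identically on every subinterval; finiteness of $M$ then bounds the number of iterations and delivers the global estimate.
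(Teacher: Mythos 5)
The paper itself does not present a proof of this proposition; it defers to a reference. Your bootstrap strategy --- subdividing $[0,\infty)$ into finitely many intervals where the scattering norm is small, closing a Strichartz estimate on each piece, then using the Cauchy criterion on the linear profile --- is exactly the standard argument one would expect, and it is the right one.

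However, as written there is a genuine exponent error. You put $\nabla \ve{u}$ in $L^4_{t,x}$ and then claim the bilinear estimate lands $\nabla F_j(\ve{u})$ in a dual Strichartz norm controlled by $\|\ve{u}\|_{L^4_{t,x}}\|\nabla\ve{u}\|_{L^4_{t,x}}$. By H\"older this product lands in $L^2_{t,x}=L^2_t L^2_x$, but $(2,2)$ is not an $L^2$-admissible pair in $\R^6$ (one needs $\tfrac{2}{q}+\tfrac{6}{r}=3$, and $(2,2)$ gives $4\neq 3$; it is not even a Foschi-acceptable pair), so $L^2_{t,x}$ is not a valid dual Strichartz norm and the a priori inequality you write does not follow from Strichartz. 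Likewise $L^4_{t,x}$ is $\dot H^1$-admissible but not $L^2$-admissible, so $\|\nabla\ve{u}\|_{L^4_{t,x}}$ is not a Strichartz norm for $\nabla\ve{u}$; it cannot appear on the left side of the Strichartz inequality. The fix is standard and does not change the structure of the argument: measure the gradient in the $L^2$-admissible space $Z(I)=L^4_t L^{12/5}_x$ (exactly the space used later in the paper, Section~\ref{S:Existence}), note the Sobolev embedding $\dot W^{1,12/5}(\R^6)\hookrightarrow L^4(\R^6)$ so that $\|\ve{u}\|_{L^4_{t,x}}\lesssim\|\nabla\ve{u}\|_{Z(I)}$, and replace the bilinear estimate by $\|\nabla(\overline{u}v)\|_{L^2_t L^{3/2}_x}\lesssim \|\ve{u}\|_{L^4_{t,x}}\|\nabla\ve{u}\|_{Z(I)}$ (plus the symmetric term), where $L^2_t L^{3/2}_x=N(I)$ is the dual of the admissible pair $(2,3)$. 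With $\|\nabla\ve{u}\|_{Z(I)}$ in place of $\|\nabla\ve{u}\|_{L^4_{t,x}}$ throughout, the partitioning, the bootstrap closure with $\eta$ chosen from the Strichartz constant alone, and the Cauchy argument for $\ve{w}(t)$ all go through exactly as you describe.
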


\begin{lemma}[Long time perturbation theory, see Proposition 2.7 in \cite{GaoMengXuZheng}]\label{ConSNLS}
Let $I\subset \R$ be a time interval containing $0$ and  let $\tilde{\ve{u}}$ be  solution to \eqref{SNLS} on $I$. 
Suppose
\[
\sup_{t\in I}\| \tilde{\ve{u}} (t) \|_{H^{1}}\leq L\qtq{and} 
	\| \tilde{\ve{u}}  \|_{L_{t,x}^{4}(I\times\R^{6})}\leq L
\]
for some $L>0$. Also,  assume  the smallness conditions 
\[
	\|\ve{u}_{0}-\tilde{\ve{u}}_{0} \|_{\dot{H}^{1}}\leq \epsilon
\]
for some $0<\epsilon<\epsilon_{0}=\epsilon_{0}(\mbox{L})>0$. 
Then there exists a unique solution $\ve{u}$ to  \eqref{SNLS} with initial data $\ve{u}_{0}$ satisfying
\[
	\sup_{t\in I}\|\ve{u}(t)-\tilde{\ve{u}}(t)\|_{\dot{H}^{1}}\leq C(L)\epsilon
	\qtq{and}	\|\ve{u}\|_{L_{t,x}^{4}(I\times\R^{6})}\leq C(L).
\]
\end{lemma}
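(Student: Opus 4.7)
The plan is a standard long-time perturbation argument: writing $\ve w:=\ve u-\tilde{\ve u}$, treat $\ve w$ as the solution of a forced Schr\"odinger system driven by terms quadratic in $(\ve w,\tilde{\ve u})$, partition $I$ into short subintervals on which the scattering norm of $\tilde{\ve u}$ is small, close a Strichartz estimate for $\ve w$ on each subinterval by a bootstrap, and iterate.

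Subtracting the \eqref{SNLS} equations satisfied by $\tilde{\ve u}$ from those satisfied by $\ve u$, the error obeys
\begin{equation*}
\left\{\begin{aligned}
i\partial_t w_1+\Delta w_1 &= -\bar w_1 w_2-\bar w_1\tilde u_2-\bar{\tilde u}_1 w_2,\\
i\partial_t w_2+\kk\Delta w_2 &= -w_1^2-2\tilde u_1 w_1,
\end{aligned}\right.
\end{equation*}
with $\|\ve w(0)\|_{\dot H^1}\le\epsilon$. The right-hand side is linear in $\ve w$ on the three cross terms and quadratic in $\ve w$ on the remaining two. I would work in the energy-critical Strichartz space adapted to $\dot H^1$ in dimension six, controlling the function in the scattering norm $L^4_{t,x}$ and the gradient in $L^\infty_t L^2_x\cap L^4_t L^{12/5}_x$, with dual forcing space $L^2_t L^{3/2}_x$ for the gradient of the nonlinearity; all Strichartz constants are independent of the fixed $\kk>0$.

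Given a small $\eta>0$ to be chosen, the hypotheses $\|\tilde{\ve u}\|_{L^4_{t,x}(I)}\le L$ and $\sup_I\|\tilde{\ve u}\|_{H^1}\le L$ yield, via Strichartz applied to $\tilde{\ve u}$ itself, $\|\nabla\tilde{\ve u}\|_{L^4_t L^{12/5}_x(I)}\le C(L)$. A pigeonhole partition then splits $I$ into $N=N(L,\eta)$ consecutive subintervals $I_j=[t_{j-1},t_j]$ on which simultaneously $\|\tilde{\ve u}\|_{L^4_{t,x}(I_j)}\le\eta$ and $\|\nabla\tilde{\ve u}\|_{L^4_t L^{12/5}_x(I_j)}\le\eta$. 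On a single $I_j$, H\"older in space gives estimates such as $\|\tilde u_1\,\nabla w_2\|_{L^{3/2}_x}\le\|\tilde u_1\|_{L^4_x}\|\nabla w_2\|_{L^{12/5}_x}$, and similarly for the other cross and pure terms; combining with the Strichartz estimate produces, for $Y_j:=\|\ve w\|_{L^4_{t,x}(I_j)}+\|\nabla\ve w\|_{L^\infty_t L^2_x\cap L^4_t L^{12/5}_x(I_j)}$,
\begin{equation*}
Y_j\le C\|\ve w(t_{j-1})\|_{\dot H^1}+C\eta\,Y_j+CY_j^2.
\end{equation*}
A standard continuity/bootstrap argument in $t\in I_j$, starting from $Y_j\big|_{\tau=t_{j-1}}=\|\nabla\ve w(t_{j-1})\|_{L^2}$, then gives $Y_j\le 2C\|\ve w(t_{j-1})\|_{\dot H^1}$ provided this quantity is small and $\eta$ is small.

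Since $\|\ve w(t_j)\|_{\dot H^1}\le Y_j$, iterating over $j=1,\ldots,N$ yields $Y_j\le(2C)^j\epsilon$; choosing $\epsilon_0(L)$ so that $(2C)^N\epsilon_0$ remains below the bootstrap threshold keeps the argument valid at every step, and summing the $Y_j$ delivers the desired $L^4_{t,x}$ and $\dot H^1$ bounds. The main technical point is the bookkeeping of the asymmetric H\"older in the cross terms: one must ensure that every cross term pairs a norm of $\tilde{\ve u}$ that has been made small by the refinement (namely $\|\tilde{\ve u}\|_{L^4_{t,x}(I_j)}$ or $\|\nabla\tilde{\ve u}\|_{L^4_t L^{12/5}_x(I_j)}$) with a norm of $\ve w$ controlled by $Y_j$, so that the coefficient in front of $Y_j$ is genuinely $C\eta$ rather than $C(L)$. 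Once this is verified, and the two components $u_1,u_2$ are tracked independently, there is no obstacle beyond the scalar energy-critical case; the backward-in-time half of $I$ is handled identically.
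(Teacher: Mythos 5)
The paper does not prove this lemma; it is quoted from \cite{GaoMengXuZheng} (Proposition 2.7) with no argument given here. Your proposal is the standard long-time perturbation scheme for energy-critical Schr\"odinger-type systems — form the difference equation for $\ve w=\ve u-\tilde{\ve u}$, subdivide $I$ into finitely many pieces on which $\tilde{\ve u}$ has small $L^4_{t,x}$ and $L^4_tL^{12/5}_x$-gradient norm, bootstrap on each piece, and iterate — and the Strichartz/H\"older exponent bookkeeping in $\R^6$ (dual forcing space $L^2_tL^{3/2}_x$, pairing of small $\tilde{\ve u}$-norms against $Y_j$ in every cross term) all checks out, so this is a faithful reconstruction of the argument the cited reference must use.
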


\subsection*{Varational analysis}

We review some results on the ground states $\Q$ for \eqref{SNLS}, along with some variational properties.
\begin{proposition}[see \cite{NayaOzaTana}]\label{TheGN}
Let $\kk>0$. Then the infimum 
\begin{equation}\label{GNI}
J_{\text{min}}:=\inf_{ f\in \B}\frac{[H(\ve{f})]^{3}}{[P(\ve{f})]^{2}},
\end{equation}
where $\B:=\left\{\ve{f}\in \dot{H}^{1}\cap L^{3}: P(\ve{f})>0 \right\}$ is  attained if and only if 
\[\ve{f}(x)=(e^{i\theta}m\sqrt{\kk} Q(n (x-x_{0})), e^{2i\theta}m Q(n (x-x_{0})))\]
for some $m>0$, $n>0$, $\theta\in \R$ and $x_{0}\in \R^{6}$.
\end{proposition}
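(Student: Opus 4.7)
The plan is to prove the proposition in three stages: (i) establish $J_{\min}>0$; (ii) obtain attainment of the infimum via concentration--compactness; (iii) identify all minimizers using the Euler--Lagrange equations together with the uniqueness of the positive radial ground state of $\Delta Q + Q^{2}=0$.

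For (i), by H\"older's inequality and the critical Sobolev embedding $\dot H^{1}(\R^{6})\hookrightarrow L^{3}(\R^{6})$ (valid since $3 = 2\cdot 6/(6-2)$),
\begin{equation*}
|P(\ve{f})| \leq \|u_1\|_{L^{3}}^{2}\,\|u_2\|_{L^{3}} \leq C\,\|u_1\|_{\dot H^{1}}^{2}\,\|u_2\|_{\dot H^{1}} \leq C'\,H(\ve{f})^{3/2},
\end{equation*}
so $J(\ve{f}) \geq (C')^{-2} > 0$ for every $\ve{f}\in\B$, and in particular every minimizing sequence is nontrivial in both $H$ and $P$.

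For (ii), I would take a minimizing sequence $\ve{f}_n\in\B$. The functional $J = H^{3}/P^{2}$ is invariant under the four-parameter family of transformations $(m,n,\theta,x_{0})$ appearing in the statement, so I may normalize by $H(\ve{f}_n)=1$ and then shift and dilate the sequence. A profile decomposition in $\dot H^{1}(\R^{6})$ in the spirit of G\'erard writes $\ve{f}_n$ as an asymptotically orthogonal sum of $\dot H^{1}$-profiles plus a remainder small in $L^{3}$, and both $H$ and $P$ then split as sums over the profiles. The elementary inequality $(a+b)^{3/2} > a^{3/2}+b^{3/2}$ for $a,b>0$ yields the strict sub-additivity $J(\ve{f}_n) > J_{\min}+o(1)$ whenever more than one profile is nontrivial, ruling out splitting. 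Hence a single profile carries the full limit, producing strong convergence modulo the symmetries to some $\ve{f}_{\star}\in\B$ with $J(\ve{f}_{\star})=J_{\min}$.

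For (iii), any minimizer satisfies the Euler--Lagrange system
\begin{equation*}
-\Delta u_1 = \mu\,\overline{u_1}\,u_2, \qquad -\kk\,\Delta u_2 = \mu\,u_1^{2}
\end{equation*}
for some Lagrange multiplier $\mu > 0$ (positive because $P(\ve{f}_{\star}) > 0$). A phase rotation combined with the Riesz rearrangement inequality applied to $\int |u_1|^{2}|u_2|\,dx$, together with the Polya--Szeg\H{o} inequality for the kinetic term, allows one to take $\ve{f}_{\star}$ real, positive and radially decreasing. The ansatz $u_1 = a\,\Psi(b\,\cdot)$, $u_2 = c\,\Psi(b\,\cdot)$ then collapses the system to the scalar equation $\Delta \Psi + \Psi^{2}=0$, whose unique positive radial $\dot H^{1}(\R^{6})$ solution is the explicit $Q$ of \eqref{ground state-W} by Theorem 7.2 of \cite{NayaOzaTana}; matching coefficients pins down the stated form of $\ve{f}_{\star}$ modulo the invariances. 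The converse ``if'' direction is then immediate from the symmetry invariance of $J$.

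The hardest part is step (ii): spatial translation, spatial dilation, and amplitude rescaling all act noncompactly on $\dot H^{1}(\R^{6})$, so one must simultaneously fix all three degrees of freedom before extracting a weak limit, and the strict sub-additivity of $J_{\min}$ is what rules out the splitting alternative in the Lions dichotomy.
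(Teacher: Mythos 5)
The paper does not prove this proposition; it is cited verbatim from Hayashi--Ozawa--Tanaka \cite{NayaOzaTana}, so there is no in-paper argument to compare your proposal against. Judged on its own terms, steps (i) and (ii) are sound in outline: the critical Sobolev embedding $\dot H^{1}(\R^{6})\hookrightarrow L^{3}(\R^{6})$ gives $J_{\min}>0$, and profile decomposition plus the strict superadditivity $(a+b)^{3/2}>a^{3/2}+b^{3/2}$ is the standard route to attainment modulo the invariances of $J$.

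The genuine gap is in step (iii), and it is the heart of the proposition. The Euler--Lagrange system $-\Delta u_1 = \mu\,\bar u_1 u_2$, $-\kk\Delta u_2 = \mu\, u_1^{2}$ is truly coupled; \emph{positing} the ansatz $u_1=a\Psi(b\,\cdot)$, $u_2=c\Psi(b\,\cdot)$ and ``matching coefficients'' only exhibits one consistent family of solutions, it does not show that \emph{every} minimizer lies in it. You must prove that any positive radial decreasing solution of the coupled system has proportional components, namely $u_1=\sqrt{\kk}\,u_2$. That is a lemma, not an ansatz. One clean way to close it: set $w:=u_1-\sqrt{\kk}\,u_2$; subtracting $\sqrt{\kk}$ times the second equation from the first gives $-\Delta w + \tfrac{\mu}{\sqrt{\kk}}u_1\,w=0$, and since the potential $\tfrac{\mu}{\sqrt{\kk}}u_1$ is strictly positive, pairing against $w$ yields $\int|\nabla w|^{2}+\tfrac{\mu}{\sqrt{\kk}}\int u_1 w^{2}=0$, forcing $w\equiv 0$ (the decay at infinity rules out a nonzero constant). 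Only after this reduction may you invoke the scalar uniqueness of $\Delta\Psi+\Psi^{2}=0$. A secondary point: the ``only if'' direction requires characterizing \emph{all} minimizers, not merely producing a radial one; your rearrangement step therefore needs the equality cases of the Riesz and P\'olya--Szeg\H{o} inequalities, which you invoke only implicitly.
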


As a consequence of Proposition~\ref{TheGN}, we have the following Gagliardo-Nirenberg-type inequality
\begin{align}\label{GNIS}
	P(\ve{f})\leq C_{GN}[H(\ve{f})]^{\frac{3}{2}},
\end{align}
where $C_{GN}=J_{\text{min}}^{-\frac{1}{2}}$. Since $\Q$ is a optimizer of \eqref{GNI} (recall that $\Q=(\sqrt{\kk}Q, Q)$), we have
$C_{GN}=\tfrac{P(\Q)}{[H(\Q)]^{\frac{3}{2}}}$. We also have the following Pohozaev's identity 
\[
H(\Q)=\tfrac{3}{2}P(\Q).
\]

Moreover, by \cite[Lemma 3.6]{GaoMengXuZheng}, for any $ \ve{v} \in \dot{H}^1$,
\begin{align}\label{DV}
	 |P(v)| \leq C(\kappa)[H(v)]^{\frac{3}{2}}, \qtq{ where $C(\kappa) = \sqrt{\frac{8}{27\kappa}}$}.
\end{align}

\begin{lemma}\label{ConvexIn}
If $\ve{f}\in \dot{H}^{1}$ and $H(\ve{f})\leq H(\Q)$, then
\[
H(\ve{f})E(\Q)\leq H(\Q)E(\ve{f}).
\]
\end{lemma}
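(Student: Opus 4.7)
The plan is to rearrange the target inequality into a ratio condition and then close the gap via the sharp Gagliardo--Nirenberg inequality together with the Pohozaev identity.

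First, I would unpack the energy: writing $E(\ve{f}) = \tfrac12 H(\ve{f}) - \tfrac12 P(\ve{f})$ and similarly for $\Q$, the inequality $H(\ve{f})E(\Q)\leq H(\Q)E(\ve{f})$ simplifies, after cancelling the common term $\tfrac12 H(\ve{f})H(\Q)$, to
\[
H(\Q)\,P(\ve{f}) \;\leq\; H(\ve{f})\,P(\Q).
\]
So it suffices to prove this last inequality under the hypothesis $H(\ve{f})\leq H(\Q)$. Note that this hypothesis together with $H(\Q)>0$ shows both sides are well-defined, and that the Pohozaev identity $H(\Q)=\tfrac{3}{2}P(\Q)$ guarantees $P(\Q)>0$.

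Next, I would invoke the sharp Gagliardo--Nirenberg inequality~\eqref{GNIS} with optimal constant $C_{GN} = P(\Q)/[H(\Q)]^{3/2}$ (which is sharp at $\Q$ by Proposition~\ref{TheGN}). Applying it to $\ve{f}$ gives
\[
P(\ve{f}) \;\leq\; \frac{P(\Q)}{[H(\Q)]^{3/2}}\,[H(\ve{f})]^{3/2}
\;=\; \frac{P(\Q)}{H(\Q)}\, H(\ve{f}) \cdot \left(\frac{H(\ve{f})}{H(\Q)}\right)^{1/2}.
\]
If $P(\ve{f})\geq 0$, then since $H(\ve{f})\leq H(\Q)$ the last factor is at most $1$, yielding
\[
P(\ve{f}) \leq \frac{P(\Q)}{H(\Q)}H(\ve{f}),
\]
which is exactly the reduced inequality. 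If instead $P(\ve{f})<0$, the same inequality holds trivially since its right-hand side is nonnegative. In either case we conclude $H(\Q)P(\ve{f})\leq H(\ve{f})P(\Q)$, hence the desired bound.

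There is no real obstacle here; the only subtlety is remembering to handle the sign of $P(\ve{f})$ separately (the GN inequality as stated is only an upper bound on $P$), but this is a one-line observation. The proof is essentially a direct application of the sharp GN inequality combined with the subcritical size assumption $H(\ve{f})\leq H(\Q)$.
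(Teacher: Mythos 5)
Your proof is correct, and it takes a genuinely different (and arguably more streamlined) route than the paper's. The paper first invokes the sharp Gagliardo--Nirenberg inequality to bound $E(\ve{f})\geq \Phi(H(\ve{f}))$ for the auxiliary function $\Phi(y)=\tfrac12 y - \tfrac12 C_{GN}y^{3/2}$, then uses the concavity of $\Phi$ together with $\Phi(0)=0$ and $\Phi(H(\Q))=E(\Q)$ to show $\Phi(\alpha H(\Q))\geq \alpha E(\Q)$, and finally substitutes $\alpha=H(\ve{f})/H(\Q)$. You instead cancel the common bilinear term $\tfrac12 H(\ve{f})H(\Q)$ from both sides to reduce the claim to the elementary inequality $H(\Q)P(\ve{f})\leq H(\ve{f})P(\Q)$, and then close it directly with the sharp GN bound $P(\ve{f})\leq C_{GN}[H(\ve{f})]^{3/2}$ combined with the observation that $\left(H(\ve{f})/H(\Q)\right)^{1/2}\leq 1$. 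Both arguments hinge on the same sharp GN constant $C_{GN}=P(\Q)/[H(\Q)]^{3/2}$, but yours eliminates the concavity step entirely, replacing it with a one-line size comparison. Your explicit handling of the case $P(\ve{f})<0$ is a fine addition for rigor, although it is also harmless in the paper's version since the GN inequality $P(\ve{f})\leq C_{GN}[H(\ve{f})]^{3/2}$ holds unconditionally (the right side is nonnegative). Overall the two proofs are equivalent in content, but yours makes the mechanism more transparent.
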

\begin{proof}
It follows from Gagliardo-Nirenberg's inequality \eqref{GNIS} that
\begin{align}\label{AUXGN}
	E(\ve{f})\geq \tfrac{1}{2}H(\ve{f})-\tfrac{1}{2}C_{GN}[H(\ve{f})]^{\frac{3}{2}}=\Phi(H(\ve{f})),
\end{align}
where 
\[\Phi(y)=\tfrac{1}{2}y-\tfrac{1}{2}C_{GN}y^{\frac{3}{2}}
\]
for $y\geq 0$. An elementary calculations shows that $\Phi(H(\Q))=E(\Q)$. Since $\Phi$ is concave on $(0, +\infty)$
and $\Phi(0)=0$, we find
\[
\Phi(\alpha H(\Q))\geq \alpha\phi(H(\Q))=\alpha E(\Q)
\qtq{for all $\alpha\in(0,1)$}.
\]
Choosing $\alpha=\frac{H(\ve{f})}{H(\Q)}$, then \eqref{AUXGN} implies the lemma.
\end{proof}

For a function $\ve{u}$ defined on $\R^{6}$, we define
\[
\ve{u}_{[x_{0}, \theta_{0}, \lambda_{0}]}
= \left( \tfrac{1}{\lambda^{2}_{0}} e^{i\theta_{0}} u_{1}\left(\tfrac{x - x_{0}}{\lambda_{0}}\right), \tfrac{1}{\lambda^{2}_{0}} e^{2i\theta_{0}} u_{2}\left(\tfrac{x - x_{0}}{\lambda_{0}}\right) \right),
\]
where $\ve{u} = (u_{1}, u_{2})$. Using the variational characterization of $ \Q $ given in Proposition~\ref{TheGN}, inequality \eqref{DV} and profile decomposition in $\dot{{H}}^{1}$, one can show that:

\begin{proposition}\label{profiQ}
Let $\kk>0$. Then there exists a function $\epsilon=\epsilon(\rho)$ such that for all $\ve{u}\in \dot{H}^{1}$
satisfying $E(\ve{u})=E(\Q)$ we have
\[
\inf_{y\in \R^{6}, \theta\in \R, \lambda>0}\|\ve{u}_{[y, \theta, \lambda]} -\Q\|_{\dot{{H}}^{1}}
\leq \epsilon(\delta(\ve{u}))\qtq{with} \lim_{\rho\to 0}\epsilon(\rho)=0,
\]
where $\delta(\ve{u}):=|H(\ve{u})-H(\Q)|$.
\end{proposition}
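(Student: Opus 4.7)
My plan is a concentration-compactness argument by contradiction. Suppose the conclusion fails: there exist $\eta>0$ and a sequence $\{\ve{u}_{n}\}\subset \dot{H}^{1}$ with $E(\ve{u}_{n})=E(\Q)$, $\delta(\ve{u}_{n})\to 0$, and
\[
\inf_{y,\theta,\lambda}\|(\ve{u}_{n})_{[y,\theta,\lambda]}-\Q\|_{\dot{H}^{1}}\geq \eta\qtq{for every} n.
\]
From the energy identity $E=\tfrac{1}{2}H-\tfrac{1}{2}P$ we obtain
\[
P(\ve{u}_{n})=H(\ve{u}_{n})-2E(\Q)\longrightarrow H(\Q)-2E(\Q)=P(\Q)>0,
\]
so $\ve{u}_{n}\in \B$ for large $n$ and the ratio $[H(\ve{u}_{n})]^{3}/[P(\ve{u}_{n})]^{2}$ converges to $[H(\Q)]^{3}/[P(\Q)]^{2}=J_{\text{min}}$. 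Hence $\{\ve{u}_{n}\}$ is a minimizing sequence for the variational problem \eqref{GNI}.

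The next step is to apply a linear profile decomposition in $\dot{H}^{1}$ adapted to the symmetries of the system (scaling $\lambda>0$, translations $y\in\R^{6}$, and the phase action $(e^{i\theta}u_{1},e^{2i\theta}u_{2})$). Passing to a subsequence, for every $J\geq 1$ one obtains
\[
\ve{u}_{n}=\sum_{j=1}^{J}(\phi^{j})_{[y_{n}^{j},\theta_{n}^{j},\lambda_{n}^{j}]}+\ve{w}_{n}^{J},
\]
with pairwise orthogonal parameters, the Pythagorean expansion $H(\ve{u}_{n})=\sum_{j=1}^{J}H(\phi^{j})+H(\ve{w}_{n}^{J})+o_{n}(1)$, the trilinear decoupling $P(\ve{u}_{n})=\sum_{j=1}^{J}P(\phi^{j})+P(\ve{w}_{n}^{J})+o_{n}(1)$, and the remainder estimate $\limsup_{n\to\infty}\|\ve{w}_{n}^{J}\|_{L^{3}}\to 0$ as $J\to\infty$. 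Combined with the H\"older bound $|P(\ve{v})|\leq \|v_{1}\|_{L^{3}}^{2}\|v_{2}\|_{L^{3}}$ (which is finite thanks to the critical Sobolev embedding $\dot{H}^{1}(\R^{6})\hookrightarrow L^{3}(\R^{6})$), the last property forces $\lim_{J\to\infty}\limsup_{n\to\infty}|P(\ve{w}_{n}^{J})|=0$.

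Applying \eqref{GNIS} profile-by-profile (keeping only those with $P(\phi^{j})\geq 0$) together with the elementary superadditivity $a^{3/2}+b^{3/2}\leq (a+b)^{3/2}$ for $a,b\geq 0$, one obtains
\[
P(\Q)=\lim_{n\to\infty}P(\ve{u}_{n})\leq C_{GN}\Bigl[\sum_{j}H(\phi^{j})\Bigr]^{3/2}\leq C_{GN}[H(\Q)]^{3/2}=P(\Q).
\]
All inequalities must therefore be equalities, which forces $H(\ve{w}_{n}^{J})\to 0$ (for $J$ sufficiently large) and exactly one profile, say $\phi^{1}$, to be nontrivial, with $P(\phi^{1})=C_{GN}[H(\phi^{1})]^{3/2}$. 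By Proposition~\ref{TheGN}, $\phi^{1}=\Q_{[y_{0},\theta_{0},\lambda_{0}]}$ for some symmetry parameters. Unwinding the modulation yields a sequence $(y_{n},\theta_{n},\lambda_{n})$ along which $(\ve{u}_{n})_{[y_{n},\theta_{n},\lambda_{n}]}\to\Q$ in $\dot{H}^{1}$, contradicting the lower bound $\eta>0$. The main obstacle I anticipate is setting up the linear profile decomposition cleanly for the two-component system: one has to ensure that both components share a common rescaling parameter $\lambda_{n}^{j}$ compatible with the $(\sqrt{\kk}Q,Q)$ structure of $\Q$, and to verify that the trilinear form $P$ decouples on orthogonal profiles. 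The asymmetric phase action $\theta\mapsto(e^{i\theta},e^{2i\theta})$ complicates the modulation bookkeeping but does not affect the core concentration-compactness mechanism.
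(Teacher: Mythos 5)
Your argument is correct and follows essentially the same route that the paper takes: the paper simply cites \cite[Proposition 2.7]{MIWUGU2015}, whose proof is precisely the contradiction--via--profile-decomposition argument you lay out, with the key observation that $\delta(\ve{u}_n)\to 0$ together with $E(\ve{u}_n)=E(\Q)$ makes $\{\ve{u}_n\}$ a minimizing sequence for the Gagliardo--Nirenberg quotient \eqref{GNI}, after which the Pythagorean decoupling of $H$, the decoupling of the trilinear form $P$, the vanishing of the remainder in $L^{3}$ (the critical Sobolev exponent in $\R^6$), and strict superadditivity of $t\mapsto t^{3/2}$ single out exactly one profile, identified with $\Q$ via Proposition~\ref{TheGN}.
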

\begin{proof}
With the appropriate modifications, the proof follows the same lines as in \cite[Proposition 2.7]{MIWUGU2015}.
\end{proof}

\subsection{Virial identities}
For $R>1$ we define the function
\[
w_{R}(x)=R^{2}\phi\(\tfrac{x}{R}\)
\quad \text{and}\quad w_{\infty}(x)=|x|^{2},
\]
where $\phi$ is a real-valued and radial function so that
\[
\phi(x)=
\begin{cases}
|x|^{2},& \quad |x|\leq 1\\
0,& \quad |x|\geq 2,
\end{cases}
\quad \text{with}\quad 
|\partial^{\alpha}\phi(x)|\lesssim |x|^{2-|\alpha|}.
\]
Consider the localized virial functional ($\ve{u}=(u,v)$)
\[
I_{R}[\ve{u}]=2\kk\IM\int_{\R^{6}} \nabla w_{R}(x) \cdot 
(2 \nabla u(t,x) \overline{u(t,x)}+ \nabla {v}(t,x) \overline{v(t,x)}) dx.
\]

We need the following result; cf. \cite[Lemma 3.9]{GaoMengXuZheng}.

\begin{lemma}\label{VirialIden}
Let $R\in [1, \infty]$. Suppose $\ve{u}(t)$ solves \eqref{SNLS}. Then
\begin{equation}\label{LocalVirial}
\frac{d}{d t}I_{R}[\ve{u}]=F_{R}[\ve{u}(t)],
\end{equation}
where
\begin{align*}
F_{R}[\ve{u}]&:=\int_{\R^{6}}(-2\kk \Delta \Delta w_{R})[|u|^{2}+\tfrac{\kk}{2}|v|^{2}]-
2\kk\Delta[w_{R}(x)]\overline{v}u^{2}\,dx\\
&+8\kk\RE\int_{\R^{6}} [\overline{u_{j}} u_{k} +\tfrac{\kk}{2}\overline{v_{j}} v_{k}]\partial_{jk}[w_{R}]dx.
\end{align*}
In particular, when $R=\infty$ we have $F_{\infty}[u]=8\kk[2H(\ve{u})-3P(\ve{u})]$.
\end{lemma}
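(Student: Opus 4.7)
The plan is to differentiate $I_R[\ve{u}]$ in time using the equations $\partial_t u = i\Delta u + i\overline{u}v$ and $\partial_t v = i\kappa\Delta v + iu^2$ derived from \eqref{SNLS}, and then integrate by parts so that all spatial derivatives rest on the weight $w_R$. The resulting expression splits naturally into a \emph{linear} contribution, coming from the Laplacian terms in the equations, and a \emph{nonlinear} contribution, coming from the quadratic couplings $\overline{u}v$ and $u^2$.

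For the linear piece, a standard Morawetz-type computation shows that for any $\phi$ solving $i\partial_t\phi + a\Delta\phi = 0$,
\[
\frac{d}{dt}\IM\int\nabla w_R\cdot\overline{\phi}\nabla\phi\,dx = 2a\RE\int\partial_{jk}w_R\,\partial_j\overline{\phi}\,\partial_k\phi\,dx - \tfrac{a}{2}\int\Delta^2 w_R\,|\phi|^2\,dx.
\]
This follows from the pointwise identity $\IM\partial_t(\overline{\phi}\,\partial_k\phi) = a\RE(\overline{\phi}\,\Delta\partial_k\phi - \Delta\overline{\phi}\,\partial_k\phi)$, two integrations by parts, and $\RE(\overline{\phi}\Delta\phi) = \tfrac{1}{2}\Delta(|\phi|^2) - |\nabla\phi|^2$. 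Applying this with $a=1$ to the $u$-equation (weighted by the coefficient $2$ from $2\overline{u}\nabla u$) and with $a=\kappa$ to the $v$-equation (weighted by $1$), then multiplying by the outer factor $2\kappa$, reproduces exactly the bi-Laplacian term $-2\kappa\Delta^2 w_R\bigl[|u|^2 + \tfrac{\kappa}{2}|v|^2\bigr]$ and the Hessian term $8\kappa\RE\int\bigl[\overline{u_j}u_k + \tfrac{\kappa}{2}\overline{v_j}v_k\bigr]\partial_{jk}w_R\,dx$ of $F_R$.

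For the nonlinear piece, the same computation produces
\[
\RE\!\bigl[\overline{u}\,\partial_k(\overline{u}v) - u\overline{v}\,\partial_k u\bigr]\quad\text{and}\quad \RE\!\bigl[\overline{v}\,\partial_k u^2 - \overline{u}^2\,\partial_k v\bigr]
\]
as the integrands coming from the $u$- and $v$-equations respectively. Expanding $\overline{u}\,\partial_k(\overline{u}v) = \overline{u}v\,\partial_k\overline{u} + \overline{u}^2\,\partial_k v$ and observing that $\overline{u}v\,\partial_k\overline{u}$ and $u\overline{v}\,\partial_k u$ are complex conjugates of each other, the first integrand collapses to $\RE(\overline{u}^2\,\partial_k v)$. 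Combining both contributions with weights $2$ and $1$, I obtain
\[
2\RE(\overline{u}^2\partial_k v) + \RE(\overline{v}\partial_k u^2 - \overline{u}^2\partial_k v) = \RE(\overline{u}^2\partial_k v + \overline{v}\partial_k u^2) = \partial_k\RE(u^2\overline{v}),
\]
where the last equality uses the conjugate pair $\RE(u^2\partial_k\overline{v}) = \RE(\overline{u}^2\partial_k v)$. Multiplying by $\partial_k w_R$, integrating over $\R^6$, performing a single integration by parts, and multiplying by the outer $2\kappa$, produces exactly the nonlinear term $-2\kappa\int\Delta w_R\,\overline{v}u^2\,dx$ of $F_R$ (read with an implicit real part).

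The main obstacle is precisely this nonlinear cancellation, which succeeds only because of the weighting $(2,1)$ in the definition of $I_R$ -- a weighting that mirrors the phase-rotation symmetry $(e^{i\theta_0}u, e^{2i\theta_0}v)$ of \eqref{SNLS}. Rigorously, the identity should first be established on smooth approximating data and then passed to the limit via the well-posedness theory. Finally, the case $R = \infty$ is an immediate substitution: with $w_\infty = |x|^2$ one has $\partial_{jk}w_\infty = 2\delta_{jk}$, $\Delta w_\infty = 12$ and $\Delta^2 w_\infty = 0$, so the bi-Laplacian term vanishes, the Hessian term reduces to $16\kappa H(\ve{u})$, and the nonlinear term becomes $-24\kappa P(\ve{u})$, yielding $F_\infty[\ve{u}] = 8\kappa[2H(\ve{u}) - 3P(\ve{u})]$.
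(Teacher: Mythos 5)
Your proof is correct and carries out the standard direct Morawetz computation: differentiate $I_R$ in time, substitute the equations, integrate by parts to land all derivatives on $w_R$, and split into linear (bi-Laplacian and Hessian) and nonlinear pieces, with the key observation that the $(2,1)$ weighting makes the quadratic couplings collapse to a total derivative of $\RE(u^2\overline v)$. The paper itself does not re-derive this identity but simply cites Lemma~3.9 of \cite{GaoMengXuZheng}, and your computation is exactly the kind of argument that citation refers to; the case $R=\infty$ is also handled correctly with $\Delta w_\infty = 12$ in $\mathbb R^6$.
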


Note that given the specifications of the weight $w_{R}$ defined above, we have that (with $\phi(r)=\phi(|x|)$)
\begin{align*}
\RE\int_{\R^{6}} \Big[ \overline{u_{j}} u_{k} +\tfrac{\kk}{2}\overline{v_{j}} v_{k} \Big] \partial_{jk}[w_{R}] \, dx = \int_{\R^{6}} \Big[ |\nabla u|^{2} +\tfrac{\kk}{2}|\nabla v|^{2} \Big] \partial^{2}_{r}w_{R} \, dx.
\end{align*}

We abuse the notation $ \ve{u}_{[\theta_{0}, \lambda_{0}]} $ to denote $ \ve{u}_{[x_{0}, \theta_{0}, \lambda_{0}]} $ for $ x_0 \equiv \ve{0} \in \R^6 $. As a direct consequence of the previous lemma, we obtain the following results.
\begin{lemma}\label{Virialzero}
Let $R\in [1, \infty]$, $\theta\in \R$ and $\lambda>0$. Then
\[
I_{R}[\Q_{[\theta, \lambda]}]=0.
\]
\end{lemma}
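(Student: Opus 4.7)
The plan is to show that for the ground state family $\Q_{[\theta,\lambda]}$, the integrand defining $I_R$ is \emph{pointwise purely real}, so that the outer $\IM$ annihilates it. This reduces the lemma to an elementary phase-cancellation observation, independent of the specific form of $w_R$.

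More precisely, by definition,
\[
\Q_{[\theta,\lambda]} = \Bigl(\tfrac{\sqrt{\kk}}{\lambda^{2}} e^{i\theta} Q(\cdot/\lambda),\ \tfrac{1}{\lambda^{2}} e^{2i\theta} Q(\cdot/\lambda)\Bigr),
\]
where $Q$ is the real-valued positive ground state from \eqref{ground state-W}. Write $u(x) = \tfrac{\sqrt{\kk}}{\lambda^{2}} e^{i\theta} Q(x/\lambda)$ and $v(x) = \tfrac{1}{\lambda^{2}} e^{2i\theta} Q(x/\lambda)$. Since the phase factors $e^{i\theta}$ and $e^{2i\theta}$ are constant in $x$, I would just compute componentwise
\[
(\partial_j u)\,\overline{u} = \tfrac{\kk}{\lambda^{5}} \bigl(e^{i\theta} \partial_j Q(x/\lambda)\bigr)\bigl(e^{-i\theta} Q(x/\lambda)\bigr) = \tfrac{\kk}{\lambda^{5}}\, Q(x/\lambda)\,\partial_j Q(x/\lambda),
\]
and analogously $(\partial_j v)\,\overline{v} = \tfrac{1}{\lambda^{5}}\, Q(x/\lambda)\,\partial_j Q(x/\lambda)$. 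Both are real-valued at every point because $Q$ is real-valued.

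Consequently, the full integrand
\[
\nabla w_R(x) \cdot \bigl(2\nabla u\,\overline{u} + \nabla v\,\overline{v}\bigr) = \tfrac{2\kk+1}{\lambda^{5}}\,\nabla w_R(x)\cdot \nabla Q(x/\lambda)\, Q(x/\lambda)
\]
is real-valued everywhere (recall also that $w_R$ is real-valued, so $\nabla w_R$ is a real vector field). Taking the imaginary part gives zero, hence $I_R[\Q_{[\theta,\lambda]}]=0$ for every $R\in[1,\infty]$, every $\theta\in\R$, and every $\lambda>0$, as claimed. There is no conceptual obstacle here; the only thing to check carefully is that the phase $e^{i\theta}$ (resp.\ $e^{2i\theta}$) is spatially constant, so that it passes through $\nabla$ unchanged and is killed by complex conjugation against itself.
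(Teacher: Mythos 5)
Your proof is correct. The paper itself gives no explicit argument for this lemma (it is asserted as an immediate consequence), and the phase-cancellation computation you carry out---observing that $\nabla u\,\overline{u}$ and $\nabla v\,\overline{v}$ are pointwise real because $Q$ is real-valued and the spatial phases $e^{i\theta},e^{2i\theta}$ are constant, while $\nabla w_R$ is a real vector field---is precisely the direct observation the authors are relying on.
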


\begin{lemma}\label{VirialModulate}
Let $\ve{u}$ be the solution of \eqref{SNLS} on an interval $I$. Let $R\in [1, \infty]$, $\chi: I\to \R$, $\theta: I\to \R$,
$\lambda: I\to \R^{\ast}$. Then for all $t\in I$,
\begin{align}\nonumber
	\frac{d}{d t}I_{R}[\ve{u}]&=F_{\infty}[\ve{u}(t)]\\ \label{Modu11}
	                     &+F_{R}[\ve{u}(t)]-F_{\infty}[\ve{u}(t)]\\\label{Modu22}
											 &-\chi(t)\big\{F_{R}[\Q_{[\theta(t), \lambda(t)]}]-F_{\infty}[\Q_{[\theta(t), \lambda(t)]}]\big\}.
\end{align}
\end{lemma}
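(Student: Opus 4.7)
The plan is to recognize that the identity stated in Lemma~\ref{VirialModulate} is simply Lemma~\ref{VirialIden} plus a vanishing correction term. Summing the three lines on the right-hand side collapses to
\[
F_R[\ve{u}(t)] - \chi(t)\bigl\{F_R[\Q_{[\theta(t),\lambda(t)]}] - F_\infty[\Q_{[\theta(t),\lambda(t)]}]\bigr\},
\]
and Lemma~\ref{VirialIden} already identifies the first summand with $\frac{d}{dt}I_R[\ve{u}(t)]$. So the entire task reduces to verifying that
\[
F_R[\Q_{[\theta,\lambda]}] \;=\; F_\infty[\Q_{[\theta,\lambda]}]
\]
for every $\theta\in\R$ and $\lambda>0$; I would establish this by showing that \emph{both} quantities vanish, making the multiplier $\chi(t)$ irrelevant.

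For $F_\infty[\Q_{[\theta,\lambda]}]$, a direct change of variables shows that $H$ and $P$ are invariant under the scaling/phase action $\ve{u}\mapsto\ve{u}_{[\theta,\lambda]}$, giving $H(\Q_{[\theta,\lambda]}) = H(\Q)$ and $P(\Q_{[\theta,\lambda]}) = P(\Q)$. Combined with the Pohozaev identity $H(\Q) = \tfrac{3}{2}P(\Q)$ already recorded in the preliminaries, this yields $F_\infty[\Q_{[\theta,\lambda]}] = 8\kk\bigl[2H(\Q) - 3P(\Q)\bigr] = 0$.

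For $F_R[\Q_{[\theta,\lambda]}]$, the clean observation is that $\Q_{[\theta,\lambda]}$ is a \emph{stationary} solution of \eqref{SNLS}. Substituting the ansatz $u_1(t,x) = \lambda^{-2}\sqrt{\kk}\,e^{i\theta}Q(x/\lambda)$, $u_2(t,x) = \lambda^{-2}e^{2i\theta}Q(x/\lambda)$ into the system and invoking $\Delta Q + Q^2 = 0$, the asymmetric phases $(e^{i\theta}, e^{2i\theta})$ cancel precisely in $\overline{u_1}u_2$ and $u_1^2$, so both equations are satisfied with $\partial_t u_j \equiv 0$. Applying Lemma~\ref{VirialIden} to this constant-in-time solution then yields $F_R[\Q_{[\theta,\lambda]}] = \tfrac{d}{dt}I_R[\Q_{[\theta,\lambda]}] = 0$, consistent with $I_R[\Q_{[\theta,\lambda]}] = 0$ from Lemma~\ref{Virialzero}. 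No real obstacle arises; the only care required is checking the phase cancellation in the cross-nonlinearities, which is precisely what motivates the asymmetric definition of $\Q$. The auxiliary data $\chi(t), \theta(t), \lambda(t)$ play no role in the proof of the identity itself; their presence in the statement simply anticipates the use of modulation parameters in the later sections, where the line \eqref{Modu22} term will be exploited to sharpen bounds on the remainder in the region where $\ve{u}(t)$ is close to the modulated ground state.
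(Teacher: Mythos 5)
Your proposal is correct and matches the approach the paper implicitly intends when it labels Lemmas~\ref{Virialzero} and~\ref{VirialModulate} "direct consequences" of Lemma~\ref{VirialIden}: the three displayed lines telescope to $F_R[\ve{u}(t)] - \chi(t)\bigl\{F_R[\Q_{[\theta(t),\lambda(t)]}]-F_\infty[\Q_{[\theta(t),\lambda(t)]}]\bigr\}$, and the correction term vanishes identically because $F_\infty[\Q_{[\theta,\lambda]}]=0$ (Pohozaev plus scaling/phase invariance of $H$ and $P$) and $F_R[\Q_{[\theta,\lambda]}]=0$ (applying Lemma~\ref{VirialIden} to the stationary solution $\Q_{[\theta,\lambda]}$, whose virial functional $I_R$ is constant in time). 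You also correctly identify the role of the artificial zero term: it is inserted so that the error $F_R[\ve{u}]-F_\infty[\ve{u}]$ can later be estimated against $\ve{u}-\Q_{[\theta,\lambda]}$ rather than against $\ve{u}$ itself.
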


\section{Spectral properties of the linearized operator}\label{S:Spectral}

We establish some spectral properties of the linearized operator that will be useful in the following sections. Let $\ve{u}(t)$ be a solution to \eqref{SNLS} and define 
\[
\ve{h}(t,x):=\ve{u}(t,x)-\Q(x).
\]
Then, defining the operators $L_{R}$ and $L_{I}$ (acting on $\dot{H}^{1}(\R^6;\R)$) via
\begin{align*}
L_{R}&=
\begin{bmatrix}
-\Delta & 0 \\
0 & -\tfrac{\kk}{2}\Delta 
\end{bmatrix}
-
\begin{bmatrix}
Q & \sqrt{\kk}Q \\
\sqrt{\kk}Q & 0 
\end{bmatrix},
\\
L_{I}&=
\begin{bmatrix}
-\Delta & 0 \\
0 & -\tfrac{\kk}{2}\Delta 
\end{bmatrix}
+
\begin{bmatrix}
Q & -\sqrt{\kk}Q \\
-\sqrt{\kk}Q & 0 
\end{bmatrix}
\end{align*}
and writing
\begin{equation}\label{Resi}
\begin{split}
R(\ve{h})=R(h,g)&=(\overline{h}g, h^{2}),\qtq{where $\ve{h}=(h, g)$,}
\end{split}
\end{equation}
we see that $\ve{h}$ satisfies the equation
\begin{equation}\label{Decomh}
\partial_{t}\ve{h}+J\L \ve{h}=iR\ve{h}, \quad \text{where} \quad
\L:=\begin{pmatrix}
0 & -L_{I} \\
L_{R} & 0
\end{pmatrix},
\end{equation}
and \[
J(\ve{u})=J(u_{1}, u_{2})=(u_{1}, 2u_{2}).
\]
Notice that for $\ve{h}=(h,g)$ with $h=h_{1}+ih_{2}$ and $g=g_{1}+ig_{2}$, we have
\[
\L\ve{h}=\L (h,g)=i\left[L_{R}\begin{pmatrix}
           h_{1} \\
           g_{1} 
  \end{pmatrix}    
+iL_{I}\begin{pmatrix}
           h_{2} \\
           g_{2} 
  \end{pmatrix}\right]  
=
i(-\Delta h-\overline{h}Q-\sqrt{\kk}Qg, -\tfrac{\kk}{2}\Delta g-\sqrt{\kk}Qh).
\]

Moreover, we can write \eqref{Decomh} as a Schr\"odinger equation (recall that $\ve{h}=(h, g)$)
\[
(i\partial_{t}h, i\partial_{t}g)+(\Delta h, {\kk}\Delta g)+K(\ve{h})=-R(\ve{h}),
\]
where $K(\ve{h})=K(h,g)=(\overline{h}Q+\sqrt{\kk}Qg, 2\sqrt{\kk}Qh)$.

Consider the orthogonal directions 
\[
\Q=(\sqrt{\kk}Q, Q)\quad i\Q_{1}=i(\sqrt{\kk}Q,  2Q),\quad \partial_{j} \Q=(\sqrt{\kk}\partial_{j} Q, \partial_{j} Q)
 \]
for $j=1,\ldots,6$, and
\[
\Lambda \Q=(\sqrt{\kk}\Lambda Q,  \Lambda Q) \qtq{where} \Lambda Q=2Q+x\cdot\nabla Q
\]
in the Hilbert space $\dot{H}^{1}=\dot{H}^{1}(\R^{6}; \C)\times \dot{H}^{1}(\R^{6}; \C)$.

Our next step is to study the spectrum of the operators $ L_{R} $ and $ L_{I}$. 

By the Weyl theorem, the spectra of $L_{R}$ and $L_{I}$ consist of essential spectrum in $[0, \infty)$. From Remark~\ref{KerLI} below, we see that $L_{I}$ is nonnegative and
${\rm Ker} \left\{L_{I}\right\}=\mbox{span}\left\{\Q_{1} \right\}$. On the other hand, $L_{R}$ has only one negative eigenvalue. Moreover, the second eigenvalue is $0$
and (cf. Remark~\ref{KerLR})
\[
{\rm Ker} \left\{L_{R}\right\}=\mbox{span}\left\{\Lambda\Q, \partial_{j}Q: j=1,2, 3 \right\}.
\]
By a direct calculation
\[
L_{R}(\partial_{j} \Q)
	=\ve{0},
\quad
L_{R}(\Lambda\Q)
	=\ve{0}
\]
and
\[
L_{I}\Q_{1}
	=\ve{0}.
\]
In particular,  we see that
\[
\L \partial_{j}\Q
	=
\L \Lambda \Q
=
\L i\Q_{1}
	=\ve{0}.
	\]

We denote by $\F(\ve{u}, \ve{v})$ the bilinear symmetric form 
\begin{equation}\label{Quadratic}
\F(\ve{u}, \ve{v}):=\tfrac{1}{2}\<L_{R} \RE\ve{u}, \RE\ve{v}\>
	+\tfrac{1}{2}\<L_{I}\IM\ve{u}, \IM\ve{v}\>.
\end{equation}
 Also, we denote $\F(\ve{u},\ve{u})$ by $\F(\ve{u})$.

\begin{lemma}\label{Coer11}
Fix $\kk>0$. There exists $C>0$ such that for every $\ve{v}\in \dot{H}^{1}(\R^{6}: \R)\times \dot{H}^{1}(\R^{6}: \R)$ satisfying
\begin{align}\label{SOrt11}
	\F(\ve{v}, \Q )=(\ve{v}, \Lambda \Q )_{\dot{H}^{1}}=(\ve{v},  \partial_{j}\Q)_{\dot{H}^{1}}=0
\end{align}
for $1\leq j\leq 6$, then we have
\begin{align}\label{Fine}
	\<L_{R}\ve{v},\ve{v}
	\>
	\geq C \|\ve{v}\|^{2}_{\dot{H}^{1}}.
\end{align}
\end{lemma}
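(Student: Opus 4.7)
The plan is the classical spectral coercivity argument for $L_R$ around the ground state, adapted to the $\dot{H}^{1}$ pairing used in the orthogonality conditions because we are in the energy-critical setting.

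First, I would invoke the spectral analysis of $L_R$ carried out in Appendix~\ref{S:A} (cf. Remark~\ref{KerLR}): $L_R$ is self-adjoint on $\dot{H}^{1}(\R^{6};\R)\times\dot{H}^{1}(\R^{6};\R)$ with essential spectrum contained in $[0,\infty)$, a single simple negative eigenvalue $-\lambda_{0}<0$ with eigenfunction $\chi_{0}$, and kernel $\ker L_R=\operatorname{span}\{\Lambda\Q,\partial_{1}\Q,\dots,\partial_{6}\Q\}$.

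Next, I would verify that the $\Q$-direction is strictly negative for $L_R$. Using $-\Delta Q=Q^{2}$, a direct computation gives
\[
L_R\Q=-\bigl(\sqrt{\kk}\,Q^{2},\,\tfrac{\kk}{2}Q^{2}\bigr),
\qquad
\<L_R\Q,\Q\>=-\tfrac{3\kk}{2}\int_{\R^{6}}Q^{3}\,dx<0,
\]
so $\Q$ has nontrivial projection onto $\chi_{0}$. Since for real $\ve{v}$ one has $\F(\ve{v},\Q)=\tfrac12\<L_R\ve{v},\Q\>$, the first condition in \eqref{SOrt11} is exactly $\<L_R\ve{v},\Q\>=0$. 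By the variational (min-max) characterization of $-\lambda_{0}$ combined with the strict negativity above, the form $\<L_R\cdot,\cdot\>$ is nonnegative on the codimension-one subspace $\{\F(\ve{v},\Q)=0\}$, and its zero set there is exactly $\ker L_R$.

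The remaining two conditions $(\ve{v},\Lambda\Q)_{\dot{H}^{1}}=0$ and $(\ve{v},\partial_{j}\Q)_{\dot{H}^{1}}=0$ make $\ve{v}$ $\dot{H}^{1}$-orthogonal to $\ker L_R$. Together with the previous step this yields strict positivity: if $\ve{v}$ satisfies \eqref{SOrt11} and $\<L_R\ve{v},\ve{v}\>=0$, then $\ve{v}\in\ker L_R$ and is $\dot{H}^{1}$-orthogonal to $\ker L_R$, so $\ve{v}=0$.

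Finally, I would upgrade strict positivity to the uniform bound \eqref{Fine} by a concentration–compactness/contradiction argument. Suppose there is a sequence $\ve{v}_{n}$ satisfying \eqref{SOrt11} with $\|\ve{v}_{n}\|_{\dot{H}^{1}}=1$ and $\<L_R\ve{v}_{n},\ve{v}_{n}\>\to 0$. Extract $\ve{v}_{n}\rightharpoonup\ve{v}_{\infty}$ in $\dot{H}^{1}$. Because $Q\in L^{3}(\R^{6})$ and $\dot{H}^{1}\hookrightarrow L^{3}$, the quadratic potential terms $\int Q|v_{n,1}|^{2}\,dx$ and $\int\sqrt{\kk}\,Q\,v_{n,1}v_{n,2}\,dx$ are weakly sequentially continuous (Rellich on compact sets plus decay of $Q$ in the tails), while the kinetic part is weakly lower semicontinuous. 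If $\ve{v}_{\infty}\neq 0$, the constraints \eqref{SOrt11} pass to the limit and we obtain $\<L_R\ve{v}_{\infty},\ve{v}_{\infty}\>\le 0$, contradicting strict positivity. If $\ve{v}_{\infty}=0$, the potential terms vanish in the limit, forcing the kinetic part of $\<L_R\ve{v}_{n},\ve{v}_{n}\>$ to tend to $0$, which contradicts $\|\ve{v}_{n}\|_{\dot{H}^{1}}=1$.

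The main obstacle is the last step: the weak continuity of the potential terms in the energy-critical dimension, where $Q$ decays only polynomially ($Q(x)\sim|x|^{-4}$). This has to be handled by combining the Rellich--Kondrachov theorem on large balls with an $L^{3}$-tail bound controlled by $Q\in L^{3}(\R^{6})$; all other steps reduce to the spectral information established in Appendix~\ref{S:A} together with the algebraic identity $-\Delta Q=Q^{2}$.
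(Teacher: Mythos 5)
Your proposal is correct in outline, but it takes a genuinely different route from the paper's own proof of Lemma~\ref{Coer11}, and one step rests on spectral facts that you cite imprecisely. The paper does \emph{not} go through the full spectral decomposition of $L_R$ (negative eigenvalue $\chi_0$, explicit kernel) plus a min--max/index argument. Instead, it argues by contradiction directly against Lemma~\ref{Apx11}: if some nonzero $\ve{g}$ satisfies the orthogonality conditions with $\F(\ve{g})\le 0$, then $E=\operatorname{span}\{\ve{g},\Q,\Lambda\Q,\partial_1\Q,\dots,\partial_6\Q\}$ is a $9$-dimensional subspace on which $\F\le 0$ (using that $\F(\Q)<0$, $\F(\ve{g},\Q)=0$, and $\Lambda\Q,\partial_j\Q\in\ker L_R$), which contradicts the fact that Lemma~\ref{Apx11} makes $\F$ positive definite on a subspace of codimension $8$. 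The uniform bound \eqref{Fine} is then dispatched in one line by noting that $\F$ is a compact perturbation of an equivalent $\dot{H}^1$ norm. Your proposal reaches the same conclusion by a more spectral route: you compute $\<L_R\Q,\Q\>=-\tfrac{3\kk}{2}\int Q^3<0$ (correct), then invoke the index structure of $L_R$ (one negative direction, explicit kernel) to get nonnegativity on $\{\F(\cdot,\Q)=0\}$ and strictness after quotienting the kernel, and finally run a concentration--compactness argument for the uniform constant, making explicit what the paper only asserts (``compact perturbation''). Both arguments are valid; the paper's is shorter and more self-contained, your version gives a cleaner conceptual picture of \emph{why} the codimension-$8$ positivity plus the negative $\Q$-direction force coercivity, and it fills in the detail of the compactness upgrade that the paper states in one sentence (which in fact contains a typo, $\dot H^2$ for $\dot H^1$).

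One caveat worth flagging: you cite Remark~\ref{KerLR} and ``Appendix~\ref{S:A}'' for the spectral analysis of $L_R$ (simple negative eigenvalue, kernel $=\operatorname{span}\{\Lambda\Q,\partial_j\Q\}$). But Remark~\ref{KerLR} is in Appendix~\ref{S:A2} and concerns the transformed operator $E_R$, not $L_R$; and the only $L_R$-specific input actually proved in Appendix~\ref{S:A} is the codimension-$8$ positivity of Lemma~\ref{Apx11}, not a full spectral decomposition. The kernel-dimension and negative-eigenvalue-multiplicity claims you use are \emph{consequences} of Lemma~\ref{Apx11} (codimension-$8$ positivity bounds the total non-positive dimension by $8$, the exhibited $7$-dimensional kernel together with the direction $\Q$ then account for all of it), so your argument is not circular, but as written it appeals to a statement located elsewhere and about a different operator. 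If you make the derivation of the spectral facts from Lemma~\ref{Apx11} explicit, or replace the min--max step by the paper's $9$-dimensional counting argument, the proof closes cleanly.
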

\begin{proof}
Fix $\kk>0$. First we show that if $\ve{v}$ satisfies \eqref{SOrt11}, then $\F(\ve{v}):=\tfrac{1}{2}\<L_{R}\ve{v},\ve{v}\>>0$.
Indeed, if not, there exists $\ve{g}\in \dot{H}^{1}\setminus\left\{0\right\}$ such that
\begin{align}\label{Ort22}
	\F(\ve{g}, \Q )=(\ve{g}, \Lambda \Q )_{\dot{H}^{1}}=(\ve{g},  \partial_{j}\Q)_{\dot{H}^{1}}=0,
\end{align}
and $\F(\ve{g})\leq 0$. Consider $E=\text{span}\left\{\ve{g}, \Q, \Lambda \Q, \partial_{j}\Q: j=1,2,\ldots,6 \right\}$.
Since $\F(\Lambda \Q, \ve{h})=\F( \partial_{j}\Q, \ve{h})=0$ for all $h\in \dot{H}^{1}$, $\F(\ve{g}, \Q )=0$ and $\F(\Q)<0$ (see Remark~\ref{PLf}), we infer that
$\F(\ve{v})\leq 0$ for all $\ve{v}\in E$.

On the other hand, notice that $E$ is a subspace of dimension $9$. 
Assume that
\[
\sum^{6}_{j=1}\alpha_{j}\partial_{j}\Q+\beta\Lambda \Q+\gamma \Q+\mu \ve{g}=0.
\]
Since $\partial_{j}\Q$, $\Lambda \Q$, and $\ve{g}$
are orthogonal in $\dot{H}^{1}$ we infer (recall $(\Lambda \Q, \Q)_{\dot{H}^{1}}=(\partial_{j}\Q, \Q)_{\dot{H}^{1}}=0$) that 
$\alpha_{j}=0$ and $\beta=0$. Therefore, we get
\[
\gamma \Q+\mu \ve{g}=0.
\]
Now since $\F(\ve{g}, \Q )=0$ and $\F(\Q)<0$ we see that $\gamma=0$. Finally, as  $\ve{g}\in \dot{H}^{1}\setminus\left\{0\right\}$,
we obtain that $\mu=0$. By Lemma~\ref{Apx11} we know that $\F$ is definite positive on a subspace of co-dimension $8$, yielding a contradiction.

This proves that if $\ve{v}$ satisfies \eqref{SOrt11}, then $\<L_{R}\ve{v},\ve{v}\>>0$. As the quadratic form $\F(\cdot)$ is a compact perturbation of $\|\cdot\|^{2}_{\dot{H}^{2}}+\tfrac{k}{2}\|\cdot\|^{2}_{\dot{H}^{1}}$, we obtain \eqref{Fine}.
\end{proof}

\begin{lemma}\label{CoerLi11}
Fix $\kk>0$. There exists $C>0$ such that for every $\ve{v}\in \dot{H}^{1}(\R^{6}: \R)\times \dot{H}^{1}(\R^{6}: \R)$ satisfying
\begin{align}\label{SOrt1111}
(\ve{v},  \Q_{1})_{\dot{H}^{1}}=0,
\end{align}
 then we have
\begin{align}\label{Fine22}
	\<L_{I}\ve{v},\ve{v}
	\>
	\geq C \|\ve{v}\|^{2}_{\dot{H}^{1}}.
\end{align}
\end{lemma}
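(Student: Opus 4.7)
The strategy mirrors Lemma~\ref{Coer11}, but is substantially simpler because $L_{I}$ is already nonnegative with a one-dimensional kernel, so no delicate index-counting on a negative eigenspace is required. I would split the argument into two steps: first a strict positivity statement on the $\dot{H}^{1}$-orthogonal complement of $\Q_{1}$, and then an upgrade to quantitative coercivity via a compact-perturbation argument.

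\textbf{Step 1 (Strict positivity).} For $\ve{v}\neq 0$ with $(\ve{v},\Q_{1})_{\dot{H}^{1}}=0$, I claim $\<L_{I}\ve{v},\ve{v}\>>0$. From Remark~\ref{KerLI}, established in Appendix~\ref{S:A2}, the form $\ve{v}\mapsto \<L_{I}\ve{v},\ve{v}\>$ is nonnegative with $\mathrm{Ker}\{L_{I}\}=\mathrm{span}\{\Q_{1}\}$. Since the form is positive semidefinite, Cauchy--Schwarz for semidefinite forms gives
\[
|\<L_{I}\ve{v},\ve{w}\>|^{2}\leq \<L_{I}\ve{v},\ve{v}\>\<L_{I}\ve{w},\ve{w}\>\qtq{for all $\ve{w}$.}
\]
Thus $\<L_{I}\ve{v},\ve{v}\>=0$ forces $L_{I}\ve{v}=0$ distributionally, hence $\ve{v}\in\mathrm{span}\{\Q_{1}\}$; combined with $(\ve{v},\Q_{1})_{\dot{H}^{1}}=0$ and $\|\Q_{1}\|_{\dot{H}^{1}}>0$, this yields $\ve{v}=0$.

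\textbf{Step 2 (Quantitative coercivity).} Writing $\ve{v}=(v_{1},v_{2})$, one has
\[
\<L_{I}\ve{v},\ve{v}\>
=\|\nabla v_{1}\|_{L^{2}}^{2}+\tfrac{\kk}{2}\|\nabla v_{2}\|_{L^{2}}^{2}+\int_{\R^{6}}\bigl(Q v_{1}^{2}-2\sqrt{\kk}Q v_{1}v_{2}\bigr)dx,
\]
so the form is a compact perturbation of $H(\ve{v})$, which is equivalent to $\|\ve{v}\|_{\dot{H}^{1}}^{2}$ since $\kk>0$. Arguing by contradiction, assume a sequence $\ve{v}_{n}$ with $\|\ve{v}_{n}\|_{\dot{H}^{1}}=1$, $(\ve{v}_{n},\Q_{1})_{\dot{H}^{1}}=0$ and $\<L_{I}\ve{v}_{n},\ve{v}_{n}\>\to 0$. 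Extracting a weak limit $\ve{v}_{n}\rightharpoonup\ve{v}^{*}$ in $\dot{H}^{1}$, the orthogonality condition passes to the limit, the compact perturbation gives strong convergence of the $Q$-weighted integrals, and the lower semicontinuity of $H$ yields $\<L_{I}\ve{v}^{*},\ve{v}^{*}\>\leq 0$. By Step 1 we conclude $\ve{v}^{*}=0$, whence the potential terms tend to $0$, so $H(\ve{v}_{n})\to 0$, contradicting $\|\ve{v}_{n}\|_{\dot{H}^{1}}=1$.

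The main technical point is the compactness of the perturbation $\ve{v}\mapsto \int Q v_{i}v_{j}\,dx$ as a form on $\dot{H}^{1}$. This relies on the decay $Q(x)\sim |x|^{-4}$ together with the Sobolev embedding $\dot{H}^{1}(\R^{6})\hookrightarrow L^{3}(\R^{6})$, allowing one to split the integral into a compact near-field piece (handled by Rellich--Kondrachov local compactness) and a small far-field piece. As in Lemma~\ref{Coer11}, I would simply invoke this standard fact, deferring the details to Appendix~\ref{S:A2}.
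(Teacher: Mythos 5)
Your two-step architecture (strict positivity, then a compact-perturbation upgrade) is the right shape and your Step~2 is essentially standard and correct. However, Step~1 as written is circular: you invoke Remark~\ref{KerLI} to conclude that $L_{I}\geq 0$ and $\mathrm{Ker}(L_{I})=\mathrm{span}\{\Q_{1}\}$, but in the paper Remark~\ref{KerLI} is stated \emph{after} Lemma~\ref{CoerLi11} and is explicitly deduced \emph{from} it (``Lemma~\ref{CoerLi11} implies $L_{I}\geq 0$ and $\text{Ker}(L_I)=\text{span}\{\Q_{1}\}$''). It is also not ``established in Appendix~\ref{S:A2}''; the relevant ingredient is Lemma~\ref{Apx22} in Appendix~\ref{S:A}. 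So, taken at face value, your argument assumes what is to be proved.

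The gap is repairable, and it is worth seeing how. The unconditional nonnegativity of $L_{I}$ is already contained in the proof of Lemma~\ref{Apx22}, where it is shown that $\<L_{I}\ve{v},\ve{v}\>\geq \<L_{1}u,u\>+\tfrac{\kk}{2}\<L_{1}v,v\>\geq 0$ for every $\ve{v}=(u,v)$. The kernel characterization also follows from Lemma~\ref{Apx22} alone: that lemma gives strict positivity of $\<L_{I}\cdot,\cdot\>$ on the $\dot{H}^{1}$-orthogonal complement of $\Xi_{1}=(Q,0)$, so $\mathrm{Ker}(L_{I})\cap\Xi_{1}^{\perp}=\{0\}$, forcing $\dim\mathrm{Ker}(L_{I})\leq 1$; since $L_{I}\Q_{1}=0$ and $(\Q_{1},\Xi_{1})_{\dot{H}^{1}}=\sqrt{\kk}\,\|\nabla Q\|_{L^{2}}^{2}\neq 0$, the kernel is exactly $\mathrm{span}\{\Q_{1}\}$. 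With those inputs your Cauchy--Schwarz argument in Step~1 is fine, but it is a detour compared with the paper's proof, which bypasses the kernel discussion entirely: assuming a nonzero $\ve{g}\perp\Q_{1}$ with $\F_{-}(\ve{g})\leq 0$, the paper forms the $2$-dimensional subspace $E=\mathrm{span}\{\ve{g},\Q_{1}\}$ on which $\F_{-}\leq 0$ (using $L_{I}\Q_{1}=0$), and this directly contradicts Lemma~\ref{Apx22}, which makes $\F_{-}$ positive definite on a codimension-$1$ subspace. You should either adopt that dimension-counting argument or rederive the nonnegativity and kernel facts from Lemma~\ref{Apx22} as above, rather than cite Remark~\ref{KerLI}.
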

\begin{proof}
As in the Lemma~\ref{Coer11}, it suffices to show that for all $\ve{v}\in \dot{H}^{1}$ satisfying \eqref{SOrt1111} we have 
$\F_{-}(\ve{v}):=\tfrac{1}{2}\<L_{I}\ve{v},\ve{v}\>>0$. Assume by 
contradiction that there exists $\ve{g}\in \dot{H}^{1}\setminus\left\{0\right\}$ such that
\begin{align}\label{contra1}
	(\ve{g}, \Q_{1})_{\dot{H}^{1}}=0,
\end{align}
and $\F_{-}(\ve{g})\leq 0$. Note that $L_I(\Q_{1})=0$. Since $\F_{-}(\Q_{1}, \ve{h})=0$ for all $\ve{h}\in \dot{H}^{1}$, we see that
$\F_{-}(\ve{v})\leq 0$ for $\ve{v}\in E$, where $E=\text{span}\left\{{\ve{g}, \Q_{1}}\right\}$. Moreover, by \eqref{contra1} we have that
$E$ is a subspace of dimension $2$, which is a contradiction with Lemma~\ref{Apx22}. 
\end{proof}

\begin{remark}\label{KerLI}
Since $L_I \Q_{1}=0$, Lemma~\ref{CoerLi11} implies $L_{I}\geq 0$ and $\text{Ker}(L_I)=\text{span}\left\{\Q_{1}\right\}$. 
\end{remark}


 Combining Lemmas~\ref{Coer11} and \ref{CoerLi11}, we obtain the following proposition.

\begin{proposition}\label{CoerQuadra}
Fix $\kk>0$. There exists $C>0$ such that for every $\ve{h}\in G^{\bot}$, we have
\[
\F(\ve{h})\geq C \|\ve{h}\|^{2}_{\dot{H}^{1}},
\]
where
\[
G^{\bot}:=\left\{\ve{h}\in \dot{H}^{1},  \F(\Q, \ve{h})=(i\Q_{1},\ve{h})_{\dot{H}^{1}}=
 (\Lambda \Q, \ve{h})_{\dot{H}^{1}}=
(\partial_{j} \Q, \ve{h})_{\dot{H}^{1}}=0: j=1,\ldots,6\right\}.
\]
\end{proposition}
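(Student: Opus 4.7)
The plan is to exploit the block structure of $\F$ with respect to real and imaginary parts, reducing the claim to the two coercivity lemmas already established.

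First, I would decompose $\ve{h}=\ve{h}_R+i\ve{h}_I$ with $\ve{h}_R,\ve{h}_I\in\dot H^1(\R^6;\R)\times\dot H^1(\R^6;\R)$. By the very definition of $\F$ in \eqref{Quadratic},
\[
\F(\ve{h})=\tfrac{1}{2}\<L_R\ve{h}_R,\ve{h}_R\>+\tfrac{1}{2}\<L_I\ve{h}_I,\ve{h}_I\>.
\]
So it suffices to produce separately the two coercivity bounds $\<L_R\ve{h}_R,\ve{h}_R\>\geq C\|\ve{h}_R\|_{\dot H^1}^2$ and $\<L_I\ve{h}_I,\ve{h}_I\>\geq C\|\ve{h}_I\|_{\dot H^1}^2$ and then add them.

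The next step is to translate the four orthogonality conditions defining $G^{\bot}$ into the corresponding real conditions on $\ve{h}_R$ and $\ve{h}_I$. Since $\Q$, $\Lambda\Q$, $\partial_j\Q$ and $\Q_1$ are all real-valued, for any real $\ve{w}$ the $\dot H^1$-inner product satisfies
\[
(\ve{w},\ve{h})_{\dot H^1}=(\ve{w},\ve{h}_R)_{\dot H^1}-i(\ve{w},\ve{h}_I)_{\dot H^1},
\]
so $(\Lambda\Q,\ve{h})_{\dot H^1}=0$ and $(\partial_j\Q,\ve{h})_{\dot H^1}=0$ each split into the two real conditions on $\ve{h}_R$ and $\ve{h}_I$ needed in Lemma~\ref{Coer11}. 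Similarly $(i\Q_1,\ve{h})_{\dot H^1}=i(\Q_1,\ve{h}_R)_{\dot H^1}+(\Q_1,\ve{h}_I)_{\dot H^1}$, whose vanishing gives in particular $(\Q_1,\ve{h}_I)_{\dot H^1}=0$, which is exactly the hypothesis of Lemma~\ref{CoerLi11}. Finally, the condition $\F(\Q,\ve{h})=0$ reduces, because $\IM\Q=0$, to $\<L_R\Q,\ve{h}_R\>=0$, i.e.\ $\F(\ve{h}_R,\Q)=0$, the remaining hypothesis of Lemma~\ref{Coer11}.

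Having verified that $\ve{h}_R$ meets all hypotheses of Lemma~\ref{Coer11} and that $\ve{h}_I$ meets the hypothesis of Lemma~\ref{CoerLi11}, I apply each lemma to obtain
\[
\<L_R\ve{h}_R,\ve{h}_R\>\geq C\|\ve{h}_R\|_{\dot H^1}^2,\qquad \<L_I\ve{h}_I,\ve{h}_I\>\geq C\|\ve{h}_I\|_{\dot H^1}^2,
\]
and summing yields $\F(\ve{h})\geq \tfrac{C}{2}\|\ve{h}\|_{\dot H^1}^2$. There is no real obstacle here; the only bookkeeping point is to be careful with the complex/real inner product conventions when splitting the orthogonality conditions — everything hinges on the fact that the directions we mod out are real (or $i$ times real), so each complex condition delivers the two real conditions required by the component lemmas.
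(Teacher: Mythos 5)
Your proposal is correct and matches the paper's own (tersely stated) argument: the paper simply remarks that Proposition~\ref{CoerQuadra} follows by "combining Lemmas~\ref{Coer11} and \ref{CoerLi11}," and your proof supplies exactly the bookkeeping that combination requires — split $\ve{h}$ into real and imaginary parts, observe that $\F$ decouples into the $L_R$ and $L_I$ quadratic forms, translate the four orthogonality constraints in $G^\bot$ into the conditions on $\ve{h}_R$ required by Lemma~\ref{Coer11} and the single condition on $\ve{h}_I$ required by Lemma~\ref{CoerLi11}, and add the two bounds. The only minor bookkeeping point is that the paper almost certainly intends $(\cdot,\cdot)_{\dot H^1}$ as the \emph{real} inner product (so that $G^\bot$ has real codimension $9$, matching the modulation analysis); under that reading $(\Lambda\Q,\ve{h})_{\dot H^1}=0$ gives only $(\Lambda\Q,\ve{h}_R)_{\dot H^1}=0$ rather than two conditions, but this is precisely all that Lemma~\ref{Coer11} needs, and your derivation of $(\Q_1,\ve{h}_I)_{\dot H^1}=0$ from $(i\Q_1,\ve{h})_{\dot H^1}=0$ goes through under either convention, so the argument is unaffected.
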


We record the following identity, which follows from direct computation:
\begin{lemma}\label{Fuu}
Let $\ve{h}=(u,v)\in \dot{H}^{1}$ and assume $E(\Q+\ve{h})=E(\Q)$.
Then we have
\begin{align*}
\F(\ve{h})&=\tfrac{1}{2}\int_{\R^{6}}\overline{v}u^{2}.
\end{align*}
In particular, $|\F(\ve{h})|\lesssim \|\ve{h}\|^{3}_{\dot{H}^{1}}$.
\end{lemma}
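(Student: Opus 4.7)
The plan is to prove the identity by a direct Taylor expansion of $E(\Q+\ve{h})$ around the ground state, matching the second-order terms with the quadratic form $\F$ and isolating the single cubic remainder.

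First, I would write $\ve{h}=(u,v)$ and expand $H(\Q+\ve{h})$ and $P(\Q+\ve{h})$ as polynomials in $(u,v)$ of degrees $2$ and $3$ respectively. Using $Q$ real, the first-order term in $H$ is $2\sqrt{\kk}\,\RE\!\int\nabla Q\cdot\nabla\overline{u}+\kk\,\RE\!\int\nabla Q\cdot\nabla\overline{v}$, which integration by parts together with the ground state equation $-\Delta Q=Q^{2}$ converts into $2\sqrt{\kk}\int Q^{2}u_{1}+\kk\int Q^{2}v_{1}$. This is exactly the first-order contribution from $P(\Q+\ve{h})$, so all linear-in-$\ve{h}$ terms cancel in $E(\Q+\ve{h})-E(\Q)$. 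This is the reflection of $\Q$ being a critical point of $E$.

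Next I would collect the remaining second-order terms, which after cancellation read
\[
\tfrac{1}{2}\|\nabla u\|_{L^{2}}^{2}+\tfrac{\kk}{4}\|\nabla v\|_{L^{2}}^{2}-\tfrac{1}{2}\RE\!\int Qu^{2}\,dx-\sqrt{\kk}\,\RE\!\int Qu\overline{v}\,dx.
\]
Splitting $u=u_{1}+iu_{2}$, $v=v_{1}+iv_{2}$ and using $\RE(u^{2})=u_{1}^{2}-u_{2}^{2}$, $\RE(u\overline{v})=u_{1}v_{1}+u_{2}v_{2}$, I would rewrite this expression as
\[
\tfrac{1}{2}\<L_{R}(u_{1},v_{1}),(u_{1},v_{1})\>+\tfrac{1}{2}\<L_{I}(u_{2},v_{2}),(u_{2},v_{2})\>=\F(\ve{h}),
\]
by matching with the explicit formulas for $L_{R}$ and $L_{I}$; the sign flip in front of $Qu_{1}^{2}$ versus $+Qu_{2}^{2}$ is precisely what the real versus imaginary parts of $u^{2}$ produce. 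The only remaining cubic term comes from $P$, giving $-\tfrac{1}{2}\RE\!\int u^{2}\overline{v}\,dx$. Consequently
\[
E(\Q+\ve{h})-E(\Q)=\F(\ve{h})-\tfrac{1}{2}\RE\!\int \overline{v}u^{2}\,dx,
\]
and the hypothesis $E(\Q+\ve{h})=E(\Q)$ yields the stated identity (the real part being tacit since $\F(\ve{h})$ is real).

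For the quantitative bound, I would apply Hölder to get $|\int \overline{v}u^{2}|\leq \|v\|_{L^{3}}\|u\|_{L^{3}}^{2}$ and then invoke the critical Sobolev embedding $\dot{H}^{1}(\R^{6})\hookrightarrow L^{3}(\R^{6})$ (since $2^{\ast}=3$ in dimension six), giving $|\F(\ve{h})|\lesssim \|u\|_{\dot{H}^{1}}^{2}\|v\|_{\dot{H}^{1}}\lesssim \|\ve{h}\|_{\dot{H}^{1}}^{3}$. The whole argument is algebraic and largely routine; the only delicate point is tracking the real-versus-imaginary-part decomposition carefully so that $L_{R}$ and $L_{I}$ emerge from the signs in $\RE(u^{2})$.
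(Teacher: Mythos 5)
Your proposal is correct and is precisely the "direct computation" the paper alludes to without spelling out: cancel the linear terms via $\Delta Q+Q^{2}=0$, match the quadratic terms against the explicit definitions of $L_{R}$ and $L_{I}$ via $\RE(u^{2})=u_{1}^{2}-u_{2}^{2}$ and $\RE(u\overline{v})=u_{1}v_{1}+u_{2}v_{2}$, and isolate the single cubic remainder $-\tfrac12\RE\int u^{2}\overline v$. Your remark that the real part is tacit (since $\F(\ve{h})$ is real) and the H\"older--Sobolev estimate using $\dot H^{1}(\R^{6})\hookrightarrow L^{3}(\R^{6})$ are both correct.
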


\section{Modulation analysis}\label{S:Modula}

Consider a solution $\ve{u}$ to \eqref{SNLS} with initial data $\ve{u}_{0}$ satisfying
\[
E(\ve{u})=E(\Q).
\]
We denote 
\[
\delta(t):=\left|H(\ve{u}(t))-H(\Q)\right|.
\]

Let $\delta_{0}>0$ be a small parameter and define the open set
\[
I_{0}=\left\{t\in [0, \infty):\delta(t)<\delta_{0}\right\}.
\] 
We will prove the following proposition. 

\begin{proposition}\label{ModilationFree}
Fix $\kk>0$. For $\delta_{0}>0$ sufficiently small, there exist functions $\theta: I_{0}\to \R$, $\lambda: I_{0}\to \R$
$\alpha: I_{0}\to \R$, $y: I_{0}\to \R^{6}$, and $\ve{h}:I_0\to \dot{H}^1$ such that we can write
\begin{equation}\label{DecomUFree}
\ve{u}_{[y(t), \theta(t), \lambda(t)]}(t)=(1+\alpha(t))\Q+\ve{h}(t)\qtq{for all}t\in I_0,
\end{equation}
with
\begin{equation}\label{EstimateOne}
|\alpha(t)|\sim \|\ve{h}(t)\|_{\dot{H}^{1}}\sim\delta(t)
\end{equation}
and
\begin{equation}\label{EstimateFree}
|y^{\prime}(t)|+|\theta^{\prime}(t)|+|\alpha^{\prime}(t)|+\frac{|\lambda^{\prime}(t)|}{|\lambda(t)|}
\lesssim\lambda^{2}(t)\delta(t).
\end{equation}
\end{proposition}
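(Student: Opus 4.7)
The argument is a Weinstein-type modulation analysis, parallel to that in \cite{DuyMerle2009}. I would carry it out in three steps.

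\emph{Step 1 (Construction of the decomposition via implicit function theorem).} For each $t\in I_0$, Proposition~\ref{profiQ} provides an initial guess $(y_0,\theta_0,\lambda_0)$ with $\|\ve{u}_{[y_0,\theta_0,\lambda_0]}(t)-\Q\|_{\dot H^1}\leq\epsilon(\delta_0)$, which I would refine using the IFT applied to the map whose nine components are the orthogonality defects
\[
\F(\Q,\ve{h}),\ \ (i\Q_1,\ve{h})_{\dot H^1},\ \ (\Lambda\Q,\ve{h})_{\dot H^1},\ \ (\partial_j\Q,\ve{h})_{\dot H^1}\ (j=1,\dots,6),
\]
where $\ve{h}:=\ve{u}_{[y,\theta,\lambda]}-(1+\alpha)\Q$. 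At the basepoint $(\alpha,\theta,\lambda,y;\ve{u})=(0,0,1,0;\Q)$, the Jacobian with respect to the nine parameters is diagonal: the cross terms vanish by the explicit pairings $\F(\Q,i\Q_1)=\F(\Q,\Lambda\Q)=\F(\Q,\partial_j\Q)=0$ (the last two reducing to $\int Q^2\Lambda Q=\int Q^2\partial_j Q=0$), by the reality/imaginary parity of the remaining generators, and by the radial symmetry of $Q$; the diagonal entries are $-\F(\Q)\neq 0$ (Remark~\ref{PLf}), $\|\Q_1\|_{\dot H^1}^2$, $\|\Lambda\Q\|_{\dot H^1}^2$, and $\|\partial_j\Q\|_{\dot H^1}^2$. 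IFT then yields unique $C^1$ parameter functions of $\ve{u}$ on a $\dot H^1$-neighborhood of $\Q$, and composition with $t\mapsto\ve{u}(t)$ gives the required $C^1$ maps on $I_0$. The chosen orthogonalities place $\ve{h}(t)$ in $G^\perp$, so Proposition~\ref{CoerQuadra} applies pointwise.

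\emph{Step 2 (The triple equivalence $|\alpha|\sim\|\ve{h}\|_{\dot H^1}\sim\delta$).} Since $\Q$ is a critical point of $E$, conservation of energy and a Taylor expansion give
\[
0=E\bigl((1+\alpha)\Q+\ve{h}\bigr)-E(\Q)=\F(\alpha\Q+\ve{h})+O\bigl((|\alpha|+\|\ve{h}\|_{\dot H^1})^3\bigr).
\]
The orthogonality $\F(\Q,\ve{h})=0$ kills the cross term and leaves $\F(\ve{h})=|\F(\Q)|\,\alpha^{2}+O((|\alpha|+\|\ve{h}\|_{\dot H^1})^{3})$. Together with the coercivity bound $\F(\ve{h})\geq C\|\ve{h}\|_{\dot H^1}^{2}$ and the trivial continuity bound $\F(\ve{h})\lesssim\|\ve{h}\|_{\dot H^1}^{2}$, this yields $|\alpha|\sim\|\ve{h}\|_{\dot H^1}$ once $\delta_0$ is small enough. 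For the relation to $\delta$, the scale/phase/translation invariance of $H$ identifies $\delta(t)=|H((1+\alpha)\Q+\ve{h})-H(\Q)|$; integration by parts against $-\Delta\Q$ shows that $\F(\Q,\ve{h})=0$ is equivalent (up to a nonzero constant) to the vanishing of the linear (in $\ve{h}$) term of this $H$-expansion, so what remains is $\delta(t)=|2\alpha\,H(\Q)|+O(|\alpha|^{2}+\|\ve{h}\|_{\dot H^1}^{2})\sim|\alpha|$.

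\emph{Step 3 (Derivative bounds).} Differentiating each orthogonality condition in $t$ produces a linear system for $(\dot\alpha,\dot\theta,\dot\lambda/\lambda,\dot y)$ in which $\partial_t\ve{h}$ is substituted from the equation of motion for $\ve{v}:=(1+\alpha)\Q+\ve{h}=\ve{u}_{[y,\theta,\lambda]}$. After the spatial rescaling built into the subscript notation, this equation takes the schematic form
\[
\partial_t\ve{h}=\lambda^{2}\bigl[-J\L\ve{h}+iR(\ve{h})\bigr]+\bigl[\text{drift involving }\dot\theta,\dot y,\dot\lambda/\lambda,\dot\alpha\bigr],
\]
the $\lambda^{2}(t)$ prefactor coming from $\Delta_z=\lambda^{2}\Delta_x$. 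Pairing against the generators $i\Q_1,\Lambda\Q,\partial_j\Q$ (and $\Q$ through $\F(\Q,\cdot)$), the linear term $J\L\ve{h}$ drops out because $J\L$ annihilates each of these directions, while the remainder $R(\ve{h})$ is controlled by $\|\ve{h}\|_{\dot H^1}^{2}$. The drift contributes the Jacobian of Step~1 plus an $O(\|\ve{h}\|_{\dot H^1})$ perturbation multiplying the parameter derivatives. Inverting the resulting system yields
\[
|\dot\alpha|+|\dot\theta|+|\dot\lambda/\lambda|+|\dot y|\lesssim\lambda^{2}(t)\,\|\ve{h}\|_{\dot H^1}^{2}\lesssim\lambda^{2}(t)\,\delta(t),
\]
which is \eqref{EstimateFree} after invoking Step~2.

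The main obstacle will be Step~3: one must carefully account for the $\lambda^{2}(t)$ factor arising from the time-dependent spatial rescaling and exploit the identities $J\L[\partial_j\Q]=J\L[\Lambda\Q]=J\L[i\Q_1]=\ve{0}$ to decouple the system. The asymmetry introduced by $J$ needs to be tracked in the bookkeeping (although, in contrast to the existence/uniqueness arguments treated in later sections through the auxiliary system \eqref{ANLS}, it does not obstruct the closure of the modulation estimates here).
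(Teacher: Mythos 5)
Your three-step plan matches the paper's proof exactly: the paper establishes the decomposition via Proposition~\ref{profiQ} and the implicit function theorem with the same nine orthogonality conditions (Lemma~\ref{ExistenceFree}), obtains the equivalence $\delta\sim|\alpha|\sim\|\ve{h}\|_{\dot H^1}$ by combining the energy constraint (through Lemma~\ref{Fuu}) with the coercivity of $\F$ on $G^\perp$ and the vanishing linear term of $H$ (Lemma~\ref{BoundI}), and derives the parameter bounds by passing to the rescaled time $ds=\lambda^2(t)\,dt$, differentiating the orthogonality relations, and inverting the near-diagonal linear system (Lemma~\ref{BoundII}). The only cosmetic difference is that you express the cubic remainder as a Taylor expansion of $E$ while the paper packages it as the identity in Lemma~\ref{Fuu}; these are the same estimate.
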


Using the variational characterization of $\Q$ (cf. Lemma~\ref{profiQ}) and the Implicit Function Theorem, one can obtain the following orthogonal decomposition.  We will omit the proof, as it is essentially the same given in \cite{DuyMerle2009}.

\begin{lemma}\label{ExistenceFree}
Fix $\kk>0$. There exist $\delta_{0}>0$, a positive function $\epsilon(\delta)$ defined for $0<\delta\leq \delta_{0}$ and functions $\theta: I_{0}\to \R$, $\lambda:I_{0}\to \R$ and $y: I_{0}\to \R^{6}$ such that if
$\delta(t)<\delta_{0}$, then
\begin{equation}\label{TaylorF}
\|\ve{u}_{[y, \theta, \lambda]} -\Q\|_{\dot{H}^{1}}\leq \epsilon(\delta).
\end{equation}
Moreover, the mapping $\ve{u}\mapsto (y, \theta, \lambda)$ is $C^{1}$ and $\epsilon(\delta)\to 0$ as $\delta\to 0$. Finally, the functions $\theta(\cdot)$, $\lambda(\cdot)$ and $y(\cdot)$ are chosen to impose the following orthogonality condition:
\begin{equation}\label{OrtFree}
\ve{u}_{[y, \theta, \lambda]}\bot \text{span}\left\{\nabla \Q, i\Q_{1}, \Lambda\Q  \right\}.
\end{equation}
\end{lemma}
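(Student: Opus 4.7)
My strategy is to combine the approximate decomposition from Proposition~\ref{profiQ} with the Implicit Function Theorem so as to pin down a canonical choice of modulation parameters realizing the orthogonality conditions \eqref{OrtFree}. Given $\ve{u}$ with $\delta(\ve{u})<\delta_0$, Proposition~\ref{profiQ} produces approximate symmetries $(y_{*},\theta_{*},\lambda_{*})$ such that $\|\ve{u}_{[y_{*},\theta_{*},\lambda_{*}]}-\Q\|_{\dot{H}^{1}}\leq\epsilon(\delta(\ve{u}))$; the IFT will then upgrade this approximation to a unique nearby choice $(y,\theta,\lambda)$ satisfying \eqref{OrtFree}, and a symmetry transport argument will spread the local result to all of $I_{0}$.

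\textbf{IFT setup and Jacobian.} I would define $\Phi:\dot{H}^{1}\times\R^{6}\times\R\times(0,\infty)\to\R^{8}$ to be the vector whose $8$ components are the $\dot{H}^{1}$ inner products of $\ve{u}_{[y,\theta,\lambda]}-\Q$ against each of $\partial_{1}\Q,\dots,\partial_{6}\Q$, $i\Q_{1}$, and $\Lambda\Q$. Then $\Phi(\Q;0,0,1)=0$ and \eqref{OrtFree} corresponds exactly to the vanishing of $\Phi$. Direct computation of the derivatives of $\ve{u}_{[y,\theta,\lambda]}$ at $(y,\theta,\lambda)=(0,0,1)$ and $\ve{u}=\Q$ produces the tangent vectors $-\partial_{k}\Q$, $i\Q_{1}$, and $-\Lambda\Q$, so $D_{(y,\theta,\lambda)}\Phi(\Q;0,0,1)$ is, up to signs, the Gram matrix of the family $\{\partial_{1}\Q,\dots,\partial_{6}\Q,i\Q_{1},\Lambda\Q\}$ in $\dot{H}^{1}$. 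A parity argument shows this $8\times 8$ matrix is block-diagonal: the translation block decouples from the phase/dilation block because $\partial_{j}Q$ is odd in $x_{j}$ whereas $Q$ and $\Lambda Q$ are even; within the translation block the entries $(\partial_{j}\Q,\partial_{k}\Q)_{\dot{H}^{1}}$ vanish for $j\neq k$ again by parity; and within the phase/dilation block the cross term vanishes because $i\Q_{1}$ is purely imaginary-valued while $\Lambda\Q$ is real-valued. Since the diagonal entries $\|\partial_{j}\Q\|_{\dot{H}^{1}}^{2}$, $\|\Q_{1}\|_{\dot{H}^{1}}^{2}$, and $\|\Lambda\Q\|_{\dot{H}^{1}}^{2}$ are strictly positive, the Jacobian is invertible.

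\textbf{Globalization and main obstacle.} The IFT then yields a $C^{1}$ map $\ve{u}\mapsto(y(\ve{u}),\theta(\ve{u}),\lambda(\ve{u}))$ defined on a $\dot{H}^{1}$-neighborhood $\mathcal{U}$ of $\Q$ and satisfying \eqref{OrtFree}. To cover every $\ve{u}$ with $\delta(\ve{u})<\delta_{0}$, I shrink $\delta_{0}$ so that $\epsilon(\delta_{0})$ is inside the radius of $\mathcal{U}$, apply Proposition~\ref{profiQ} to obtain $(y_{*},\theta_{*},\lambda_{*})$ with $\ve{u}_{[y_{*},\theta_{*},\lambda_{*}]}\in\mathcal{U}$, run the IFT on that translated/rescaled/rotated solution, and use the group law of the symmetries to assemble the final parameters. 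The bound \eqref{TaylorF} then follows from the triangle inequality, and $\epsilon(\delta)\to 0$ as $\delta\to 0$ is inherited from Proposition~\ref{profiQ} together with continuity of the IFT solution at $\Q$. The main delicate point I anticipate is justifying the parity-based decoupling of the Jacobian: one must use the complex form of the $\dot{H}^{1}$ inner product, $(f,g)_{\dot{H}^{1}}=\RE\int\nabla f\cdot\overline{\nabla g}\,dx$, and the explicit radial expression \eqref{ground state-W} for $Q$ to verify that every off-diagonal Gram entry truly vanishes, since a failure of any single orthogonality would force one to enlarge the orthogonality conditions and redo the Jacobian analysis.
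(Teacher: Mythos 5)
Your proposal is correct and follows essentially the same route the paper takes: the paper explicitly omits the proof of Lemma~\ref{ExistenceFree}, citing Proposition~\ref{profiQ} together with the Implicit Function Theorem and deferring the details to \cite{DuyMerle2009}, and your argument is precisely that combination, carried out in full. Your Jacobian decoupling argument (parity of $\partial_j Q$ versus radiality of $Q$ and $\Lambda Q$ for the translation block, and the real/imaginary split for the phase/dilation block) is the standard reason the Gram matrix is invertible, though note that radiality of $Q$ alone suffices and the explicit formula \eqref{ground state-W} is not actually needed for this step.
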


With $(y(t), \theta(t), \lambda(t))$ as in Lemma~\ref{ExistenceFree}, we write
\begin{equation}\label{DefH}
\ve{u}_{[y(t), \theta(t), \lambda(t)]}(t)=(1+\alpha(t))\Q+\ve{h}(t)\qtq{for all}t\in I_0,
\end{equation}
where
\[
\alpha(t)+1=\tfrac{1}{\F(\Q,\Q)}\F(\Q, \ve{u}_{[y(t), \theta(t), \lambda(t)]}).
\]
Thus, we have the following orthogonally conditions for $\ve{h}$: 
\begin{equation}\label{h: Cond}
\ve{h}\bot \text{span}\left\{\nabla \Q, i\Q_{1}, \Lambda\Q  \right\}
\qtq{and $\F(\Q,\ve{h})=0$.}
\end{equation}
We note that $L_{R}(\Q)=(\sqrt{\kk}\Delta Q, \frac{\kk}{2}\Delta Q)$. In particular, by \eqref{h: Cond} we see that 
\[
\left(\big(\sqrt{\kk} Q, \tfrac{\kk}{2} Q\big), \ve{h}\right)_{\dot{H}^{1}}=0.
\]

The following lemma relates the parameters $\delta(t)$, $\lambda(t)$ $\alpha(t)$, and $\ve{h}(t)$. 
\begin{lemma}\label{BoundI}
Fix $\kk>0$. For all $t\in I_0$, we have
\begin{equation}\label{DeltaBound}
\delta(t)\sim |\alpha(t)|\sim \|\ve{h}(t)\|_{\dot{H}^{1}}.
\end{equation}
\end{lemma}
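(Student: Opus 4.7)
The plan is to derive two identities that couple $\delta(t)$, $|\alpha(t)|$, and $\|\ve{h}(t)\|_{\dot H^{1}}$: the first from directly expanding $H(\tilde{\ve{u}})$, the second from imposing $E(\tilde{\ve{u}})=E(\Q)$ and invoking Lemma~\ref{Fuu}. The coercivity of $\F$ on $G^{\perp}$ (Proposition~\ref{CoerQuadra}) then ties them together. Throughout write $\tilde{\ve{u}}:=\ve{u}_{[y,\theta,\lambda]}(t)$ and $\ve{k}:=\tilde{\ve{u}}-\Q=\alpha\Q+\ve{h}$; since the symmetries $[y,\theta,\lambda]$ preserve $H$ and $E$, one has $\delta(t)=|H(\tilde{\ve{u}})-H(\Q)|$ and $E(\tilde{\ve{u}})=E(\Q)$. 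Lemma~\ref{ExistenceFree} gives $\|\ve{k}\|_{\dot H^{1}}\leq \epsilon(\delta)\to 0$ as $\delta\to 0$; combining this with the orthogonality $\F(\Q,\ve{h})=0$ and the nondegeneracy $\F(\Q)=-\tfrac{1}{2}H(\Q)\neq 0$ (so that $|\alpha|\sim |\F(\Q,\ve{k})|/H(\Q)\lesssim \|\ve{k}\|_{\dot H^{1}}$) forces both $|\alpha|$ and $\|\ve{h}\|_{\dot H^{1}}$ to be $O(\|\ve{k}\|_{\dot H^{1}})$, which legitimizes a perturbative analysis on $I_{0}$ for $\delta_{0}$ small.

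For the first identity, expand $H(\Q+\ve{k})$ algebraically; the cross term reduces, via integration by parts and $-\Delta Q=Q^{2}$, to $2\alpha H(\Q)+2\bigl[\sqrt{\kk}\int Q^{2}\RE h_{1}+\tfrac{\kk}{2}\int Q^{2}\RE h_{2}\bigr]$, whose second summand equals $-4\F(\Q,\ve{h})$ and vanishes by the orthogonality condition. The same cancellation applied inside $H(\ve{k})$ gives $H(\ve{k})=\alpha^{2}H(\Q)+H(\ve{h})$, and assembling yields
\[
\delta(t)=\bigl|(2\alpha+\alpha^{2})H(\Q)+H(\ve{h})\bigr|.
\]
For the second identity, Lemma~\ref{Fuu} applied to $\ve{k}$ (which is admissible since $E(\Q+\ve{k})=E(\Q)$) gives $|\F(\ve{k})|\lesssim \|\ve{k}\|_{\dot H^{1}}^{3}$. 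Bilinearity of $\F$ together with $\F(\Q,\ve{h})=0$ and $\F(\Q)=-\tfrac{1}{2}H(\Q)$ produces $\F(\ve{k})=-\tfrac{1}{2}\alpha^{2}H(\Q)+\F(\ve{h})$, and rearranging,
\[
\F(\ve{h})=\tfrac{1}{2}\alpha^{2}H(\Q)+O\!\bigl((|\alpha|+\|\ve{h}\|_{\dot H^{1}})^{3}\bigr).
\]

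Since \eqref{h: Cond} places $\ve{h}\in G^{\perp}$, Proposition~\ref{CoerQuadra} gives the coercive lower bound $\F(\ve{h})\geq C\|\ve{h}\|_{\dot H^{1}}^{2}$, while continuity of $\F$ on $\dot H^{1}$ yields the matching upper bound $\F(\ve{h})\leq C'\|\ve{h}\|_{\dot H^{1}}^{2}$. Feeding both into the second identity and choosing $\delta_{0}$ small enough to absorb the cubic remainder into the quadratic leading terms on each side produces the two-sided comparison $\|\ve{h}\|_{\dot H^{1}}^{2}\sim \alpha^{2}$. Inserting this back into the first identity and noting that the linear term $2\alpha H(\Q)$ dominates the quadratic corrections $\alpha^{2}H(\Q)+H(\ve{h})$ for $|\alpha|$ small finally yields $\delta(t)\sim |\alpha(t)|$, completing the chain. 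The most delicate step is this last bootstrap: coercivity alone yields only $\|\ve{h}\|_{\dot H^{1}}\lesssim |\alpha|$, and the reverse inequality $|\alpha|\lesssim \|\ve{h}\|_{\dot H^{1}}$ requires combining the \emph{upper} $\F$-bound with the energy-derived identity, reflecting that the constraint $E(\tilde{\ve{u}})=E(\Q)$ is precisely what prevents $\alpha$ from being large while $\ve{h}$ stays small.
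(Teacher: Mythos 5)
Your proposal is correct and follows essentially the same route as the paper: expand $H$ and $\F$ around $\Q$, use $\F(\Q,\ve{h})=0$ to kill the cross terms, combine Lemma~\ref{Fuu} (which exploits $E=E(\Q)$) with the coercivity from Proposition~\ref{CoerQuadra} to get $\|\ve{h}\|_{\dot H^1}\sim|\alpha|$, and then read off $\delta\sim|\alpha|$ from the linear term in the $H$-expansion. Your write-up is in fact somewhat more explicit than the paper's at the final step (the paper's last displayed identity for $\delta(t)$ contains a typo but clearly intends the same linear-in-$\alpha$ leading-order calculation you carry out).
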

\begin{proof}
Writing $\ve{v}=\ve{u}_{[y(t), \theta(t), \lambda(t)]}(t)-\Q=\ve{h}+\alpha(t)\Q$ we see that
\begin{align}\label{EquV}
	H(\ve{v})=\alpha^{2}H(\Q)+H(\ve{h}),
\end{align}
where we have used that $\left(\big(\sqrt{\kk} Q, \tfrac{\kk}{2} Q\big), \ve{h}\right)_{\dot{H}^{1}}
=2\F(\Q, \ve{h})=0$ (cf. \eqref{h: Cond}).  Notice that $H(\ve{v})\sim \|\ve{v}\|^{2}_{\dot{H}^{1}}$ is small when $\delta(t)$ is small.

Notice also that (recall that $\F(\Q)<0$)
\[
\F(\ve{v})=\F(\ve{h})+\alpha^{2}\F(\Q)=\F(\ve{h})-\alpha^{2}|\F(\Q)|.
\]
Thus, by Lemma~\ref{Fuu} we infer that $|\F(\ve{h})-\alpha^{2}|\F(\Q)||\leq C\|\ve{v}\|^{3}_{\dot{H}^{1}}$. Moreover, by 
Proposition~\ref{CoerQuadra} (recalling the orthogonally conditions for $\ve{h}$, \eqref{h: Cond})
 we infer that  $\|\ve{h}\|^{2}_{{\dot{H}}^{1}}\sim \F(\ve{h})$. Therefore,
\begin{align}\label{TwoI}
	\alpha^{2}\leq O(\|\ve{h}\|^{2}_{{\dot{H}}^{1}}+\|\ve{v}\|^{3}_{{\dot{H}}^{1}})
\qtq{and} 
\|\ve{h}\|^{2}_{{\dot{H}}^{1}}\leq O(\alpha^{2}+\|\ve{v}\|^{3}_{{\dot{H}}^{1}}).
\end{align}
Since $H(\ve{v}) \sim \|\ve{v}\|^{2}_{\dot{H}^{1}}$, then combining \eqref{EquV} and \eqref{TwoI} we get for $\delta_{0}$ small
\[
|\alpha|\sim \|\ve{h}\|_{{\dot{H}}^{1}} \sim \|\ve{v}\|_{{\dot{H}}^{1}}.
\]
Finally, as
\[
\delta(t)=|H(\ve{v})-H(\Q)|=|H(\ve{v})-\alpha|\F(\Q)||,
\]
we get $\delta \sim |\alpha|$. This proves the result.
\end{proof}

\begin{lemma}\label{BoundII}
 Let $(y(t), \theta(t), \lambda(t))$ be as in Lemma~\ref{ExistenceFree}  and $h(t)$ and $\alpha(t)$ be as in \eqref{DefH}. Then, taking a smaller $\delta_{0}$ if necessary, we have
\begin{align}
\label{alfaE}
|y^{\prime}(t)|+|\theta^{\prime}(t)|+|\alpha^{\prime}(t)|+\frac{|\lambda^{\prime}(t)|}{|\lambda(t)|}
\lesssim\lambda^{2}(t)\delta(t).
\end{align}
\end{lemma}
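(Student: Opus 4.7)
The plan is the standard modulation analysis: differentiate the four orthogonality conditions of \eqref{h: Cond} in $t$ to obtain a $9\times 9$ linear system for the unknowns $(\alpha',\theta',\lambda'/\lambda,y')$, verify that the system is invertible, and then bound its right-hand side by $O(\lambda^2\delta)$. The bookkeeping of the equation for $\ve{h}$ and the verification that all off-diagonal entries of the linearized matrix vanish are the only nontrivial ingredients.

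\textbf{Equation for $\ve{h}$.} Setting $\ve{w}(t):=\ve{u}_{[y(t),\theta(t),\lambda(t)]}(t)$ and using \eqref{SNLS} together with the chain rule, a direct computation gives
\[
i\partial_t\ve{w}=-\lambda^{2}\bigl[(\Delta w_{1},\kk\Delta w_{2})+N(\ve{w})\bigr]-\theta'J\ve{w}-i\tfrac{\lambda'}{\lambda}\Lambda_{y}\ve{w}-iy'\cdot\nabla\ve{w},
\]
where $N(\ve{w})=(\overline{w_{1}}w_{2},w_{1}^{2})$ and $\Lambda_{y}=2+(x-y)\cdot\nabla$. Substituting the decomposition $\ve{w}=(1+\alpha)\Q+\ve{h}$, using the ground state relation $(\Delta\Q_{1},\kk\Delta\Q_{2})+N(\Q)=\ve{0}$, and collecting the terms linear and quadratic in $(\ve{h},\alpha)$, I obtain
\[
\partial_{t}\ve{h}=-\alpha'\Q+i\theta'J\bigl((1+\alpha)\Q+\ve{h}\bigr)-\tfrac{\lambda'}{\lambda}\Lambda_{y}\bigl((1+\alpha)\Q+\ve{h}\bigr)-y'\cdot\nabla\bigl((1+\alpha)\Q+\ve{h}\bigr)+i\lambda^{2}\EE(\ve{h},\alpha),
\]
where $\EE(\ve{h},\alpha)$ gathers the linearization of $\Delta_{\kk}+N$ at $\Q$ applied to $\ve{h}$, together with quadratic remainders of the form $N(\ve{h})$ and $\alpha\,DN(\Q)\ve{h}$, and a term of order $\alpha^{2}$ multiplying $N(\Q)$.

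\textbf{The linear system and invertibility at $(\ve{h},\alpha)=(\ve{0},0)$.} Differentiating the four conditions in \eqref{h: Cond} in $t$ yields $(\partial_{t}\ve{h},\psi)_{\dot{H}^{1}}=0$ for $\psi\in\{\partial_{j}\Q,i\Q_{1},\Lambda\Q\}$ and $\F(\Q,\partial_{t}\ve{h})=0$. Inserting the previous expression for $\partial_{t}\ve{h}$ and separating, I get
\[
M(\ve{h},\alpha)\bigl(\alpha',\theta',\lambda'/\lambda,y'_{1},\ldots,y'_{6}\bigr)^{\top}=\lambda^{2}F(\ve{h},\alpha).
\]
The crucial point is that the frozen matrix $M_{0}:=M(\ve{0},0)$ is diagonal. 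Indeed: (a) the ``real versus imaginary'' structure of the $\dot{H}^{1}$-inner product kills $(i\Q_{1},\partial_{j}\Q)_{\dot{H}^{1}}$, $(i\Q_{1},\Lambda\Q)_{\dot{H}^{1}}$ and $(i\Q_{1},\Q)_{\dot{H}^{1}}$; (b) radiality of $Q$ yields $(\Lambda\Q,\partial_{j}\Q)_{\dot{H}^{1}}=0$ and, via $\Delta Q+Q^{2}=0$, $(\Q,\partial_{j}\Q)_{\dot{H}^{1}}=0$ and $(\Q,\Lambda\Q)_{\dot{H}^{1}}=0$; (c) self-adjointness of $L_{R}$ together with $L_{R}\Lambda\Q=L_{R}\partial_{j}\Q=0$ and $L_{I}\Q_{1}=0$ give $\F(\Q,i\Q_{1})=\F(\Q,\Lambda\Q)=\F(\Q,\partial_{j}\Q)=0$. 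The diagonal entries are then $-\F(\Q)>0$, $\|\Q_{1}\|_{\dot{H}^{1}}^{2}$, $\|\Lambda\Q\|_{\dot{H}^{1}}^{2}$ and $\|\partial_{j}\Q\|_{\dot{H}^{1}}^{2}$, all nonzero. By continuity of $M$ in $(\ve{h},\alpha)$, together with Lemma~\ref{BoundI} controlling $\|\ve{h}\|_{\dot{H}^{1}}+|\alpha|\lesssim\delta_{0}$, the matrix $M(\ve{h},\alpha)$ stays invertible with uniformly bounded inverse once $\delta_{0}$ is chosen small.

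\textbf{Bound on $F$ and conclusion.} The vector $F(\ve{h},\alpha)$ is a linear combination of $\dot{H}^{1}$- and $\F$-pairings of $\EE(\ve{h},\alpha)$ against $\partial_{j}\Q,i\Q_{1},\Lambda\Q,\Q$. Cauchy--Schwarz bounds the linear-in-$\ve{h}$ contributions by $\|\ve{h}\|_{\dot{H}^{1}}$, while the Sobolev embedding $\dot{H}^{1}(\R^{6})\hookrightarrow L^{3}(\R^{6})$ controls the quadratic ones by $\|\ve{h}\|_{\dot{H}^{1}}^{2}$; the residual terms in $\alpha$ are handled similarly. Thus $|F|\lesssim\|\ve{h}\|_{\dot{H}^{1}}+\|\ve{h}\|_{\dot{H}^{1}}^{2}+|\alpha|\lesssim\delta(t)$ by Lemma~\ref{BoundI}. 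Inverting $M$ gives \eqref{alfaE}. The main obstacle is purely algebraic: the careful derivation of the $\ve{h}$-equation and the verification of the diagonal structure of $M_{0}$; once that structure is in hand, the perturbative invertibility and the estimate on $F$ follow from routine Cauchy--Schwarz and Sobolev arguments.
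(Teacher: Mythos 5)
Your proposal is correct and follows essentially the same route as the paper: derive the equation for $\ve{h}$, differentiate the orthogonality conditions in \eqref{h: Cond}, invert the resulting modulation matrix (whose unperturbed version is invertible because the modulation directions are pairwise orthogonal in the appropriate pairings), and bound the source by $O(\lambda^2\delta)$ using Lemma~\ref{BoundI}. The only cosmetic difference is that the paper passes to the rescaled time $ds=\lambda^2(t)\,dt$ so the $\lambda^2$ factor drops out of the computation temporarily, whereas you keep physical time $t$ and track the $\lambda^2$ factor explicitly; and you should note that for radial $\ve{u}$ the translation parameter $y$ can be taken identically zero, which is implicitly used when you write $\Lambda$ rather than $\Lambda_{y}$ in the frozen matrix $M_0$.
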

\begin{proof}
With Lemma~\ref{BoundI} in hand, the proof is very similar to that of \cite[Lemma~3.7]{DuyMerle2009}. 
Indeed, consider the variables $y$ and $s$ given by
\[
\lambda(t)z=x \qtq{and} ds=\lambda^{2}(t)dt.
\] 
Then the equation \eqref{SNLS} and Lemma~\ref{BoundI} yields
\[
i\partial_{s} \ve{h}-i\alpha_{s}\Q+\theta_{s}\Q_{1}+i\tfrac{|\lambda_{s}|}{|\lambda|}\Lambda \Q-iy_{s}\cdot \nabla \Q
+\tilde{J}\ve{h}=O(\epsilon(s)),
\]
where $\tilde{J}(f,g)=(f, \kk g)$ and 
\[\epsilon(s)=:\delta(u(t(s)))[\delta(u(t(s)))+|y_{s}|+|\theta_{s}|+|\alpha_{s}|
+\tfrac{|\lambda_{s}|}{|\lambda|}].\]
Using \eqref{h: Cond}, we obtain $\partial_{t}\ve{h}\bot \text{span}\left\{\nabla \Q, i\Q_{1}, \Lambda\Q  \right\}$
and $\F(\Q,\partial_{t}\ve{h})=0$. Therefore, integrating by parts we infer that
\[
|y_{s}|+|\theta_{s}|+|\alpha_{s}|
+\tfrac{|\lambda_{s}|}{|\lambda|}=0(\epsilon(s)+\delta(u(t(s)))).
\]
If $\delta_{0}$ is chosen small enough we obtain the result.
\end{proof}

\begin{proof}[Proof of Proposition~\ref{ModilationFree}]
The proof of Proposition~\ref{ModilationFree} is a direct consequence of Lemmas~\ref{BoundI} and~\ref{BoundII}.
\end{proof}



\section{Convergence for subcritical threshold solution}\label{S: SubThres}

The main objective of this section is to establish the following result.
\begin{proposition}\label{CompacDeca}
Let $\ve{u}$ be a radial solution of \eqref{SNLS} on $I=(T_{-}, T_{+})$ satisfying 
\begin{align}\label{PropCon11}
E(\ve{u}_{0})=E(\Q) \qtq{and} H(\ve{u}_{0})<H(\Q).
\end{align}
Then $I=\R$. Moreover, if 
\begin{align}\label{ScaTa}
\|\ve{u}\|_{\Lm^{4}_{t, x}((0, \infty)\times \R^{6})}=\infty,
\end{align}
then there exist $\theta\in \R$, $\lambda>0$, $c>0$ and $C>0$ such that
\[
\|\ve{u}(t)-\Q_{[\theta, \lambda]}\|_{\dot{H}^{1}}\leq Ce^{-c t}
\qtq{for all $t\geq 0$.}
\]
Similar result holds for negative time.
\end{proposition}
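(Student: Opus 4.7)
The strategy follows Duyckaerts--Merle.

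First, I would show $H(\ve{u}(t))<H(\Q)$ persists on $I$: Lemma~\ref{ConvexIn} gives $H(\ve{u}(t))\leq H(\Q)$, and an equality $H(\ve{u}(t_0))=H(\Q)$ together with $E(\ve{u}(t_0))=E(\Q)$ would force $\ve{u}(t_0)$ to be an extremizer of~\eqref{GNIS}, hence a symmetry of $\Q$ by Proposition~\ref{TheGN}, contradicting $H(\ve{u}_0)<H(\Q)$. To conclude $I=\R$, I would rule out finite-time blow-up by a concentration-compactness argument: if $T_+<\infty$, Theorem~\ref{LCP} forces $\|\ve{u}\|_{L^4_{t,x}((0,T_+))}=\infty$; a profile decomposition of $\ve{u}(t_n)$ with $t_n\nearrow T_+$ produces nonlinear profiles each with $H\leq H(\Q)$, strictly subcritical profiles scatter by Theorem~\ref{Th1}(i), and any threshold profile is forced to be (a symmetry of) $\Q$ carrying all the energy. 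Local theory near $\Q$ then gives uniform intervals of existence with bounded $L^4_{t,x}$ norm, contradicting the blow-up criterion.

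Now suppose $\|\ve{u}\|_{L^4_{t,x}((0,\infty)\times\R^6)}=\infty$. Running the profile decomposition on $\ve{u}(t_n)$ for any $t_n\to+\infty$, Kenig--Merle minimality ensures at least one profile is non-scattering; by Theorem~\ref{Th1}(i) it must sit at the threshold, i.e., equal $\Q$ up to symmetry. Hence $\delta(t_n)\to 0$ and Proposition~\ref{profiQ} places the orbit close to $\G$ along $t_n$. A standard bootstrap upgrades this to $\delta(t)\to 0$. Once $\delta(t)<\delta_0$ for $t\geq T_0$, Proposition~\ref{ModilationFree} applies and, by radiality, one may take $y(t)\equiv 0$. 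A further step shows $\lambda(t)$ is bounded above and below on $[T_0,\infty)$; otherwise a rescaling with $\lambda\to 0$ or $\lambda\to\infty$ would produce concentration or spreading incompatible with $H(\ve{u})<H(\Q)$ and the scaling of $H,P$.

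The technical heart is the derivation of exponential decay of $\delta$. Applying Lemma~\ref{VirialModulate} and plugging the decomposition~\eqref{DecomUFree} into $F_\infty[\ve{u}]=8\kk[2H(\ve{u})-3P(\ve{u})]$, the Pohozaev identity $H(\Q)=\tfrac{3}{2}P(\Q)$ reduces the main term, after rescaling by $\lambda(t)$, to a quadratic form in $\ve{h}$ that is coercive on $G^\perp$ by Proposition~\ref{CoerQuadra} (the orthogonality~\eqref{h: Cond} is exactly the hypothesis). Lemma~\ref{Fuu} supplies the cubic error $O(\delta^{3/2})$, the $R$-dependent corrections in~\eqref{Modu11}--\eqref{Modu22} are controlled using the scale bounds together with~\eqref{EstimateFree}, and $I_R[\ve{u}(t)]$ remains uniformly bounded. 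Integration of~\eqref{LocalVirial} from $t$ to $+\infty$ then yields
\[
\int_t^{\infty}\delta(s)\,ds\leq C\,\delta(t),
\]
and a Gronwall-type bootstrap gives $\delta(t)\leq Ce^{-ct}$. The bounds~\eqref{EstimateFree} then imply $(\theta(t),\lambda(t))\to(\theta_\infty,\lambda_\infty)$ exponentially, whence $\|\ve{u}(t)-\Q_{[\theta_\infty,\lambda_\infty]}\|_{\dot H^1}\leq Ce^{-ct}$; time reversal handles the negative-time statement.

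The main obstacle is the virial step: extracting a coercive lower bound on the localized virial that dominates both the boundary correction from the cutoff $w_R$ and the modulation errors requires choosing $R$ in terms of $\lambda(t)$ and carefully balancing the cubic error against the quadratic main term. The preliminary scale bounds on $\lambda(t)$ are themselves delicate and rely crucially on radiality.
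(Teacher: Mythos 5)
The overall Duyckaerts--Merle strategy you describe (compactness of the rescaled trajectory, modulation near $\G$, localized virial with coercivity, Gronwall) is the one the paper follows, but two central steps of your sketch contain genuine gaps.

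First, the way you obtain a sequence with $\delta(t_n)\to 0$ is not valid. Having a precompact rescaled trajectory $\{\ve{u}_{[\lambda(t)]}\}$ does \emph{not} by itself produce a subsequence converging to $\Q$: the strong $\dot{H}^1$ limit along any $t_n$ is some $\ve{w}_0$ with $E(\ve{w}_0)=E(\Q)$ and $H(\ve{w}_0)\le H(\Q)$, which need not lie on the orbit $\G$, so $\delta(t_n)\to 0$ does not follow from the profile decomposition alone. In the paper this is obtained dynamically, via the localized virial: one first proves $\frac1T\int_0^T\delta\to 0$ (Lemma~\ref{ZeroVirial}), which hinges on the nontrivial Claim~\ref{Claim11} that $\sqrt{t}\,\lambda(t)\to\infty$, and from that one extracts the sequence (Lemma~\ref{ZeroVirial11}). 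Your proposal never establishes any analogue of Claim~\ref{Claim11}, and the phrase ``Kenig--Merle minimality ensures one profile is non-scattering'' is being used here not just to build the compact trajectory but to conclude convergence to $\Q$, which is an unjustified leap.

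Second, the bound on the scale $\lambda(t)$ is not a soft consequence of ``concentration or spreading incompatible with $H<H(\Q)$''. Nothing in the scaling of $H$ and $P$ alone prevents $\lambda(t)\to 0$; the scaling invariance of the critical problem leaves $\lambda$ entirely free a priori. The paper pins $\lambda$ down through a quantitative interplay: Lemma~\ref{Lemma11} shows $\int_{t_1}^{t_2}\delta\lesssim \sup_{[t_1,t_2]}\lambda^{-2}\,(\delta(t_1)+\delta(t_2))$, and Proposition~\ref{Spatialcenter} shows $|\lambda(t_2)^{-2}-\lambda(t_1)^{-2}|\lesssim\int_{t_1}^{t_2}\delta$. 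Feeding the second into the first along the good sequence $t_n$ with $\delta(t_n)\to 0$ is what forces $\lambda^{-2}$ to be bounded, which then closes the loop and yields $\int_T^\infty\delta\lesssim\delta(T)$ and Gronwall. Proposition~\ref{Spatialcenter} itself requires a three-step argument (local boundedness of the ratio $\lambda(s)/\lambda(t)$ via compactness, a dichotomy for $\delta$ on unit-scale time windows, and a chaining argument); none of this is captured in your proposal. Relatedly, your claim that ``a standard bootstrap upgrades $\delta(t_n)\to 0$ to $\delta(t)\to 0$'' is also out of order: in the paper $\delta(t)\to 0$ is a \emph{consequence} of the exponential decay of $\int\delta$ and the modulation bound \eqref{EstimateFree}, not a prerequisite.

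A smaller point: in the virial step you describe the main term as ``a quadratic form in $\ve{h}$ coercive on $G^\perp$'', but the paper does not use coercivity there. Since $E(\ve{u})=E(\Q)$ and $6E(\Q)=H(\Q)$, one has the exact identity $F_\infty[\ve{u}(t)]=8\kk\delta(t)$; the coercivity of $\F$ (Proposition~\ref{CoerQuadra}) enters earlier, in Lemma~\ref{BoundI}, to establish $\delta\sim|\alpha|\sim\|\ve{h}\|_{\dot{H}^1}$. Your account collapses these two distinct uses.
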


As a corollary, we will obtain the following: 
\begin{corollary}\label{ClassC}
There is no radial solution to equation \eqref{SNLS} satisfying \eqref{PropCon11} and 
\begin{equation}\label{Infinity10}
\|u\|_{\Lm^{4}_{t, x}((0, \infty)\times \R^{6})}=
\|u\|_{\Lm^{4}_{t, x}((-\infty,0)\times \R^{6})}=
\infty.
\end{equation}
\end{corollary}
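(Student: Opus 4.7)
The plan is a contradiction argument based on the localized virial identity. Suppose $\ve{u}$ is a radial solution of \eqref{SNLS} satisfying \eqref{PropCon11} and \eqref{Infinity10}. Applying Proposition~\ref{CompacDeca} in both time directions, we obtain parameters $\theta_{\pm}\in\R$, $\lambda_{\pm}>0$ and constants $c,C>0$ with
\[
\|\ve{u}(t)-\Q_{[\theta_{\pm},\lambda_{\pm}]}\|_{\dot{H}^{1}}\leq Ce^{-c|t|}\qtq{for $\pm t\geq 0$.}
\]
In particular $\delta(t):=|H(\ve{u}(t))-H(\Q)|$ is exponentially decaying and integrable on $\R$.

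First I would rule out the borderline case $H(\ve{u}(t_{0}))=H(\Q)$ at some $t_{0}\in\R$. Combined with $E(\ve{u})=E(\Q)$ and Pohozaev's identity $H(\Q)=\tfrac{3}{2}P(\Q)$, this would force $P(\ve{u}(t_{0}))=P(\Q)$, so $\ve{u}(t_{0})$ saturates \eqref{GNIS}; the variational characterization of Proposition~\ref{TheGN} gives $\ve{u}(t_{0})=\Q_{[y,\theta,\lambda]}$, and since $\Q_{[y,\theta,\lambda]}$ is a static solution of \eqref{SNLS}, uniqueness of the flow forces $\ve{u}\equiv\Q_{[y,\theta,\lambda]}$, contradicting $H(\ve{u}_{0})<H(\Q)$. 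Hence $H(\ve{u}(t))<H(\Q)$ for all $t\in\R$, and combining energy conservation with Pohozaev yields
\[
F_{\infty}[\ve{u}(t)]=8\kk\bigl(2H(\ve{u}(t))-3P(\ve{u}(t))\bigr)=8\kk\bigl(H(\Q)-H(\ve{u}(t))\bigr)=8\kk\delta(t)>0.
\]

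Next I would apply Lemma~\ref{VirialModulate} with the modulation parameters $(\theta(t),\lambda(t))$ from Proposition~\ref{ModilationFree} and the choice $\chi(t)=1$ on $\R_{\text{small}}:=\{t:\delta(t)<\delta_{0}\}$, $\chi(t)=0$ on $\R_{\text{far}}:=\R\setminus\R_{\text{small}}$. Because $\delta(t)\to 0$ at $\pm\infty$, the set $\R_{\text{far}}$ is bounded and $\{\ve{u}(t):t\in\R_{\text{far}}\}$ is precompact in $\dot{H}^{1}$; in particular $|(F_{R}-F_{\infty})[\ve{u}(t)]|$ can be made uniformly small on $\R_{\text{far}}$ by taking $R$ large. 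On $\R_{\text{small}}$, the modulational bound $\|\ve{u}(t)-\Q_{[\theta(t),\lambda(t)]}\|_{\dot{H}^{1}}\lesssim \delta(t)$ from Proposition~\ref{ModilationFree}, combined with the fact that $\lambda(t)\to\lambda_{\pm}$ at $\pm\infty$ (so $\lambda(t)$ is uniformly bounded above and away from $0$), yields
\[
\bigl|(F_{R}-F_{\infty})[\ve{u}(t)]-(F_{R}-F_{\infty})[\Q_{[\theta(t),\lambda(t)]}]\bigr|\lesssim \eta(R)\,\delta(t),
\]
with $\eta(R)\to 0$ as $R\to\infty$. Combining these estimates, for $R$ large enough we obtain $\frac{d}{dt}I_{R}[\ve{u}(t)]\geq 4\kk\,\delta(t)$ on $\R_{\text{small}}$ and $\frac{d}{dt}I_{R}[\ve{u}(t)]\geq -\eta(R)$ on $\R_{\text{far}}$.

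To conclude, I would integrate from $-T$ to $T$ and let $T\to\infty$. For each fixed $R$ the functional $I_{R}$ is continuous on $\dot{H}^{1}$, so combining the exponential convergence $\ve{u}(\pm T)\to\Q_{[\theta_{\pm},\lambda_{\pm}]}$ with Lemma~\ref{Virialzero} gives $I_{R}[\ve{u}(\pm T)]\to 0$. Hence the left-hand side tends to $0$, while the right-hand side is bounded below by $4\kk\int_{\R}\delta(t)\,dt-\eta(R)|\R_{\text{far}}|$, which is strictly positive once $R$ is chosen large enough. This yields the desired contradiction $0>0$. The main obstacle will be the uniform control of the error $(F_{R}-F_{\infty})$ in the two regimes above; this requires combining radial Strauss-type decay estimates in $\R^{6}$ with the modulational stability of Section~\ref{S:Modula} and the boundedness away from $0$ and $\infty$ of the modulation scale $\lambda(t)$, which is ensured by the exponential convergence to $\Q_{[\theta_{\pm},\lambda_{\pm}]}$.
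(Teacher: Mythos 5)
Your argument is correct and follows essentially the same route as the paper: both proofs rest on the localized virial identity with the relation $F_{\infty}[\ve{u}]=8\kk\delta(t)$, the smallness of $F_R-F_\infty$ for large $R$ coming from compactness and the modulational estimate of Proposition~\ref{ModilationFree}, and the vanishing of the boundary virial terms as $t\to\pm\infty$. The only organizational difference is that the paper controls the endpoint terms $I_R[\ve{u}(\pm n)]$ via the quantitative bound $|I_R[\ve{u}(t_j)]|\lesssim R^2\delta(t_j)$ (Claim I of Lemma~\ref{Lemma11}) together with $\delta(\pm n)\to 0$, obtaining $\int_{-n}^{n}\delta\leq C(\delta(n)+\delta(-n))\to 0$ and hence $\delta\equiv 0$, whereas you invoke the full exponential convergence $\ve{u}(\pm T)\to\Q_{[\theta_\pm,\lambda_\pm]}$ from Proposition~\ref{CompacDeca}, continuity of $I_R$ on $\dot H^1$, and Lemma~\ref{Virialzero}; these two ways of killing the boundary terms are effectively equivalent given the exponential decay already available.

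Two small remarks. First, the preliminary step ruling out $H(\ve{u}(t_0))=H(\Q)$ is already handled in the paper by the continuity argument stated immediately before Lemma~\ref{Compacness11}; your variational argument via saturation of \eqref{GNIS} is a valid alternative. Second, in the final integration the lower bound should read $4\kk\int_{\R_{\text{small}}}\delta - \eta(R)|\R_{\text{far}}|$ rather than $4\kk\int_{\R}\delta - \eta(R)|\R_{\text{far}}|$, since on $\R_{\text{far}}$ you only used the weaker bound $\tfrac{d}{dt}I_R\geq -\eta(R)$; this does not affect the conclusion because $\int_{\R_{\text{small}}}\delta>0$ and $|\R_{\text{far}}|<\infty$ (in fact on $\R_{\text{far}}$ one has the stronger $\tfrac{d}{dt}I_R\geq 8\kk\delta_0-\eta(R)>0$ once $R$ is large, since $F_\infty[\ve{u}]=8\kk\delta(t)\geq 8\kk\delta_0$ there).
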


Before proving Proposition~\ref{PropCon11}, we establish some lemmas. Note that from \eqref{PropCon11}, the variational characterization of $\Q$ in Proposition~\ref{TheGN}, and a continuity argument, we have
\[
H(\ve{u}(t)) < H(\Q) \quad \text{for all } t \text{ in the domain of existence.}
\]
Combining this with Theorem~\ref{Th1}~(i), the concentration-compactness principle, and stability theory (Lemma~\ref{ConSNLS}), as in \cite[Section 4.3]{GaoMengXuZheng}, we can show:

\begin{lemma}[Compactness]\label{Compacness11}
Suppose that $\ve{u}(t)$ is a radial solution of \eqref{SNLS} of maximal interval of existence $I=[0, T_{+})$ such that  satisfies \eqref{PropCon11} and 
\begin{align}\label{NscaF}
	\|\ve{u}\|_{\Lm^{4}_{t, x}((0, T_{+})\times \R^{6})}=\infty.
\end{align}
 Then there exists a continuous function 
$\lambda:[0, T_{+}) \to \R$ such that 
 \begin{equation}\label{CompactX}
\left\{\ve{u}_{[\lambda(t)]}: t\in [0, T_{+})\right\} \quad
\text{is pre-compact in $\dot{H}^{1}(\R^{6})$}.
\end{equation}
\end{lemma}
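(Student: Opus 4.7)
My plan is to deduce the precompactness modulo scaling from the standard concentration--compactness/rigidity scheme, in the style of \cite[Section~4.3]{GaoMengXuZheng}, specialized to the threshold energy $E(\ve{u}_{0})=E(\Q)$. The starting point is a uniform $\dot{H}^{1}$ bound on the trajectory: from the hypothesis $H(\ve{u}_{0})<H(\Q)$, the sharp Gagliardo--Nirenberg inequality \eqref{GNIS} together with Proposition~\ref{TheGN}, and a continuity argument, the inequality $H(\ve{u}(t))<H(\Q)$ propagates on the whole maximal interval $[0,T_{+})$, so $\sup_{t\in[0,T_{+})}\|\ve{u}(t)\|_{\dot{H}^{1}}<\infty$.

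Given this bound, I would proceed by contradiction. Fix an arbitrary sequence $t_{n}\in[0,T_{+})$ and apply a radial linear profile decomposition in $\dot{H}^{1}$ to $\{\ve{u}(t_{n})\}$; radiality kills translations, so we obtain profiles $\phi^{j}\in\dot{H}^{1}$, scales $\lambda_{n}^{j}$, time shifts $\tau_{n}^{j}$, and an error $\ve{w}_{n}^{J}$ satisfying the usual asymptotic orthogonality of $H$, $E$ and $P$. Each profile gives rise to a nonlinear profile $\ve{U}^{j}$ solving \eqref{SNLS} through the limiting scenario for $\tau_{n}^{j}$ (finite, $+\infty$, or $-\infty$).

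The rigidity step is the crux. Asymptotic additivity of $H$ and $E$, combined with $H(\ve{u}(t_{n}))<H(\Q)$, yields $H(\ve{U}^{j})\leq H(\Q)$ for every $j$ and $\sum_{j}E(\ve{U}^{j})\leq E(\Q)$, and Lemma~\ref{ConvexIn} makes each $E(\ve{U}^{j})$ nonnegative. If at least two profiles carried nontrivial kinetic energy, strict additivity would give $E(\ve{U}^{j})<E(\Q)$ and $H(\ve{U}^{j})<H(\Q)$ for each, so Theorem~\ref{Th1}(i) would render every $\ve{U}^{j}$ global and scattering. The long-time perturbation Lemma~\ref{ConSNLS} would then yield
\[
\|\ve{u}\|_{L^{4}_{t,x}((0,T_{+})\times\R^{6})}<\infty,
\]
contradicting \eqref{NscaF}. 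Hence exactly one profile survives, the remainder vanishes in $\dot{H}^{1}$, and along a subsequence $\ve{u}(t_{n})=\phi^{1}_{[\theta_{n},\lambda_{n}]}+o_{n}(1)$ in $\dot{H}^{1}$.

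Since the sequence $t_{n}$ was arbitrary, this produces a compact set $\mathcal{K}\subset\dot{H}^{1}$ to which the rescaled trajectory approaches; taking $\lambda(t)$ to be a minimizer of $\lambda\mapsto\mathrm{dist}_{\dot{H}^{1}}(\ve{u}_{[\lambda]}(t),\mathcal{K})$ delivers both precompactness and continuity of $\lambda$, the latter coming from the continuity of the $\dot{H}^{1}$-flow and the local uniqueness of the optimal scale. The step I expect to be most delicate is the multi-profile exclusion: one must verify that the vector-valued quadratic nonlinearity $(\overline{u}v,u^{2})$ is compatible with the profile decomposition and that every limiting scenario for the time parameters $\tau_{n}^{j}$ produces a genuine nonlinear profile to which Theorem~\ref{Th1}(i) applies. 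Once this is in place --- and the required tools are available in \cite{GaoMengXuZheng} --- the rest is a routine assembly.
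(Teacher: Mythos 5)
Your proposal is correct and follows essentially the same route as the paper, which proves this lemma only by pointing to Theorem~\ref{Th1}(i), the concentration--compactness principle, and the stability theory (Lemma~\ref{ConSNLS}) as developed in \cite[Section 4.3]{GaoMengXuZheng}; your write-up simply fleshes out that standard argument (propagation of $H(\ve{u}(t))<H(\Q)$, radial profile decomposition with asymptotic additivity of $H$, $E$, $P$, exclusion of multiple profiles via Lemma~\ref{ConvexIn} and sub-threshold scattering, and long-time perturbation to contradict \eqref{NscaF}). The one place I'd tighten: defining $\lambda(t)$ as a minimizer of the distance to a set $\mathcal{K}$ obtained from the sequential argument is somewhat circular; the cleaner standard device is to fix $\lambda(t)$ by a normalization such as concentrating a fixed fraction of the kinetic energy in the ball of radius $1/\lambda(t)$, then deduce precompactness of the resulting family from the sequential compactness, with continuity of $\lambda$ coming from the $\dot{H}^{1}$-continuity of the flow.
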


The compactness in Lemma \ref{Compacness11} is the last piece of the puzzle to show the existence of global solutions when
\begin{equation}\label{leqleq}
E(\ve{u}_{0}) \leq E(\Q) \qtq{and} H(\ve{u}_{0}) \leq H(\Q).
\end{equation}

\begin{lemma}[Global solution]\label{GlobalW}
Suppose that $\ve{u}(t)$ is a radial solution of \eqref{SNLS} with maximal interval of existence $I=[T_{-}, T_{+})$.  If 
\eqref{leqleq} holds, then we have $I=\R$.
\end{lemma}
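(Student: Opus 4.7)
The plan is to first show that the hypothesis $H(\ve{u}(t))\le H(\Q)$ is propagated in time, and then to rule out finite-time blow-up. Suppose for contradiction that $H(\ve{u}(t^*))>H(\Q)$ for some $t^*\in I$; continuity of $t\mapsto H(\ve{u}(t))$ produces a first crossing $t_1\in I$ at which $H(\ve{u}(t_1))=H(\Q)$. The Gagliardo--Nirenberg inequality \eqref{GNIS} applied at $t_1$ yields $P(\ve{u}(t_1))\le C_{GN}[H(\Q)]^{3/2}=P(\Q)$, hence
\[
E(\ve{u}_0)=E(\ve{u}(t_1))=\tfrac{1}{2}H(\Q)-\tfrac{1}{2}P(\ve{u}(t_1))\ge\tfrac{1}{2}H(\Q)-\tfrac{1}{2}P(\Q)=E(\Q).
\]
Combined with $E(\ve{u}_0)\le E(\Q)$, every inequality saturates, so $\ve{u}(t_1)$ optimizes \eqref{GNI}; by Proposition~\ref{TheGN} we get $\ve{u}(t_1)=\Q_{[y_0,\theta_0,\lambda_0]}$ for some triple. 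This configuration is a stationary solution of \eqref{SNLS}, so Cauchy uniqueness (Theorem~\ref{LCP}) forces $\ve{u}\equiv \Q_{[y_0,\theta_0,\lambda_0]}$ on $I$; this solution is defined globally, giving $I=\R$ directly and contradicting the existence of a first crossing.

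Hence $H(\ve{u}(t))\le H(\Q)$ throughout $I$, and Lemma~\ref{ConvexIn} upgrades this to the uniform bound $H(\ve{u}(t))\le \frac{E(\ve{u}_0)}{E(\Q)}H(\Q)$. In the sub-threshold regime $E(\ve{u}_0)<E(\Q)$ the gap is strict and uniform, and the case $H(\ve{u}_0)=H(\Q)$ is precluded by the optimizer argument above, so Theorem~\ref{Th1}(i) yields $I=\R$ directly. The only remaining case is the threshold case $E(\ve{u}_0)=E(\Q)$ with $H(\ve{u}_0)<H(\Q)$ strictly.

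In this case, suppose for contradiction that $T_+<\infty$. The blow-up criterion Theorem~\ref{LCP}(ii) gives $\|\ve{u}\|_{L^4_{t,x}((0,T_+)\times\R^6)}=\infty$, so Lemma~\ref{Compacness11} supplies $\lambda:[0,T_+)\to\R$ making $\{\ve{u}_{[\lambda(t)]}(t)\}$ pre-compact in $\dot{H}^1$. A standard argument using the perturbation Lemma~\ref{ConSNLS} rules out boundedness of $\lambda(t_n)$ along any sequence $t_n\to T_+$: a subsequential $\dot{H}^1$-limit of $\ve{u}(t_n)$ would, via the local theory around that limit, produce an extension of $\ve{u}$ beyond $T_+$. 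Hence $\lambda(t)\to+\infty$, and by radiality $\ve{u}(t)$ concentrates at the origin. The contradiction is then extracted from the localized virial identity of Lemma~\ref{VirialIden}. Pointwise, $F_\infty[\ve{u}(t)]=8\kk(2H(\ve{u}(t))-3P(\ve{u}(t)))>0$ whenever $H(\ve{u}(t))<H(\Q)$ (again by \eqref{GNIS}); feeding the modulation decomposition of Proposition~\ref{ModilationFree} and the coercivity of $\F$ in Proposition~\ref{CoerQuadra} into this positivity upgrades it to a quantitative coercive lower bound, which combined with the radial-Sobolev-based a priori estimate $|I_R[\ve{u}(t)]|\lesssim R^2$ and a scale-tailored choice of $R$ gives a contradiction upon integrating $\frac{d}{dt}I_R=F_R$ over an interval approaching $T_+$.

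The main obstacle is precisely this final virial step. The threshold hypothesis furnishes only $H(\ve{u}(t))<H(\Q)$ strictly, with no uniform gap, so the positivity of $F_\infty$ may degenerate as $H(\ve{u}(t))\nearrow H(\Q)$ near $T_+$; turning pointwise positivity into a uniform coercive lower bound usable in the virial requires exploiting the concentration $\lambda(t)\to+\infty$ together with the spectral and modulation machinery of Sections~\ref{S:Spectral}--\ref{S:Modula}, which is the technical heart of the argument.
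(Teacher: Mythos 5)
Your outline matches the paper's up to the point of extracting the final contradiction: you correctly case on $H(\ve{u}_0)=H(\Q)$ versus $H(\ve{u}_0)<H(\Q)$, dispatch the sub-threshold case via Theorem~\ref{Th1}, invoke the compactness Lemma~\ref{Compacness11} when $T_+<\infty$ and the $L^4_{t,x}$-norm is infinite, and argue $\lambda(t)\to+\infty$. But the tool you reach for at the end — a localized virial argument — is not what the paper uses, and you acknowledge yourself that you cannot complete it. That acknowledgement is well-placed: the virial step genuinely does not close here. On a finite interval $[0,T_+)$, integrating $\frac{d}{dt}I_R=F_R$ cannot force a contradiction from mere positivity of $F_\infty$, since the total variation of $I_R$ is bounded by $C_\ast R^2$ regardless; the "scale-tailored $R$" trick only produces something useful when one integrates over time intervals of length comparable to $R^2$, i.e.\ over a long or infinite time horizon (this is exactly how Lemma~\ref{ZeroVirial} is used later, \emph{after} globality is known). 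Using it to \emph{prove} globality would be circular, and the degeneration of the coercivity gap as $H(\ve{u}(t))\nearrow H(\Q)$ that you flag is a real obstruction, not a technicality.

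The paper's argument at this step is different and more elementary. Having established $\lim_{t\to T_+}\lambda(t)=+\infty$, one introduces the truncated mass
\[
z_R(t)=\int_{\R^6}\bigl[2\kappa|u(t,x)|^2+|v(t,x)|^2\bigr]\,\xi\bigl(\tfrac{x}{R}\bigr)\,dx,
\]
computes $z_R'$, and uses Hardy and Sobolev together with the a priori bound $H(\ve{u}(t))\le H(\Q)$ to get $|z_R'(t)|\le C_0$ uniformly in $t$ and $R$. Pre-compactness of $\{\ve{u}_{[\lambda(t)]}\}$ plus $\lambda(t)\to\infty$ then forces $z_R(t)\to0$ as $t\to T_+$, so by the fundamental theorem of calculus $|z_R(t)|\le C_0|t-T_+|$; letting $R\to\infty$ shows $\ve{u}(t)\in L^2$ with $\|\ve{u}(t)\|_{L^2}^2\le C_0|t-T_+|$, and conservation of mass forces $\ve{u}_0=0$, contradicting $E(\ve{u}_0)=E(\Q)>0$. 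Note that this argument makes no use of Propositions~\ref{ModilationFree} and~\ref{CoerQuadra}, which is important for the logical ordering of the paper. So your proof has a genuine gap at the concluding step; the right mechanism is the localized-mass estimate, not the virial.
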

\begin{proof}
If $H(\ve{u}_{0})=H(\Q)$, Lemma~\ref{ConvexIn} implies that $E(\ve{u})=E(\Q)$. Then, by Proposition~\ref{TheGN} we find 
$\ve{u}_{0}=\Q_{[\theta_{0}, \lambda_{0}]}$, which implies that the solution $\ve{u}$ is global.

On the other hand, suppose that $H(\ve{u}_{0})<H(\Q)$ and $E(\ve{u})<E(\Q)$. By Theorem~\ref{Th1}, the solution 
$\ve{u}$ is global and scatters.

Finally, suppose that $H(\ve{u}_{0})<H(\Q)$ and $E(\ve{u})=E(\Q)$. If $\|\ve{u}\|_{\Lm^{4}_{t, x}(I\times \R^{6})}<\infty$,
then by the finite blow up criterion, we see that $\ve{u}$ is a global solution.

Next, if $\|\ve{u}\|_{\Lm^{4}_{t, x}([0, T_{+})\times \R^{6})}=\infty$, Lemma~\ref{Compacness11} implies that there exists a continuous function
$\lambda(t)$ such that $\left\{\ve{u}_{[\lambda(t)]}: t\in [0, T_{+})\right\}$ 
is pre-compact in $\dot{H}^{1}(\R^{6})$. Suppose that, by contradiction, $T_{+}< +\infty$. Following the same argument developed in Case 1 in \cite[Proposition 5.3]{KenigMerle2006} we get that
\begin{align}\label{LamdaI}
	\lim_{t\to T_{+}}\lambda(t)=+\infty.
\end{align}
Now, define for $R>0$,
\[
z_{R}(t)=\int_{\R^{6}}[2\kappa |u(t,x)|^{2}+|v(t,x)|^{2}]\xi\(  \tfrac{x}{R} \)dx
\qtq{for $t\in [0, T_{+})$}
\]
with $\xi=1$ if $|x|\leq 1$ and $\xi=0$ if $|x|\geq 2$. Since 
\[
z^{\prime}_{R}(t)=\tfrac{2\kappa}{R}\IM\int_{\R^{6}}(2\overline{u}\nabla u + \overline{v}\nabla v)\cdot (\nabla\xi)\(  \tfrac{x}{R} \),
\]
it follows by Hardy and Sobolev inequalities, and $H(\ve{u(t)})\leq H(\Q)$ that $|z^{\prime}_{R}(t)|\leq C_{0}$. Thus, applying the fundamental theorem of calculus on $[t, T]\subset [0, T_{+})$ we have
\begin{align}\label{InteF}
	|z_{R}(t)-z_{R}(T)|\leq C_{0}|t-T|.
\end{align}
By compactness \eqref{CompactX}, we infer that that for each $\epsilon>0$, there exists $\rho_{\epsilon}=\rho(\epsilon)>0$
such that
\begin{equation}\label{CritialE}
\int_{|x|>\tfrac{\rho_{\epsilon}}{\lambda(t)}}|u(t,x)|^{3}+|v(t,x)|^{3}\,dx\ll\epsilon.
\end{equation}
Thus, by  \eqref{LamdaI} and  \eqref{CritialE} 
we find
\[
\lim_{t\to T_{+}}z_{R}(t)=0.
\]
From \eqref{InteF} we have $|z_{R}(t)|\leq  C_{0}|t-T_{+}|$.  Letting $R\to+\infty$ we obtain $\ve{u}(t)\in L^{2}$ and
$\|\ve{u}(t)\|^{2}_{L^{2}}\leq C_{0}|t-T_{+}|$. In particular,  by the conservation of the $L^{2}$ norm, we get $\ve{u}_{0}=0$, which is a contradiction with $E(\ve{u})= E(\Q)>0$. Therefore, $T_{+}=+\infty$. The same result holds for negative time.
\end{proof}

\begin{lemma}[Convergence in the ergodic mean]\label{ZeroVirial}
Suppose $\ve{u}$ is a radial solution of \eqref{SNLS} satisfying the assumptions of Proposition~\ref{CompacDeca} 
(cf. \eqref{PropCon11} and \eqref{ScaTa}). 
Then 
\begin{align}\label{Cdelta}
	\lim_{T\to +\infty}\frac{1}{T}\int^{T}_{0}\delta(t)dt=0.
\end{align}
\end{lemma}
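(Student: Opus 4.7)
The plan is to combine the localized virial identity of Lemma~\ref{VirialIden} with the precompactness modulo scaling supplied by Lemma~\ref{Compacness11}, in the spirit of \cite{DuyMerle2009}. The key algebraic step is to rewrite $F_\infty$ purely in terms of $\delta(t)$: from energy conservation $E(\ve{u}(t))=E(\Q)$ and Pohozaev's identity $H(\Q)=\tfrac{3}{2}P(\Q)$ (so $E(\Q)=\tfrac{1}{6}H(\Q)$), one has $P(\ve{u}(t))=H(\ve{u}(t))-2E(\Q)=H(\ve{u}(t))-\tfrac{1}{3}H(\Q)$, whence
\[
F_\infty[\ve{u}(t)]=8\kk\bigl(2H(\ve{u}(t))-3P(\ve{u}(t))\bigr)=8\kk\bigl(H(\Q)-H(\ve{u}(t))\bigr)=8\kk\,\delta(t).
\]
The sign is correct because $H(\ve{u}(t))<H(\Q)$ for every $t$: if there were $t_\ast$ with $H(\ve{u}(t_\ast))=H(\Q)$, then $E(\ve{u}(t_\ast))=E(\Q)$ would force $P(\ve{u}(t_\ast))=P(\Q)$, i.e.~equality in~\eqref{GNIS}; Proposition~\ref{TheGN} then puts $\ve{u}(t_\ast)$ on $\G$, and since each element of $\G$ is a stationary solution of \eqref{SNLS}, uniqueness propagates $\ve{u}_0\in\G$, contradicting $H(\ve{u}_0)<H(\Q)$.

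Next, I would integrate $\tfrac{d}{dt}I_R[\ve{u}]=F_R[\ve{u}]$ on $[0,T]$ and insert $F_\infty$ by adding and subtracting to get
\[
\int_0^T 8\kk\,\delta(t)\,dt=I_R[\ve{u}(T)]-I_R[\ve{u}(0)]-\int_0^T\bigl(F_R[\ve{u}(t)]-F_\infty[\ve{u}(t)]\bigr)\,dt.
\]
Since $w_R(x)=|x|^2=w_\infty(x)$ on $\{|x|\leq R\}$, the integrand of $F_R-F_\infty$ is supported in $\{|x|>R\}$, and combining $|\partial^\alpha w_R|\lesssim |x|^{2-|\alpha|}$ with Hardy's inequality gives
\[
\bigl|F_R[\ve{u}(t)]-F_\infty[\ve{u}(t)]\bigr|\lesssim \int_{|x|>R}\bigl(|\nabla\ve{u}(t)|^2+|\ve{u}(t)|^3\bigr)\,dx.
\]
By Lemma~\ref{Compacness11}, for each $\eta>0$ there is a radius $\rho_\eta$ such that this tail is $<\eta$ whenever $R\geq\rho_\eta/\lambda(t)$.

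The main obstacle is converting this into a uniform-in-$t$ bound at a \emph{fixed} $R$, which demands $\inf_{t\geq 0}\lambda(t)\geq\lambda_0>0$. I would establish this lower bound via the truncated-mass argument appearing at the end of the proof of Lemma~\ref{GlobalW}: if $\lambda(t_n)\to 0$ along some sequence, then for any fixed radius the localized mass $z_\rho(\ve{u}(t_n))$ can be made arbitrarily small by compactness, while its time-derivative is $O(1)$ uniformly in $t$ (by Hardy + Sobolev and $H(\ve{u}(t))\leq H(\Q)$); the fundamental theorem of calculus then propagates the smallness of $z_\rho$ back to $t=0$, forcing $\ve{u}_0\equiv 0$ and contradicting $E(\ve{u}_0)=E(\Q)>0$. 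Once $\lambda(t)\geq\lambda_0$ is secured, I set $R:=\rho_\eta/\lambda_0$; then $|F_R-F_\infty|[\ve{u}(t)]\lesssim\eta$ uniformly in $t$, while $|I_R[\ve{u}(t)]|\lesssim R^2\|\ve{u}(t)\|_{\dot{H}^1}^2\leq C(R,\Q)$ is uniformly bounded. Dividing the displayed identity by $T$, sending $T\to\infty$ and then $\eta\to 0$, yields \eqref{Cdelta}. Apart from the lower bound on $\lambda(t)$, every step is routine bookkeeping.
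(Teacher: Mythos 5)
Your algebraic preparation is correct and matches the paper: rewriting $F_\infty[\ve{u}(t)]=8\kk\delta(t)$ via Pohozaev and energy conservation, integrating $\tfrac{d}{dt}I_R=F_R$ over $[0,T]$, bounding the boundary term $|I_R[\ve{u}(t)]|\lesssim R^2$ by Hardy, and localizing $F_R-F_\infty$ to $\{|x|>R\}$ where it is $<\eta$ once $R\gtrsim\rho_\eta/\lambda(t)$. The difficulty, as you correctly identify, is converting the last estimate into something uniform in $t$.

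The gap is in your claim that $\inf_{t\ge0}\lambda(t)\ge\lambda_0>0$. You try to prove it by the truncated-mass argument from the end of Lemma~\ref{GlobalW}: if $\lambda(t_n)\to0$, the localized mass $z_\rho(\ve{u}(t_n))$ becomes small, and $|z_\rho'(t)|\le C_0$, so the fundamental theorem of calculus ``propagates the smallness back to $t=0$.'' But the fundamental theorem of calculus gives $|z_\rho(0)|\le|z_\rho(t_n)|+C_0\,t_n$, and in the present setting there is no reason for $t_n$ to stay bounded --- you are considering the global-in-time regime $t\to+\infty$, not $t\to T_+<\infty$ as in Lemma~\ref{GlobalW}. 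With $t_n\to\infty$ the estimate produces nothing. Moreover, a lower bound $\lambda(t)\ge\lambda_0>0$ is not available at this stage of the paper's logic: it is ultimately derived in the proof of Proposition~\ref{CompacDeca}, and that derivation \emph{uses} the present lemma (through Lemma~\ref{ZeroVirial11}) together with Lemma~\ref{Lemma11} and Proposition~\ref{Spatialcenter}. So your route is circular as well as broken.

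The paper sidesteps this by proving only the much weaker Claim~\ref{Claim11}: $\sqrt{t}\,\lambda(t)\to+\infty$ as $t\to+\infty$ (argued by rescaling to a limiting solution $\ve{w}$ and showing that $\lambda(t_n)\to0$ with $\sqrt{t_n}\lambda(t_n)$ bounded would force $\ve{u}_0$ to be a weak limit of vanishing rescalings, contradicting $E(\ve{u}_0)=E(\Q)>0$). With this, one takes $R=\epsilon_0\sqrt{T}$; on $[t_0,T]$ this radius dominates $C_\epsilon/\lambda(t)$, so the error term is uniformly $\le\epsilon$ there, while the boundary contribution is $\sim R^2/T=\epsilon_0^2$, and both are then sent to zero. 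The key insight you are missing is that the radius $R$ must be allowed to grow with $T$; a fixed $R$ requires a lower bound on $\lambda$ that is unavailable at this point.
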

\begin{proof} 
Since $|\nabla w_{R}| \lesssim \tfrac{R^{2}}{|x|}$,  by Hardy inequality we see that 
\[
|I_{R}[\ve{u}](t)|\leq C_{\ast} R^{2}
\]
for some $C_{\ast}>0$.
With $\epsilon>0$ given and $R>0$ to be determined below, we write
(cf. Lemma~\ref{VirialIden}) 
\begin{align*}
	\frac{d}{d t}I_{R}[u]&=F_{\infty}[\ve{u}(t)]\\
	                     &+F_{R}[\ve{u}(t)]-F_{\infty}[\ve{u}(t)].
\end{align*}
Observe that by the Pohozaev's identity, $6E(\Q)=H(\Q)$. As $E(\ve{u})=E(\Q)$ we obtain
\[
F_{\infty}[\ve{u}(t)]=8\kk[2H(\ve{u})-3P(\ve{u})]=8\kk[6E(\ve{u})-H(\ve{u})]=8\kk\delta(t).
\]
Therefore,
\[
\frac{d}{d t}I_{R}[\ve{u}]=8\kk\delta(t)+[F_{R}[\ve{u}(t)]-F_{\infty}[\ve{u}(t)]].
\]
Now, notice that
\begin{align*}
F_{R}[\ve{u}(t)]-F_{\infty}[\ve{u}(t)]&
=\int_{|x|\geq R}(-2\kk \Delta \Delta w_{R})[|u|^{2}+\tfrac{\kk}{2}|v|^{2}]-
2\kk\Delta[w_{R}(x)]\RE(\overline{v}u^{2}) \, dx\\
&+16\kk \RE\int_{|x|\geq R}[|\nabla u|^{2}+\tfrac{\kk}{2}|\nabla v|^{2}+\tfrac{3}{2}\overline{v}u^{2}]dx\\
&+8\kk\RE\int_{|x|\geq R} [\overline{u_{j}} u_{k} +\tfrac{\kk}{2}\overline{v_{j}} v_{k}]\partial_{jk}[w_{R}]dx.
\end{align*}
By compactness in $\dot{H}^{1}(\R^{6})$ (see \eqref{CompactX}), we infer that there exists $C_{\epsilon}>0$ 
such that
\[
\sup_{t\geq 0}\int_{|x|>\tfrac{C_{\epsilon}}{\lambda(t)}}[|\nabla u|^{2} +|\nabla v|^{2}+ | u|^{3} +| v|^{3}](t,x)dx  \ll \epsilon.
\]

Thus, using the conditions on the weight $w_{R}$ specified in Section~\ref{S1:preli}, by H\"older inequality we have
for $R\geq \tfrac{C_{\epsilon}}{\lambda(t)}$,
\[
|F_{R}[\ve{u}(t)]-F_{\infty}[\ve{u}(t)]|\leq \epsilon.
\]
Next, we need the following result.
\begin{claim}\label{Claim11}
\begin{align}\label{Claim1}
	\lim_{t\to +\infty}\sqrt{t}\lambda(t)=+\infty.
\end{align}
\end{claim}
Assuming the claim for a moment, we infer that there exists $t_{0}\geq 0$ such that for all $t\geq\geq t_{0}$ we have
\[
\lambda(t)\geq \tfrac{M_{0}}{\sqrt{t}},
\]
where we choose $M_{0}$ such that
\[
M_{0}\epsilon_{0}\geq C_{\epsilon}\qtq{with}
\epsilon^{2}_{0}:=\tfrac{\epsilon}{2C_{\ast}}.
\]
We put $R:=\epsilon_{0}\sqrt{T}$ for $T\geq t_{{0}}$. Thus, for $t\in[t_{0}, T]$, we infer that
\[
R\geq \epsilon_{0}\sqrt{T}\frac{M_{0}}{\sqrt{t}\lambda(t)}=\frac{\sqrt{T}}{\sqrt{t_{0}}}\frac{M_{0}\epsilon_{0}}{\lambda(t)}
\geq \frac{C_{\epsilon}}{\lambda(t)}.
\]
Applying the fundamental theorem of calculus on $[t_{0}, T]$ and collecting the estimates above, we get
\[
8\frac{\kk}{T}\int^{T}_{t}\delta(t)\,dt\leq 2C_{\ast}\frac{R^{2}}{T}+\epsilon\frac{(T-t_{0})}{T}\leq 2\epsilon.
\]
Taking to the limit $T\to +\infty$ and then $\epsilon\to 0$ we find
\[
\lim_{T\to +\infty}\frac{1}{T}\int^{T}_{0}\delta(t)dt=0.
\]

\begin{proof}[{Proof of Claim~\ref{Claim11}}]
Assume by contradiction that \eqref{Claim1} does not hold. Then there exists $s\in [0, +\infty)$ such that
$\lim_{t_{n}\to+\infty}\sqrt{t_{n}}\lambda(t_{n})=s$. In particular,
\begin{align}\label{LamZer}
	\lim_{t_{n}\to+\infty}\lambda(t_{n})=0.
\end{align}
Define
\[
\ve{w}_{n}(\tau, y)=
\lambda(t_{n})^{-2}\ve{u}\( t_{n}+\tfrac{\tau}{\lambda(t_{n})^{2}},\tfrac{y}{\lambda(t_{n})}\).
\]
By compactness we see that there exists $\ve{w}_{0}\in \dot{H}^{1}$ so that $\ve{w}_{n}(0)\to \ve{w}_{0}$ in $\dot{H}^{1}$ as $n\to \infty$.
Since $E(\ve{u}_{0})=E(\Q)$ and $H(\ve{u}(t_{n})<H(\Q)$, it follows that $E(\ve{w}_{0})=E(\Q)$ and $H(\ve{w}_{0})\leq H(\Q)$.
Then Lemma~\ref{GlobalW} implies that the solution $\ve{w}(t)$ to \eqref{SNLS} with initial data $\ve{w}_{0}$ is global and $E(\ve{w}(t))=E(\Q)$ for all $t\in \R$. As $-\sqrt{t_{n}}\lambda(t_{n})\to -s$, we infer that (by stability theory)
\[
\lambda(t_{n})^{-2}\ve{u}_{0}\(\tfrac{y}{\lambda(t_{n})}\)
=\ve{w}_{n}(-t_{n}\lambda(t_{n})^{2}, y)\to \ve{w}(-s^{2},y).
\]
But then, by \eqref{LamZer} we have
\[
\lambda(t_{n})^{-2}\ve{u}_{0}\(\tfrac{y}{\lambda(t_{n})}\)\rightharpoonup 0
\qtq {in $\dot{H}^{1}$,}
\]
which is a contradiction with $E(\ve{w}(-s^{2}))=E(\Q)>0$. This completes the proof of claim. 
\end{proof}
\end{proof}

As an immediate consequence of the lemma above, we obtain the following result.
\begin{lemma}\label{ZeroVirial11}
Let $\ve{u}$ be a radial solution of \eqref{SNLS} satisfying the assumptions of Proposition~\ref{CompacDeca}. 
Then there exists a sequence $t_{n}\to\infty$ such that 
\[
\lim_{n\to +\infty}\delta(t_{n})=0.
\]
\end{lemma}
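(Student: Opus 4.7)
The plan is to deduce Lemma~\ref{ZeroVirial11} directly from Lemma~\ref{ZeroVirial} by a simple contradiction argument exploiting the non-negativity of $\delta(t)$. The convergence of the Ces\`aro mean of a non-negative function to zero forces the existence of a sequence along which the function vanishes in the limit.

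More precisely, I would argue as follows. Suppose, by contradiction, that no such sequence exists; that is, assume
\[
\liminf_{t\to+\infty}\delta(t) = 2c > 0.
\]
Then there exists $T_{0}>0$ such that $\delta(t)\geq c$ for all $t\geq T_{0}$. Splitting the integral, we get for all $T\geq T_{0}$,
\[
\frac{1}{T}\int_{0}^{T}\delta(t)\,dt \;\geq\; \frac{1}{T}\int_{T_{0}}^{T}\delta(t)\,dt \;\geq\; c\cdot\frac{T-T_{0}}{T}.
\]
Letting $T\to+\infty$ yields $\liminf_{T\to+\infty}\tfrac{1}{T}\int_{0}^{T}\delta(t)\,dt \geq c > 0$, which directly contradicts Lemma~\ref{ZeroVirial}. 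Hence $\liminf_{t\to+\infty}\delta(t)=0$, and by definition of $\liminf$ there is a sequence $t_{n}\to+\infty$ with $\delta(t_{n})\to 0$.

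This step is essentially routine once Lemma~\ref{ZeroVirial} is available, so no real obstacle arises; the nontrivial content was already absorbed into the proof of the Ces\`aro convergence (which relied on the localized virial identity, compactness of the orbit modulo scaling, and the lower bound $\sqrt{t}\lambda(t)\to\infty$ from Claim~\ref{Claim11}). The only subtlety is making sure one uses non-negativity of $\delta$ (which is automatic from its definition as an absolute value) to pass from ergodic-mean convergence to a pointwise subsequential statement; without this sign condition the implication would fail.
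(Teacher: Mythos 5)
Your proof is correct and is exactly the routine argument the paper leaves implicit (the paper simply asserts the lemma is an ``immediate consequence'' of Lemma~\ref{ZeroVirial} without spelling it out). The contradiction via $\liminf$ together with the non-negativity of $\delta$ is precisely the standard passage from Ces\`aro-mean convergence to subsequential pointwise decay, and you correctly flag that the sign condition is what makes the implication go through.
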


Let $\mathbf{u}$ be a solution of \eqref{SNLS}. We consider $\delta_0 > 0$ and the modulation parameters $\theta(t)$, $\mu(t)$, and $\alpha(t)$ given by Lemma~\ref{ExistenceFree}, which are defined for all $t \in I_0$. In what follows, we identify $\mu(t)$ with the scaling parameter $\lambda(t)$ from Lemma~\ref{ExistenceFree}. 
From the decomposition \eqref{DecomUFree} and the estimate \eqref{EstimateOne}, we deduce the existence of a constant $C_0 > 0$ satisfying:
\begin{align*}
\int_{\mu(t)\leq|x|\leq 2\mu(t)}\left[|\nabla u(t,x)|^{2}+|\nabla v(t,x)|^{2}\right]dx 
\geq \int_{1\leq|x|\leq 2}|\nabla Q|^{2}-C_{0}\delta(t), \quad \text{for all } t\in I_{0}.
\end{align*}

Thus, for $\delta_{0}>0$ sufficiently small and $t\in I_{0}$, there exists $\epsilon>0$ such that
\begin{align*}
\int_{\frac{\mu(t)}{\lambda(t)}\leq|x|\leq\frac{2\mu(t)}{\lambda(t)}}
\left[\tfrac{1}{\lambda(t)^{6}}\left|\nabla u\left(t,\tfrac{x}{\lambda(t)}\right)\right|^{2}+\tfrac{1}{\lambda(t)^{6}}\left|\nabla v\left(t,\tfrac{x}{\lambda(t)}\right)\right|^{2}\right]dx \geq \epsilon.
\end{align*}

Since $\left\{\ve{u}_{[\lambda(t)]}: t\in [0, +\infty)\right\}$ is pre-compact in $\dot{H}^{1}(\R^{6})$, it follows that $|\mu(t)|\sim|\lambda(t)|$ for $t\in I_{0}$.

As a consequence, we may modify $\lambda(t)$ such that $\left\{\ve{u}_{[\lambda(t)]}: t\in [0, +\infty)\right\}$ remains pre-compact in $\dot{H}^{1}$ with
\begin{align}\label{Kcp}
    \lambda(t)=\mu(t) \quad \text{for all } t\in I_{0}.
\end{align}


\begin{lemma}\label{Lemma11}
There exists a constant $C=C(\delta_{1})>0$ such that for any interval $[t_{1}, t_{2}]\subset [0, \infty)$ we have
\begin{equation}\label{BoundT1}
\int^{t_{2}}_{t_{1}}\delta(t)dt\leq C\sup_{t\in[t_{1},t_{2}]}\tfrac{1}{\lambda(t)^{2}}
\left\{\delta(t_{1})+\delta(t_{2})\right\}.
\end{equation}
\end{lemma}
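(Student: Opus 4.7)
My plan is to integrate the localized virial identity over $[t_1,t_2]$ and control the boundary and interior contributions by combining the modulational decomposition with compactness. Fix $R := C_\ast / \inf_{t \in [t_1, t_2]} \lambda(t)$ for a large constant $C_\ast$ to be chosen, and let $\chi$ be a smooth cutoff with $\chi \equiv 1$ on $I_0 \cap [t_1, t_2]$ and $\chi \equiv 0$ outside $I_0$. Since $E(\ve{u}) = E(\Q)$ and $H(\Q) = 6 E(\Q)$ (Pohozaev), one has $F_\infty[\ve{u}(t)] = 8\kappa \delta(t)$. Integrating Lemma~\ref{VirialModulate} over $[t_1, t_2]$ gives
\[
\int_{t_1}^{t_2} 8\kappa\, \delta(t)\, dt = I_R[\ve{u}(t_2)] - I_R[\ve{u}(t_1)] + \int_{t_1}^{t_2} \bigl[\chi(t) (F_R - F_\infty)[\Q_{[\theta(t), \lambda(t)]}] - (F_R - F_\infty)[\ve{u}(t)]\bigr] dt.
\]

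For the boundary terms, at $t_i$ with $\delta(t_i) < \delta_0$ I apply Proposition~\ref{ModilationFree}: writing $\tilde{\ve{u}}(t_i) = \ve{u}_{[y(t_i), \theta(t_i), \lambda(t_i)]}(t_i) = (1+\alpha(t_i))\Q + \ve{h}(t_i)$ with $\|\ve{h}(t_i)\|_{\dot{H}^1} \sim \delta(t_i)$, a change of variables yields the scaling identity
\[
I_R[\ve{u}(t)] = \lambda(t)^{-2}\, \widetilde{I}_{R\lambda(t)}[\tilde{\ve{u}}(t)],
\]
where $\widetilde{I}_{R'}$ denotes the virial $I_{R'}$ with weight shifted to be centered at $y(t)$. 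Since $\widetilde{I}_{R'}[\Q] = 0$ (as in Lemma~\ref{Virialzero}, because $\Q$ is real-valued), expanding the quadratic virial functional around $\Q$ produces a bilinear cross-term and a purely quadratic term in $\ve{h}(t_i)$; both can be controlled via Hardy and Cauchy--Schwarz (with at worst logarithmic growth in $R\lambda(t_i)$ from weighted integrals of $\Q$) to obtain $|I_R[\ve{u}(t_i)]| \lesssim \delta(t_i)/\lambda(t_i)^2$. When $\delta(t_i) \geq \delta_0$, the crude bound $|I_R[\ve{u}(t_i)]| \lesssim R^2 H(\ve{u}(t_i)) \lesssim R^2 \lesssim \sup_t (1/\lambda(t)^2)$ combined with $1 \leq \delta(t_i)/\delta_0$ gives the same estimate at the cost of a constant depending on $\delta_0$.

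For the interior error, on $I_0 \cap [t_1, t_2]$ (where $\chi \equiv 1$) the integrand is $(F_R - F_\infty)[\Q_{[\theta, \lambda]}] - (F_R - F_\infty)[\ve{u}]$; since $\ve{u}(t)$ is close in $\dot{H}^1$ to a modulation of $\Q$ and $F_R - F_\infty$ is supported in $|x| \geq R$, linearizing in $\ve{u} - (1+\alpha)\Q_{[\theta, \lambda]}$ shows this difference is $O(\eta_R\, \delta(t))$, with $\eta_R \to 0$ as $C_\ast \to \infty$ thanks to the pre-compactness of $\{\ve{u}_{[\lambda(t)]}\}$ (Lemma~\ref{Compacness11}) and the decay of $\Q$. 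Off $I_0$ (where $\chi \equiv 0$), compactness gives $|(F_R - F_\infty)[\ve{u}(t)]| \leq \eta_R$ pointwise; since $\delta(t) \geq \delta_0$ there, the corresponding contribution is bounded by $(\eta_R / \delta_0) \int \delta(t) dt$. Choosing $C_\ast$ large enough that $\eta_R$ is small ensures the entire interior error is at most $\tfrac{1}{2} \int 8\kappa \delta\, dt$, absorbable into the left-hand side, yielding the desired estimate with $C = C(\delta_0)$.

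The hard part will be the refined boundary bound $|I_R[\ve{u}(t_i)]| \lesssim \delta(t_i)/\lambda(t_i)^2$ in the modulation regime: the bilinear cross-term in the expansion involves weighted $L^2$ (or $L^{3/2}$) integrals of $\Q$ and $\nabla\Q$ that are only borderline finite given the slow decay $Q(x) \sim |x|^{-4}$ in dimension six, so $R\lambda(t_i)$ may produce an unwanted logarithmic factor. Removing it requires either exploiting the orthogonality conditions~\eqref{h: Cond} on $\ve{h}(t_i)$ to extract extra cancellation in the cross-term, or absorbing the logarithm into the constant $C(\delta_0)$ by a refined choice of $R$.
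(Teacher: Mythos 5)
Your overall strategy matches the paper's: integrate a modulated localized virial identity over $[t_1,t_2]$, bound the boundary values $I_R[\ve{u}(t_j)]$, and absorb the interior error $F_R-F_\infty$ into the left-hand side using the pre-compactness of $\{\ve{u}_{[\lambda(t)]}\}$ and the modulation estimate. Your treatment of the $\delta(t_j)\geq\delta_0$ boundary case and of the interior error is consistent with the paper's. The gap is in the modulation-regime boundary estimate, and you flag it yourself.

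You aim for the pointwise bound $|I_R[\ve{u}(t_i)]|\lesssim\delta(t_i)/\lambda(t_i)^2$ by first rescaling, $I_R[\ve{u}]=\lambda^{-2}\widetilde{I}_{R\lambda}[\tilde{\ve{u}}]$, and then expanding $\widetilde{I}_{R\lambda}$ around $\Q$. As you correctly observe, the resulting bilinear cross terms involve weighted integrals of $Q$ and $\nabla Q$ at scale $R\lambda$, and since $Q(x)\sim|x|^{-4}$ in $\R^6$ (so $Q\notin L^{3/2}$) these produce logarithmic divergences; you propose two possible fixes (the orthogonality \eqref{h: Cond}, or absorbing the log) but carry out neither, so the argument is incomplete. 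The paper avoids this entirely by \emph{not rescaling} before subtracting. Using Lemma~\ref{Virialzero} ($I_R[\Q_{[\theta,\lambda]}]=0$ for every $\theta$, $\lambda$, because $Q$ is real), it writes $I_R[\ve{u}(t_j)]=I_R[\ve{u}(t_j)]-I_R[\Q_{[\theta(t_j),\lambda(t_j)]}]$ directly at scale $R$, rewrites the integrand as a difference such as $\overline{u}\nabla u-\overline{Q_{[\cdot]}}\nabla Q_{[\cdot]}$, and applies Hardy's inequality through $|\nabla w_R(x)|\lesssim R^2/|x|$, obtaining the cruder but sufficient bound
\[
|I_R[\ve{u}(t_j)]|\lesssim R^2\bigl[\|\ve{u}\|_{L^\infty_t\dot H^1}+\|\Q\|_{\dot H^1}\bigr]\,\|\ve{u}(t_j)-\Q_{[\theta(t_j),\lambda(t_j)]}\|_{\dot H^1}\lesssim_Q R^2\,\delta(t_j).
\]
Only the Hardy quotient $1/|x|$ appears here; no borderline weighted norms of $Q$ arise, so there is no logarithm to remove. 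Since $R=\rho_\epsilon\sup_{t\in[t_1,t_2]}\lambda(t)^{-1}$ (fixed so that the interior error is $O(\epsilon\delta(t))$), this is $\lesssim\rho_\epsilon^2\sup_t\lambda(t)^{-2}\cdot\delta(t_j)$, which is exactly the form the lemma requires. In short, the sharper $\delta(t_i)/\lambda(t_i)^2$ bound is stronger than necessary, and pursuing it is precisely what creates the obstruction you could not close.
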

\begin{proof}
Let $R>1$, which will be determined later. We use the localized virial identities (cf. Lemma \ref{VirialModulate})
with the $\chi(t)$ satisfying
\[
\chi(t)=
\begin{cases}
1& \quad \delta(t)<\delta_{0} \\
0& \quad \delta(t)\geq \delta_{0}.
\end{cases}
\]
From  Lemma \ref{VirialModulate} we have (recall that $F_{\infty}[u(t)]=8\kk\delta(t)$)
\begin{equation}\label{VirilaX}
\frac{d}{dt}I_{R}[u(t)]=F_{\infty}[u(t)]+\EE(t)= 8\kk\delta(t)+\EE(t)
\end{equation}
with
\begin{equation}\label{Error11}
\EE(t)=
\begin{cases}
F_{R}[u(t)]-F_{\infty}[u(t)]& \quad  \text{if $\delta(t)\geq \delta_{0}$}, \\
F_{R}[u(t)]-F_{\infty}[u(t)]-\K[u(t)]& \quad \text{if $\delta(t)< \delta_{0}$},
\end{cases}
\end{equation}
where
\begin{equation}\label{Error22}
\K(t)=F_{R}[\Q_{[\theta(t), \lambda(t)]}]-F_{\infty}[\Q_{[\theta(t), \lambda(t)]}].
\end{equation}

Next we assume the following claims for a moment to conclude the proof.

\textit{{Claim I.}} For $R>1$,  we have
\begin{align}\label{EstimateV11}
	|I_{R}[u(t_{j})]|\lesssim \frac{R^{2}}{\delta_{0}}\delta(t_{j}) \quad& \text{if $\delta(t_{j})\geq \delta_{0}$ for $j=1$, $2$},\\
	\label{EstimateV22}
	|I_{R}[u(t_{j})]|\lesssim R^{2} \delta(t_{j}) \quad &\text{if $\delta(t_{j})< \delta_{0}$ for $j=1$, $2$}.
\end{align}

\textit{{Claim II.}} For $\epsilon>0$, there exists $\rho_{\epsilon}=\rho(\epsilon)>0$ such that if
$R=\rho_{\epsilon}\sup_{t\in [t_{1}, t_{2}]}\tfrac{1}{\lambda(t)}$ we have
\begin{align}\label{EstimateE11}
	|\EE(t)|\leq \frac{\epsilon}{\delta_{0}}\delta(t) \quad &\text{uniformly for $t\in [t_{1}, t_{2}]$ and $\delta(t)\geq \delta_{0}$},\\
	\label{EstimateE22}
|\EE(t)|\leq \epsilon \delta(t)\quad &\text{uniformly for $t\in [t_{1}, t_{2}]$ and $\delta(t)< \delta_{0}$}.
\end{align}

Integrating \eqref{VirilaX} on $[t_{1}, t_{2}]$ and combining the estimates \eqref{EstimateV11}, \eqref{EstimateV22}, \eqref{EstimateE11} 
and \eqref{EstimateE22} together  we have
\[
\int^{t_{2}}_{t_{1}}\delta(t)dt\lesssim 
\frac{\rho_{\epsilon}}{\delta_{0}}\sup_{t\in [t_{1}, t_{2}]}\frac{1}{\lambda(t)^{2}}(\delta(t_{1})+\delta(t_{2}))
+\Big(\frac{\epsilon}{\delta_{0}}+\epsilon\Big)\int^{t_{2}}_{t_{1}}\delta(t)dt.
\]
 Choosing $\epsilon=\epsilon(\delta_{0})$ sufficiently small
we obtain \eqref{BoundT1}.

Thus, it remains to establish the above claims.
\begin{proof}[{Proof of Claim I}] First, suppose that $\delta(t_{j})\geq \delta_{0}$. Then by Hardy inequality we have
\[
|I_{R}[u(t)]|
\lesssim R^{2}\|u\|^{2}_{L^{\infty}_{t}\dot{H}^{1}}\lesssim_{Q} \frac{R^{2}}{\delta_{0}}\delta(t_{j}),
\]
which implies \eqref{EstimateV11}. If instead $\delta(t_{j})< \delta_{0}$, by using the fact that $Q$ is real we get
\begin{align*}
|I_{R}[u(t_{j})]|&\leq \left|2\IM\int_{\R^{6}}\nabla w_{R}(2\overline{u} \nabla u-2\sqrt{\kk}e^{-i\theta(t_{j})}Q_{[\lambda(t_{j})]} 
\nabla [\sqrt{\kk} e^{i\theta(t_{j})}Q_{[\lambda(t_{j})]}])
dx\right|\\
&+
\left|2\IM\int_{\R^{6}}\nabla w_{R}(\overline{v} \nabla v-e^{-2i\theta(t_{j})}Q_{[\lambda(t)]} 
\nabla [e^{2i\theta(t_{j})}Q_{[\lambda(t_{j})]}])dx\right|\\
&\lesssim
R^{2}[\|u\|_{L^{\infty}_{t}\dot{H}^{1}_{x}}+\sqrt{\kk}\|Q\|_{\dot{H}^{1}}]
\|u(t_{j})-e^{i\theta (t_{j})}\sqrt{\kk}Q_{[\lambda(t_{j})]}\|_{\dot{H}^{1}}\\
&+R^{2}[\|v\|_{L^{\infty}_{t}\dot{H}^{1}_{x}}+\|Q\|_{\dot{H}^{1}}]
\|v(t_{j})-e^{i\theta (t_{j})}Q_{[\lambda(t_{j})]}\|_{\dot{H}^{1}}\\
&\lesssim_{Q}R^{2} \delta(t_{j}),
\end{align*}
where in the last inequality we have used estimate \eqref{EstimateOne}.
\end{proof}

\begin{proof}[{Proof of Claim II}]
Assume that $\delta(t)\geq \delta_{0}$. By using \eqref{CompactX} we infer that for each $\epsilon>0$, there exists 
$\rho_{\epsilon}=\rho (\epsilon)>0$  such that
\begin{equation}\label{CompactAgain}
\int_{|x|>\tfrac{\rho_{\epsilon}}{\lambda(t)}}[|\nabla u(t,x)|^{2}+|\nabla v(t,x)|^{2}]dx\ll\epsilon.
\end{equation}
We set
\[
R:=\rho_{\epsilon}\sup_{t\in [t_{1},t_{2}]}\tfrac{1}{\lambda(t)}.
\]
Using the same argument developed above in Lemma~\ref{ZeroVirial} we have
\[
 |F_{R}[u(t)]-F_{\infty}[u(t)]|\leq \epsilon\leq \tfrac{\epsilon}{\delta_{0}} \delta(t)
\quad \text{for every $t\in [t_{1}, t_{2}]$ with $\delta(t)\geq \delta_{0}$}.
\]
This implies estimate \eqref{EstimateE11}.

Next, suppose  $\delta(t)< \delta_{0}$. In order to simplify the notation, 
we put $Q_{1}(t)=e^{i\theta}Q_{\lambda(t)}$ and $Q_{2}(t)=e^{2i\theta}Q_{\lambda(t)}$.
By definition of $\EE(t)$, given in \eqref{Error11}, we can write 
\begin{align}\label{Decomp11}
\EE(t)&=16\kk \int_{|x|\geq R}[(|\nabla u|^{2}+\tfrac{\kk}{2}|\nabla v|^{2})-(|\nabla (\sqrt{\kk} Q_{1})|^{2}+\tfrac{\kk}{2}|\nabla Q_{2}|^{2})]dx
\\\label{Decomp12}
&-16\kk \int_{|x|\geq R}[(\tfrac{3}{2}\overline{v}u^{2})-(\tfrac{3\kk}{2}e^{{i\theta}}Q^{3})]dx\\\label{Decomp33}
&+\int_{|x|\geq R}(-2\kk \Delta \Delta w_{R})[(|u|^{2}+\tfrac{\kk}{2}|v|^{2})-(|\sqrt{\kk}Q_{1}|^{2}+\tfrac{\kk}{2}|Q_{2}|^{2})]dx\\\label{Decomp44}
&-\int_{|x|\geq R}2\kk\Delta[w_{R}(x)](\overline{v}u^{2}-\kk e^{i\theta}Q^{3})dx\\\label{Decomp55}
&+8\kk\RE\int_{|x|\geq R}
 [(\overline{u_{j}} u_{k} +\tfrac{\kk}{2}\overline{v_{j}} v_{k})-
(\overline{\kk\partial_{j}Q_{1}(t)} \partial_{k}Q_{1}(t) +\tfrac{\kk}{2}\overline{\partial_{j}Q_{2}(t)} \partial_{2}Q_{2}(t))]\partial_{jk}[w_{R}]dx.
\end{align}
for all $t\in[t_{1}, t_{2}]$ such that $\delta(t)<\delta_{1}$.
By Hardy and Sobolev inequalities we infer that \eqref{Decomp11}--\eqref{Decomp55} can be estimated by terms of the form
\[
[\|u(t)\|^{\alpha}_{\dot{H}_{x}^{1}(|x|\geq R)}+\|v(t)\|^{\alpha}_{\dot{H}_{x}^{1}(|x|\geq R)}+\|Q_{[\lambda(t)]}\|^{\alpha}_{\dot{H}_{x}^{1}(|x|\geq R)}]
\|\ve{u}(t)-\Q_{\theta(t), \lambda(t)}\|_{\dot{H}_{x}^{1}(|x|\geq R)},
\]
where $\alpha\in \left\{1,2\right\}$. As $\delta(t)<\delta_{1}$, by Proposition~\ref{ModilationFree} and \eqref{CompactAgain},
above term is further estimate by
\[
[\|u(t)\|^{\alpha}_{\dot{H}_{x}^{1}(|x|\geq \tfrac{\rho_{\epsilon}}{\lambda(t)})}+\|v(t)\|^{\alpha}_{\dot{H}_{x}^{1}(|x|\geq \tfrac{\rho_{\epsilon}}{\lambda(t)})}+\|Q\|^{\alpha}_{\dot{H}_{x}^{1}(|x|\geq \rho_{\epsilon})}]\delta(t)
\lesssim \epsilon\delta(t),
\]
for $\rho_{\epsilon}$ sufficiently large depending of $Q$. This completes the proof of Claim II.
\end{proof}

\end{proof}

\begin{proposition}[Control of the variations of the parameter $\lambda(t)$]\label{Spatialcenter}
Let $[t_{1}, t_{2}]$ be an interval of $(0, \infty)$ with $t_{1}+\tfrac{1}{\lambda(t_{1})}\leq t_{2}$. Then there exists  $C>0$ such that
\begin{equation}\label{BoundCenter}
\left|\frac{1}{\lambda(t_{2})^{2}}-\frac{1}{\lambda(t_{1})^{2}}\right|\leq C_{0}\int^{t_{2}}_{t_{1}}\delta(t)\, dt.
\end{equation}
\end{proposition}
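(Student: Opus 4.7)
The plan is to combine the modulation estimate from Proposition~\ref{ModilationFree} (where applicable) with the compactness property \eqref{CompactX} (elsewhere), following the strategy of Duyckaerts--Merle. I would decompose $[t_1, t_2] = A \cup B$, where $A := [t_1, t_2] \cap I_0$ is the subset on which $\delta(t) < \delta_0$ and $B := [t_1, t_2] \setminus A$ is its closed complement on which $\delta(t) \geq \delta_0$. Since $\lambda$ is continuous on $[t_1, t_2]$ (so that $1/\lambda^2$ is continuous as well), the total variation bound \eqref{BoundCenter} will follow from controlling the increments of $1/\lambda^2$ accumulated on each of the two pieces separately.

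On the good set $A$, I would apply Proposition~\ref{ModilationFree} directly. The estimate \eqref{EstimateFree} gives $|\lambda'(t)| \lesssim \lambda(t)^3 \delta(t)$, and hence
\[
\left|\frac{d}{dt}\frac{1}{\lambda(t)^2}\right| \,=\, \frac{2\,|\lambda'(t)|}{\lambda(t)^3} \,\lesssim\, \delta(t), \qquad t \in A.
\]
Summing over the (at most countably many) maximal open connected components of $A$ then produces a total contribution bounded by $C\int_A \delta(t)\,dt$.

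On the bad set $B$, where $\delta(t) \geq \delta_0$, the key observation is that $|B| \leq \delta_0^{-1}\int_B \delta(t)\,dt$, so it suffices to prove that $1/\lambda^2$ is uniformly Lipschitz on $B$ with an absolute constant. I would extract this from the pre-compactness of $\{\ve{u}_{[\lambda(t)]}(t) : t \geq 0\}$ in $\dot{H}^1$ provided by Lemma~\ref{Compacness11}, together with relation \eqref{Kcp}: compactness and continuity force $\lambda$ to be bounded above and away from $0$ on the compact interval $[t_1, t_2]$, while local well-posedness (Lemma~\ref{ConSNLS}) applied at the natural time scale $1/\lambda^2$ supplies a uniform modulus of continuity for $t \mapsto \ve{u}_{[\lambda(t)]}(t)$; translated into an estimate on $\lambda$ itself, this gives $\lambda(t) \sim \lambda(t_0)$ on any subinterval of length $\lesssim 1/\lambda(t_0)^2$ intersecting $B$. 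A covering/telescoping argument across the components of $B$, leveraging the hypothesis $t_1 + 1/\lambda(t_1) \leq t_2$ to fix the reference scale, then yields $|1/\lambda(b)^2 - 1/\lambda(a)^2| \lesssim b - a$ on each such component. Summing the contributions from $A$ and $B$ produces \eqref{BoundCenter}.

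The main obstacle is precisely the step on $B$: outside $I_0$ there is no differential bound of modulation type available for $\lambda$, so the Lipschitz control must be extracted purely from compactness. The hypothesis $t_1 + 1/\lambda(t_1) \leq t_2$ is essential here, as it guarantees the interval is long enough at the intrinsic scale of $\ve{u}(t_1)$ for the uniform modulus of continuity furnished by compactness to translate into quantitative Lipschitz control of $\lambda$, ruling out pathological rapid oscillations that would otherwise be invisible at the given resolution.
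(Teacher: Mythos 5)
Your proposal follows the Duyckaerts--Merle strategy and captures the two regimes correctly, but it contains a genuine gap on the ``bad'' set $B$, and the missing ingredient is precisely the dichotomy that the paper establishes as a separate step.

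You decompose $[t_1,t_2]$ pointwise into $A=\{\delta<\delta_0\}$ and $B=\{\delta\geq\delta_0\}$, handle $A$ via the modulation bound $|\lambda'/\lambda^3|\lesssim\delta$, and then claim that on each component $(a,b)$ of $B$ one has $\bigl|\tfrac{1}{\lambda(b)^2}-\tfrac{1}{\lambda(a)^2}\bigr|\lesssim b-a$ ``from compactness.'' But the compactness/local well-posedness argument you invoke (which is the paper's Step~1) only yields $\lambda(t)\sim\lambda(s)$ for $|t-s|\leq 1/\lambda(s)^2$, i.e.\ a bound of the form $\bigl|\tfrac{1}{\lambda(t)^2}-\tfrac{1}{\lambda(s)^2}\bigr|\lesssim \tfrac{1}{\lambda(s)^2}$. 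This controls the increment of $1/\lambda^2$ \emph{at the natural scale}, where $|t-s|\sim 1/\lambda(s)^2$, but it is \emph{not} a Lipschitz bound: when $b-a$ is much smaller than $1/\lambda(a)^2$, it gives $\tfrac{1}{\lambda(a)^2}\gg b-a$, which is useless. Since $\lambda$ is only defined up to the compactness modulus and is not a priori differentiable outside $I_0$, there is nothing to rule out $B$ consisting of many tiny components on which the accumulated oscillation of $1/\lambda^2$ is not controlled by $|B|$.

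The paper closes this gap with a second compactness step (Step~2 of the actual proof): for some $\delta_1>0$, on \emph{every} interval $[T,T+1/\lambda(T)^2]$ either $\delta<\delta_0$ throughout or $\delta\geq\delta_1$ throughout. Consequently if $\delta(a)\geq\delta_0$, then $\delta\geq\delta_1$ on the whole natural-scale interval starting at $a$; the bad region therefore always absorbs a block of length $\gtrsim 1/\lambda(a)^2$, so $\int\delta\gtrsim\delta_1/\lambda(a)^2$, which is exactly the size of the jump Step~1 permits. The paper then subdivides $[t_1,t_2]$ into intervals of the natural length rather than by the pointwise sign of $\delta-\delta_0$, applies the dichotomy on each, and telescopes. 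Your proposal would be repaired by proving this dichotomy and switching from the pointwise decomposition $A\cup B$ to the interval-based subdivision; as written, the Lipschitz claim on $B$ does not follow from the tools you cite.
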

\begin{proof}
\textsl{Step 1.} There exists a constant $C_{1}$ such that
\begin{equation}\label{step11}
\tfrac{\lambda(s)\check{}}{\lambda(t)}+\tfrac{\lambda(t)}{\lambda(s)}\leq C_{1} \quad \text{for all $t$, $s\geq 0$ such that $|t-s|\leq \tfrac{1}{\lambda(s)^{2}} $}.
\end{equation}
Indeed, suppose that $s_{n}$, $t_{n}$ obey 
\begin{align}\label{CBound}
	|t_{{n}}-s_{n}|\leq \tfrac{1}{\lambda(s_{n})^{2}} \qtq{but}
\tfrac{\lambda(s_{n})^{2}}{\lambda(t_{n})^{2}}+\tfrac{\lambda(t_{n})^{2}}{\lambda(s_{n})^{2}}\to \infty.
\end{align}
Up subsequence, we can suppose that
\[
\lim_{t\to \infty}\lambda(s_{n})^{2}(t_{n}-s_{n})=\tau_{0}\in[-1, 1].
\]
Consider the solution of \eqref{SNLS}
\[
\ve{v}_{n}(\tau,y)=\lambda(s_{n})^{-2}\ve{u}\( \tfrac{\tau}{\lambda(s_{n})^{2}}+s_{n},\tfrac{y}{\lambda(s_{n})}  \).
\]
We can use compactness to find $\ve{v}_{0}\in \dot{H}^{1}$ such that
\[
\ve{v}_{n}(0,y)\to \ve{v}_{0}(y) \qtq{in $\dot{H}^{1}$ as $n\to \infty$}.
\]
Then $E(\ve{v})=E(\Q)$ and $H(\ve{v}_{0})\leq H(\Q)$. Consider $\ve{v}$ be the solution of \eqref{SNLS} with initial data $\ve{v}_{0}$.
Lemma~\ref{GlobalW} implies that $\ve{v}$ is globally defined and
\[
\ve{w}_{n}(y)=\ve{v}_{n}(\lambda(s_{n})^{2}(t_{n}-s_{n}), y)=
\lambda(s_{n})^{-2}\ve{u}\( t_{n},\tfrac{y}{\lambda(s_{n})}\)\to \ve{v}(\tau_{0}, y).
\]
On the other hand, by compactness we see that
\[
\tfrac{1}{\lambda(t_{n})^{2}}\ve{u}\( t_{n},\tfrac{y}{\lambda(t_{n})}\)=
\tfrac{\lambda(s_{n})^{2}}{\lambda(t_{n})^{2}}\ve{w}_{n}\(\tfrac{\lambda(s_{n})}{\lambda(t_{n})}y\)\to \varphi\neq 0
\]
in $\dot{H}^{1}$, which implies that $\tfrac{\lambda(s_{n})}{\lambda(t_{n})}$ and 
$\tfrac{\lambda(t_{n})}{\lambda(s_{n})}$ are bounded, contradicting \eqref{CBound}.

\textsl{Step 2.}
There exists $\delta_{1}>0$ such that either
\begin{equation}\label{MinMax}
\inf_{t\in [T, T+\tfrac{1}{\lambda(T)^{2}}]}\delta(t)\geq \delta_{1} \quad \text{or}\quad
\sup_{t\in [T,T+\tfrac{1}{\lambda(T)^{2}}]}\delta(t)<\delta_{0}\quad \text{for any  $T\geq 0$}.
\end{equation}
Indeed, \eqref{MinMax} is proved by contradiction. Suppose that there exist $t_{n}^{\ast}\geq 0$ and two sequences
$t_{n}$, $t^{\prime}_{n}\in  [t_{n}^{\ast}, t_{n}^{\ast}+\tfrac{1}{\lambda(t_{n}^{\ast})^{2}}]$ such that
\begin{align}\label{ContraStep2}
&\delta(t_{n})\to 0 \quad \text{and}\quad \delta(t^{\prime}_{n})\geq \delta_{1} \quad \text{as $n\to \infty$}.
\end{align}
By Step 1, we see that $\tfrac{\lambda(t_{n})}{\lambda(t^{\ast}_{n})}\leq C$ for some $C>0$. Thus, possibly for a subsequence only,
we may assume that
\begin{align}\label{ContraLimit}
& \lambda(t_{n})^{2}(t_{n}-t^{\prime}_{n})\to t^{\ast}\in[-C,C].
\end{align}
Consider
\[
\ve{v}_{n}(\tau,y)=\lambda(t_{n})^{-2}\ve{u}\( \tfrac{\tau}{\lambda(t_{n})^{2}}+t_{n},\tfrac{y}{\lambda(t_{n})}  \).
\]
Now, since $\delta(t_{n})\to 0$, by compactness we obtain that there exists (cf. Proposition~\ref{TheGN})
$\lambda_{0}>0$ such that 
\begin{equation}\label{Step2Conver}
\text{$\ve{v}_{n}(0, \cdot)\to \Q_{[\theta_{0},\lambda_{0}]}$ strongly in $\dot{H}^{1}$ as $n\to \infty$}.
\end{equation}
Therefore, putting together \eqref{ContraLimit} and \eqref{Step2Conver} we get
\[
\lambda(t_{n})^{-2}\ve{u}\(t^{\prime}_{n},\tfrac{y}{\lambda(t_{n})}  \)=
\ve{v}_{n}(\lambda(t_{n})^{2}(t_{n}-t^{\prime}_{n}), y)\to \Q_{[\theta_{0},\lambda_{0}]},
\]
which is a contradiction with \eqref{ContraStep2}.

\textsl{Step 3.} Next we show
\begin{align}\label{acD}
	0\leq t_{1}\leq \tilde{t_{1}}\leq \tilde{t_{2}}\leq t_{2}=t_{1}+\frac{1}{C^{2}_{1}\lambda(t_{2})^{2}}
\Rightarrow 
\left|\tfrac{1}{\lambda(\tilde{t_{2}})^{2}}-\tfrac{1}{\lambda(\tilde{t_{1}})^{2}}\right|\leq C\int^{t_{2}}_{t_{1}}\delta(t)\,dt.
\end{align}
for some $C>0$, where $C_1$ is defined in \eqref{step11}. Indeed, we can assume that $\sup_{t\in [t_{1}, t_{2}]}\delta(t)<\delta_{0}$ or
$\inf_{t\in [t_{1}, t_{2}]}\delta(t)\geq \delta_{1}$. In the first case, we obtain \eqref{acD} by time-integration 
and $\left|\tfrac{\lambda^{\prime}({t})}{\lambda(t)^{3}}\right|\lesssim \delta(t)$ for $\delta(t)<\delta_{0}$ (cf. \eqref{EstimateFree}). 
In the second case, we have $\int^{t_{2}}_{t_{1}}\delta(t)\,dt\geq \int^{t_{2}}_{t_{1}}\delta_{1}\,dt$ and
\[
|\tilde{t_{1}}-\tilde{t_{2}}|\leq \tfrac{1}{C^{2}_{1}\lambda(t_{1})^{2}}\leq \tfrac{1}{\lambda(\tilde{t_{1}})^{2}}.
\]
Thus, by Step 1 we get (note that $C_{1}\geq1$)
\[
\left|\tfrac{1}{\lambda(\tilde{t_{2}})^{2}}-\tfrac{1}{\lambda(\tilde{t_{1}})^{2}}\right|
\leq\tfrac{2 C^{2}_{1}}{\lambda(\tilde{t_{1}})^{2}}
\leq \tfrac{2 C^{4}_{1}}{\lambda({t_{1}})^{2}}
=2C^{5}_{1}|t_{2}-t_{1}|\leq \tfrac{2C^{5}_{1}}{\delta_{1}}\int^{t_{2}}_{t_{1}}\delta(t)dt
\]
To complete the proof, it remains divide $[t_{1}, t_{2}]$ into small intervals and stick together the inequalities above to obtain 
\eqref{BoundCenter}.
\end{proof}

\begin{proof}[{Proof of Proposition~\ref{CompacDeca}}]
By Lemmas~\ref{ZeroVirial11} and \ref{Lemma11}, and Proposition~\ref{Spatialcenter}, and using the same argument developed in  \cite[Lemma 6.9]{MIWUGU2015}, one can show that 
$\tfrac{1}{\lambda(t)^{2}}$ is bounded on $[0,\infty)$.  Indeed, we use the standard localized virial to find a sequence $t_n\to\infty$ with $\delta(t_n)\to 0$ (cf. Lemma~\ref{ZeroVirial}). Then,  using the modulated virial and the fact that the integral of $\delta$ controls the variation of
 $\tfrac{1}{\lambda(\cdot)^{2}}$ (cf. Proposition~\ref{Spatialcenter}), we can obtain that $\tfrac{1}{\lambda(t)^{2}}\lesssim \tfrac{1}{\lambda(t_{N})^{2}}$ for all $t\geq t_N$ for some sufficiently large $N$.

Applying Lemma~\ref{Lemma11}, we then infer that there exists a constant $C>0$ so that
\[
\int^{s}_{T}\delta(t)\,dt\leq C\left\{\delta(T)+\delta(s)\right\}
\qtq{with $[T, s]\subset [0, \infty]$.}
\]
Applying this with a sequence $t_n\to\infty$ such that $\delta(t_n)\to 0$, we see that $\int^{\infty}_{T}\delta(t)\,dt\leq C\delta(T)$ for all $T\geq 0$.  Gronwall's lemma then implies 
\begin{align}\label{PIN}
	\int^{\infty}_{T}\delta(t)\,dt\leq C e^{-c T}.
\end{align}
for some $C$, $c>0$. Combining this inequality with estimate \eqref{alfaE} and employing the same argument as in Proposition~\ref{SupercriQ} (cf. \eqref{deltaN}) below, one can show that
\begin{align}\label{DeltaZ}
	\lim_{t\to \infty}\delta(t)=0.
\end{align}
In view of \eqref{DefH}, to conclude the proof of proposition, it is sufficient to show that there exist $\lambda_{\infty}\in (0, +\infty)$  and $\theta\in \R$
such that
\begin{align}\label{EstimaC}
	\delta(t)+|\alpha(t)|+\|\ve{h}(t)\|_{\dot{H}^{1}}+|\theta(t)-\theta_{\infty}|
	+\left|\frac{1}{\lambda(t)^{2}}-\frac{1}{\lambda_{\infty}^{2}}\right|\leq C e^{-c T}.
\end{align}
Note that by Lemma~\ref{BoundCenter} and \eqref{PIN} we have
there exists  $C>0$ such that
\[
\left|\tfrac{1}{\lambda(t_{2})^{2}}-\tfrac{1}{\lambda(t_{1})^{2}}\right|\leq C e^{-ct_{2}}
\qtq{for $t_{1}+\tfrac{1}{\lambda(t_{1})}\leq t_{2}$.}
\]
Then, Cauchy criteria of convergence implies that there exists $\lambda_{\infty}\in (0, +\infty]$ such that
\[
\left|\tfrac{1}{\lambda(t)^{2}}-\tfrac{1}{\lambda_{\infty}^{2}}\right|\leq C e^{-ct}.
\]
We claim that $\lambda_{\infty}<+\infty$. Indeed, suppose by contradiction that $\lambda_{\infty}=+\infty$. 
From Lemma~\ref{ZeroVirial11} there exists $t_{n}\to +\infty$ such that
\[
\delta(t_{0})\leq \frac{1}{2C_{0}} \qtq{and} \delta(t_{n})\to 0
\]
as $n\to \infty$. Here $C_{0}$ is the constant of estimate \eqref{BoundCenter}. Consider $0\leq a\leq b$.
For $n$ large we get $b+\tfrac{1}{\lambda(b)^{2}}<t_{n}$. By estimate \eqref{BoundCenter}, we find
\[
\left|\tfrac{1}{\lambda(b)^{2}}-\tfrac{1}{\lambda(t_{n})^{2}}\right|\leq C_{0}
\sup_{a\leq t\leq t_{n}}\(\frac{1}{\lambda(t)^{2}}\)\left\{\delta(a)+\delta(t_{n})  \right\}.
\]
Thus, we obtain as $n\to+\infty$,
\[
\sup_{t\geq a}\(\frac{1}{\lambda(t)^{2}}\)\leq C_{0}\delta(a)\sup_{t\geq a}\(\frac{1}{\lambda(t)^{2}}\).
\]
Choosing $a=t_{0}$, we get a contradiction. This proves the bound on $\lambda(t)$ in \eqref{EstimaC}.

On the other hand, as $\alpha(t)\sim \delta(t)$, \eqref{DeltaZ} implies that $\lim_{t\to \infty}\alpha(t)=0$.  Therefore, from \eqref{EstimateFree} we get
\[
\delta(t)+\|\ve{h}(t)\|_{\dot{H}^{1}}\sim |\alpha(t)|
\leq C\int^{+\infty}_{t}|\alpha^{\prime}(s)|ds\leq 
C\int^{+\infty}_{t}\lambda(s)^{2}\delta(s)ds\leq Ce^{-ct}.
\]
Finally, since $\int^{\infty}_{T}|\theta^{\prime}(t)|dt\leq Ce^{-cT}$, we see that there exists $\theta_{\infty}\in \R$ such that
$|\theta(t)-\theta_{\infty}|\leq Ce^{-ct}$. Thus we deduce the bound on $\theta(t)$, which concludes the proof. 
\end{proof}

\begin{proof}[{Proof of Corollary~\ref{ClassC}}]
Suposse that $\ve{u}$ satisfies \eqref{PropCon11} and \eqref{Infinity10}. Arguing as above, one can construct $\lambda(t)$ such that the set
 $\left\{\ve{u}_{[\lambda(t)]}(t)): t\in \R\right\}$ is  pre-compact in $\dot{H}^{1}$.  Moreover, we can prove that $\frac{1}{\lambda(t)^{2}}$ is bounded and 
\[
\lim_{t\to -\infty}\delta(t)=\lim_{t\to \infty}\delta(t)=0.
\]
Also, modifying the proof of Lemma~\ref{Lemma11}, we get
\[
\int^{n}_{-n}\delta(t)\,dt\leq C(\delta(n)+\delta(-n))\quad \text{for all $n\in \N$}.
\]
Sending $n\to \infty$, we find that $\delta(t)\equiv 0$, which is a contradiction with \eqref{PropCon11}.\end{proof}

\section{Convergence for supercritical threshold solution}\label{S: SuperThres}

This section is devoted to establishing the following result:

\begin{proposition}\label{SupercriQ}
Fix $\kk>0$. Let $\ve{u}\in H^{1}(\R^{6})\times H^{1}(\R^{6})$ be a radial solution to \eqref{SNLS} satisfying 
\begin{align*}
E(\ve{u}_{0})=E(\Q) \qtq{and} H(\ve{u}_{0})>H(\Q),
\end{align*}
which is globally defined for the positive times. Then there exist $\theta_{0}\in \R$, $\lambda_{0}>0$, $c$, $C>0$ such that
\begin{align}\label{ExpoAbove}
\|\ve{u}(t)-\Q_{[\theta_{0}, \lambda_{0}]}\|_{\dot{H}^{1}}\leq Ce^{-c t}
\qtq{for all $t\geq 0$.}
\end{align}
Moreover, the negative time of existence is finite.
\end{proposition}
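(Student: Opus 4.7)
The strategy is to follow the virial--modulation scheme of \cite{DuyMerle2009}, adapted to the super-critical side of the threshold. First, continuity and the variational characterization in Proposition~\ref{TheGN} propagate the assumption $H(\ve{u}_0) > H(\Q)$, so $\delta(t) = H(\ve{u}(t)) - H(\Q) > 0$ throughout the life span. Combining $E(\ve{u}) = E(\Q)$ with the Pohozaev identity $6E(\Q) = H(\Q)$ rigidifies the unlocalized virial:
$$F_\infty[\ve{u}(t)] = 8\kappa\bigl(6E(\ve{u}) - H(\ve{u}(t))\bigr) = -8\kappa\,\delta(t) < 0.$$

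The first main step is to extract $\delta(t_n) \to 0$ along some sequence $t_n \to +\infty$ using the localized virial $I_R[\ve{u}(t)]$. Since we lack a scattering dichotomy in the super-critical regime, I would adapt the Kenig--Merle concentration--compactness scheme underlying Lemma~\ref{Compacness11} to the threshold level $E(\ve{u}) = E(\Q)$: the positive-time global existence hypothesis combined with the strict negativity $F_\infty < 0$ should yield a scale $\lambda(t)$ such that $\{\ve{u}_{[\lambda(t)]}(t) : t \geq 0\}$ is pre-compact in $\dot{H}^1$. Pre-compactness forces the remainder $(F_R - F_\infty)[\ve{u}(t)]$ below any $\epsilon > 0$ once $R \geq C_\epsilon/\inf_t \lambda(t)$, and combined with the uniform bound $|I_R[\ve{u}(t)]| \lesssim R^2$ (via Hardy applied to $\dot{H}^1$ data), integration of the virial identity yields $\int_0^\infty \delta(t)\,dt < \infty$ and in particular produces the desired sequence.

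Once $\delta(t_n) \to 0$, I would bootstrap to $\delta(t) < \delta_0$ on $[T, \infty)$ for $T$ large, so that Proposition~\ref{ModilationFree} applies globally on that half-line and yields the decomposition $\ve{u}_{[y(t), \theta(t), \lambda(t)]}(t) = (1 + \alpha(t))\Q + \ve{h}(t)$. Running the modulated virial of Lemma~\ref{VirialModulate} exactly as in the proof of Lemma~\ref{Lemma11} produces
$$\int_{t_1}^{t_2} \delta(t)\,dt \lesssim \sup_{[t_1, t_2]} \tfrac{1}{\lambda(t)^2}\bigl(\delta(t_1) + \delta(t_2)\bigr);$$
combined with a Proposition~\ref{Spatialcenter}-style control on $\lambda(t)$, letting $t_2 \to \infty$ and applying Gronwall upgrade this to $\delta(t) \lesssim e^{-ct}$. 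The parameter ODE \eqref{EstimateFree} then has exponentially integrable right-hand side, so $\lambda(t) \to \lambda_0 \in (0, \infty)$, $\theta(t) \to \theta_0$, and by radiality one can enforce $y(t) \equiv 0$; together with $\|\ve{h}(t)\|_{\dot{H}^1} + |\alpha(t)| \sim \delta(t)$ from Lemma~\ref{BoundI}, this yields \eqref{ExpoAbove}.

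For the negative-time assertion I would argue by contradiction. If $T_-(\ve{u}) = -\infty$, the same backward analysis gives $\ve{u}(t) \to \Q_{[\theta_-, \lambda_-]}$ exponentially as $t \to -\infty$. By Lemma~\ref{Virialzero}, $I_R[\Q_{[\theta, \lambda]}] \to 0$ for $R$ large along the limit, hence $I_R(t) \to 0$ at both $\pm \infty$; integrating $\tfrac{d}{dt} I_R = -8\kappa\,\delta + (F_R - F_\infty)$ over $\R$ and sending $R \to \infty$ would force $\int_\R \delta(t)\,dt = 0$, contradicting $\delta > 0$. \emph{The main obstacle} is the pre-compactness step in the second paragraph: unlike the sub-threshold setting of Lemma~\ref{Compacness11}, there is no scattering dichotomy to invoke, so compactness must be extracted directly from the positive-time global existence hypothesis together with the strict virial monotonicity via a threshold-level profile decomposition. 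Once this is in place, the spectral coercivity (Proposition~\ref{CoerQuadra}) and modulation analysis (Proposition~\ref{ModilationFree}) combine in a standard way to deliver the exponential bound.
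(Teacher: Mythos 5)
Your scheme diverges from the paper's at a critical point, and the divergence is not merely a detail: you identify the pre-compactness of $\{\ve{u}_{[\lambda(t)]}(t)\}$ as the main missing ingredient and propose to obtain it from a "threshold-level profile decomposition," but this compactness is neither established in your argument nor available in the supercritical regime $H(\ve{u}_0)>H(\Q)$ (Lemma~\ref{Compacness11} explicitly requires $H(\ve{u}_0)<H(\Q)$, where it comes from the scattering dichotomy, which you correctly note is absent here). Without that compactness, your bound $|F_R-F_\infty|<\epsilon$ for $R\geq C_\epsilon/\inf\lambda$, the resulting $\int_0^\infty\delta<\infty$, and your entire negative-time contradiction via $I_R(t)\to0$ at $\pm\infty$ all collapse. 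So as written there is a genuine gap that the rest of the argument depends on.

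The paper avoids compactness altogether by exploiting the hypothesis $\ve{u}_0\in H^1\times H^1$ rather than only $\dot{H}^1$. Two things replace the missing compactness. First, in Lemma~\ref{BoundDQ} the localized-virial error $F_R-F_\infty$ is bounded directly: the $L^2$-critical pieces are controlled by $R^{-2}\|\ve{u}_0\|_{L^2}^2$ via Hardy, and the cubic pieces by the radial Sobolev (Strauss) inequality, which gives $\int_{|x|\geq R}(|u|^3+|v|^3)\lesssim R^{-5/2}\|\ve{u}_0\|_{L^2}^{5/2}(\|\nabla u\|_{L^2}^{1/2}+\|\nabla v\|_{L^2}^{1/2})$; combined with a modulation-based refinement when $\delta(t)<\delta_1$ (using mass conservation to bound $\lambda(t)$ below, estimate \eqref{InfP}), this yields the uniform monotonicity $\tfrac{d}{dt}I_R[\ve{u}]\leq -4\kappa\delta(t)$ for $R$ large, no compactness needed. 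Second, and this is the key structural idea you miss, the paper introduces the auxiliary functional $V_R(t)=\int(2\kappa|u|^2+|v|^2)w_R$ with $\tfrac{d}{dt}V_R=I_R[\ve{u}]$ and $\tfrac{d^2}{dt^2}V_R\leq -4\kappa\delta<0$; concavity plus positivity of $V_R$ on $[0,\infty)$ forces $I_R>0$, hence $4\kappa\int_t^T\delta\leq I_R(t)\lesssim R^2\delta(t)$ and Gronwall gives $\int_t^\infty\delta\leq Ce^{-ct}$ directly. The negative-time blow-up is then the same convexity argument run backwards: if the solution were global, $\tfrac{d}{dt}V_R\to0$ at both $\pm\infty$ while $\tfrac{d^2}{dt^2}V_R<0$ everywhere, which is impossible. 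Your $I_R(t)\to0$ integration argument would also give a contradiction if it were available, but it hinges on the compactness you haven't supplied; the paper's $V_R$-concavity route is both more elementary and actually closes.
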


The proof of this proposition is based on the following two lemmas.

\begin{lemma}\label{BoundDQ}
With $\ve{u}(t)$ as in Proposition~\ref{SupercriQ}, there exists $R_{1}>0$ such that for $R\geq R_{1}$ we have
\begin{align}\label{DoDeri}
\tfrac{d}{dt}I_{R}[\ve{u}(t)]\leq -4\kk\delta(t) \qtq{for all $t\geq 0$.}
\end{align}
\end{lemma}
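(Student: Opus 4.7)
The natural starting point is Lemma~\ref{VirialIden}:
\[
\tfrac{d}{dt}I_R[\ve{u}(t)]=F_R[\ve{u}(t)]=F_\infty[\ve{u}(t)]+\bigl(F_R[\ve{u}(t)]-F_\infty[\ve{u}(t)]\bigr).
\]
Combining $E(\ve{u})=E(\Q)$ with the Pohozaev identity $H(\Q)=6E(\Q)$ and the relation $P(\ve{u})=H(\ve{u})-2E(\ve{u})$, the principal term becomes
\[
F_\infty[\ve{u}(t)]=8\kk[2H(\ve{u})-3P(\ve{u})]=8\kk[6E(\Q)-H(\ve{u}(t))]=-8\kk\delta(t),
\]
so the lemma reduces to proving $F_R[\ve{u}(t)]-F_\infty[\ve{u}(t)]\leq 4\kk\delta(t)$ for $R$ large. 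Note that $\delta(t)>0$ for every $t\geq 0$: if $\delta(t_0)=0$ then the sharp Gagliardo--Nirenberg characterization (Proposition~\ref{TheGN}) combined with $E=E(\Q)$ forces $\ve{u}(t_0)\in\G$, and backward uniqueness for \eqref{SNLS} then contradicts $H(\ve{u}_0)>H(\Q)$.

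For the error, I would choose the cutoff $\phi$ so that $\partial_r^2\phi\leq 2$; this makes the gradient contribution to $F_R-F_\infty$, which contains the non-positive weight $\partial_r^2 w_R-2$ on $|x|\geq R$, non-positive. What remains is
\[
F_R[\ve{u}(t)]-F_\infty[\ve{u}(t)]\leq C\int_{|x|\geq R}(|u|^3+|v|^3)\,dx+CR^{-2}\int_{R\leq|x|\leq 2R}(|u|^2+|v|^2)\,dx.
\]
Because $\ve{u}(t)$ is radial and in $H^1$, the dimension-six radial Sobolev bound
\[
|\ve{u}(t,x)|\lesssim|x|^{-5/2}\|\ve{u}(t)\|_{L^2}^{1/2}\|\nabla\ve{u}(t)\|_{L^2}^{1/2}\qtq{on}|x|\geq 1,
\]
together with conservation of the mass $\|u\|_{L^2}^2+\|v\|_{L^2}^2$ (a direct multiplier calculation exploiting the algebraic structure of the nonlinearity $(\overline u v, u^2)$) and the uniform $\dot H^1$ bound on $[0,\infty)$, yields the uniform tail bound $|F_R[\ve{u}(t)]-F_\infty[\ve{u}(t)]|\leq C_0R^{-3/2}$.

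To close the argument I would combine these with a uniform lower bound $\delta(t)\geq\delta_\ast>0$: arguing by contradiction, if $\delta(t_n)\to 0$ along some sequence, then Proposition~\ref{ModilationFree} and Lemma~\ref{BoundI} place $\ve{u}(t_n)$ in an $o(1)$-neighborhood of the orbit $\G$ in $\dot H^1$, and the trichotomy classification forbids this for a strictly supercritical solution. Choosing $R_1$ so that $C_0R_1^{-3/2}\leq 4\kk\delta_\ast$ then gives, for every $R\geq R_1$,
\[
\tfrac{d}{dt}I_R[\ve{u}(t)]\leq -8\kk\delta(t)+4\kk\delta_\ast\leq -4\kk\delta(t).
\]

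\textbf{Main obstacle.} The delicate point is the lower bound on $\delta(t)$: the special solution $\G^+$ of Theorem~\ref{Gcharc}(iii) satisfies $\delta(t)\to 0$ as $t\to+\infty$, so no universal lower bound can hold in the supercritical regime. The correct refinement is to replace the crude tail estimate $C_0R^{-3/2}$ by a bound proportional to $\delta(t)$, exploiting that $\Q$ is a genuine stationary solution of \eqref{SNLS} (so $F_R[\Q_{[y,\theta,\lambda]}]=0$ for every $R>0$ by Lemma~\ref{VirialIden}) and that modulation yields $\ve{u}(t)=\Q_{[y(t),\theta(t),\lambda(t)]}+O(\delta(t))$ whenever $\delta(t)$ is small, making the Lipschitz error $|F_R[\ve{u}]-F_R[\Q_{[y,\theta,\lambda]}]|$ itself of order $\delta(t)$.
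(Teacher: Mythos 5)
Your framework is correct---the virial identity, the rewrite $F_\infty[\ve{u}]=-8\kk\delta(t)$ via Pohozaev, the choice $\partial_r^2 w_R\leq 2$ to make the gradient contribution to $F_R-F_\infty$ non-positive, and the radial Strauss inequality for the tail---but the proof as written does not close, and your ``main obstacle'' paragraph correctly diagnoses why: there is no uniform lower bound $\delta(t)\geq\delta_\ast$, precisely because $\G^+$ is itself a supercritical solution with $\delta(t)\to 0$. So the argument you actually run fails, and what remains is a sketch of the fix rather than a proof.

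The sketch is pointed in the right direction (the paper's Step 2 does estimate the tail by $C_\ast[\delta(t)^2+R^{-2}\delta(t)]$ near the orbit, using $F_R[\Q_{[\theta,\lambda]}]-F_\infty[\Q_{[\theta,\lambda]}]=0$ and $\|\ve{u}_{[\theta(t),\lambda(t)]}-\Q\|_{\dot H^1}\sim\delta(t)$), but two essential points are missing. First, the change of variables turns $A_R(\ve{u}(t))$ into $A_{R\lambda(t)}(\ve{V}(t)+\Q)$, so the estimate degenerates as $\lambda(t)\to 0$; the paper establishes $\lambda_{\mathrm{inf}}:=\inf\{\lambda(t):\delta(t)\leq\delta_1\}>0$ by comparing the conserved mass $\|\ve{u}_0\|_{L^2}$ against $\tfrac{1}{\lambda(t)^2}\int_{|x|\leq 1}Q^2$---this is the step that uses the hypothesis $\ve{u}_0\in L^2$ and it cannot be skipped. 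Second, modulation only applies where $\delta(t)<\delta_0$; on the complementary set the paper falls back on the general bound $|F_R-F_\infty|\leq C_0[R^{-2}+R^{-5/2}(\delta(t)+H(\Q))^{1/4}]$ and shows the auxiliary function $f_R(\delta)=C_0[R^{-2}+R^{-5/2}(\delta+H(\Q))^{1/4}]-4\kk\delta$ is concave, hence nonpositive on $[\delta_2,\infty)$ once it and its derivative are nonpositive at $\delta_2$ for $R$ large. Relatedly, your claimed uniform tail bound $C_0 R^{-3/2}$ implicitly assumes a uniform $\dot H^1$ bound on $[0,\infty)$; such a bound is not available a priori in the supercritical regime (it is a consequence of the lemma, not a hypothesis), which is exactly why the paper's Step~1 estimate retains the $(\delta(t)+H(\Q))^{1/4}$ factor and why the large-$\delta$ case needs its own argument.

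Finally, your remark that $\delta(t)>0$ for every $t\geq 0$ (via the variational characterization and uniqueness) is correct but not used in the paper's argument; the sign of $F_\infty$ is all that is needed.
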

\begin{proof}
For $R>0$ to be determined below, we write (cf. Lemma~\ref{VirialIden} ) 
\begin{align*}
	\frac{d}{d t}I_{R}[\ve{u}]&=F_{\infty}[\ve{u}(t)] + F_{R}[\ve{u}(t)]-F_{\infty}[\ve{u}(t)].
\end{align*}
Since
\[
F_{\infty}[\ve{u}(t)]=8\kk[2H(\ve{u})-3P(\ve{u})]=8\kk[6E(\ve{u})-H(\ve{u})]=-8\kk\delta(t),
\]
we obtain
\[
\frac{d}{d t}I_{R}[\ve{u}]=-8\kk\delta(t)+[F_{R}[\ve{u}(t)]-F_{\infty}[\ve{u}(t)]],
\]
where
\begin{align*}
F_{R}[\ve{u}(t)]-F_{\infty}[\ve{u}(t)]&
=\int_{|x|\geq R}(-2\kk \Delta \Delta w_{R})[|u|^{2}+\tfrac{\kk}{2}|v|^{2}]-
2\kk\Delta[w_{R}(x)]\overline{v}u^{2}\\
&-16\kk \int_{|x|\geq R}[|\nabla u|^{2}+\tfrac{\kk}{2}|\nabla v|^{2}+\frac{3}{2}\overline{v}u^{2}]\\
&+8\kk\RE\int_{|x|\geq R} [\overline{u_{j}} u_{k} +\tfrac{\kk}{2}\overline{v_{j}} v_{k}]\partial_{jk}[w_{R}]dx.
\end{align*}

\textbf{Step 1.} General bound on $|F_{R}[\ve{u}(t)]-F_{\infty}[\ve{u}(t)]|$.
Choosing suitable $\phi$ such that $\partial^{2}_{r}w_{R}\leq 2$ we infer that
\begin{align*}
&8\kk\RE\int_{|x|\geq R} [\overline{u_{j}} u_{k} +\tfrac{\kk}{2}\overline{v_{j}} v_{k}]\partial_{jk}[w_{R}]dx-
16\kk \int_{|x|\geq R}[|\nabla u|^{2}+\tfrac{\kk}{2}|\nabla v|^{2}]dx\\
&=8\kk\int_{|x|\geq R} [|\nabla u|^{2}+\tfrac{\kk}{2}|\nabla v|^{2}](\partial^{2}_{r}w_{R}- 2)dx\leq 0.
\end{align*}
Thus, by using H\"older  inequality we obtain
\[
|F_{R}[\ve{u}(t)]-F_{\infty}[\ve{u}(t)]|
\lesssim
\int_{|x|\geq R} \tfrac{1}{R^{2}}[|u|^{2}+\tfrac{\kk}{2}|v|^{2}]dx
+\int_{|x|\geq R} [|u|^{3}+|v|^{3}]dx.
\]
Moreover, Strauss Lemma (radial Sobolev inequality) implies
\[
\int_{|x|\geq R} [|u|^{3}+|v|^{3}]dx\leq \tfrac{C}{R^{\frac{5}{2}}}
[\|\nabla u\|^{\frac{1}{2}}_{L^{2}}+\|\nabla v\|^{\frac{1}{2}}_{L^{2}}],
\]
where the constant $C$ depends only on $\|\ve{u}_{0}\|_{L^{2}}$.
Therefore,
\begin{align}\label{GeneralB}
	|F_{R}[\ve{u}(t)]-F_{\infty}[\ve{u}(t)]|
\leq C_{0}\left[ \tfrac{1}{R^{2}} +\tfrac{1}{R^{\frac{5}{2}}} (\delta(t)+H(\Q))^{\frac{1}{4}} \right].
\end{align}

\textbf{Step 2.} Bound on $	|F_{R}[\ve{u}(t)]-F_{\infty}[\ve{u}(t)]|$ when $\delta(t)$ is small.

By \eqref{DecomUFree} we have $\ve{u}_{[\theta(t), \lambda(t)]}=\Q+\ve{V}$, where $\|\ve{V}\|_{\dot{H}^{1}}\sim \delta(t)$.
We claim that
\begin{align}\label{InfP}
	\lambda_{\text{inf}}:=\inf\left\{{\lambda(t), t\geq 0, \delta(t)\leq \delta_{1}}\right\}>0
\end{align}
for $\delta_{1}$ sufficiently small. Indeed, writing $\ve{V}=(v_{1}, v_{2})$, by the mass conservation we see that
\begin{align*}
	M(\ve{u}_{0})&\gtrsim \int_{|x|\leq \tfrac{1}{\lambda(t)}}[|u(x,t)|^{2}+|v(x,t)|^{2}]dx
	=\tfrac{1}{\lambda(t)^{2}}\int_{|x|\leq 1}[|u_{[\theta(t), \lambda(t)]}|^{2}+|v_{[\theta(t), \lambda(t)]}|^{2}]dx\\
	&\gtrsim \tfrac{1}{\lambda(t)^{2}}\(\int_{|x|\leq 1}Q^{2}dx-\int_{|x|\leq 1}[|v_{1}|^{2}+|v_{2}|^{2}]dx\).
\end{align*}
Thus, as
\[
\|\ve{V}(t)\|_{L^{2}(|x|\leq 1)}\lesssim \|\ve{V}(t)\|_{L^{3}(|x|\leq 1)}  
\lesssim \|\ve{V}(t)\|_{\dot{H}^{1}}\lesssim\delta(t)
\]
it follows
\[
\|\ve{u}_{0}\|_{L^{2}}\gtrsim  \tfrac{1}{\lambda(t)^{2}}\(\int_{|x|\leq 1}Q^{2}dx-C\delta^{2}(t)\).
\]
Choosing $\delta_{1}$ sufficiently small, we get \eqref{InfP}.

Next, we set
\[
A_{R}(\ve{u}(t)):=F_{R}[\ve{u}(t)]-F_{\infty}[\ve{u}(t)].
\]
A change of variable implies
\[
|A_{R}(\ve{u}(t))|=|A_{R\lambda(t)}(\ve{V}(t)+\Q)|.
\]
Notice that
\begin{align}\label{IdenQ}
	A_{R\lambda(t)}(\Q)=0, \qtq{and $\|Q\|_{\dot{H}^{1}(|x|\geq r)}\sim \|Q\|_{L^{3}(|x|\geq r)}\sim r^{-2}$ for $r\geq1$.}
\end{align}

By H\"older, Hardy and  Sobolev inequalities and \eqref{IdenQ}, we get for $R\geq1$ (recall that $\ve{V}=(v_{1}, v_{2})$),
\begin{align*}
|A_{R}(\ve{u}(t))|&=|A_{R\lambda(t)}(\Q+\ve{V}(t))|=|A_{R\lambda(t)}(\Q+\ve{V}(t))-A_{R\lambda(t)}(\Q)|\\
&\leq C \int_{|x|\geq R\lambda(t)}[|\nabla v_{1}|^{2}  + |\nabla v_{2}|^{2}+|\nabla Q\cdot \nabla v_{1}| +|\nabla Q\cdot \nabla v_{1}|+|\nabla Q\cdot \nabla v_{2}|] dx\\
&+C\int_{R\lambda(t)\leq|x|\leq 2R\lambda(t)}\tfrac{1}{(R\lambda(t))^{2}}[| v_{1}|^{2}+ | v_{2}|^{2}+| Q| |v_{1}| +| Q| |v_{2}|]dx\\
&+C\int_{|x|\geq R\lambda(t)}[|v_{1}|^{2}|v_{2}|+|v_{1}|^{2}Q+ |v_{1}||v_{2}|Q+ |v_{1}|Q^{2}  +  |v_{2}|Q^{2} ]dx\\
&\leq C\left[ \|\ve{V}\|^{2}_{\dot{H}^{1}}  +\tfrac{1}{(R\lambda(t))^{2}}\|\ve{V}\|_{\dot{H}^{1}} 
+\frac{1}{(R\lambda(t))}\|\ve{V}\|^{2}_{\dot{H}^{1}}+\tfrac{1}{R\lambda(t)^{2}}\|\ve{V}\|^{2}_{\dot{H}^{1}}
+\|\ve{V}\|^{3}_{\dot{H}^{1}}\right]\\
&\leq C_{\ast}\left[ \delta(t)^{2}+\tfrac{1}{R^{2}}\delta(t) \right],
\end{align*}
where the constant $C_{\ast}$ depends only on $\lambda_{\text{inf}}$.

\textbf{Step. 3} Conclusion. To prove \eqref{DoDeri}, it sufficiently to show
\begin{align}\label{ClaimDel}
	|F_{R}[\ve{u}(t)]-F_{\infty}[\ve{u}(t)]|\leq 4\kk\delta(t).
\end{align}
By Step 2, there exists $\delta_{2}>0$ such that if  $\delta(t)\leq\delta_{2}$ and $R\geq R_{1}$, with $R_{1}$ sufficiently large, then
\[
|F_{R}[\ve{u}(t)]-F_{\infty}[\ve{u}(t)]|\leq  C_{\ast}\left[ \delta(t)^{2}+\tfrac{1}{R^{2}}\delta(t) \right]
\leq 4\kk\delta(t).
\]
On the other hand, for $\delta(t)>\delta_{2}$ we consider the function
\[
f_{R}(\delta):=C_{0}\left[ \tfrac{1}{R^{2}} +\tfrac{1}{R^{\frac{5}{2}}} (\delta+H(\Q))^{\frac{1}{4}} \right]-4\kk\delta,
\]
where $C_{0}$ is given by \eqref{GeneralB}. Notice that $f^{\prime\prime}_{R}(\delta)<0$ for all $\delta>0$.
For sufficiently large $R_{2}$ we have $f_{R_{2}}(\delta_{2})\leq 0$ and $f^{\prime}_{R_{2}}(\delta_{2})\leq 0$. This implies $f_{R}(\delta)\leq 0$ for all $\delta\geq \delta_{2}$ and $R\geq R_{2}$. Thus, the bound \eqref{ClaimDel} holds for $R=\max\left\{R_{1}, R_{2}\right\}$.

\end{proof}

\begin{lemma}\label{NeD}
With $\ve{u}(t)$ as in Proposition~\ref{SupercriQ}, there exist $c>0$, $C>0$ and $R_{1}>0$ such that for $R\geq R_{1}$ we have
\begin{align}\label{DoQ11}
\int^{+\infty}_{t}\delta(s)ds \leq Ce^{-ct}\qtq{for all $t\geq 0$.}
\end{align}
\end{lemma}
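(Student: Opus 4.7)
The strategy is a Gronwall-type argument based on the monotonicity estimate from Lemma~\ref{BoundDQ}, in the spirit of \cite[Section~4]{DuyMerle2009}. The crucial intermediate ingredient is a pointwise bound of the form $|I_R[\ve{u}(t)]|\leq C\,\delta(t)$ valid whenever $\delta(t)$ is small enough to apply Proposition~\ref{ModilationFree}; combined with the differential inequality $\tfrac{d}{dt}I_R\leq -4\kk\delta$ and the vanishing of $I_R$ at $t=+\infty$, this closes into exponential decay.

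First, I would establish $|I_R[\ve{u}(t)]|\lesssim\delta(t)$ on the set $\{t\colon\delta(t)<\delta_0\}$. Using Proposition~\ref{ModilationFree}, write $\ve{u}_{[\theta(t),\lambda(t)]}(t)=(1+\alpha(t))\Q+\ve{h}(t)$ with $|\alpha(t)|+\|\ve{h}(t)\|_{\dot H^{1}}\sim\delta(t)$, and exploit the scaling identity $I_R[\ve{u}(t)]=\lambda(t)^{2}\,I_{R/\lambda(t)}[(1+\alpha(t))\Q+\ve{h}(t)]$ coming from the change of variables $x\mapsto x/\lambda(t)$. Since $(1+\alpha(t))\Q$ is real-valued, the $\IM$-structure of $I_R$ forces $I_{R/\lambda(t)}[(1+\alpha(t))\Q]=0$ (the same mechanism underlying Lemma~\ref{Virialzero}). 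Expanding around $(1+\alpha(t))\Q$, the only surviving term is linear in $\ve{h}$ and, via H\"older plus Hardy inequalities together with $|x||\nabla Q|\in L^{2}(\R^{6})$, is bounded by $C\,\|\ve{h}\|_{\dot H^{1}}$. Moreover $\lambda(t)$ is bounded above and below on $\{\delta<\delta_0\}$ from $L^{2}$ mass conservation: the lower bound is the $\lambda_{\text{inf}}>0$ established in Step~2 of Lemma~\ref{BoundDQ}'s proof, while the matching upper bound follows analogously from $\|\ve{u}_{0}\|_{L^{2}}^{2}\sim \lambda(t)^{2}\|Q\|_{L^{2}}^{2}(1+O(\delta(t)))$. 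Putting these together yields $|I_R[\ve{u}(t)]|\lesssim \delta(t)$.

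Second, I would verify $I_R[\ve{u}(t)]\to 0$ (and hence $\delta(t)\to 0$) as $t\to+\infty$. Since $I_R[\ve{u}(\cdot)]$ is monotonically decreasing by Lemma~\ref{BoundDQ}, integrating $\tfrac{d}{dt}I_R\leq-4\kk\delta$ gives $4\kk\int_{s}^{T}\delta\leq I_R[\ve{u}(s)]-I_R[\ve{u}(T)]$ for all $s\leq T$. Applying Step~1 at any time $s$ with $\delta(s)<\delta_0$ produces the one-sided bound $I_R[\ve{u}(T)]\geq -C\delta(s)$ for $T\geq s$, and a bootstrap-and-contradiction argument patterned after \cite[Section~4]{DuyMerle2009} then rules out $\liminf_{t\to+\infty}\delta(t)>0$ and produces a sequence $t_{n}\to+\infty$ with $\delta(t_{n})\to 0$. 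Step~1 forces $I_R[\ve{u}(t_{n})]\to 0$, and the monotonicity promotes this subsequential convergence to the full limit $I_R[\ve{u}(t)]\to 0$, whence $\int_{0}^{\infty}\delta(s)\,ds<\infty$. Lipschitz continuity of $t\mapsto H(\ve{u}(t))$ on the regime $\delta<\delta_0$ (where $\|\ve{u}(t)\|_{H^{1}}$ is bounded) finally upgrades this to $\delta(t)\to 0$.

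Finally, for $t$ large enough that $\delta(t)<\delta_0$, integrating the monotonicity inequality from $t$ to $+\infty$ and using $I_R[\ve{u}(T)]\to 0$ together with Step~1 gives
\[
4\kk\int_{t}^{+\infty}\delta(s)\,ds\;\leq\;I_R[\ve{u}(t)]\;\leq\;C\,\delta(t).
\]
Setting $F(t):=\int_{t}^{+\infty}\delta(s)\,ds$, this reads $F(t)\leq -\tfrac{C}{4\kk}\,F'(t)$, and Gronwall's lemma yields $F(t)\leq Ce^{-ct}$ for all sufficiently large $t$; the bound extends to all $t\geq 0$ after enlarging $C$ to absorb the bounded initial interval. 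The main obstacle is Step~1: the Taylor expansion of $I_R$ around the modulated ground state must produce only linear-in-$\ve{h}$ (hence linear-in-$\delta$) contributions, which depends crucially on the vanishing $I_{R/\lambda(t)}[(1+\alpha(t))\Q]=0$ together with the uniform two-sided control of $\lambda(t)$; the non-trivial secondary point is justifying in Step~2 that $I_R$ does not decrease to $-\infty$, which hinges on the recurrence of the regime $\{\delta<\delta_0\}$ rather than an a priori $H^{1}$-bound.
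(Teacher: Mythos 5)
Your plan correctly identifies the outer scaffolding of the argument --- the pointwise estimate $|I_R[\ve{u}(t)]|\leq CR^2\delta(t)$ (cf.\ \eqref{EstimateV11}--\eqref{EstimateV22}), the integration of Lemma~\ref{BoundDQ}'s differential inequality, and the Gronwall closure --- but it misses the single idea that makes the scheme close. The paper introduces a nonnegative weighted local mass $V_R(t)$ satisfying $\tfrac{d}{dt}V_R(t)=I_R[\ve{u}(t)]$, so that Lemma~\ref{BoundDQ} becomes $\tfrac{d^2}{dt^2}V_R=\tfrac{d}{dt}I_R\leq -4\kk\delta(t)<0$ on $[0,\infty)$. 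A strictly concave function that stays positive on a half-line must be strictly increasing, and hence $I_R[\ve{u}(t)]=V_R'(t)>0$ for every $t\geq0$. This positivity gives directly, for any $T\geq t$,
\[
4\kk\int_t^T\delta(s)\,ds\leq I_R[\ve{u}(t)]-I_R[\ve{u}(T)]\leq I_R[\ve{u}(t)]\leq CR^2\delta(t),
\]
and Gronwall yields the exponential decay.

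Your Step~2 tries to replace this sign information by a bootstrap showing $I_R[\ve{u}(t)]\to0$, but that bootstrap is circular. Monotone decrease of $I_R$ gives only an \emph{upper} bound $I_R[\ve{u}(T)]\leq I_R[\ve{u}(s)]\leq C\delta(s)$ for $T\geq s$; the lower bound $I_R[\ve{u}(T)]\geq-C\delta(s)$ you assert does not follow from anything you have established. Without a lower bound on $I_R$ one cannot rule out $\liminf_t\delta(t)>0$, since $I_R$ might simply tend to $-\infty$; and the ``recurrence of the regime $\{\delta<\delta_0\}$'' you invoke at the end is precisely the conclusion that a lower bound on $I_R$ would furnish. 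Even granting a sequence $\delta(t_n)\to0$, you would still have to show $\lim_t I_R\geq0$: if $\lim_t I_R=L<0$, the integrated inequality becomes $4\kk\int_t^\infty\delta\leq I_R[\ve{u}(t)]-L\leq CR^2\delta(t)+|L|$, and the additive constant $|L|>0$ destroys the Gronwall self-improvement to exponential decay. The concavity-of-$V_R$ observation supplies the sign of $I_R$ (hence $L=0$) with no bootstrapping, and the proof should be organized around it.
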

\begin{proof}
We define
\[
V_{R}(t)=\int_{\R^{6}}(2\kk|u(t,x)|^{2}+|v(t,x)|^{2})dx.
\]
Notice that $\tfrac{d}{dt}V_{R}(t)=I_{R}[\ve{u}]$ (cf. Section~\ref{S1:preli}). From Lemma~\ref{BoundDQ}, we see that
$\tfrac{d^{2}}{dt^{2}}V_{R}(t)=\tfrac{d}{dt}I_{R}[u(t)]\leq -4\kk\delta(t)$ for  $R\geq R_{1}$. 

On the other hand, since $\tfrac{d^{2}}{dt^{2}}V_{R}(t)<0$ and $V_{R}(t)>0$ for all $t\geq0$, it follows that  $I_{R}[u(t)]=\tfrac{d}{dt}V_{R}(t)>0$ for all $t\geq 0$. Thus,
\[
4\kk\int^{T}_{t}\delta(s)ds\leq -\int^{T}_{t}\tfrac{d}{ds}I_{R}[u(s)]ds
=I_{R}[u(t)]-I_{R}[u(T)]\leq I_{R}[u(t)]\leq CR^{2}\delta(t),
\]
where we have used the estimate $I_{R}[u(t)]\leq CR^{2}\delta(t)$ for all $t\geq 0$ (cf. \eqref{EstimateV11} and \eqref{EstimateV22}). By the Gronwall inequality we obtain \eqref{DoQ11}.
 \end{proof}

\begin{proof}[{Proof of Proposition~\ref{SupercriQ}}]

From Lemma~\ref{NeD} we find that there exists $\left\{t_{n}\right\}_{n\in \N}$ with $t_{n}\to +\infty$ such that
$\lim_{n \to \infty}\delta(t_{n})=0$. Fix such $\left\{t_{n}\right\}_{n\in \N}$.
Notice that  
\begin{align}\label{deltaN}
	\lim_{t \to \infty}\delta(t)=0.
\end{align}
 If not, there exists a sequence $\left\{t^{\prime}_{n}\right\}_{n\in \N}$ so that
$\delta(t^{\prime}_{n})\geq \epsilon$ for some $\epsilon\in(0, \delta_{0})$.
Moreover, extracting subsequences of $\left\{t_{n}\right\}_{n\in \N}$ and $\left\{t^{\prime}_{n}\right\}_{n\in \N}$ if necessary, 
we can assume the following:
\[
t_{n}<t^{\prime}_{n}, \quad \delta(t^{\prime}_{n})=\epsilon, \quad \delta(t)<\epsilon \quad \text{for all $t\in [t_{n}, t^{\prime}_{n})$}.
\]
Note that on $[t_{n}, t^{\prime}_{n})$ the parameters $\alpha(t)$, $\theta(t)$ and $\lambda(t)$ are well defined and recall that (cf. \eqref{DecomUFree})
\[
\ve{u}_{[\theta(t), \lambda(t)]}(t)=(1+\alpha(t))\Q+\ve{h}(t).
\] 
We claim that
\begin{align}\label{BounLa}
	\lim_{n\to \infty}\lambda(t_{n})=\lambda_{\infty}\in (0, +\infty).
\end{align}
Indeed, by estimate $\left|\tfrac{\lambda^{\prime}(t)}{\lambda(t)^{3}}\right|\leq C\delta(t)$ and \eqref{DoQ11} we infer that
\begin{equation}\label{BoLamda}
\left|\tfrac{1}{\lambda(t)^{2}}-\tfrac{1}{\lambda(t_{n})^{2}}\right|\leq C_{0}e^{-ct_{n}}
\qtq{for $t\in [t_{n}, t^{\prime}_{n})$.}
\end{equation}
On the other hand, assume that $\lambda_{\infty}=\infty$. Let $r_{0}>0$. By H\"older, Hardy and Sobolev inequalities we obtain
\begin{align*}
	|V_{R}(t_{n})|&\lesssim
r^{4}_{0}H(\Q)+\|u(t_{n})\|^{2}_{{L}^{3}(|x|\geq r_{0})}
+\|v(t_{n})\|^{2}_{{L}^{3}(|x|\geq r_{0})}.
\end{align*}
Since $\ve{u}_{[\theta(t_{n}), \lambda(t_{n})]} \to \Q$ in $\dot{H}^{1}$, we infer  that for each $r_{0}>0$,
\begin{equation}\label{Limuv}
\|u(t_{n})\|^{2}_{{L}^{3}(|x|\geq r_{0})}
+\|v(t_{n})\|^{2}_{{L}^{3}(|x|\geq r_{0})}\to 0
\qtq{as $n\to \infty$.}
\end{equation}
Taking to the limit $n\to \infty$, and then  $r_{0} \to 0$
we find
\[
\lim_{n\to \infty}V_{R}(t_{n})=0.
\]
But then, as $\tfrac{d}{dt}V_{R}>0$ for all $t\geq 0$ (cf. Lemma~\ref{NeD}), it follows that $V_{R}(t)<0$ for $t\geq 0$, which is a contradiction. 
Therefore, $\lambda_{\infty}<\infty$. In particular, by \eqref{BoLamda} we find that $\lambda(t)\leq 2\lambda_{\infty}$ 
on $\cup [t_{n}, t^{\prime}_{n})$.

As $|\alpha^{\prime}(t)|\lesssim \lambda(t)^{2}|\delta(t)|\lesssim|\delta(t)|$ on $\cup [t_{n}, t^{\prime}_{n})$ (cf. \eqref{EstimateFree}), estimate \eqref{DoQ11} implies 
\begin{equation}\label{limnn}
\lim_{n\to \infty}|\alpha(t_{n})-\alpha(t^{\prime}_{n})|=0.
\end{equation}
Since $|\alpha| \sim |\delta|$ (cf. \eqref{EstimateOne}), we deduce
\[
|\alpha(t_{n})| \sim |\delta(t_{n})|\to 0 \quad \text{and} \quad 
|\alpha(t^{\prime}_{n})| \sim |\delta(t^{\prime}_{n})|=\epsilon>0,
\]
which is a contradiction with \eqref{limnn}. Therefore, $\lim_{t \to \infty}\delta(t)=0$. In particular, the parameters $\alpha(t)$, $\lambda(t)$, and $\theta(t)$ are well-defined for large $t$, and from \eqref{InfP}, we deduce that $\lambda_{\infty}>0$.

Next, as (recall that $\lambda(t)\leq 2\lambda_{\infty}$)
\[
\delta(t)+\|\ve{h}(t)\|_{\dot{H}^{1}}\sim |\alpha(t)|
\leq C\int^{+\infty}_{t}|\alpha^{\prime}(s)|ds\leq 
C\int^{+\infty}_{t}\lambda(s)^{2}\delta(s)\,ds\leq Ce^{-ct},
\]
it follows by Lemma~\ref{EstimateFree},
\[
\|\ve{u}-\Q_{[\theta(t), \lambda(t)]}\|_{\dot{H}^{1}}+|\alpha^{\prime}(t)|+|\theta^{\prime}(t)|\leq  Ce^{-ct},
\]
which implies \eqref{ExpoAbove}.

Finally, we show the finite time blow up for negative times. 
Suppose by contradiction that $\ve{u}$ is global in negative time. Writing $\ve{v}(t,x):=\overline{\ve{u}(-t,x)}$,  we have  that  
Lemmas~\ref{BoundDQ} and \ref{NeD} also hold for negative times. In particular, we find
\[
\lim_{t\to \pm\infty}\delta(t)=0.
\]
Thus, we obtain that $\frac{d}{dt}V_{R}(t)\to 0$ as $t\to \pm\infty$ and $\frac{d}{dt}V_{R}(t)> 0$ for all $t\in \R$ (cf. Lemma~\ref{NeD}). Since $\frac{d^{2}}{dt^{2}}V_{R}(t)< 0$ for all $t\in\R$,
we obtain a contradiction.

This completes the proof of proposition.
\end{proof}


\section{Existence of special solutions}\label{S:Existence}
Due to the operator $J$ that appears in the linearized equation \eqref{Decomh}, we cannot directly apply the arguments developed in \cite{DuyMerle2009, DuyckaertsRou2010} to obtain approximate and special solutions converging to the ground state $\Q$. To overcome this difficulty, we define the transformation:
\begin{equation}\label{TRA}
T(\ve{u}) := T(u_{1}, u_{2}) = \left( \frac{u_{1}}{\sqrt{2}}, \frac{u_{2}}{{2}} \right).
\end{equation}
Note that if $\ve{u}$ is a solution to \eqref{SNLS}, then $T(\ve{u})$ is a solution to the system:
\begin{equation}\label{NNLS}
\left\{
\begin{array}{ccc}
i\partial_t u + \Delta u +2\overline{u}v &=& 0, \\
i\partial_t v +  \kk\Delta v+u^{2} &=& 0. \\
\end{array}
\right.	
\end{equation}
Moreover, the ground state for this NLS system \eqref{NNLS} is given by $T(\Q)$ and the associated energy is
\[
E_{N}(\ve{u}(t)):= \tfrac{1}{2} H_{N}(\ve{u}(t))-\RE\int_{\R^6} u_{1}^{2}\overline{u_{2}}dx,
\]
where
\begin{equation}\label{HNN}
 H_{N}(\ve{u}):=\|\nabla u_{1}\|^2_{L^2(\R^6)} + {\kk}\|\nabla u_{2}\|^2_{L^2(\R^6)}.
\end{equation}
Note also that if $\ve{W}$ is solution to \eqref{NNLS}, then we have that $T^{-1}(\ve{W})$ is solution to \eqref{SNLS} with
$E(T^{-1}(\ve{W}))=2E_{N}(\ve{W})$ and $H(T^{-1}(\ve{W}))=2H_{N}(\ve{W})$.

Throughout this section, we will construct approximate and especial solutions to \eqref{NNLS} that converge to $T(\Q)$. Subsequently, by applying $T^{-1}$, we will obtain approximate and especial solutions to \eqref{SNLS} that converge to $\Q$.

We consider a radial solution $\ve{u}(t)$ of \eqref{NNLS} close to $T(\Q)=(\frac{\sqrt{\kk}}{\sqrt{2}}Q, \frac{1}{2}Q)$ and write
\[
\ve{k}(t,x) = \ve{u}(t,x) - T(\Q).
\]
Then, $\ve{k} = (h, g)$ satisfies the equation:
\[
(i\partial_{t}h, i\partial_{t}g) + (\Delta h, {\kk}\Delta g) + B(\ve{h}) = -N(\ve{k}),
\]
where $B(\ve{k}) = B(h, g) = (\overline{h}Q + \sqrt{2\kk}Qg, \sqrt{2\kk}Qh)$ and $N(\ve{k}) = (2\overline{h} g, h^{2})$. Note also that $\ve{k}$ is a solution of the equation:
\begin{equation}\label{De112}
\partial_{t}\ve{k} + \EE \ve{k} = iN(\ve{k}), \quad \text{where} \quad
\EE := \begin{pmatrix}
0 & -E_{I} \\
E_{R} & 0
\end{pmatrix},
\end{equation}
and the operators $E_{I}$ and $E_{R}$ are defined by (acting on $L^2(\R^6;\R)$) 
\begin{align*}
E_{R} &=
\begin{bmatrix}
-\Delta & 0 \\
0 & -{\kk}\Delta 
\end{bmatrix}
-
\begin{bmatrix}
Q & \sqrt{2\kk}Q \\
\sqrt{2\kk}Q & 0 
\end{bmatrix},
\\
E_{I} &=
\begin{bmatrix}
-\Delta & 0 \\
0 & -{\kk}\Delta 
\end{bmatrix}
+
\begin{bmatrix}
Q & -\sqrt{2\kk}Q \\
-\sqrt{2\kk}Q & 0 
\end{bmatrix}.
\end{align*}
We observe that by a direct calculation (recall that $\partial_{j} \Q$, $\Lambda\Q$ and $\Q_{1}$ are defined in Section~\ref{S:Spectral}.)
\[
E_{R}(T(\partial_{j} \Q))
	=0,
\quad
E_{R}(T(\Lambda\Q))
	=0,
\]
where $\Lambda Q:=2Q+x\cdot \nabla Q \in L^{2}(\R^{6})$,
and
\[
E_{I}(T(\Q_{1}))
	=0.
\]
We define $\F_{E}(\ve{u}, \ve{v})$ as the bilinear symmetric form
\begin{equation}\label{QuadraticNew}
\F_{E}(\ve{u}, \ve{v}) := \tfrac{1}{2} \<E_{R} \RE\ve{u}, \RE\ve{v}\> 
    + \tfrac{1}{2} \<E_{I} \IM\ve{u}, \IM\ve{v}\>,
\end{equation}
and we write $\F_{E}(\ve{u})$ to denote $\F_{E}(\ve{u}, \ve{u})$. Notice that $\F_{E}(T(\Q))<0$ for $\kk>0$.

The proof of the following lemma can be found in Appendix~\ref{S:A2}.

\begin{lemma}\label{SpecLL}
Fix $\kk>0$. Let $\sigma(\EE)$ be the spectrum of the operator $\EE$, defined on $(L^{2}(\R^{3}))^{4}$ with domain 
$(H^{2}(\R^{3}))^{4}$. Then $\EE$ has two simple eigenfunctions $e_{+} = (Y, Z)$ and $e_{-} = (\overline{Y}, \overline{Z})$ in $\Sch(\R^{3}) \times \Sch(\R^{3})$ with real eigenvalues
$\pm \lambda_{1}$ with $\lambda_{1} > 0$. Furthermore,
\[
\sigma(\EE) \cap \R = \left\{-\lambda_{1}, 0, \lambda_{1}\right\},
\quad
\text{and}\quad  
\sigma_{{\text ess}}(\EE) = \left\{i\xi: \xi \in \R\right\},
\]
where $\sigma_{\text{ess}}(\EE)$ denotes the essential spectrum of $\EE$.
\end{lemma}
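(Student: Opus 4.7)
The strategy follows the classical linearized-operator analysis for NLS ground states (Weinstein, Grillakis, Duyckaerts--Merle). The plan has four ingredients: (i) essential spectrum via Weyl's theorem, (ii) existence of the real eigenvalue $\lambda_1$ via a variational problem for $-\EE^2$, (iii) the companion eigenvalue and simplicity via a symmetry of $\EE$ combined with the spectral pictures of $E_R$ and $E_I$, and (iv) Schwartz regularity via elliptic bootstrap plus exponential decay.

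For the essential spectrum, I would write $\EE=\EE_{0}+V$, where $\EE_{0}$ is the ``free'' operator (replacing $Q$ with $0$) and $V$ is multiplication by a matrix of smooth, decaying $Q$-dependent entries. A direct Fourier computation gives $\EE_{0}^{2}=-\kk\Delta^{2}\cdot I$, so $\sigma(\EE_{0})\subset i\R$, and equality holds. Because $Q$ is smooth and decays like $|x|^{-4}$, the multiplication operator $V$ is relatively $\EE_{0}$-compact; Weyl's theorem then yields $\sigma_{\mathrm{ess}}(\EE)=i\R$.

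For the real eigenvalue, the key identity is
\[
-\EE^{2}=\begin{pmatrix} E_{I}E_{R} & 0\\ 0 & E_{R}E_{I}\end{pmatrix},
\]
so a real eigenvalue $\lambda$ of $\EE$ corresponds to a negative eigenvalue $-\lambda^{2}$ of $E_{I}E_{R}$. From Appendices~\ref{S:A} and \ref{S:A2} we have $E_{R}$ with exactly one simple negative eigenvalue and $E_{I}\geq 0$ with $\ker E_{I}=\mathrm{span}\{T(\Q_{1})\}$. I would then consider the constrained minimization
\[
-\lambda_{1}^{2}\;:=\;\inf\bigl\{\,\langle E_{R}u,u\rangle : u\in(\ker E_{I})^{\perp},\ \langle E_{I}^{-1}u,u\rangle=1\,\bigr\}.
\]
Using a test function aligned with the negative direction of $E_{R}$ (e.g.\ $T(\Q)$, since $\F_{E}(T(\Q))<0$) shows the infimum is strictly negative; a standard compactness/concentration argument for minimizing sequences, together with exponential decay of $Q$-tails, yields a minimizer $u_{0}$. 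Lagrange multipliers give $E_{I}E_{R}u_{0}=-\lambda_{1}^{2}u_{0}$, and setting $Z:=u_{0}$, $Y:=-\lambda_{1}^{-1}E_{I}u_{0}$ produces the eigenvector $e_{+}=(Y,Z)$ with eigenvalue $\lambda_{1}$.

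For the companion eigenvalue $-\lambda_{1}$, the structural identity $J\EE J=-\EE$ with $J=\mathrm{diag}(I,-I)$, combined with the reality of the coefficients (so conjugation commutes with $\EE$), produces the second eigenvector in the stated form. Simplicity of $\pm\lambda_{1}$ and the absence of other real eigenvalues is where the main effort lies: one conjugates $E_{R}E_{I}$ to the self-adjoint operator $E_{I}^{1/2}E_{R}E_{I}^{1/2}$ on $(\ker E_{I})^{\perp}$, and a Sylvester-type comparison shows its number of negative eigenvalues equals the number of negative eigenvalues of $E_{R}$ modulo the kernel — namely one. The zero eigenvalue is present since $\ker(\EE)$ contains $T(\partial_{j}\Q)$, $T(\Lambda\Q)$ and $T(i\Q_{1})$ by the computations in Section~\ref{S:Spectral}. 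Finally, Schwartz regularity of $e_{\pm}$ follows from standard elliptic bootstrap applied to the eigenvalue system (smooth, bounded coefficients), while the fact that $\pm\lambda_{1}\notin\sigma(\EE_{0})=i\R$ permits Agmon-type exponential decay estimates. The principal obstacle is the simplicity/uniqueness argument in step (iii): the non-self-adjointness of $E_{I}E_{R}$ prevents direct spectral counting, and the conjugation to a self-adjoint form must be handled carefully on $(\ker E_{I})^{\perp}$ so as not to lose information at the zero eigenvalue of $E_{I}$.
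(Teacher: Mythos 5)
Your outline matches the paper for the essential spectrum (Weyl's theorem) and broadly for the construction of $\lambda_{1}$, but you have introduced some avoidable technical problems there, and your treatment of uniqueness -- the part the paper actually needs to make precise -- is both a genuinely different route and incomplete. For existence, the paper introduces the self-adjoint operator $\TT=(E_{I})^{\frac{1}{2}}E_{R}(E_{I})^{\frac{1}{2}}$ on $L^{2}$ with domain $H^{4}$: since $\TT$ is a relatively compact perturbation of $((\Delta)^{2}, (\kk\Delta)^{2})$, Weyl gives $\sigma_{\text{ess}}(\TT)=[0,\infty)$, and therefore a \emph{single} test vector with $(\TT\ve{g},\ve{g})_{L^{2}}<0$ already forces a negative eigenvalue; no compactness or concentration argument is needed at all. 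Your constrained infimum over $\{u\perp\ker E_{I},\ \langle E_{I}^{-1}u,u\rangle=1\}$ involves the unbounded operator $E_{I}^{-1}$ in the constraint (the essential spectrum of $E_{I}$ touches $0$), so it is not a priori well posed and the ``standard compactness argument'' would require real work that the paper's route sidesteps. The paper is also more careful with the test function: it takes $\Phi=T(\Lambda\Q)-cT(\Q)$ with $c$ chosen so that $\Phi\perp T(i\Q_{1})$ in $L^{2}$ (this is needed to place the test vector in $\overline{\mathrm{Ran}(E_{I})}$), then regularizes via $(E_{I}+1)^{-1}E_{I}$, rather than using $T(\Q)$ directly. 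Finally, your $Y,Z$ labels are swapped: your Lagrange condition gives $E_{I}E_{R}u_{0}=-\lambda_{1}^{2}u_{0}$, so $u_{0}$ plays the role of $Y$ with $Z:=\lambda_{1}^{-1}E_{R}u_{0}$; the assignment $Z:=u_{0}$, $Y:=-\lambda_{1}^{-1}E_{I}u_{0}$ would instead require $E_{R}E_{I}u_{0}=-\lambda_{1}^{2}u_{0}$.

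The uniqueness step is where the gap lies. You propose counting negative eigenvalues of $\TT$ by a Sylvester-type comparison with $E_{R}$, and you yourself flag the obstacle: $E_{I}^{1/2}$ is not invertible (it has kernel $\mathrm{span}\{T(i\Q_{1})\}$), so Sylvester's law of inertia does not apply directly and one must control what happens on $\ker E_{I}$. The paper does not count negative eigenvalues of $\TT$ at all. Instead, to rule out another real eigenvalue $\lambda_{0}\notin\{0,\pm\lambda_{1}\}$ with eigenfunction $\ve{f}$, it uses the antisymmetry $\F_{E}(\EE\ve{g},\ve{h})=-\F_{E}(\ve{g},\EE\ve{h})$ to obtain $\F_{E}(\ve{f},e_{+})=\F_{E}(\ve{f},e_{-})=\F_{E}(\ve{f})=0$, subtracts the kernel components of $\ve{f}$ to obtain a remainder $g\in\tilde{G}^{\bot}$ with $\F_{E}(g)=0$, and then invokes the coercivity of $\F_{E}$ on $\tilde{G}^{\bot}$ (Proposition~\ref{FCove}) to conclude $g=0$, hence $\EE\ve{f}=0$ and $\ve{f}=0$. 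This is shorter, sidesteps the non-invertibility issue entirely, and is precisely what Proposition~\ref{FCove} is designed to deliver. Your plan never invokes that coercivity at the uniqueness step, which is why the ``principal obstacle'' you identify remains unresolved in your sketch.
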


\begin{remark}\label{PLf} 
A direct computation shows that for any $\ve{h}$, $\ve{u} \in \dot{H}^{1}$, we have
\begin{align*}
&\F_{E}(e_{\pm}) = \F_{E}(T(i\Q_{1})) = \F_{E}(T(\Lambda \Q)) = 0, \quad \F_{E}(T(\Q))  < 0,\\
&\F_{E}(\ve{h}, \ve{u}) = \F_{E}(\ve{u}, \ve{h}), \quad \F_{E}(\EE \ve{h}, \ve{u}) = -\F_{E}(\ve{h}, \EE \ve{u}),\\
&\F_{E}(\ve{h}, T(i\Q_{1})) =\F_{E}(\ve{h}, T(\Lambda \Q))=\F_{E}(\ve{h}, T(\partial_{j}\Q))=0,
\end{align*}
for $j = 1, \ldots, 4$.
\end{remark}

The proof of the following proposition can be found in Appendix~\ref{S:A2}.

\begin{proposition}\label{FCove}
Fix $\kk>0$. There exists $C > 0$ such that for every $\ve{h} \in G^{\bot}$, we have
\[
\F_{E}(\ve{h}) \geq C \|\ve{h}\|^{2}_{\dot{H}^{1}},
\]
where
\begin{align*}
\tilde{G}^{\bot} := \left\{\ve{h} \in \dot{H}^{1}, \F_{E}(\ve{h}, e_{\pm}) = (T(i\Q_{1}), \ve{h})_{\dot{H}^{1}} = (T(\Lambda \Q), \ve{h})_{\dot{H}^{1}} = 0, \right. \\
\left. (T(\partial_{j} \Q), \ve{h})_{\dot{H}^{1}} = 0 : j = 1, \ldots, 6 \right\}.
\end{align*}
\end{proposition}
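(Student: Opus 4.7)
The plan is to follow the strategy of Proposition \ref{CoerQuadra}, exploiting the bilinear decomposition
\[
\F_E(\ve{h}) = \tfrac{1}{2}\langle E_R \RE \ve{h}, \RE \ve{h}\rangle + \tfrac{1}{2}\langle E_I \IM \ve{h}, \IM \ve{h}\rangle,
\]
which reduces the coercivity question to two independent estimates on the self-adjoint operators $E_R$ and $E_I$ acting on real-valued pairs of functions.

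First I would rewrite the constraints defining $\tilde{G}^{\bot}$ in terms of $\RE \ve{h}$ and $\IM \ve{h}$. Writing $e_+ = a + ib$ with $a = \RE e_+$ and $b = \IM e_+$, the hypothesis $e_- = \overline{e_+}$ from Lemma \ref{SpecLL} lets us combine the two identities $\F_E(\ve{h}, e_\pm) = 0$ by addition and subtraction to obtain
\[
\langle E_R \RE \ve{h}, a\rangle = 0, \qquad \langle E_I \IM \ve{h}, b\rangle = 0.
\]
Since $T(\Lambda \Q)$ and $T(\partial_j \Q)$ are real-valued while $T(i\Q_1) = i T(\Q_1)$ with $T(\Q_1)$ real-valued, the remaining $\dot{H}^{1}$-orthogonalities split cleanly as $\dot{H}^{1}$-constraints on $\RE \ve{h}$ alone (against $T(\Lambda \Q)$ and $T(\partial_j \Q)$) and on $\IM \ve{h}$ alone (against $T(\Q_1)$).

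Next, I would establish two auxiliary coercivity lemmas for real vector-valued functions: an $E_R$-version of Lemma \ref{Coer11} asserting $\langle E_R \ve{v}, \ve{v}\rangle \geq C \|\ve{v}\|^{2}_{\dot{H}^{1}}$ under the real-part constraints above, and an $E_I$-version of Lemma \ref{CoerLi11} asserting $\langle E_I \ve{w}, \ve{w}\rangle \geq C \|\ve{w}\|^{2}_{\dot{H}^{1}}$ under $(\ve{w}, T(\Q_1))_{\dot{H}^{1}} = 0$. Both proceed by the same dimension-counting contradiction: assume a counter-example $\ve{g}$; build a finite-dimensional subspace containing $\ve{g}$, the kernel elements of the operator, and the negative-direction candidate; verify that the quadratic form is non-positive on this subspace; and compare its dimension with the maximal dimension of negative-semi-definiteness, provided by the spectral analysis in Appendices \ref{S:A} and \ref{S:A2}. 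The $E_I$-version is essentially a transcript of Lemma \ref{CoerLi11}, using $E_I \geq 0$ and $\mathrm{Ker}(E_I) = \mathrm{span}\{T(\Q_1)\}$.

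The main obstacle is the $E_R$-coercivity. In Lemma \ref{Coer11} the form-orthogonality was imposed directly against $\Q$, which is itself the direction where $\F(\Q) < 0$; here the form-orthogonality is against $a = \RE e_+$, whereas the direction of negativity of $\F_E$ is $T(\Q)$ (cf. $\F_E(T(\Q)) < 0$ in Remark \ref{PLf}). To bridge this, I would use the eigenvalue relation $\EE e_+ = \lambda_1 e_+$ with $\lambda_1 > 0$, whose block form couples the real components $a$ and imaginary components $b$ via relations of the shape $E_R a = \lambda_1 b$ and $E_I b = -\lambda_1 a$. Combined with the simplicity of the unstable eigenvalue from Lemma \ref{SpecLL}, this structure forces $a$ to have a nonzero projection onto the (unique) negative eigenvector of $E_R$, so that the constraint $\langle E_R \ve{v}, a\rangle = 0$ rules out the negative mode exactly as $\langle E_R \ve{v}, T(\Q)\rangle = 0$ would. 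Once this bridge is in place, the dimension-counting argument of Lemma \ref{Coer11} transfers verbatim, and applying the two auxiliary coercivity estimates to $\RE \ve{h}$ and $\IM \ve{h}$ and summing yields $\F_E(\ve{h}) \geq C \|\ve{h}\|^{2}_{\dot{H}^{1}}$ for $\ve{h} \in \tilde{G}^{\bot}$.
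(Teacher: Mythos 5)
Your decomposition into $\langle E_R \RE\ve{h},\RE\ve{h}\rangle$ and $\langle E_I \IM\ve{h},\IM\ve{h}\rangle$ with two separate coercivity lemmas is a genuinely different route from the paper. The paper argues by contradiction directly in the complex setting: assuming a nonzero $\ve{g}\in\tilde{G}^{\bot}$ with $\F_E(\ve{g})\leq 0$, it forms the real subspace
\[
E_{-}:=\operatorname{span}\bigl\{T(i\Q_1),\,T(\Lambda\Q),\,e_{+},\,\ve{g},\,T(\partial_j\Q):j=1,\dots,6\bigr\},
\]
shows $\F_E\leq 0$ on $E_{-}$ using $\F_E(e_{\pm})=0$ and $\F_E(e_{+},e_{-})\neq 0$ (Remarks \ref{PLf} and \ref{Nzero}), verifies $\dim_{\R}E_{-}=10$, and contradicts Proposition \ref{ApCo} ($\F_E$ positive definite on a codimension-$9$ subspace). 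Your route keeps closer to the template of Proposition \ref{CoerQuadra}; the paper's route avoids introducing a new $E_R$-coercivity lemma, which is exactly the place where your proposal is weakest.

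Concretely, your ``bridge'' step for $E_R$ is imprecise. You need the quantitative statement $\langle E_R e_1, e_1\rangle < 0$ (with $e_1=\RE e_{+}$) so that $e_1$ can stand in for $T(\Q)$ in the dimension-counting argument of Lemma \ref{Combi1}(i) --- that lemma uses $\F_E(T(\Q))<0$ both to make the form nonpositive on the counterexample subspace and to establish linear independence. The observation that $e_1$ has a nonzero projection onto the negative eigendirection of $E_R$ is a \emph{consequence} of $\langle E_R e_1,e_1\rangle<0$ but is not by itself sufficient: a vector can have a nonzero negative component while the form evaluates nonnegatively on it. The correct argument is: since $\F_E(e_{+})=0$, one has $\langle E_R e_1,e_1\rangle = -\langle E_I e_2,e_2\rangle\leq 0$ by Lemma \ref{Combi1}(ii) ($E_I\geq 0$); equality would force $e_2\in\operatorname{Ker}E_I=\operatorname{span}\{T(\Q_1)\}$, and then the relations $-E_I e_2=\lambda_1 e_1$, $E_R e_1=\lambda_1 e_2$ give $e_1=0$ and hence $e_2=0$, contradicting $e_{+}\neq 0$. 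Thus $\langle E_R e_1,e_1\rangle<0$. This is where the genuine input lies, not in the simplicity of $\lambda_1$. Once that inequality is in place, your two separate coercivity estimates do combine to give the claim.
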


\begin{remark}\label{PE1}
We have
\[
(e_{1}, T(\Q))_{H_{N}} \neq 0 \qtq{where} e_{1} = (\RE Y, \RE Z),
\]
and $(\cdot,\cdot )_{H_{N}}$ is the inner product associated with the norm $H_{N}^{\frac{1}{2}}$ (see \eqref{HNN}).
Indeed, suppose instead that $(e_{1}, T(\Q))_{H_{N}} = 0$. Notice that 
\[
\lambda_{1}\F_{E}(e_{\pm}, T(\Q)) = \pm\F_{E}(\EE e_{\pm}, T(\Q)) = \mp\F_{E}(e_{\pm}, \EE T(\Q))
= \tfrac{1}{2}\lambda_{1}(e_{1}, T(\Q))_{H_{N}} = 0.
\]
Here we have used that $E_{I}e_{2} = -\lambda_{1} e_{1}$, where $e_{2} = (\IM Y, \IM Z)$.
Since 
\[(T(i\Q_{1}), T(\Q))_{\dot{H}^{1}} = (T(\Lambda\Q), T(\Q))_{\dot{H}^{1}} = (T(\partial_{j}\Q), T(\Q))_{\dot{H}^{1}} = 0,\]
by Proposition~\ref{FCove}, we infer that $\F_{E}(T(\Q)) > 0$, which is a contradiction (cf. Remark~\ref{PLf}). 
\end{remark}

For $ I$ being a time interval, we will use the following notation:
\[
\begin{aligned}
&S(I) = L^{4}_{t}L^{4}_{x}(I \times \mathbb{R}^{6}) \times L^{4}_{t}L^{4}_{x}(I \times \mathbb{R}^{6}), \quad
Z(I) = L^{4}_{t}L^{\frac{12}{5}}_{x}(I \times \mathbb{R}^{6}) \times L^{4}_{t}L^{\frac{12}{5}}_{x}(I \times \mathbb{R}^{6}), \\
&N(I) = L^{2}_{t}L^{\frac{3}{2}}_{x}(I \times \mathbb{R}^{6}) \times L^{2}_{t}L^{\frac{3}{2}}_{x}(I \times \mathbb{R}^{6}), \quad
\text{L}^{p}(\mathbb{R}^{6}) = L^{p}(\mathbb{R}^{6}) \times L^{p}(\mathbb{R}^{6}),\\
&\text{S}:=\Sch(\R^{6})\times \Sch(\R^{6}) \qtq{Here, $\Sch(\R^{6})$ is the Schwartz space.}
\end{aligned}
\]

Note that
\begin{equation}\label{SI1}
\left\|\mathbf{f}\right\|_{S(I)} \lesssim \left\|\nabla \mathbf{f}\right\|_{Z(I)}.
\end{equation}

In the following lemmas, we obtain some estimates that will be useful throughout the section.
Recall that 
\[
B(\ve{h}) = B(h, g) = (\overline{h}Q + \sqrt{2\kk}Qg, \sqrt{2\kk}Qh) \qtq{and} N(\ve{h}) = (2\overline{h} g, h^{2}).
\]
\begin{lemma}[Linear estimates]\label{LL1}
Let $I$ be a finite interval of length $|I|$, $\ve{h}\in S(I)$ and $\nabla \ve{h}\in Z(I)$. There exists a constant $C$ independent of $I $ such that
\begin{equation}\label{Fi22}
\|\nabla B(\ve{h})\|_{N(I)}\leq |I|^{\frac{1}{4}}\|\nabla \ve{h}\|_{Z(I)}.
\end{equation}
Moreover, for $\ve{h}\in \text{L}^{3}$ we have
\begin{equation}\label{Fi1}
\|B(\ve{h})\|_{\text{L}^{\frac{3}{2}}}\leq C\|\ve{h}\|_{\text{L}^{3}}.
\end{equation}
\end{lemma}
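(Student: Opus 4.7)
\medskip

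\noindent\textbf{Proof proposal.} Both inequalities are direct consequences of Hölder's inequality combined with the explicit decay rates of $Q$ and $\nabla Q$ coming from~\eqref{ground state-W}. The plan is to first verify the key integrability facts $Q\in L^{p}(\R^{6})$ for every $p>3/2$ and $\nabla Q\in L^{p}(\R^{6})$ for every $p>6/5$, since $Q(x)\sim |x|^{-4}$ and $|\nabla Q(x)|\sim |x|^{-5}$ as $|x|\to\infty$ while both are bounded near the origin. In particular $Q\in L^{3}\cap L^{4}$ and $\nabla Q\in L^{12/5}$, and these are the exponents that will appear below.

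For the first estimate, I would expand $\nabla B(h,g)$ using the product rule; the resulting terms are of two types: $Q\,\nabla\ve{h}$ and $(\nabla Q)\,\ve{h}$ (with scalar factors that cause no trouble). For the first type, Hölder in space with $\tfrac{2}{3}=\tfrac{1}{4}+\tfrac{5}{12}$ gives
\[
\|Q\,\nabla \ve{h}\|_{L^{3/2}_{x}}\leq \|Q\|_{L^{4}_{x}}\,\|\nabla \ve{h}\|_{L^{12/5}_{x}},
\]
and then Hölder in time on $I$ (using $\|\cdot\|_{L^{2}_{t}(I)}\leq |I|^{1/4}\|\cdot\|_{L^{4}_{t}(I)}$) yields $\|Q\,\nabla\ve{h}\|_{N(I)}\leq |I|^{1/4}\|Q\|_{L^{4}_{x}}\|\nabla\ve{h}\|_{Z(I)}$. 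For the second type, Hölder in space with the same exponent split $\tfrac{2}{3}=\tfrac{1}{4}+\tfrac{5}{12}$ but now with $\ve{h}\in L^{4}_{x}$ gives
\[
\|(\nabla Q)\,\ve{h}\|_{L^{3/2}_{x}}\leq \|\nabla Q\|_{L^{12/5}_{x}}\,\|\ve{h}\|_{L^{4}_{x}},
\]
so $\|(\nabla Q)\,\ve{h}\|_{N(I)}\leq |I|^{1/4}\|\nabla Q\|_{L^{12/5}_{x}}\|\ve{h}\|_{S(I)}$, and finally the Sobolev-type embedding~\eqref{SI1} converts the $S(I)$ norm into $\|\nabla\ve{h}\|_{Z(I)}$.

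For the second estimate, Hölder in space with $\tfrac{2}{3}=\tfrac{1}{3}+\tfrac{1}{3}$ directly gives
\[
\|Q\,h\|_{L^{3/2}_{x}}\leq \|Q\|_{L^{3}_{x}}\,\|h\|_{L^{3}_{x}},
\qquad
\|Q\,g\|_{L^{3/2}_{x}}\leq \|Q\|_{L^{3}_{x}}\,\|g\|_{L^{3}_{x}},
\]
and summing the components yields $\|B(\ve{h})\|_{\text{L}^{3/2}}\lesssim \|Q\|_{L^{3}}\|\ve{h}\|_{\text{L}^{3}}$, with the constant absorbing the factor $\sqrt{2\kk}$ and $\|Q\|_{L^{3}}$.

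There is no substantive obstacle here; the only point worth double-checking is the exponent bookkeeping, in particular that the exponent pair $(4,12/5)$ chosen in space is indeed admissible (it is, since $Q\in L^{4}$ and $\nabla Q\in L^{12/5}$ by the explicit decay) and that the temporal loss $|I|^{1/4}$ matches the difference between $L^{4}_{t}$ and $L^{2}_{t}$ on an interval of finite length.
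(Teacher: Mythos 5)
Your proposal is correct and follows essentially the same route as the paper: expand $\nabla B(\ve{h})$ into terms of the form $Q\,\nabla\ve{h}$ and $(\nabla Q)\,\ve{h}$, apply the Hölder estimate $\|fg\|_{L^{2}_{t}L^{3/2}_{x}}\leq\|f\|_{L^{4}_{t}L^{12/5}_{x}}\|g\|_{L^{4}_{t}L^{4}_{x}}$ together with \eqref{SI1}, and pick up the $|I|^{1/4}$ from the time-independence of $Q$; the second bound is just Hölder with $\tfrac{2}{3}=\tfrac{1}{3}+\tfrac{1}{3}$. The only cosmetic difference is that you verify $\nabla Q\in L^{12/5}$ directly from the decay $|\nabla Q(x)|\sim|x|^{-5}$, whereas the paper shortcuts this via the pointwise bound $|\partial_{\alpha}Q|\lesssim|Q|$ and the single fact $Q\in L^{4}\cap L^{12/5}$; both are valid.
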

\begin{proof}
The proof of \eqref{Fi1} is a direct consequence of the inequality 
\begin{equation}\label{Hnew}
\|fg\|_{L^{\frac{3}{2}}} \leq \|f\|_{L^{3}} \|g\|_{L^{3}}.
\end{equation}
On the other hand, using Hölder's inequality, we have that
\begin{equation}\label{H11}
\|{h}{g}\|_{L^{2}_{t}L^{\frac{3}{2}}_{x}} \leq \|{h}\|_{L^{4}_{t}L^{\frac{12}{5}}_{x}} \|{g}\|_{L^{4}_{t}L^{4}_{x}}.
\end{equation}
Thus, the inequality \eqref{Fi22} follows easily from the inequalities \eqref{SI1}, \eqref{H11}, and the fact that $|\partial_{\alpha}Q| \lesssim |Q|$ and $Q \in L^{4} \cap L^{\frac{12}{5}}$.
\end{proof}

\begin{lemma}[NonLinear estimates]\label{LN1}
Let $\ve{h}$ and $\ve{g}$ functions in $\text{L}^{3}(\mathbb{R}^{6})$. Then
\begin{equation}\label{Nol1}
\|N(\ve{h})-N(\ve{g})\|_{\text{L}^{\frac{3}{2}}}\leq C\|\ve{h}-\ve{g}\|_{\text{L}^{3}}\(\|\ve{h}\|_{\text{L}^{3}}+\|\ve{g}\|_{\text{L}^{3}}\).
\end{equation}
Let $I$ be a finite interval of length $|I|$, $\ve{h}$, $\ve{g} \in S(I)$ and $\nabla\ve{h}$, $ \nabla\ve{g}\in Z(I)$. There exists a constant $C$ independent of $I $ such that
\begin{equation}\label{Nol2}
\|\nabla N(\ve{h})-\nabla N(\ve{g})\|_{N(I)}\leq C\|\nabla \ve{h}-\nabla \ve{g}\|_{Z(I)}\(\|\nabla \ve{h}\|_{Z(I)}+\|\nabla \ve{g}\|_{Z(I)}\).
\end{equation}
\end{lemma}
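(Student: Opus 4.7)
Write $\ve{h}=(h_{1},h_{2})$ and $\ve{g}=(g_{1},g_{2})$, so that
\[
N(\ve{h})-N(\ve{g})=\bigl(2(\overline{h_{1}}h_{2}-\overline{g_{1}}g_{2}),\; h_{1}^{2}-g_{1}^{2}\bigr).
\]
The first estimate \eqref{Nol1} follows immediately from the telescoping identities
\[
\overline{h_{1}}h_{2}-\overline{g_{1}}g_{2}=(\overline{h_{1}}-\overline{g_{1}})h_{2}+\overline{g_{1}}(h_{2}-g_{2}),\qquad
h_{1}^{2}-g_{1}^{2}=(h_{1}-g_{1})(h_{1}+g_{1}),
\]
combined with the H\"older-type inequality \eqref{Hnew} and the obvious bound $\|h_{j}\|_{L^{3}}+\|g_{j}\|_{L^{3}}\lesssim\|\ve{h}\|_{\text{L}^{3}}+\|\ve{g}\|_{\text{L}^{3}}$ for $j=1,2$.

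For the gradient estimate \eqref{Nol2}, apply the product rule to obtain
\begin{align*}
\nabla(\overline{h_{1}}h_{2}-\overline{g_{1}}g_{2})&=\nabla(\overline{h_{1}}-\overline{g_{1}})\cdot h_{2}+\nabla\overline{g_{1}}\cdot(h_{2}-g_{2})\\
&\quad+(\overline{h_{1}}-\overline{g_{1}})\nabla h_{2}+\overline{g_{1}}\nabla(h_{2}-g_{2}),\\
\nabla(h_{1}^{2}-g_{1}^{2})&=2(h_{1}-g_{1})\nabla h_{1}+2g_{1}\nabla(h_{1}-g_{1}).
\end{align*}
Each resulting term is a product of one factor involving $\ve{h}-\ve{g}$ or $\nabla(\ve{h}-\ve{g})$ and one factor involving $\ve{h}$, $\ve{g}$, or their gradients. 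Bound them with H\"older's inequality using the scalings $\tfrac{1}{2}=\tfrac{1}{4}+\tfrac{1}{4}$ in time and $\tfrac{2}{3}=\tfrac{5}{12}+\tfrac{1}{4}$ in space, so that a product of a $L^{4}_{t}L^{12/5}_{x}$ (gradient) factor with an $L^{4}_{t}L^{4}_{x}$ (undifferentiated) factor lies in $L^{2}_{t}L^{3/2}_{x}=N(I)$. Finally, convert the $L^{4}_{t}L^{4}_{x}=S(I)$ norm of the undifferentiated factor into the $Z(I)$ norm of its gradient via \eqref{SI1}.

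The step to watch is merely bookkeeping: because the nonlinearity couples the two components asymmetrically, the difference must be decomposed one coordinate at a time and every cross term checked separately; once that is in place, all estimates reduce to a single pairing of H\"older exponents. There is no functional-analytic obstacle, and the constant $C$ is independent of $I$ since the exponents are scale-invariant and no factor of $|I|$ arises.
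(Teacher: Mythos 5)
Your proof is correct and follows exactly the route the paper sketches: \eqref{Nol1} from the bilinear estimate \eqref{Hnew} after telescoping, and \eqref{Nol2} from the space-time H\"older bound \eqref{H11} together with the Sobolev-type bound \eqref{SI1} applied to the undifferentiated factor in each product-rule term. You have merely filled in the bookkeeping the paper explicitly omits, and your exponent arithmetic ($5/12+1/4=2/3$ in space, $1/4+1/4=1/2$ in time) and the observation that no $|I|$-factor is needed are both accurate.
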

\begin{proof}
From \eqref{Hnew}, we obtain \eqref{Nol1}. On the other hand, by combining \eqref{H11} and \eqref{SI1}, the inequality \eqref{Nol2} follows easily. We omit the details.
\end{proof}

We start by constructing approximate solutions to the linearized equation \eqref{De112}. The proof follows a similar approach to that of \cite[Proposition 6.3]{DuyMerle2009}, so it will suffice to outline the main steps of the argument.

\begin{proposition}\label{ApproxSo}
Let $ a \in \mathbb{R}$. There exists a sequence $\{g^{a}_{j}\}_{j \geq 1} $ in $ \text{S} $ such that the following holds: writing $ g^{a}_{1} = a e_{+} $ and
\[
U^{a}_{k}(t, x) :=\sum^{k}_{j=1} e^{-j \lambda_{1} t} g^{a}_{j}(x) \quad \text{for } k \geq 1,
\]
we have
\begin{equation}\label{Aproxlimi}
\epsilon_{k}:=\partial_{t} U^{a}_{k} + \mathcal{E} U^{a}_{k} - i N(U^{a}_{k}) =\mathcal{O}(e^{-(k+1) \lambda_{1} t}) \quad \text{in } \text{S} \quad \text{as } t \to \infty.
\end{equation}
Moreover, if $W^{a}_{k}=(f,h):=U^{a}_{k}+T(\Q)$, then 
\[
\epsilon_{k}:=i\partial_{t}W^{a}_{k}+(\Delta f, {\kk}\Delta g)+(\overline{f}g, g^{2})
=\mathcal{O}(e^{-(k+1) \lambda_{1} t}) \quad \text{in } \text{S} \quad \text{as } t \to \infty.
\]
\end{proposition}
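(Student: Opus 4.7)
The plan is to construct $\{g_j^a\}_{j\ge 1} \subset \text{S}$ recursively, taking $g_1^a = a\,e_+$ and, for each $k \ge 2$, defining $g_k^a$ as the unique solution of a linear equation of the form $(\mathcal{E} - k\lambda_1)g_k^a = iF_k$, where $F_k \in \text{S}$ depends only on $g_1^a,\ldots,g_{k-1}^a$. The key algebraic observation is that $N$ is real-bilinear, so writing $U_k^a = \sum_{j=1}^{k} e^{-j\lambda_1 t}g_j^a$ and expanding gives
\[
N(U_k^a) \;=\; \sum_{m=2}^{2k} e^{-m\lambda_1 t}\, N_m,
\]
with $N_m$ a pointwise bilinear combination of the pairs $(g_i^a, g_j^a)$ satisfying $i+j = m$. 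In particular, for $m \le k$ the indices obey $1 \le i,j \le k-1$, so $N_m$ involves only $g_1^a,\ldots,g_{k-1}^a$, and the coefficient $F_k := N_k$ is determined before $g_k^a$ is constructed.

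For the base case, Lemma~\ref{SpecLL} gives $\mathcal{E}e_+ = \lambda_1 e_+$, hence $\partial_t U_1^a + \mathcal{E}U_1^a = 0$, and $\epsilon_1 = -iN(U_1^a) = -ia^2 e^{-2\lambda_1 t}(2\overline{Y}Z,\,Y^2)$, which is $\mathcal{O}(e^{-2\lambda_1 t})$ in $\text{S}$. For the inductive step, assume $g_1^a,\ldots,g_{k-1}^a \in \text{S}$ have been defined so that $\partial_t U_{k-1}^a + \mathcal{E}U_{k-1}^a$ cancels the contribution of $iN(U_{k-1}^a)$ at orders $e^{-m\lambda_1 t}$ for $m \le k-1$. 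By Lemma~\ref{SpecLL}, $\sigma(\mathcal{E}) \cap \R = \{-\lambda_1, 0, \lambda_1\}$ and $\sigma_{\mathrm{ess}}(\mathcal{E}) = i\R$, so $k\lambda_1$ lies in the resolvent set for every $k \ge 2$, and $(\mathcal{E} - k\lambda_1)g_k^a = iF_k$ admits a unique $L^2$ solution. Adding $e^{-k\lambda_1 t} g_k^a$ to $U_{k-1}^a$ cancels the $e^{-k\lambda_1 t}$ term in the residual, and the remaining bilinear contributions only involve exponents $m \ge k+1$, yielding $\epsilon_k = \mathcal{O}(e^{-(k+1)\lambda_1 t})$.

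The main technical point is verifying that $g_k^a$ belongs to $\text{S}$. Smoothness is standard: $\mathcal{E}$ is a second-order elliptic system whose zeroth-order part multiplies by the Schwartz function $Q$, and $F_k$ is a sum of products of Schwartz functions, so elliptic bootstrapping yields $g_k^a \in H^s$ for all $s \ge 0$. Rapid decay follows from a weighted-resolvent argument in the spirit of \cite[Proposition 6.3]{DuyMerle2009}: conjugating $\mathcal{E} - k\lambda_1$ by the weight $\langle x\rangle^M$ produces a compact perturbation that remains invertible on $L^2$, so $\langle x\rangle^M g_k^a \in L^2$ for every $M$, and combined with the Sobolev estimates this gives $g_k^a \in \text{S}$. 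This decay step is the main obstacle; the rest is essentially formal power-series matching.

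Finally, the second form of the claim follows directly from the first. Since $\Delta Q + Q^2 = 0$ one checks that $T(\Q)$ is a stationary solution of \eqref{NNLS}, so setting $W_k^a = T(\Q) + U_k^a$ transforms the linearized-plus-nonlinear residual for $U_k^a$ in \eqref{De112} into precisely the full nonlinear residual of $W_k^a$ in \eqref{NNLS}. The decay rate $\mathcal{O}(e^{-(k+1)\lambda_1 t})$ in $\text{S}$ therefore carries over verbatim.
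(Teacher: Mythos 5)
Your proposal is correct and follows essentially the same strategy as the paper, which also constructs $g^a_k$ inductively by solving $(\mathcal{E}-k\lambda_1)g^a_k = iN_k$ using that $k\lambda_1\notin\sigma(\mathcal{E})$ for $k\geq 2$, and then refers to Step~2 of \cite[Proposition 6.3]{DuyMerle2009} for the Schwartz-class regularity of the resolvent output; you have simply filled in more of those details, including the correct observation that the second form of $\epsilon_k$ follows because $T(\Q)$ is a stationary solution of \eqref{NNLS}. One small inaccuracy worth noting: $Q(x)=(1+|x|^2/24)^{-2}$ decays only like $|x|^{-4}$ and is therefore not Schwartz, so the phrase ``the Schwartz function $Q$'' is wrong; however this is not load-bearing, since elliptic bootstrapping only needs $Q$ smooth and bounded, and the Schwartz decay of $g^a_k$ comes from the weighted-resolvent argument together with the fact (from Lemma~\ref{SpecLL}) that $e_\pm\in\Sch\times\Sch$.
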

\begin{proof}
The proof proceeds by induction. The proof proceeds by induction. Since $U^{a}_{1} = a e^{- \lambda_{1} t}e_{+}$, we see that
\[
\partial_{t} U^{a}_{1} + \mathcal{E} U^{a}_{1} - i N(U^{a}_{1}) = -i N(U^{a}_{1}) = \mathcal{O}(e^{-2\lambda_{1}t}).
\]
Thus, we obtain \eqref{Aproxlimi} for the case $k = 1$.

Now, let $k \geq 1$. Noting that $(k + 1)\lambda_{1}$  is not in the spectrum of $\mathcal{E}$ (see Lemma~\ref{SpecLL}), and following the same argument as in Step 2 of \cite[Proposition 6.3]{DuyMerle2009}, we derive the estimate \eqref{Aproxlimi}.
\end{proof}

Next, we recall a useful integral summation argument from \cite{DuyMerle2009}.
\begin{lemma}\label{SumsE}
Let $a_{0}>0$, $t_{0}>0$, $p\in[1, \infty)$, $E$ a normed vector space, and $f\in L_{\text{loc}}^{p}((t_{0}, \infty); E)$. Suppose that there exist $\tau_{0}>0$ and $C_{0}>0$ such that 
\[
\|f\|_{L^{p}(t, t+\tau_{0})}\leq C_{0}e^{-a_{0}t} \quad \text{for all $t\geq t_{0}$}.
\]
Then, for all $t\geq t_{0}$,
\[
\|f\|_{L^{p}(t, \infty)}\leq \frac{C_{0}e^{-a_{0}t}}{1-e^{-a_{0}\tau_{0}}}.
\]
\end{lemma}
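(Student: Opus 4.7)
The plan is to decompose the tail interval $(t,\infty)$ into a countable union of translates of an interval of length $\tau_0$ and then apply the hypothesis on each piece together with Minkowski's inequality to sum up the resulting geometric series. More precisely, for $t\geq t_0$ I will set
\[
I_n := [\,t+n\tau_0,\; t+(n+1)\tau_0\,), \qquad n\in\mathbb{N}\cup\{0\},
\]
so that $(t,\infty) = \bigsqcup_{n\geq 0} I_n$ up to a null set, and I will write $f = \sum_{n\geq 0} f\,\chi_{I_n}$ in $L^p_{\mathrm{loc}}((t,\infty);E)$.

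Next, I will invoke the triangle (Minkowski) inequality for the $L^p$ norm to get
\[
\|f\|_{L^p(t,\infty;E)} \;\leq\; \sum_{n=0}^{\infty} \|f\,\chi_{I_n}\|_{L^p(t,\infty;E)} \;=\; \sum_{n=0}^{\infty}\|f\|_{L^p(t+n\tau_0,\,t+(n+1)\tau_0;E)}.
\]
Since $t+n\tau_0\geq t\geq t_0$, the hypothesis applies on each interval $I_n$ and yields
\[
\|f\|_{L^p(t+n\tau_0,\,t+(n+1)\tau_0;E)}\leq C_0 e^{-a_0(t+n\tau_0)}.
\]
Summing the geometric series in $n$ gives
\[
\|f\|_{L^p(t,\infty;E)} \;\leq\; C_0 e^{-a_0 t}\sum_{n=0}^{\infty} e^{-a_0 n\tau_0} \;=\; \frac{C_0 e^{-a_0 t}}{1-e^{-a_0\tau_0}},
\]
which is exactly the claimed bound.

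There is no genuine obstacle here; the argument is purely a packaging of the hypothesis via geometric summation. The only point that requires a little care is the use of Minkowski rather than taking $p$-th powers on each $I_n$ (which would produce the suboptimal denominator $(1-e^{-a_0 p\tau_0})^{1/p}$); using the triangle inequality for $\|\cdot\|_{L^p}$ on the disjointly supported pieces $f\chi_{I_n}$ gives the cleaner constant stated in the lemma. The condition $p\in[1,\infty)$ is exactly what is needed for Minkowski to apply.
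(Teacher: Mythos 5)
Your decomposition-plus-geometric-series argument is correct and is the standard proof of this summation lemma; the paper itself gives no proof and simply cites \cite{DuyMerle2009}, so there is nothing to contrast against. The only flaw is in your parenthetical aside: the alternative of raising to the $p$-th power on each disjoint block actually yields the \emph{sharper} bound $C_0 e^{-a_0 t}(1-e^{-a_0 p\tau_0})^{-1/p}$, not a suboptimal one, since $(1-x^p)^{1/p}\geq 1-x$ for $x\in(0,1)$ and $p\geq 1$; both routes establish the stated inequality, and the Minkowski route simply lands exactly on the constant as written.
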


We now construct true solutions to \eqref{NNLS} that remain close to the ground state $T(\Q)$ as $t\to\infty$.

\begin{proposition}\label{ContracA}
Let $a\in \R$. There exist $k_{0}>0$ and $t_{k}\geq 0$ such that for any $k\geq k_{0}$, there exists a solution $W^{a}$ to \eqref{NNLS} satisfying, for $t\geq t_k$,
\begin{equation}\label{Uniq}
\|\nabla W^{a}(t) - \nabla W_k^a(t)\|_{Z(t, +\infty)}
\leq e^{-(k+\frac{1}{2})\lambda_{1}t}.
\end{equation}
Furthermore, $W^{a}$ is the unique solution to equation \eqref{NNLS} that satisfies \eqref{Uniq} for large $t$. Finally, $W^{a}$ is independent of $k$ and satisfies, for large $t$,
\begin{equation}\label{UniqVec}
\|W^{a}(t)-T(\Q)-ae^{-\lambda_{1} t}e_{+}\|_{\dot{H}^{1}}\leq e^{-\frac{3}{2}\lambda_{1}t}.
\end{equation}
\end{proposition}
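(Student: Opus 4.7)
The plan is a contraction mapping argument centered on the approximate solution $W^{a}_{k}$. Writing $W^{a} = W^{a}_{k} + \ve{h}$ and subtracting the approximate equation of Proposition~\ref{ApproxSo} from \eqref{NNLS}, one sees via \eqref{De112} that $\ve{h}$ must satisfy
$$
\partial_{t}\ve{h} + \EE\ve{h} = R_{k}(\ve{h}) - \epsilon_{k},
\qquad R_{k}(\ve{h}) := i\bigl[N(U^{a}_{k} + \ve{h}) - N(U^{a}_{k})\bigr],
$$
together with the boundary condition $\ve{h}(t)\to 0$ as $t\to\infty$. Inverting $\partial_{t}+\EE$ by integrating from $+\infty$ yields the fixed-point formulation
$$
\ve{h}(t) = \Phi(\ve{h})(t) := -\int_{t}^{+\infty} e^{-(t-s)\EE}\bigl[R_{k}(\ve{h}(s)) - \epsilon_{k}(s)\bigr]\,ds.
$$

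To make sense of $e^{-(t-s)\EE}$ for $s>t$, I decompose along the spectral picture of Lemma~\ref{SpecLL}: projectors $\pi_{\pm}$ onto $\mathrm{span}(e_{\pm})$ and $\pi_{c} := I - \pi_{+} - \pi_{-}$ onto the complementary essentially-spectrum-plus-zero-mode part. On $e_{\pm}$ the propagator is explicit ($e^{\mp\lambda_{1}(t-s)}$), while on $\pi_{c}$ the linear flow is a short-range perturbation of the diagonal Schr\"odinger group, so by the spectral information in Lemma~\ref{SpecLL} it satisfies Strichartz estimates compatible with the $Z$ and $N$ spaces. The $\pi_{+}$-component of the integrand grows like $e^{\lambda_{1}(s-t)}$ as $s\to\infty$, so integrability requires the forcing to decay strictly faster than $e^{-\lambda_{1}s}$; this is secured by the $\mathcal{O}(e^{-(k+1)\lambda_{1}s})$ bound on $\epsilon_{k}$ of Proposition~\ref{ApproxSo} together with the quadratic character of $R_{k}$, provided $k$ is chosen large enough.

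I then work in the Banach space
$$
X_{k} := \Big\{ \ve{h} : \|\ve{h}\|_{X_{k}} := \sup_{t\geq t_{k}} e^{(k+\tfrac{1}{2})\lambda_{1}t}\|\nabla\ve{h}\|_{Z(t,\infty)} < \infty \Big\}
$$
and, using Lemmas~\ref{LL1}--\ref{LN1}, the Strichartz estimates on $\pi_{c}$, and Lemma~\ref{SumsE} to convert local-in-time norms into exponentially decaying global-in-time ones, I check that for $t_{k}$ large enough depending on $k$, the map $\Phi$ sends a unit ball of $X_{k}$ into itself and is a contraction. The required smallness is harvested either from an $L^{1}_{t}$-integration against the decaying $\pi_{-}$ kernel or from the extra factor of $\ve{h}$ in $R_{k}(\ve{h})$. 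The fixed point delivers $W^{a} := W^{a}_{k} + \ve{h}$ solving \eqref{NNLS} and \eqref{Uniq}. Uniqueness of $W^{a}$ within the class \eqref{Uniq} is immediate from uniqueness of the fixed point. Independence of $k$ follows by observing that if $k_{0}\leq k<k'$, the solution produced at level $k'$ also satisfies \eqref{Uniq} at level $k$, since
$$
W^{a}_{k'} - W^{a}_{k} = \sum_{j=k+1}^{k'}e^{-j\lambda_{1}t}g^{a}_{j} = \mathcal{O}(e^{-(k+1)\lambda_{1}t}) \quad \text{in } \text{S};
$$
uniqueness at level $k$ then forces the two solutions to coincide. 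Finally, \eqref{UniqVec} follows from $W^{a} - T(\Q) - a e^{-\lambda_{1}t}e_{+} = \sum_{j\geq 2}e^{-j\lambda_{1}t}g^{a}_{j} + \ve{h}$, bounding the sum in $\dot{H}^{1}$ by $Ce^{-2\lambda_{1}t}$ and upgrading the $Z$-bound on $\ve{h}$ to an $\dot{H}^{1}$-bound via the same Strichartz estimate applied to the Duhamel representation.

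The main obstacle is handling the unstable eigenvalue $\lambda_{1}$: a global Duhamel formula on the $e_{+}$-direction converges only when the forcing decays strictly faster than $e^{-\lambda_{1}s}$, which is precisely what dictates the threshold $k\geq k_{0}$ and the need to go sufficiently far in the asymptotic expansion of Proposition~\ref{ApproxSo}. A secondary technical point is justifying Strichartz estimates for $e^{-t\EE}\pi_{c}$ from the spectral description of Lemma~\ref{SpecLL} together with the rapid decay of the linearized potential $Q$; this is where the short-range structure of the problem enters.
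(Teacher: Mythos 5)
Your proposal follows the same broad shape (fixed point around $W^a_k$, uniqueness from contraction, $k$-independence from uniqueness, and \eqref{UniqVec} by summing the tail), but it inverts a fundamentally different linear operator than the paper does, and that is where a real gap appears. You invert the full linearized flow $\partial_t + \EE$ and decompose the resulting propagator $e^{-(t-s)\EE}$ by spectral projectors $\pi_\pm, \pi_c$, then assert that $e^{-t\EE}\pi_c$ satisfies Strichartz estimates "by the spectral information in Lemma~\ref{SpecLL}" and the short-range character of the potential. This does not follow. Lemma~\ref{SpecLL} identifies the point spectrum and the essential spectrum, but it says nothing about dispersive or Strichartz bounds for the linearized (non-self-adjoint, matrix) Schr\"odinger flow; such estimates for flows linearized about a ground state require ruling out embedded eigenvalues and threshold resonances and proving a dedicated dispersive estimate, none of which is done here and none of which the paper attempts. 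Moreover, $\pi_c$ as you define it contains the $8$-dimensional kernel $\mathrm{Ker}(\EE)$ (cf. Remark~\ref{KerLR}), on which the semigroup is the identity and exhibits no decay at all, so even granting dispersive bounds on the absolutely continuous part you would still have to project out the zero modes and control them separately. You do not address this.

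The paper sidesteps the entire issue. In the proof of Proposition~\ref{ContracA}, the equation for $\ve{h}$ is rewritten as $i\partial_t \ve{h} + (\Delta h, \kk\Delta g) = -B(\ve{h}) - [N(\ve{v}_k+\ve{h})-N(\ve{v}_k)] - i\varepsilon_k$, and the fixed-point map $\mathrm{M}_k$ uses the \emph{free} diagonal propagator $S(t)=\mathrm{diag}(e^{it\Delta}, e^{i\kk t\Delta})$, for which Strichartz is standard. The linear potential term $B(\ve{h})$ is kept on the right-hand side and rendered perturbative via Lemma~\ref{LL1}, which gives $\|\nabla B(\ve{h})\|_{N(I)}\lesssim |I|^{1/4}\|\nabla\ve{h}\|_{Z(I)}$ on short intervals; Lemma~\ref{SumsE} then sums these local bounds against the exponential weight $e^{(k+\frac12)\lambda_1 t}$, and the smallness needed to close the contraction comes from choosing the interval length $\tau_0$ small and $k$ large (Claim~\ref{Est34}). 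No dispersive information about $\EE$ is used beyond the location of its discrete spectrum, which enters only through Proposition~\ref{ApproxSo} (the construction of $W^a_k$) and the exponential weights. If you want to salvage your route you would either have to prove Strichartz estimates for $e^{-t\EE}$ on the range of $\pi_c$ minus the kernel (a substantial undertaking not justified by anything in the paper), or simply replace $e^{-t\EE}$ by the free propagator and move $B$ to the right-hand side as the paper does.
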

\begin{proof}
The argument is very similar to the one provided in \cite[Proposition 6.3]{DuyMerle2009}.

Notice that the function $W^a$ is a solution of \eqref{NNLS} if and only if $\ve{w}^a := W^a - T(\Q)$ is a solution of
\[
\partial_t \ve{w}^a + \EE \ve{w}^a = iN(\ve{w}^a).
\]
Moreover, by \eqref{Aproxlimi}, $\ve{v}_k^a := W_k^a - T(\Q)$ satisfies the identity
\[
\partial_t \ve{v}_k^a + \EE \ve{v}_k^a - iN(\ve{v}_k^a) = \varepsilon_k .
\]
Thus, $W^a$ satisfies \eqref{NNLS} if and only if $\ve{h} := W^a - W_k^a = \ve{w}^a - \ve{v}_k^a$ satisfies
\[
\partial_t \ve{h} + \EE \ve{h} = iN(\ve{v}_k^a + \ve{h}) - iN(\ve{v}_k^a) - \varepsilon_k.
\]
This can be rewritten as (with $\ve{h}:=(h,g)$)
\[
i \partial_t \ve{h} + (\Delta h, {\kk}\Delta g) = -B(\ve{h}) - N(\ve{v}_{k}+ \ve{h}) + N(\ve{v}_{k}) - i \varepsilon_k.
\]

We construct the solution to \eqref{NNLS} via a fixed point argument. Consider the operator
\begin{equation*}
[\text{M}_{k}(\ve{h}) ](t):=-\int^{\infty}_{t}S(t-s)[-iB(\ve{h}(s))-iN(\ve{v}_{k}(s)+\ve{h}(s))+iN(\ve{v}_{k}(s))+\epsilon_{k}(s)]\,ds,
\end{equation*}
where
\[
S(t)=\begin{bmatrix}
e^{it\Delta} & 0 \\
0 & e^{i\kk t\Delta}
\end{bmatrix},
\]
and the spaces
\begin{align*}
E^{k}_{Z}&:=\left\{ \ve{h} \in S(t_k, +\infty), \nabla \ve{h} \in Z(t_k, +\infty); \|h\|_{E_l^k} := \sup_{t \geq t_k} e^{(k + \frac{1}{2})\lambda_{1} t} \| \nabla \ve{h}\|_{Z(t, +\infty)} < \infty \right\},\\
L^{k}_{Z}&:=\left\{\ve{h} \in E^{k}_{Z}, \|\ve{h}\|_{E^{k}_{Z}}\leq 1\right\}.
\end{align*}
Note that the space $E_Z^k$ is a Banach space. We need the following:
\begin{claim}\label{Est34}
There exists $k_{0}>0$ such that for $k\geq k_{0}$, we have
\begin{align}\label{Cla11}
\| \nabla B (\ve{h})\|_{N(t, \infty)}&\leq \tfrac{1}{4 C^{\ast}}e^{-(k+\frac{1}{2})\lambda_{1}t}\|\ve{h}\|_{E_{Z}^{k}},\\ 
\label{Cla22}
\|\nabla (N(\ve{v}_{k}+\ve{g})-N(\ve{v}_{k}+\ve{h}))\|_{N(t, \infty)}&\leq C_{k}e^{-(k+\frac{3}{2})\lambda_{1}t}\|\ve{g}-\ve{h}\|_{E_{Z}^{k}},\\
\label{Cla33}
\|\epsilon_{k}\|_{N(t, \infty)}&\leq C_{k}e^{-(k+1)\lambda_{1}t},
\end{align}
for $t\geq t_{k}$, where the constant $C_{k}$ depends only on $k$.
\end{claim}
\begin{proof}[{Proof of Claim~\ref{Est34}}]
We first estimate \eqref{Cla11}. Let $\tau_{0}>0$. From \eqref{Fi22}, we have
\[
\|\nabla B(\ve{h})\|_{N(t, t+\tau_{0})}\leq C_{1}\tau_{0}^{\frac{1}{4}}e^{-(k+\frac{1}{2})\lambda_{1}t}\|\ve{h}\|_{E_{Z}^{k}}.
\]
Then obtain \eqref{Cla11} by Lemma~\ref{SumsE} for $k\geq k_{0}$ by choosing $\tau_{0}$ and $k_{0}$ appropriately.

Now we show \eqref{Cla22}. Notice that by definition $\|\ve{v}_{k}\|_{Z(t, t+1)}\leq C_{k}e^{-\lambda_{1}t}$ 
(cf. Proposition~\ref{ContracA}). Moreover, by estimate \eqref{Nol2}, we deduce (we set $I:=[t,t+1]$)
\begin{align*}
&\|\nabla (N(\ve{v}_{k}+\ve{g}) - N(\ve{v}_{k}+\ve{h}))\|_{N(I)} \\
&\quad \leq C_{1,2} \|\nabla \ve{h} - \nabla \ve{g}\|_{Z(I)} \left( \|\nabla \ve{h}\|_{Z(I)} + \|\nabla \ve{g}\|_{Z(I)} + \|\nabla \ve{v}_{k}\|_{Z(I)} \right) \\
&\quad \leq C_{k,2} e^{-\lambda_{1}t} \|\nabla \ve{h} - \nabla \ve{g}\|_{Z(I)} \\
&\quad \leq C_{k,2} e^{-(k+\frac{3}{2})\lambda_{1}t} \|\ve{h} - \ve{g}\|_{E_{Z}^{k}}.
\end{align*}
where the constant $C_{k,2}$ depends only on $k$. Therefore, Lemma~\ref{SumsE} implies \eqref{Cla22}. 

Finally, \eqref{Cla33} is a direct consequence of \eqref{Aproxlimi}.
\end{proof}

With Claim~\ref{Est34}, the remainder of the proof follows the same  as in Step 2 of \cite[Proposition 6.3]{DuyMerle2009}. This completes the proof.
\end{proof}

\subsection{Construction of special solutions}
\begin{proof}[Proof of Theorem~\ref{Gcharc}]
As the norms $H^{\frac{1}{2}}_{N}(\cdot)$ (cf. \eqref{HNN}) and $\|\cdot\|_{\dot{H}^{1}}$ are equivalent,  by Proposition~\ref{ContracA}, we have
\[
H_{N}(W^{a}(t))=H_{N}(T(\Q))+2ae^{-\lambda_{1}t}(e_{1}, T(\Q))_{H_{N}}+O\left(e^{-\frac{3}{2}\lambda_{1}t}\right)\quad \mbox{as $t\to+\infty$}.
\]
From Remark~\ref{PE1}, we may assume that $(e_{1}, T(\Q))_{H_{N}}>0$. Therefore, we have that $H_{N}(W^{a}(t))-H_{N}(T(\Q))$ has the same sign as $a$ for large times. In particular, by variational characterization (cf. Proposition~\ref{TheGN}), we infer that $H_{N}(W^{a}(t_{0}))-H_{N}(T(\Q))$ has the same sign as $a$.
We define 
\[
A^{+}(t,x)=W^{+1}(t+t_{0},x),\quad A^{-}(t,x)=W^{-1}(t+t_{0},x),
\] 
for $t_{0}$ sufficiently large. Then, we obtain two radial solutions $A^{\pm}(t,x)$ of \eqref{NNLS} that satisfy

\[
H_{N}(A^{-}(0))<H_{N}(T(\Q)) \qtq{and} H_{N}(A^{+}(0))>H_{N}(T(\Q)), 
\]
and
\[
H_{N}(A^{\pm}(t)-T(\Q))\leq Ce^{-\lambda_{1}t}\qtq{for $t\geq 0$.}
\]
Next, we define $\G^{+}=T^{-1}(A^{+})$ and $\G^{-}=T^{-1}(A^{-})$ (recall $T$ in \eqref{TRA}). Then, $\G^{+}$ and $\G^{-}$ are solutions of \eqref{SNLS}, and since 
$H(T^{-1}(\ve{u}))=2H_{N}(\ve{u})$, we see that
\[
\|\G^{\pm}(t)-\Q\|^{2}_{\dot{H}^{1}}\lesssim H(\G^{\pm}(t)-\Q)\leq Ce^{-\lambda_{1}t} \quad \text{for $t\geq 0$},
\]
and
\[
H(\G^{-}(0))<H(\Q) \qtq{and} H(\G^{+}(0))>H(\Q). 
\]
Note also that, by Corollary~\ref{ClassC}, the solution $\G^{-}$ is defined for all $\R$ and scatters as $t \to-\infty$. 
Finally, the same argument given in \cite[p.~38]{DuyMerle2009} shows that $\mathcal{G}^{+}(0) \in L^{2}$. Thus, Proposition~\ref{SupercriQ} implies that the negative existence time of $\mathcal{G}^{+}$ is finite.

This concludes the proof of the theorem.
\end{proof}

\section{A Uniqueness Result}\label{S:uniq}

Our first main goal in this section is to establish the following:

\begin{proposition}\label{UniqueU}  
If $\ve{u}$ is a radial solution to \eqref{NNLS} satisfying
\begin{equation}\label{UniqCon}
\|\ve{u}(t)-T(\Q)\|_{\dot{H}^{1}}\leq Ce^{-ct} \qtq{for $t\geq 0$,}
\end{equation} 
for some $C$, $c>0$, then there exists a unique $a \in \R$ such that $\ve{u}=W^{a}$, where $W^{a}$ is the solution of \eqref{NNLS} given in Proposition~\ref{ContracA}.
\end{proposition}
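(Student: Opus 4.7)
The strategy is to adapt the Duyckaerts--Merle rigidity argument \cite{DuyMerle2009} to the system \eqref{NNLS}. Set $\ve{k}(t) := \ve{u}(t) - T(\Q)$, so that
\[
\partial_{t}\ve{k} + \EE\ve{k} = iN(\ve{k}), \qquad \|\ve{k}(t)\|_{\dot{H}^{1}} \leq Ce^{-ct}.
\]
Since $\ve{u}$ is radial, the translation directions $T(\partial_{j}\Q)$ are inert. Using a phase/scaling modulation in the spirit of Lemma~\ref{BoundII}, decompose
\[
\ve{k}(t) = a_{+}(t)\,e_{+} + a_{-}(t)\,e_{-} + \ve{k}^{\bot}(t),
\]
where $a_{\pm}(t)$ are defined by $\F_{E}$-projections against $e_{\mp}$ (the pairing $\F_{E}(e_{+},e_{-})$ is nondegenerate because $\pm\lambda_{1}$ are simple isolated eigenvalues and $\F_{E}$ is formally conserved by the linear flow), and $\ve{k}^{\bot}$ lies in the subspace $\tilde{G}^{\bot}$ where the coercivity of Proposition~\ref{FCove} applies. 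Using the antisymmetry $\F_{E}(\EE\cdot,\cdot) = -\F_{E}(\cdot,\EE\cdot)$ stated in Remark~\ref{PLf}, one finds
\[
a_{+}' + \lambda_{1}a_{+} = F_{+}[\ve{k}], \qquad a_{-}' - \lambda_{1}a_{-} = F_{-}[\ve{k}], \qquad \tfrac{d}{dt}\F_{E}(\ve{k}^{\bot}) = O\bigl(\|\ve{k}\|^{2}_{\dot{H}^{1}}\|\ve{k}^{\bot}\|_{\dot{H}^{1}}\bigr),
\]
with $|F_{\pm}[\ve{k}]| \lesssim \|\ve{k}\|_{\dot{H}^{1}}^{2}$, after absorbing modulation error terms of the same size.

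The core of the argument is then a bootstrap on the exponential rate. Since $a_{-}(t)$ and $\ve{k}^{\bot}(t)$ both decay to zero by hypothesis, integrating their equations from $t$ to $+\infty$ and invoking the coercivity estimate $\F_{E}(\ve{k}^{\bot}) \sim \|\ve{k}^{\bot}\|^{2}_{\dot{H}^{1}}$ gives
\[
|a_{-}(t)| + \|\ve{k}^{\bot}(t)\|_{\dot{H}^{1}} \lesssim \int_{t}^{\infty} \|\ve{k}(s)\|^{2}_{\dot{H}^{1}}\,ds.
\]
For the stable coefficient, setting $a_{+}(t) = b(t)e^{-\lambda_{1}t}$ yields $b'(t) = e^{\lambda_{1}t}F_{+}[\ve{k}](t)$, which is integrable, so the limit $a := \lim_{t\to\infty}e^{\lambda_{1}t}a_{+}(t)$ exists and $|a_{+}(t) - ae^{-\lambda_{1}t}| \lesssim \int_{t}^{\infty} e^{\lambda_{1}s}\|\ve{k}(s)\|^{2}_{\dot{H}^{1}}\,ds$. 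Iterating: if $\|\ve{k}(t) - ae^{-\lambda_{1}t}e_{+}\|_{\dot{H}^{1}} \lesssim e^{-\mu t}$ at one stage, the next round produces an improved rate driven by $e^{-(\mu + \lambda_{1})t}$ (up to saturations at $e^{-2\mu t}$), so arbitrarily fast decay of $\ve{k}(t) - ae^{-\lambda_{1}t}e_{+}$ is eventually reached.

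Once $a$ is identified, set $\ve{h}(t) := \ve{u}(t) - W^{a}(t)$ with $W^{a}$ from Proposition~\ref{ContracA}. By \eqref{UniqVec}, $\ve{w}^{a} := W^{a} - T(\Q)$ has the same leading asymptotic $ae^{-\lambda_{1}t}e_{+}$, so $\ve{h}$ satisfies
\[
\partial_{t}\ve{h} + \EE\ve{h} = i[N(\ve{k}) - N(\ve{w}^{a})],
\]
with nonlinearity bounded by $(\|\ve{k}\|_{\dot{H}^{1}} + \|\ve{w}^{a}\|_{\dot{H}^{1}})\|\ve{h}\|_{\dot{H}^{1}}$, and its $e_{+}$-projection carries no leading linear term. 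Applying the same decomposition--bootstrap to $\ve{h}$, together with the Strichartz/Lemma~\ref{LL1}--\ref{LN1} estimates to convert $\dot{H}^{1}$ decay into control of $\|\nabla\ve{h}\|_{Z(t,\infty)}$, one shows that for every $k \geq k_{0}$
\[
\|\nabla(\ve{u} - W_{k}^{a})\|_{Z(t, +\infty)} \leq e^{-(k + \frac{1}{2})\lambda_{1}t}, \qquad t \text{ large}.
\]
The uniqueness clause of Proposition~\ref{ContracA} then forces $\ve{u} \equiv W^{a}$, and uniqueness of $a$ is immediate from reading off the leading coefficient in \eqref{UniqVec}. The main technical obstacle is the iterative bootstrap itself: at each stage the three components $a_{\pm}$ and $\ve{k}^{\bot}$ couple through the quadratic nonlinearity $N$ and through the modulation parameters, and one must verify that the coupling strictly improves the rate without losing control of the modulation, all the way to arbitrary decay rates $e^{-n\lambda_{1}t}$ needed to trigger the uniqueness half of Proposition~\ref{ContracA}.
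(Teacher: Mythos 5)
Your proposal is correct and follows essentially the same route as the paper, which (following Duyckaerts--Merle, Lemma~6.5) runs a decay bootstrap based on Proposition~\ref{AxuST}, extracts the leading coefficient $a$, compares $\ve{u}$ with $W^{a}$, and invokes the uniqueness clause of Proposition~\ref{ContracA}. The one structural difference is how the neutral directions $T(i\Q_{1})$ and $T(\Lambda\Q)$ are treated: you propose to modulate them away ``in the spirit of Lemma~\ref{BoundII},'' which changes the equation and forces you to ``absorb modulation error terms,'' whereas the paper's Proposition~\ref{AxuST} keeps the equation fixed and enlarges the decomposition \eqref{DecompV} by explicit coefficients $\beta(t)T(i\Q)$ and $\gamma(t)T(\Lambda\Q)$, estimating those ODEs directly. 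As you write it, the three-term split $\ve{k}=a_{+}e_{+}+a_{-}e_{-}+\ve{k}^{\bot}$ with $\ve{k}^{\bot}\in\tilde{G}^{\bot}$ is not a genuine direct-sum decomposition of $\dot{H}^{1}_{\mathrm{rad}}$ unless the modulation step has actually been carried through, so the paper's version is cleaner to make rigorous; but your route is standard and would also close. The bootstrap mechanics, the identification of $a$ via $\lim_{t\to\infty}e^{\lambda_{1}t}a_{+}(t)$ once $2c$ passes $\lambda_{1}$, the conversion of pointwise $\dot{H}^{1}$ decay into $Z(t,\infty)$ control via Lemmas~\ref{LL1}--\ref{LN1} and Lemma~\ref{SumsE}, and the final appeal to the uniqueness in Proposition~\ref{ContracA} all match the intended argument.
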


\begin{corollary}\label{coroCla}
Let $a\neq0$. Then there exists  $T_{a}\in \R$ such that 
\begin{equation}\label{C1s}
\begin{cases} 
W^{a}=W^{+1}(t+T_{a}) & \text{if } a>0, \\
W^{a}=W^{-1}(t+T_{a})  & \text{if } a<0.
\end{cases}
\end{equation}
\end{corollary}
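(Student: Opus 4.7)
The plan is to exploit time translation invariance of \eqref{NNLS} together with the uniqueness result in Proposition~\ref{UniqueU}. Given $a\neq 0$, define
\[
T_a := -\tfrac{1}{\lambda_{1}}\ln |a| \in \R,
\]
and set $\tilde W(t,x) := W^{+1}(t+T_a,x)$ if $a>0$ and $\tilde W(t,x) := W^{-1}(t+T_a,x)$ if $a<0$. Since \eqref{NNLS} is invariant under time translation, $\tilde W$ is a radial solution of \eqref{NNLS}. Then I would verify two things about $\tilde W$: (1) it satisfies the exponential decay hypothesis \eqref{UniqCon}, and (2) its leading-order expansion matches that of $W^a$.

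For (1), by \eqref{UniqVec} applied to $W^{\pm 1}$, one has
\[
\|W^{\pm 1}(s)-T(\Q)\|_{\dot{H}^{1}}\leq \|(\pm 1)e^{-\lambda_{1}s}e_{+}\|_{\dot{H}^{1}}+e^{-\tfrac{3}{2}\lambda_{1}s}
\]
for $s$ large. Substituting $s=t+T_a$ gives $\|\tilde W(t)-T(\Q)\|_{\dot{H}^{1}}\leq Ce^{-\lambda_1 t}$ for $t$ large, so \eqref{UniqCon} holds on $[t_{0},\infty)$ for some $t_{0}$. For (2), using again \eqref{UniqVec},
\[
\tilde W(t)-T(\Q)=(\pm 1)e^{-\lambda_{1}T_{a}}e^{-\lambda_{1}t}e_{+}+O\bigl(e^{-\tfrac{3}{2}\lambda_{1}t}\bigr)
=ae^{-\lambda_{1}t}e_{+}+O\bigl(e^{-\tfrac{3}{2}\lambda_{1}t}\bigr),
\]
since $(\pm 1)e^{-\lambda_{1}T_{a}} = (\pm 1)|a| = a$ by the sign convention in the definition of $T_a$.

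Applying Proposition~\ref{UniqueU} to $\tilde W$ (after a time shift to reduce to hypotheses on $[0,\infty)$), there exists a unique $a^{\prime}\in\R$ such that $\tilde W=W^{a^{\prime}}$. By \eqref{UniqVec} for $W^{a^{\prime}}$, we also have
\[
\tilde W(t)-T(\Q)=a^{\prime}e^{-\lambda_{1}t}e_{+}+O\bigl(e^{-\tfrac{3}{2}\lambda_{1}t}\bigr).
\]
Subtracting the two asymptotic expansions yields $(a^{\prime}-a)e^{-\lambda_{1}t}e_{+}=O(e^{-\tfrac{3}{2}\lambda_{1}t})$ as $t\to\infty$, which forces $a^{\prime}=a$ (since $e_{+}\neq 0$ in $\dot{H}^{1}$). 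Therefore $W^{a}=\tilde W=W^{\pm 1}(\,\cdot\,+T_{a})$, establishing \eqref{C1s}. The only subtle point is that Proposition~\ref{UniqueU} is stated with the decay hypothesis on $[0,\infty)$; to apply it to $\tilde W$ one simply shifts time so the hypothesis holds from $t=0$ onward, which is permissible since both $W^{a}$ and $W^{\pm 1}(\,\cdot\,+T_{a})$ are globally defined near $+\infty$ and are then identified by uniqueness on all of their common interval of existence.
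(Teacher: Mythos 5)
Your proof is correct and follows essentially the same strategy as the paper's own proof: choose the time shift $T_a$ so that the leading asymptotic coefficient becomes $\pm 1$, then apply Proposition~\ref{UniqueU} and pin down the constant by matching the $e^{-\lambda_1 t}e_+$ term. The only (cosmetic) difference is the direction: the paper applies Proposition~\ref{UniqueU} to $W^{a}(\cdot+T_a)$ and deduces it equals $W^{\pm 1}$, while you apply it to $W^{\pm 1}(\cdot+T_a)$ and deduce it equals $W^{a}$; these are mirror images of one another. Your sign convention $T_a=-\lambda_1^{-1}\ln|a|$ is actually the one consistent with the statement of \eqref{C1s} (the paper writes $|a|e^{-\lambda_1 T_a}=1$, which leads to $W^{a}(t)=W^{\pm 1}(t-T_a)$ rather than $t+T_a$, a harmless sign slip since the corollary asserts mere existence of $T_a$). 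The caveat about shifting time to place the decay hypothesis on $[0,\infty)$ is correctly flagged; the paper handles it with the same implicit convention.
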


We begin with a lemma.

\begin{lemma}\label{BoundGra11}
Let $\ve{v}$ be a solution of \eqref{De112} with 
\begin{equation}\label{ExpH}
\|\ve{v}(t)\|_{{ \dot{H}}^{1}}\leq Ce^{-c_{0}t}
\end{equation}
for some constants $C>0$ and $c_{0}>0$. Then for any admissible pair $(q, r)$ we have for $t$ large
\begin{equation}\label{BoundCo}
\|\ve{v}\|_{S(t, +\infty)}
+\|\nabla \ve{v}\|_{\text{L}^{p}(t, +\infty; L^{q})}
\leq Ce^{-c_{0}t}.
\end{equation}
\end{lemma}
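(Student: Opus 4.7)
The plan is to combine Strichartz estimates for the linear Schr\"odinger group $S(t)$ with the bilinear/nonlinear estimates of Lemmas~\ref{LL1} and \ref{LN1}, together with the integral summation device of Lemma~\ref{SumsE}, to promote the $\dot{H}^{1}$ decay of $\ve{v}$ to decay in all Strichartz norms. Writing equation \eqref{De112} as the Schr\"odinger system
\[
i\partial_{t}\ve{v}+(\Delta v_{1},\kk\Delta v_{2}) = -B(\ve{v})-N(\ve{v}),
\]
Duhamel's formula gives, for any $\tau_{0}>0$ and $I=[t,t+\tau_{0}]$,
\[
\ve{v}(s)=S(s-t)\ve{v}(t)+i\int_{t}^{s}S(s-\sigma)\bigl[B(\ve{v}(\sigma))+N(\ve{v}(\sigma))\bigr]\,d\sigma.
\]

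First I would apply standard Strichartz estimates on $I$ to obtain
\[
\|\ve{v}\|_{S(I)}+\|\nabla\ve{v}\|_{Z(I)}\lesssim \|\ve{v}(t)\|_{\dot H^{1}}+\|\nabla B(\ve{v})\|_{N(I)}+\|\nabla N(\ve{v})\|_{N(I)}.
\]
By Lemma~\ref{LL1} the linear potential term is controlled by $\tau_{0}^{1/4}\|\nabla\ve{v}\|_{Z(I)}$, while Lemma~\ref{LN1} gives $\|\nabla N(\ve{v})\|_{N(I)}\lesssim\|\nabla\ve{v}\|_{Z(I)}^{2}$. Choosing $\tau_{0}$ small enough, depending only on the Strichartz constant, so that $\tau_{0}^{1/4}$ can be absorbed, and using that $\|\ve{v}(t)\|_{\dot H^{1}}\le Ce^{-c_{0}t}$ is small for $t$ large (so that a continuity/bootstrap argument makes the quadratic term absorbable as well), I would obtain
\[
\|\ve{v}\|_{S(t,t+\tau_{0})}+\|\nabla\ve{v}\|_{Z(t,t+\tau_{0})}\lesssim \|\ve{v}(t)\|_{\dot H^{1}}\le Ce^{-c_{0}t}
\]
for all $t\ge T_{0}$ with $T_{0}$ sufficiently large.

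Next I would apply Lemma~\ref{SumsE} with $p=4$, $a_{0}=c_{0}$, and $E$ equal to the underlying spatial space, to the function $\sigma\mapsto\|\nabla\ve{v}(\sigma)\|_{L^{12/5}}$ (and similarly for $\|\ve{v}(\sigma)\|_{L^{4}}$). This upgrades the short-interval bound to the global tail bound
\[
\|\ve{v}\|_{S(t,+\infty)}+\|\nabla\ve{v}\|_{Z(t,+\infty)}\le Ce^{-c_{0}t}
\]
for $t$ large, via the trivial telescoping $\|\cdot\|^{4}_{Z(t,\infty)}=\sum_{k\ge0}\|\cdot\|^{4}_{Z(t+k\tau_{0},t+(k+1)\tau_{0})}$ bounded by a geometric series in $e^{-4c_{0}\tau_{0}k}$. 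For a general admissible pair $(p,q)$ I would then invoke the Strichartz inequality one last time in the Duhamel representation on $(t,+\infty)$: the initial datum contribution is controlled by $\|\ve{v}(t)\|_{\dot H^{1}}\le Ce^{-c_{0}t}$, and the two forcing terms are bounded, using Lemmas~\ref{LL1}--\ref{LN1} and the just-proved control on $\|\nabla\ve{v}\|_{Z(t,+\infty)}$, by $Ce^{-c_{0}t}$ (the $B$-term over each subinterval gives $\tau_{0}^{1/4}Ce^{-c_{0}t}$, again summed via Lemma~\ref{SumsE}, and the quadratic $N$-term is even better).

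The main technical obstacle is the linear potential $B(\ve{v})$: unlike the quadratic term $N(\ve{v})$, it does not gain smallness from $\|\ve{v}\|_{\dot H^{1}}$ being small, so it cannot be absorbed by bootstrap alone. This is exactly why the bound \eqref{Fi22} comes with the factor $|I|^{1/4}$ --- it forces the whole argument to proceed on short intervals of fixed length $\tau_{0}$ and to recover the global bound only after the summation step of Lemma~\ref{SumsE}. Everything else (Strichartz applied to $S(t)$, H\"older, and the mapping properties of $B,N$) is routine given the preparatory lemmas already available in the paper.
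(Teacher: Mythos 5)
Your proposal is correct and follows essentially the same path the paper takes: the paper's proof is a citation to Duyckaerts--Merle Lemma 5.7, which runs exactly the short-interval Strichartz + absorption + Lemma~\ref{SumsE} summation argument you outline, with Lemma~\ref{LL1} supplying the crucial $|I|^{1/4}$ gain on the linear potential term $B$ and Lemma~\ref{LN1} handling the quadratic term $N$ by bootstrap. You also correctly identify the subtle point (the final pass for a general admissible pair must again treat $\|\nabla B(\ve{v})\|_{N(t,\infty)}$ by subdividing and re-summing, since \eqref{Fi22} degenerates on an infinite interval), so nothing essential is missing.
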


\begin{proof}
Using Lemmas~\ref{SumsE}, \ref{LL1}, and \ref{LN1}, the proof follows the same lines as \cite[Lemma 5.7]{DuyMerle2009}.
\end{proof}

For what follows, we introduce the linearized equation
\begin{equation}\label{CondiExp}
\partial_{t}\ve{v}+\EE \ve{v}=g, \quad (t,x)\in [0, \infty)\times \R^{6},
\end{equation}
where $\ve{v}$ and $\epsilon$ satisfy, for all $t\geq 0$,
\begin{align} \label{CondiExp22}
& \|\ve{v}(t)\|_{{ \dot{H}}^{1}}\leq Ce^{-c_{1}t},\\ \label{CondiExp33}
&\|\nabla g\|_{N(t, +\infty)}+\|g\|_{L_{x}^{\frac{3}{2}}}\leq Ce^{-c_{2}t},
\end{align}
with $0 < c_{1} < c_{2}$.

Notice that by Strichartz estimates and Lemma~\ref{SumsE}, we can obtain the following result.

\begin{lemma}\label{AxuST11}
Under the assumptions \eqref{CondiExp}, \eqref{CondiExp22}, and \eqref{CondiExp33} with $0 < c_{1} < c_{2}$, we have, for any admissible pair $(q, r)$,
\begin{equation}\label{NewSt11}
\|\ve{v}\|_{\text{L}^{p}(t, +\infty; L^{q})}
\leq Ce^{-c_{1}t}.
\end{equation}
\end{lemma}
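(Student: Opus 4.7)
The plan is to rewrite \eqref{CondiExp} in Schr\"odinger form and close a Duhamel--Strichartz argument on a short interval, then propagate the decay via the summation Lemma~\ref{SumsE}. Writing $\ve{v}=(v_{1},v_{2})$, the equation $\partial_{t}\ve{v}+\EE\ve{v}=g$ is equivalent to the inhomogeneous system
\[
i\partial_{t}v_{1}+\Delta v_{1}=-\overline{v_{1}}Q-\sqrt{2\kk}Qv_{2}-ig_{1},\qquad i\partial_{t}v_{2}+\kk\Delta v_{2}=-\sqrt{2\kk}Qv_{1}-ig_{2},
\]
so that, letting $S(t)$ denote the diagonal Schr\"odinger propagator associated with $(\Delta,\kk\Delta)$, Duhamel's formula reads
\[
\ve{v}(s)=S(s-t)\ve{v}(t)+i\int_{t}^{s}S(s-\tau)\bigl[B(\ve{v}(\tau))+ig(\tau)\bigr]\,d\tau,\qquad s\geq t.
\]

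I would fix a small $\tau_{0}>0$ independent of $t$ and work on $I=[t,t+\tau_{0}]$. For any Strichartz-admissible pair $(p,q)$, the homogeneous and dual Strichartz estimates yield
\[
\|\ve{v}\|_{L^{p}(I;L^{q})}\lesssim \|\ve{v}(t)\|_{\dot{H}^{1}}+\|B(\ve{v})\|_{N(I)}+\|g\|_{N(I)}.
\]
The free term is controlled by \eqref{CondiExp22}: $\|\ve{v}(t)\|_{\dot{H}^{1}}\leq C e^{-c_{1}t}$. For $B(\ve{v})$, inequality \eqref{Fi1} in Lemma~\ref{LL1} combined with the Sobolev embedding $\dot{H}^{1}(\R^{6})\hookrightarrow L^{3}(\R^{6})$ provides the pointwise estimate $\|B(\ve{v}(\tau))\|_{L^{\frac{3}{2}}}\leq C\|\ve{v}(\tau)\|_{L^{3}}\leq C\|\ve{v}(\tau)\|_{\dot{H}^{1}}\leq Ce^{-c_{1}\tau}$, which integrated over $I$ produces $\|B(\ve{v})\|_{N(I)}\leq C\tau_{0}^{\frac{1}{2}}e^{-c_{1}t}$. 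For the source term, \eqref{CondiExp33} together with $c_{2}>c_{1}$ gives $\|g\|_{N(I)}\leq \tau_{0}^{\frac{1}{2}}\sup_{\tau\in I}\|g(\tau)\|_{L^{\frac{3}{2}}}\leq C\tau_{0}^{\frac{1}{2}}e^{-c_{2}t}\leq Ce^{-c_{1}t}$. Combining the three estimates produces $\|\ve{v}\|_{L^{p}(I;L^{q})}\leq Ce^{-c_{1}t}$ with $C$ uniform in $t$.

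It then suffices to invoke Lemma~\ref{SumsE} with $a_{0}=c_{1}$ and $E=L^{q}(\R^{6})$ on the consecutive intervals $[t+k\tau_{0},t+(k+1)\tau_{0}]$, $k\geq 0$, to deduce
\[
\|\ve{v}\|_{L^{p}(t,+\infty;L^{q})}\leq \frac{Ce^{-c_{1}t}}{1-e^{-c_{1}\tau_{0}}},
\]
which is exactly \eqref{NewSt11}.

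The step I expect to require the most care is verifying that the above scheme closes \emph{for every} admissible pair $(p,q)$ on the left-hand side, not only for the $\dot{H}^{1}$-admissible ones: for pairs at a lower Sobolev scaling one needs to couple the Strichartz inequality with the $L^{3}$-control on $\ve{v}(t)$ coming from Sobolev embedding, and to check that all implicit constants are uniform in $\tau_{0}$ (so that choosing $\tau_{0}$ small enough does not damage the bound before summing). This is the same technical issue handled in the analogous Lemma~5.7 of \cite{DuyMerle2009}, which I would follow verbatim in the present setting.
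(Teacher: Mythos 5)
Your overall strategy — close a Duhamel estimate on short intervals of length $\tau_0$ and then sum via Lemma~\ref{SumsE} — is the right skeleton, and is what the paper's one-line justification has in mind. However, the central Strichartz inequality you invoke,
\[
\|\ve{v}\|_{L^p(I;L^q)}\lesssim \|\ve{v}(t)\|_{\dot{H}^1}+\|B(\ve{v})\|_{N(I)}+\|g\|_{N(I)},
\]
is not valid for any single pair $(p,q)$, and this is a genuine gap rather than a technicality to be checked at the end. The homogeneous piece $\|S(\cdot-t)\ve{v}(t)\|_{L^p(I;L^q)}$ is controlled by $\|\ve{v}(t)\|_{\dot{H}^1}$ only for $\dot{H}^1$-admissible pairs, i.e.\ $\tfrac{2}{p}+\tfrac{6}{q}=2$ in $\R^6$ (and these are the only pairs for which the conclusion can even be interpreted, since nothing in the hypotheses places $\ve{v}(t)$ in $L^2$). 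The dual Strichartz estimate $\|\int_t^{\cdot}S(\cdot-\tau)F(\tau)\,d\tau\|_{L^p(I;L^q)}\lesssim\|F\|_{N(I)}$, with $N(I)=L^2_tL^{3/2}_x$, holds instead for $L^2$-admissible pairs, $\tfrac{2}{p}+\tfrac{6}{q}=3$. Since $2\neq 3$, no pair serves both roles, and a scaling count rules out a non-diagonal inhomogeneous estimate taking $L^2_tL^{3/2}_x$ into an $\dot{H}^1$-admissible $L^p_tL^q_x$. Accordingly, your pointwise bound $\|B(\ve{v}(\tau))\|_{L^{3/2}}\lesssim\|\ve{v}(\tau)\|_{\dot{H}^1}$ is not the estimate the Duhamel loop needs; and you never use the half of hypothesis \eqref{CondiExp33} that controls $\|\nabla g\|_{N(t,\infty)}$, which is the clearest signal that the intended argument runs at the gradient level.

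The repair is to apply Duhamel to $\nabla\ve{v}$ rather than to $\ve{v}$. On $I=[t,t+\tau_0]$, Strichartz at the $L^2$ scale gives, for any $L^2$-admissible $(p,\tilde q)$,
\[
\|\nabla\ve{v}\|_{L^p(I;L^{\tilde q})}+\|\nabla\ve{v}\|_{Z(I)}
\lesssim \|\nabla\ve{v}(t)\|_{L^2}+\|\nabla B(\ve{v})\|_{N(I)}+\|\nabla g\|_{N(I)}.
\]
By \eqref{CondiExp22} the first right-hand term is $\lesssim e^{-c_1 t}$; by \eqref{CondiExp33} the third is $\lesssim e^{-c_2 t}\leq e^{-c_1 t}$; and by \eqref{Fi22} the second is $\leq\tau_0^{1/4}\|\nabla\ve{v}\|_{Z(I)}$, which can be absorbed into the left side once $\tau_0$ is chosen small (uniformly in $t$, since the constant in \eqref{Fi22} is absolute). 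This gives $\|\nabla\ve{v}\|_{L^p(I;L^{\tilde q})}\lesssim e^{-c_1 t}$, Lemma~\ref{SumsE} then gives $\|\nabla\ve{v}\|_{L^p(t,\infty;L^{\tilde q})}\lesssim e^{-c_1 t}$, and the Sobolev embedding with $\tfrac{1}{q}=\tfrac{1}{\tilde q}-\tfrac{1}{6}$ converts this into \eqref{NewSt11} for the corresponding $\dot{H}^1$-admissible pair $(p,q)$.
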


In what follows, we will use the following notation: for a given $c > 0$, we denote by $c^{-}$ a positive number that is arbitrarily close to $c$ and satisfies $0 < c^{-} < c$. Additionally, recall that $\lambda_1 > 0$ represents the eigenvalue of the linearized operator $\EE$, as defined in Lemma~\ref{SpecLL}.

\begin{proposition}\label{AxuST}
Consider $\ve{v}$ and $\epsilon$ satisfying  \eqref{CondiExp}, \eqref{CondiExp22} and \eqref{CondiExp33}. Then we have:
\begin{enumerate}[label=\rm{(\roman*)}]
\item If $\lambda_{1}\notin [c_{1}, c_{2})$, then 
\begin{equation}\label{BoundHsec}
 \|\ve{v}(t)\|_{{ \dot{H}}_{1}}\leq Ce^{-c^{-}_{2}t}.
\end{equation}
\item If $\lambda_{1}\in [c_{1}, c_{2})$, then there exists $a\in \R$ such that
\begin{equation}\label{BoundHsec22}
 \|\ve{v}(t)-ae^{-\lambda_{1}t}e_{+}\|_{{ \dot{H}}^{1}}
\leq Ce^{-c^{-}_{2}t}.
\end{equation}
\end{enumerate}
\end{proposition}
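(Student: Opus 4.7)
My plan is to perform a spectral decomposition of $\ve{v}(t)$ along the invariant subspaces of $\EE$ provided by Lemma~\ref{SpecLL} and Proposition~\ref{FCove}, and then to analyze each component separately. Namely, I write
\[
\ve{v}(t) = \alpha_+(t)\,e_+ + \alpha_-(t)\,e_- + \ve{v}_{K}(t) + \ve{v}_{\perp}(t),
\]
where $\alpha_\pm(t)$ are scalar coefficients extracted from $\ve{v}(t)$ via duality pairings with $e_\mp$ through $\F_E$ (using that $\F_E(e_+, e_-)\neq 0$, since $\F_E$ is nondegenerate on $\mathrm{span}\{e_+, e_-\}$), $\ve{v}_K(t)$ is the $\dot{H}^1$-orthogonal projection onto $\ker\EE$, and $\ve{v}_\perp(t) \in \tilde G^\perp$ belongs to the coercive subspace of Proposition~\ref{FCove}. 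The goal is to show each piece decays like $e^{-c_2^- t}$, with the sole possible exception of a surviving $a\,e^{-\lambda_1 t}$ contribution in $\alpha_+$, which occurs precisely in the regime $\lambda_1 \in [c_1, c_2)$.

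\textbf{ODE analysis of $\alpha_\pm$.} Projecting $\partial_t \ve{v} + \EE\ve{v} = g$ onto $e_\pm$ yields scalar ODEs
\[
\alpha_\pm'(t) \pm \lambda_1\,\alpha_\pm(t) = g_\pm(t),
\]
where $g_\pm$ are $\F_E$-pairings of $g$ with $e_\mp$; since $e_\pm \in \text{S}$ and $\|g(t)\|_{\text{L}^{3/2}} \lesssim e^{-c_2 t}$, H\"older duality gives $|g_\pm(t)| \lesssim e^{-c_2 t}$. For the unstable mode $\alpha_-$, I integrate backward from infinity (using $\alpha_-(t)\to 0$) to get $\alpha_-(t) = -\int_t^\infty e^{\lambda_1(t-s)}g_-(s)\,ds$, which is $O(e^{-c_2 t})$. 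For the stable mode $\alpha_+$, the Duhamel formula reads $\alpha_+(t) = e^{-\lambda_1 t}\alpha_+(0) + \int_0^t e^{-\lambda_1(t-s)}g_+(s)\,ds$, and I split into three subcases: if $\lambda_1 \ge c_2$, both terms decay at rate $e^{-c_2^- t}$ directly; if $\lambda_1 < c_1$, the hypothesis $|\alpha_+(t)| \lesssim e^{-c_1 t}$ forces $e^{\lambda_1 T}\alpha_+(T) \to 0$, allowing the rewriting $\alpha_+(t) = -\int_t^\infty e^{\lambda_1(t-s)}g_+(s)\,ds = O(e^{-c_2 t})$; and if $\lambda_1 \in [c_1, c_2)$, I set $a := \alpha_+(0) + \int_0^\infty e^{\lambda_1 s}g_+(s)\,ds$ (which converges since $\lambda_1 < c_2$), yielding
\[
\alpha_+(t) - a\,e^{-\lambda_1 t} = -e^{-\lambda_1 t}\int_t^\infty e^{\lambda_1 s}g_+(s)\,ds = O(e^{-c_2 t}).
\]

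\textbf{Orthogonal and kernel components.} For $\ve{v}_K$, the projected equation reduces essentially to $\partial_t \ve{v}_K = g_K$ (up to projection correction terms), so $\ve{v}_K(t) = -\int_t^\infty g_K(s)\,ds$ using $\ve{v}_K(t) \to 0$, which gives decay $e^{-c_2 t}$. For $\ve{v}_\perp$, I exploit the antisymmetry $\F_E(\EE u, w) = -\F_E(u, \EE w)$ combined with the symmetry of $\F_E$ (both from Remark~\ref{PLf}), which imply $\F_E(\EE \ve{v}_\perp, \ve{v}_\perp) = 0$ and hence
\[
\tfrac{d}{dt}\F_E(\ve{v}_\perp(t)) = 2\,\F_E(\ve{v}_\perp(t), g_\perp(t)) + (\text{projection corrections}).
\]
Combining $\F_E(\ve{v}_\perp) \gtrsim \|\ve{v}_\perp\|^2_{\dot H^1}$ from Proposition~\ref{FCove} with a Cauchy--Schwarz-type bound $|\F_E(\ve{v}_\perp, g_\perp)| \lesssim \|\ve{v}_\perp\|_{\dot H^1}\|g\|_{\text{L}^{3/2}}$ and integrating backward from infinity (where $\|\ve{v}_\perp(t)\|_{\dot H^1}\to 0$) produces $\|\ve{v}_\perp(t)\|_{\dot H^1} \lesssim e^{-c_2^- t}$.

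\textbf{Main obstacle.} The principal technical hurdle is the handling of the projection correction terms: since the $\dot H^1$-orthogonal projection onto $\ker \EE$ does not commute with $\EE$, the decomposed system contains coupling terms between $\alpha_\pm$, $\ve{v}_K$, and $\ve{v}_\perp$. I expect these to be of lower order, absorbable through a bootstrap argument that starts from the a priori rate $e^{-c_1 t}$ and iteratively improves the decay toward $e^{-c_2^- t}$. A cleaner alternative, mirroring the strategy of \cite{DuyMerle2009}, is to replace the energy identity for $\ve{v}_\perp$ by a Strichartz-based estimate applied on short time intervals and then summed via Lemma~\ref{SumsE}, leveraging Lemma~\ref{AxuST11} to convert $\dot H^1$-bounds into mixed-norm control. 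Either way, the decisive mechanism is that only the stable eigenfunction $e_+$ can produce an obstruction to decay faster than $e^{-c_2^- t}$, and this obstruction is precisely the constant $a$ arising in case (ii).
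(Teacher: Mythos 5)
Your spectral decomposition and the ODE analysis for $\alpha_\pm$ mirror the paper's strategy (which itself follows Duyckaerts--Merle and Miao--Wu--Xu), and that part is sound. However, there are two genuine gaps in the remainder.

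First, your pointwise estimate $|\F_E(\ve{v}_\perp, g_\perp)| \lesssim \|\ve{v}_\perp\|_{\dot H^1}\|g\|_{\mathrm{L}^{3/2}}$ does not follow from the hypotheses: the form $\F_E$ contains the term $\int \nabla (\RE\ve{v}_\perp)\cdot\nabla(\RE g)$, and pairing $\|\ve{v}_\perp\|_{\dot H^1}$ against $\|g\|_{\mathrm{L}^{3/2}}$ does not control this gradient--gradient term pointwise in time. What hypothesis \eqref{CondiExp33} actually gives is control of $\nabla g$ in the Strichartz norm $N(t,\infty)=L^2_tL^{3/2}_x$, which only produces a time-integrated bound of the form \eqref{L1BoundF}. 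One must therefore integrate the energy identity over unit intervals and invoke Lemma~\ref{SumsE}, not run a pointwise Gronwall.

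Second, your decision to apply the energy identity to $\ve{v}_\perp$ alone creates the ``projection corrections'' you flag as the main obstacle, but you leave the bootstrap unexecuted. The paper's route avoids this entirely: it applies $\tfrac{d}{dt}\F_E(\ve{v}(t))=2\F_E(g,\ve{v})$ to the \emph{full} $\ve{v}$, where the identity is exact (no commutator terms), and then uses the algebraic fact that $\F_E$ vanishes on $\ker\EE$ and on the diagonal of $\mathrm{span}\{e_+,e_-\}$ to obtain the clean identity $\F_E(\ve{v})=\F_E(v^\perp)+2\alpha_+\alpha_-$. Combining this with the already-established decay of $\alpha_\pm$ and Proposition~\ref{FCove} directly yields $\|v^\perp(t)\|_{\dot H^1}\lesssim e^{-\frac{c_1+c_2}{2}t}$. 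The kernel coefficients $\beta,\gamma$ are then controlled afterward via the explicit duality estimate $(T(i\Q_1), \EE v^\perp)_{\dot H^1}=(\EE^*T(i\Delta\Q_1),v^\perp)_{L^2}\lesssim \|v^\perp\|_{\dot H^1}$, which is bounded by the rate just obtained and fed into an ODE for $\beta,\gamma$. So the ``coupling'' you worried about is not resolved by bootstrap but by choosing the right quantity (full $\F_E(\ve{v})$) and the right order of estimation. Without this reordering, your proposal as written does not close.

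One minor point: since the setting is radial, the kernel directions $T(\partial_j\Q)$ contribute nothing, which is why the paper decomposes only along $e_\pm$, $T(i\Q_1)$, $T(\Lambda\Q)$, and $Y^\perp$ rather than the full $\ker\EE\oplus\tilde G^\perp$; keeping the translation modes in $\ve{v}_K$ is harmless but slightly obscures the structure.
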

\begin{proof}
We closely follow the argument in \cite[Proposition 5.9]{DuyMerle2009} and \cite[Proposition 7.2]{MIWUGU2015}.
Let 
\[
Y^{\bot}:= \left\{\ve{h} \in \dot{H}^{1}, \F_{E}(\ve{h}, e_{\pm}) = (T(i\Q_{1}), \ve{h})_{\dot{H}^{1}} = 
 (T(\Lambda \Q), \ve{h})_{\dot{H}^{1}} =0\right\}.
\]

We decompose $\ve{v}$ as
\begin{equation}\label{DecompV}
\ve{v}(t)=\alpha_{+}(t)e_{+}+\alpha_{-}(t)e_{-}+\beta(t)T(i\Q)+\gamma(t)T(\Lambda \Q)+v^{\bot}(t),
\end{equation}
where $v^{\bot}(t)\in Y^{\bot}\cap \dot{H}^{1}_{rad}$.

Notice that by Remark~\ref{Nzero} we have that $\F_{E}(e_{+}, e_{-})\neq 0$, thus, we can normalize the eigenfunctions $e_{\pm}$ such that $\F_{E}(e_{+}, e_{-})=1$. Moreover, from the definition of $Y^{\bot}$ and Remark~\ref{PLf}, we see that
\begin{align*}
&	\alpha_{+}(t)=\F_{E}(\ve{v}(t), e_{-}), \quad	\alpha_{-}(t)=\F_{E}(\ve{v}(t), e_{+}),\\
& \beta(t)=\frac{1}{\|T(\Q)\|_{\dot{H}^{1}}}(\ve{v}(t)-\alpha_{+}(t)e_{+}-\alpha_{-}(t)e_{-}, T(i\Q))_{\dot{H}^{1}}\\
& \gamma(t)=\frac{1}{\|T(\Lambda\Q)\|_{\dot{H}^{1}}}(\ve{v}(t)-\alpha_{+}(t)e_{+}-\alpha_{-}(t)e_{-}, T(\Lambda\Q))_{\dot{H}^{1}}.
\end{align*}

\textbf{Step 1. Decay estimates.} From \eqref{CondiExp33}, \eqref{CondiExp22} and following the same argument developed in \cite[Proposition 7.2, Step 2]{MIWUGU2015}, one can show that
\begin{align}\label{Exp11alfa00}
&|\alpha^{\prime}_{-}(t)|\leq C e^{-c_{2}t},\\ \label{Exp11alfa}
&|\alpha^{\prime}_{+}(t)|\leq C e^{-c_{2}t}	\qtq{if $\lambda_{1}\leq c_{1}$ or $c_{2}\leq \lambda_{1}$},\\\label{Exp11alfalamda}
&|\alpha_{+}(t)-ae^{-\lambda_{1}t}|\leq e^{-c_{2}t} \quad \text{if $c_{1}\leq \lambda_{1}<c_{2}$},
\end{align}
where
\begin{align}\label{L:alfa}
a:=\lim_{t\to \infty}e^{\lambda_{1}t}\alpha_{+}(t).
\end{align}
Indeed, note that for some time interval $I$ with $|I|< \infty$, we have
\begin{equation*}
\begin{aligned}
&\int_{I}\left|\int_{\R^{6}} \nabla f(t)\nabla g(t) dx\right|\lesssim\|\nabla {f}\|_{L^{2}(I :L^{\frac{3}{2}})} \|\nabla {g}\|_{{L}^{2}(I: {L}^{3})}\\
&\int_{\R^{6}}  \left| f\,g\, Q\right|\, dx\lesssim \|f\|_{{L}_{x}^{\frac{3}{2}}}\|g\|_{{L}_{x}^{3}}\|Q\|_{{L}^{\infty}}.
\end{aligned}
\end{equation*}
Combining these inequalities, by the definition of $\F_{E}$, we see that
\begin{equation}\label{L1BoundF}
\begin{aligned}
\int_{I}|\F_{E}(\ve{f}(t), \ve{h}(t))|dt &\lesssim
\|\nabla \ve{f}\|_{N(I)} \|\nabla \ve{h}\|_{\text{L}^{2}(I: \text{L}^{3})}\\
&+|I| \| \ve{f}\|_{\text{L}^{\infty}(I: \text{L}^{\frac{3}{2}})}\| \ve{h}\|_{\text{L}^{\infty}(I: \text{L}^{3})}.
\end{aligned}
\end{equation}
Thus, by \eqref{CondiExp33} and inequality \eqref{L1BoundF}, we get
\[
\int^{t+1}_{t}|e^{-\lambda_{1}s}\F_{E}(g(s), e_{+})|ds\leq Ce^{-(\lambda_{1}+c_{2})t}.
\]
In this case, Lemma~\ref{SumsE} shows that
\[
\int^{\infty}_{t}|e^{-\lambda_{1}s}\F_{E}(g(s), e_{+})|ds\leq Ce^{-(\lambda_{1}+c_{2})t}.
\]

From the above inequalities, and following exactly the same argument as in \cite[Proposition 7.2]{MIWUGU2015}, we obtain the estimates \eqref{Exp11alfa00}, \eqref{Exp11alfa}, and \eqref{Exp11alfalamda}.

\textbf{Step 2. Proof in the case when either  $\lambda_1\geq c_2$, or $\lambda_1<c_2$ and $a=0$.}
Note that from the estimates in the previous step, we have
\begin{equation}\label{Al12}
|\alpha_{+}(t)|+|\alpha_{-}(t)| \leq Ce^{-c_{2}t}.
\end{equation}
On the other hand, note that  
\begin{equation}\label{DecomF}
\frac{d}{dt}\F_{E}(\ve{v}(t))=2\F_{E}(\partial_{t} \ve{v}(t), \ve{v}(t))=-2\F(\L \ve{v},\ve{v})+2\F(g, \ve{v})=2\F(g, \ve{v}).
\end{equation}
Moreover, by \eqref{CondiExp33} and inequality \eqref{L1BoundF}, we get
\[
\int^{t+1}_{t}|\F_{E}(g(s), \ve{v}(s))|ds \leq Ce^{-(c_{1}+c_{2})t}.
\]
Then Lemma~\ref{SumsE} implies that
\[
\int^{\infty}_{t}|\F_{E}(g(s), \ve{v}(s))|ds \leq Ce^{-(\lambda_{1}+c_{2})t}.
\]
Since $|\F_{E}(\ve{v}(t))| \lesssim \|\ve{v}(t)\|^{2}_{H^{1}} \to 0$ as $t \to \infty$ (cf. \eqref{CondiExp22}), by \eqref{DecomF} and the inequality above, we have
\[
|\F_{E}(\ve{v}(t))| \leq \int^{\infty}_{t}|\F_{E}(g, \ve{v}(t))|dt \leq Ce^{-(c_{1}+c_{2})t}.
\]
Since $\F_{E}(e_{+}, e_{-})=1$ and $\F_{E}(e_{+})=\F_{E}(e_{-})=0$, we obtain
\[
\F_{E}(\ve{v})=\F(v^{\bot})+2\alpha_{+}\alpha_{-}.
\]
Thus, Proposition~\ref{FCove} and \eqref{Al12} imply that 
\begin{equation}\label{EstimaV}
\|v^{\bot}(t)\|_{\dot{H}^{1}} \lesssim \sqrt{|\F_{E}(v^{\bot})|} \leq Ce^{-\frac{(c_{1}+c_{2})}{2}t}.
\end{equation}
In addition, we have that
\begin{equation}\label{EstimaV123}
(T(i\Q), \EE v^{\bot})_{\dot{H}^{1}}=(\EE^{\ast}T(i\Delta\Q),  v^{\bot})_{{L}^{2}}
\lesssim \|\EE^{\ast}T(i\Delta\Q)\|_{L^{\frac{3}{2}}}\|v^{\bot}\|_{\dot{H}^{1}}
\lesssim e^{-\frac{(c_{1}+c_{2})}{2}t}.
\end{equation}
Here, we used the fact that $\EE^{\ast}T(i\Delta\Q)=E_{R}T(\Delta\Q) \in L^{\frac{3}{2}}$.
Combining the inequalities \eqref{EstimaV} and \eqref{EstimaV123}, and following the same lines as in Step 3 of \cite[Proposition 7.2]{MIWUGU2015}, we obtain the proof of the proposition in the case when either $\lambda_1 \geq c_2$, or $\lambda_1 < c_2$ and $a=0$.

\textbf{Step 3. Proof in the remaining cases.} If $\lambda_1 < c_1$, then we have $\lambda_1 < c_2$. Moreover, in this case, by \eqref{L:alfa}, we infer that $a = 0$, and hence we obtain the estimate in (ii) with $a = 0$ using Step 2. Thus, it suffices to consider $c_1 \leq \lambda_1 < c_2$ and $a \neq 0$. The proof in this case is the same as the one given in Step 4 of \cite[Proposition 7.2]{MIWUGU2015}.
This completes the proof of the proposition.

\end{proof}

\begin{proof}[Proof of Proposition~\ref{UniqueU}]
With Propositions~\ref{AxuST} and \ref{ContracA}, and Lemmas~\ref{BoundGra11}, \ref{LN1}, \ref{AxuST11} and \ref{SumsE} at hand, the proof follows along the same lines as in \cite[Lemma 6.5]{DuyMerle2009}. We omit the details.
\end{proof}

\begin{proof}[Proof of Corollary~\ref{coroCla}]
Let $a\neq0$ and choose $T_{a}$ so that $|a|e^{-\lambda_{1}T_{a}}=1$. By estimate \eqref{UniqVec} we obtain
\begin{equation}\label{W1}
\|W^{a}(t+T_{a})-T(\Q)\mp e^{-\lambda_{1} t}e_{+}\|_{\dot{H}^{1}}
\leq e^{-\frac{3}{2}\lambda_{1}t}.
\end{equation}

On the other hand, $W^{a}(t+T_{a})$ satisfies the assumption of Proposition~\ref{UniqueU}. Therefore, there exists $\tilde{a}$
such that 
\[W^{a}(\cdot+T_{a})=W^{\tilde{a}}.\]
From \eqref{W1} and Proposition~\ref{ContracA} we obtain that $\tilde{a}=1$ if $a>0$, and $\tilde{a}=-1$ if $a<0$, which implies \eqref{C1s}. 
\end{proof}

\section{Proof of the main result}\label{S:proof}

\begin{proof}[Proof of Theorem~\ref{TH22}]

(i) Let $\ve{u}$ be a radial solution to \eqref{SNLS} such that
\begin{equation}\label{Hdn}
E(\ve{u}_0) = E(\Q), \quad H(\ve{u}_0) < H(\Q).
\end{equation}
By Lemma~\ref{GlobalW}, the solution $\ve{u}$ is global. Suppose  that $\ve{u}$ does not scatter, i.e., $\|\ve{u}\|_{\text{L}^{4}_{t, x}(\R \times \R^6)} = \infty$. 

If necessary, replace $\ve{u}(t)$ with $\overline{\ve{u}}(-t)$. Then, by Proposition~\ref{CompacDeca} and Corollary~\ref{ClassC}, there exist $\theta_0 \in \mathbb{R}$, $\mu_0 > 0$, and constants $c, C > 0$ such that (recall that the norms $[H(\cdot)]^{\frac{1}{2}}$,  $[H_{N}(\cdot)]^{\frac{1}{2}}$ and $\dot{H}^1$ are equivalents)
\[
H(\ve{u}_{[\theta_0, \mu_0]}(t) - \Q)\|\lesssim \|\ve{u}_{[\theta_0, \mu_0]}(t) - \Q\|_{\dot{H}^1} \leq Ce^{-ct} \qtq{for $t\geq 0$.}
\]
In particular, we have
\begin{equation}\label{DNb}
\|T(\ve{u}_{[\theta_0, \mu_0]})(t) - T(\Q)\|_{\dot{H}^1} \leq Ce^{-ct} \qtq{for $t\geq 0$.}
\end{equation}

Since $T(\ve{u}_{[\theta_0, \mu_0]})$ is a radial solution to \eqref{NNLS} and satisfies \eqref{DNb}, we observe that $T(\ve{u}_{[\theta_0, \mu_0]})$ fulfills the assumptions of Proposition~\ref{UniqueU}. Moreover, \eqref{Hdn} implies that $H_N(T(\ve{u}_{[\theta_0, \mu_0]})) < H_N(T(\Q))$. Therefore, Corollary~\ref{coroCla} implies the existence of $a < 0$ and $T_a$ such that
\[
T(\ve{u}_{[\theta_0, \mu_0]})(t) = W^{-1}(t + T_a).
\]
Thus, $\ve{u}_{[\theta_0, \mu_0]}(t) = T^{-1}(W^{-1}(t + T_a)) = \G^{-}(t+t_{0})$ for some $t_{0}\in \R$, which completes the proof of part (i).

(ii) If $E(\ve{u}_0) = E(\Q)$ and $H(\ve{u}_0) = H(\Q)$, then by the variational characterization provided in Proposition~\ref{TheGN}, we conclude that $\ve{u}_0 = \Q$ up to the symmetries of the equation.

Now, we prove part (iii). Let $\ve{u}$ be a radial solution to \eqref{NNLS} defined on $[0, +\infty)$ (if necessary, replace $\ve{u}(t)$ with $\overline{\ve{u}}(-t)$) such that
\[
E(\ve{u}) = E(\Q), \quad H(\ve{u}_0) > H(\Q), \quad \text{and} \quad \ve{u}_0 \in {L}^2.
\]
By Proposition~\ref{SupercriQ}, there exist $\theta_0 \in \mathbb{R}$, $\mu_0 > 0$, and constants $c, C > 0$ such that
\[
\|\ve{u}_{[\theta_0, \mu_0]}(t) - \Q\|_{\dot{H}^1} \leq Ce^{-ct} \qtq{for $t\geq 0$.}
\]
In a manner similar to the proof of (i), we conclude that
\begin{equation}\label{DNWu}
\|T(\ve{u}_{[\theta_0, \mu_0]})(t) - T(\Q)\|_{\dot{H}^1} \leq Ce^{-ct} \qtq{for $t\geq 0$.}
\end{equation}
Thus, since $H_N(T(\ve{u}_{[\theta_0, \mu_0]})) > H_N(T(\Q))$, by Corollary~\ref{coroCla} there exist $a > 0$ and $T_a$ such that
\[
T(\ve{u}_{[\theta_0, \mu_0]})(t) = W^{+1}(t + T_a).
\]
This implies that $\ve{u}_{[\theta_0, \mu_0]}(t) = T^{-1}(W^{+1}(t + T_a)) = \G^{+}(t+t_{0})$ for some $t_{0}\in \R$, which completes the proof of part (iii).

This concludes the proof of the theorem.

\end{proof}

\appendix
\section{Coercivity}\label{S:A}

In this appendix, we will study some coercivity properties of the operators $L_{R}$ and $L_{I}$, which are defined in Section~\ref{S:Spectral}.

For $\gamma\in \R$ we define $L_{\gamma}v=-\Delta v-\gamma Qv$ for $v\in \dot{H}^{1}(\R^{6})$.  

\begin{lemma}\label{Apx22}
Fix $\kk>0$. There exists $C>0$ such that for every $\ve{v}\in \dot{H}^{1}(\R^{6}: \R)\times \dot{H}^{1}(\R^{6}: \R)$ satisfying
\begin{align}\label{Ort223}
	(\ve{v}, \Xi_{1} )_{\dot{H}^{1}}=0, \qtq{where $\Xi_{1}=(Q ,0)$,}
\end{align}
 then we have
\[
\<L_{I}\ve{v},\ve{v}
	\>
	\geq C \|\ve{v}\|^{2}_{\dot{H}^{1}}.
\]
\end{lemma}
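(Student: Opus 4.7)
My plan is to prove Lemma~\ref{Apx22} in three steps: (i) I will exhibit an explicit factorization identity that makes $L_{I}\geq 0$ and $\ker L_{I}=\text{span}\{(\sqrt{\kk}Q,2Q)\}$ transparent; (ii) I will verify that the constraint $(\ve{v},\Xi_{1})_{\dot{H}^{1}}=0$ is transverse to this kernel; and (iii) I will upgrade non-negativity to $\dot{H}^{1}$-coercivity by a standard concentration--compactness argument.

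For step~(i), since $Q>0$ everywhere on $\R^{6}$, the ground-state substitution $v_{1}=\sqrt{\kk}\,Qw_{1}$, $v_{2}=2Qw_{2}$ is well defined for $\ve{v}\in\dot{H}^{1}$. A single integration by parts against $\nabla(\log Q)$, using $-\Delta Q=Q^{2}$, gives the scalar identity
\begin{equation*}
\int Q^{2}|\nabla(v/Q)|^{2}\,dx=\int|\nabla v|^{2}\,dx-\int Qv^{2}\,dx,
\end{equation*}
and assembling this with an explicit check on the cross term $-2\sqrt{\kk}\int Qv_{1}v_{2}$ yields
\begin{equation*}
\langle L_{I}\ve{v},\ve{v}\rangle=\kk\int Q^{2}|\nabla w_{1}|^{2}\,dx+2\kk\int Q^{2}|\nabla w_{2}|^{2}\,dx+2\kk\int Q^{3}(w_{1}-w_{2})^{2}\,dx.
\end{equation*}
Each summand is non-negative, so $L_{I}\geq 0$; equality forces $w_{1}$ and $w_{2}$ constant with $w_{1}=w_{2}$, i.e.\ $\ve{v}$ a scalar multiple of $(\sqrt{\kk}Q,2Q)$, so $\ker L_{I}=\text{span}\{(\sqrt{\kk}Q,2Q)\}$. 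Step~(ii) is then a one-line computation: $((\sqrt{\kk}Q,2Q),\Xi_{1})_{\dot{H}^{1}}=\sqrt{\kk}\,\|\nabla Q\|_{L^{2}}^{2}\neq 0$, so the hyperplane $\{(\ve{v},\Xi_{1})_{\dot{H}^{1}}=0\}$ meets $\ker L_{I}$ only at the origin.

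For step~(iii), I would argue by contradiction: suppose there exists a sequence $(\ve{v}_{n})\subset\dot{H}^{1}$ with $\|\ve{v}_{n}\|_{\dot{H}^{1}}=1$, $(\ve{v}_{n},\Xi_{1})_{\dot{H}^{1}}=0$, and $\langle L_{I}\ve{v}_{n},\ve{v}_{n}\rangle\to 0$. Extract a weak limit $\ve{v}_{n}\rightharpoonup\ve{v}_{\ast}$ in $\dot{H}^{1}$. Since $Q\in L^{3}(\R^{6})$ and $Q\to 0$ at infinity, the potential part $K(\ve{v}):=\int Qv_{1}^{2}-2\sqrt{\kk}\int Qv_{1}v_{2}$ is weakly continuous on $\dot{H}^{1}$ (Rellich on balls plus tail control via $\|Q\|_{L^{3}(|x|>R)}\to 0$), while the kinetic part is weakly lower semicontinuous. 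Hence $\langle L_{I}\ve{v}_{\ast},\ve{v}_{\ast}\rangle\leq 0$, which by step~(i) forces $\ve{v}_{\ast}\in\ker L_{I}$, and transversality then gives $\ve{v}_{\ast}=0$. But then $K(\ve{v}_{n})\to 0$ while the kinetic part of $\langle L_{I}\ve{v}_{n},\ve{v}_{n}\rangle$ stays uniformly comparable to $\|\ve{v}_{n}\|_{\dot{H}^{1}}^{2}=1$, contradicting $\langle L_{I}\ve{v}_{n},\ve{v}_{n}\rangle\to 0$. Homogeneity yields the claimed bound.

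The main substantive step---indeed the one point where something nontrivial happens---is the factorization in step~(i): once one guesses to factor $v_{1}$ and $v_{2}$ along the components of the positive eigenfunction $(\sqrt{\kk}Q,2Q)$, with the \emph{matching} weights $\sqrt{\kk}$ and $2$, the algebra collapses into a sum of non-negative pieces, including the crucial coupling term $(w_{1}-w_{2})^{2}$. This explicit identity sidesteps the Perron--Frobenius/Krein--Rutman machinery for matrix Schr\"odinger operators with cooperative coupling that one would otherwise need in order to establish $L_{I}\geq 0$ together with the simplicity of its zero eigenvalue.
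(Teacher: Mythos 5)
Your proof is correct, and it takes a genuinely different route from the paper's for the positivity step. The paper estimates the cross term $-2\sqrt{\kk}\int Quv$ by Young's inequality (with the sharp weights $2Q|u|^2$ and $\tfrac{\kk}{2}Q|v|^2$), which reduces $\langle L_I\ve{v},\ve{v}\rangle$ to $\langle L_1 u,u\rangle+\tfrac{\kk}{2}\langle L_1 v,v\rangle$ with $L_1=-\Delta-Q$, and then invokes the known scalar spectral facts for $L_1$ (nonnegativity, simple zero eigenvalue at $Q$, coercivity under $(u,Q)_{\dot H^1}=0$). Your version instead pushes the ground-state substitution through both components \emph{simultaneously}, with the matched weights $(\sqrt{\kk},2)$ coming from $\Q_1$, and lands on the identity
\begin{equation*}
\langle L_{I}\ve{v},\ve{v}\rangle=\kk\int Q^{2}|\nabla w_{1}|^{2}+2\kk\int Q^{2}|\nabla w_{2}|^{2}+2\kk\int Q^{3}(w_{1}-w_{2})^{2},
\end{equation*}
which I checked term by term and is correct; in one stroke this gives $L_I\geq 0$ \emph{and} $\ker L_I=\mathrm{span}\{\Q_1\}$ without any appeal to the Perron--Frobenius characterization of the bottom of the spectrum of $L_1$. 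What your approach buys is self-containedness and an explicit picture of the kernel; what the paper's approach buys is modularity — it piggybacks on the already-established scalar theory and would adapt more routinely if the potential in $L_I$ were changed. Your step~(iii) (normalized sequence, weak compactness of the potential via decay of $Q\in L^3(\R^6)$, weak lower semicontinuity of the kinetic part, and the transversality $((\sqrt{\kk}Q,2Q),\Xi_1)_{\dot H^1}=\sqrt{\kk}\|\nabla Q\|_{L^2}^2\neq 0$) is essentially the same "compact perturbation" upgrade the paper gestures at in its final sentence. One small technical caveat you should flag: the factorization identity should first be established for $\ve{v}\in C_c^\infty\times C_c^\infty$ (where the integration by parts against $\nabla\log Q$ is unproblematic) and then extended to $\dot H^1$ by density, since for general $v\in\dot H^1(\R^6)$ the individual quantities $\int Q^2|\nabla(v/Q)|^2$ and $\int Q^3(v/Q)^2$ are not obviously finite a priori.
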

\begin{proof}
We observe (we set $\ve{v}=(u,v)$)
\[
\<L_{I}\ve{v},\ve{v}\>=\|\nabla u\|^{2}_{L^{2}}+\tfrac{\kk}{2}\|\nabla v\|^{2}_{L^{2}}+\int_{\R^{6}}Q|u|^{2}\,dx
-2\sqrt{\kk}\int_{\R^{6}}Q\,u\, v\,dx.
\]
By Young's inequality  we see that
\begin{equation}\label{AYIne}
\sqrt{\kk}\int_{\R^{6}}Q\,u\, v\,dx\leq \tfrac{1}{2}\int_{\R^{6}}2Q|u|^{2}\,dx+\tfrac{1}{2}\int_{\R^{6}}\tfrac{\kk}{2}Q|v|^{2}\,dx.
\end{equation}
Combining the inequalities above, we infer that
\begin{equation}\label{DeLi}
\<L_{I}\ve{v},\ve{v}\>\geq \<L_{1}u,u\>+\tfrac{\kk}{2}\<L_{1}{v},{v}\>.
\end{equation}
In particular, note that $\<L_{I}\ve{v},\ve{v}\> \geq 0$. Now, assume that $(u,v)\neq (0,0)$. First, if $u\neq 0$ and $v\in  \dot{H}^{1}(\R^{6}: \R)$, then by \eqref{Ort223} and \eqref{DeLi} we have
\[
\<L_{I}\ve{v},\ve{v}\>\geq \tfrac{\kk}{2}\<L_{1}{v},{v}\>+\<L_{1}u,u\>\geq C_{1}\|u\|^{2}_{\dot{H}^{1}}>0.
\]
On the other hand, if $u=0$ and $v\neq0$, then
\[
\<L_{I}\ve{v},\ve{v}\>=\tfrac{\kk}{2}\|v\|^{2}_{\dot{H}^{1}}>0.
\]
In any case, if $\ve{v}\neq (0,0)$ and satisfies \eqref{Ort223}, we obtain that $\<L_{I}\ve{v},\ve{v}\>>0$. 
As the quadratic form $\<L_{I}\ve{v},\ve{v}\>$ is a compact perturbation of $\|\cdot\|^{2}_{\dot{H}^{1}}+\tfrac{k}{2}\|\cdot\|^{2}_{\dot{H}^{1}}$, a standard argument shows the conclusion of the lemma.
\end{proof}

Next, we set 
\begin{equation}\label{piY}
\Pi_{1}=\big(\sqrt{\tfrac{2}{3}}Q, \tfrac{1}{\sqrt{k}}\sqrt{\tfrac{2}{3}}Q\big), \quad
\Pi_{2}=\big(\sqrt{\tfrac{2}{3}}\Lambda Q, \tfrac{1}{\sqrt{k}}\sqrt{\tfrac{2}{3}}\Lambda Q\big), \quad
\Psi_{j}=\big(\sqrt{\tfrac{2}{3}}\partial_{j}Q, \tfrac{1}{\sqrt{k}}\sqrt{\tfrac{2}{3}}\partial_{j}Q\big)
\end{equation}
for $1\leq j\leq 6$.

\begin{lemma}\label{Apx11}
Fix $\kk>0$. There exists $C>0$ such that for every $\ve{v}\in \dot{H}^{1}(\R^{6}: \R)\times \dot{H}^{1}(\R^{6}: \R)$ satisfying
\begin{align}\label{Ort11}
	(\ve{v}, \Pi_{1} )_{\dot{H}^{1}}=(\ve{v}, \Pi_{2} )_{\dot{H}^{1}}=(\ve{v}, \Psi_{j} )_{\dot{H}^{1}}=0
\end{align}
for $1\leq j\leq 6$, then we have
\[
\<L_{R}\ve{v},\ve{v}
	\>
	\geq C \|\ve{v}\|^{2}_{\dot{H}^{1}}.
\]
\end{lemma}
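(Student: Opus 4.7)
The plan is to diagonalize the quadratic form $\langle L_R \ve v, \ve v\rangle$ by a change of variables that decouples it into two independent scalar Schr\"odinger forms, thereby reducing coercivity of $L_R$ to the standard Weinstein-type coercivity for the critical-NLS linearized operator. I would first introduce the rescaled variable $\tilde v := \sqrt{\kk/2}\,v$ (so that the kinetic part becomes $\|\nabla u\|^2 + \|\nabla \tilde v\|^2$) and then apply the orthogonal change of coordinates $w_1 = (\sqrt 2\,u + \tilde v)/\sqrt 3$, $w_2 = (u - \sqrt 2\,\tilde v)/\sqrt 3$ that diagonalizes the $2\times 2$ potential matrix (eigenvalues $-2Q$ and $Q$). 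A direct computation then yields the decoupled form
\[
\langle L_R \ve v, \ve v\rangle = \langle L_2 w_1, w_1\rangle + \langle L_{-1} w_2, w_2\rangle,
\]
where $L_2 := -\Delta - 2Q$ is the standard critical-NLS linearized operator on $\R^6$ (with kernel $\mathrm{span}\{\Lambda Q, \partial_j Q\}$, one simple negative eigenvalue, and Weinstein-type coercivity on the $\dot H^1$-orthogonal complement of $\{Q, \Lambda Q, \partial_j Q\}$) and $L_{-1} := -\Delta + Q$ is strictly positive.

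Next I would translate the eight orthogonality conditions into the new variables. A direct calculation shows each becomes $A(w_1, f)_{\dot H^1} + B(w_2, f)_{\dot H^1} = 0$ for $f$ ranging over $\{Q, \Lambda Q, \partial_1 Q, \ldots, \partial_6 Q\}$, with $A := (2 + 2/\kk)/3$ and $B := \sqrt 2\,(1 - 2/\kk)/3$. Decomposing $w_1 = w_1^\parallel + w_1^\perp$ and $w_2 = w_2^\parallel + r^\perp$ along the $\dot H^1$-orthogonal splitting $\dot H^1 = \mathcal V \oplus \mathcal V^\perp$ with $\mathcal V := \mathrm{span}\{Q, \Lambda Q, \partial_j Q\}$ (whose basis elements are mutually $\dot H^1$-orthogonal by scale invariance and rotational symmetry), the eight constraints reduce to the single coefficientwise relation $w_1^\parallel = -(B/A)\,w_2^\parallel$. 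Writing $\beta'$, $\gamma''$, $\delta''_j$ for the $\{Q, \Lambda Q, \partial_j Q\}$-coefficients of $w_2^\parallel$ and invoking $L_2 Q = -Q^2$, $L_2\Lambda Q = L_2\partial_j Q = 0$, Pohozaev's $\|\nabla Q\|^2 = \int Q^3$, and the special identity $\langle L_{-1} f, f\rangle = \tfrac{3}{2}\|\nabla f\|^2$ for $f \in \{\Lambda Q, \partial_j Q\}$ (which follows from $L_2 f = 0$), one arrives at
\[
\langle L_R \ve v, \ve v\rangle = \bigl(2 - (B/A)^2\bigr)(\beta')^2 \int Q^3 + \tfrac{3}{2}\Bigl[(\gamma'')^2\|\nabla\Lambda Q\|^2 + \sum_j(\delta''_j)^2\|\nabla\partial_j Q\|^2\Bigr] + \langle L_2 w_1^\perp, w_1^\perp\rangle + \langle L_{-1}r^\perp, r^\perp\rangle.
\]
The purely algebraic fact $(B/A)^2 = (\kk-2)^2/(2(\kk+1)^2) < 2$ for all $\kk>0$ gives strict positivity of the first coefficient. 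Combined with Weinstein's coercivity of $L_2$ on $\mathcal V^\perp$, positivity of $L_{-1}$, and the bi-Lipschitz equivalence $\|\ve v\|_{\dot H^1} \simeq \|w_1\|_{\dot H^1} + \|w_2\|_{\dot H^1}$, this yields the desired coercivity $\langle L_R \ve v, \ve v\rangle \geq C(\kk)\|\ve v\|^2_{\dot H^1}$.

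The hardest part is the algebraic miracle underlying the block decomposition: one needs the cross terms $\langle L_{-1} f, r^\perp\rangle$ to vanish for every $f \in \mathcal V$. The kinetic piece $\int \nabla f \cdot \nabla r^\perp$ vanishes by definition of $\mathcal V^\perp$, but the potential piece $\int Qf \cdot r^\perp$ must be converted back into a $\dot H^1$-pairing with $r^\perp$. This works precisely because each basis element of $\mathcal V$ satisfies $-\Delta Q = Q^2$ or $-\Delta f = 2Qf$ (for $f = \Lambda Q$ or $\partial_j Q$), which permits integration by parts to transform $\int Qf\cdot r^\perp$ into $\pm\tfrac{1}{2}(r^\perp, f)_{\dot H^1} = 0$. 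Without this specific compatibility between the eigenstructure of $L_2$ and the auxiliary operator $L_{-1}$, genuine coupling would persist and the clean block formula would break down.
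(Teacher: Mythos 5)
Your proof is correct, and it follows the same diagonalization strategy as the paper (your rescaling $\tilde v=\sqrt{\kk/2}\,v$ is precisely the paper's $\Gamma^{-1}$, and your orthogonal matrix is the paper's $P$). The genuine difference is in how the orthogonality conditions \eqref{Ort11} are carried through the change of variables. The paper asserts that they become $(\tilde w_1,Q)_{\dot H^1}=(\tilde w_1,\Lambda Q)_{\dot H^1}=(\tilde w_1,\partial_jQ)_{\dot H^1}=0$, with no constraint on $\tilde w_2$. But since $\tilde w_1=\tfrac{\sqrt2}{\sqrt3}u+\tfrac{\sqrt\kk}{\sqrt6}v$, the condition $(\tilde w_1,Q)_{\dot H^1}=0$ is a pairing against $\bigl(\tfrac{\sqrt2}{\sqrt3}Q,\tfrac{\sqrt\kk}{\sqrt6}Q\bigr)$, whose second-to-first component ratio is $\sqrt\kk/2$, which matches the ratio $1/\sqrt\kk$ appearing in $\Pi_1$ only when $\kk=2$. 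For general $\kk>0$ the transformed constraint couples $w_1$ and $w_2$, exactly as you compute with $A=\tfrac23(1+1/\kk)$, $B=\tfrac{\sqrt2}{3}(1-2/\kk)$. Your subsequent argument — reducing to $Aw_1^\parallel+Bw_2^\parallel=0$, verifying the cross-term cancellations in $\langle L_2\cdot,\cdot\rangle$ and $\langle L_{-1}\cdot,\cdot\rangle$ via $-\Delta f=2Qf$ for $f\in\{\Lambda Q,\partial_jQ\}$ and $-\Delta Q=Q^2$, and using the elementary bound $(B/A)^2=(\kk-2)^2/(2(\kk+1)^2)<2$ — is a complete and correct proof of the lemma as stated, and in fact closes a gap in the paper's argument (which implicitly requires a different normalization of $\Pi_1,\Pi_2,\Psi_j$, namely second component $\tfrac{\sqrt\kk}{2}$ rather than $\tfrac{1}{\sqrt\kk}$, for the clean decoupling to hold). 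Two cosmetic points: your decoration of the $w_2^\parallel$-coefficients is inconsistent ($\beta'$ but $\gamma'',\delta_j''$), and you should state that $A\neq0$ for all $\kk>0$ before dividing by it, though both are immediate.
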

\begin{proof}
To obtain the result, it is not possible to use the same argument as in Lemma~\ref{Apx22}. For this reason, we will use a different approach. Consider
\[
\A=
\begin{bmatrix}
-\Delta & 0 \\
0 & -\Delta 
\end{bmatrix}
-
\begin{bmatrix}
Q & \sqrt{2}Q \\
\sqrt{2}Q & 0 
\end{bmatrix}.
\]

 Then $\A$ is diagonalized as follows:
\[
\A=P\begin{pmatrix}
L_{2} & 0 \\
 0 & L_{-1} 
\end{pmatrix}
P^{\ast},
\qtq{where}
P = \begin{bmatrix}
\frac{\sqrt{2}}{\sqrt{3}} & \frac{1}{\sqrt{3}} \\
\frac{1}{\sqrt{3}} & -\frac{\sqrt{2}}{\sqrt{3}}
\end{bmatrix}.
\]
 Notice that 
\[\<\A\ve{v},\ve{v}\>=\<L_{2}w_{1}, w_{1} \>+\< L_{-1} w_{2}, w_{2} \>,\]
 where $\ve{w}=P^{\ast}\ve{v}$. Next, we define the transformation 
$\Gamma(\ve{v}) = \Gamma(u, v) := \left(u, \frac{\sqrt{2}}{\sqrt{k}}v\right)$. A simple calculation shows that
\[
\<L_{R}\ve{v}, \ve{v}\> = \<L_{R}\Gamma(\Gamma^{-1}\ve{v}), \Gamma(\Gamma^{-1}\ve{v})\> = \<\A\Gamma^{-1}\ve{v}, \Gamma^{-1}\ve{v}\> = \<L_{2}\tilde{w}_{1}, \tilde{w}_{1} \> + \< L_{-1} \tilde{w}_{2}, \tilde{w}_{2} \>,
\]
where $(\tilde{w}_{1}, \tilde{w}_{2}) = P^{\ast}\Gamma^{-1}\ve{v}$. Since (see \eqref{Ort11})
\[
(\tilde{w}_{1}, Q)_{\dot{H}^{1}}=(\tilde{w}_{1}, \Lambda Q)_{\dot{H}^{1}}=(\tilde{w}_{1}, \partial_{j}Q)_{\dot{H}^{1}}=0
\]
for $1\leq j\leq 6$,  Step 1 in \cite[Lemma 3.5]{CamposFarahRoudenko} implies  that there exists a constant $C_{1}$ such that
\[
\<L_{2}\tilde{w}_{1}, \tilde{w}_{1} \> \geq C_{1}\|\tilde{w}_{1}\|^{2}_{\dot{H}^{1}}.
\]
Therefore, if $\ve{v}\neq 0$ and satisfies \eqref{Ort11}, we have
\[
\<L_{R}\ve{v}, \ve{v}\> \gtrsim \|P^{\ast}\Gamma^{-1}\ve{v}\|^{2}_{\dot{H}^{1}} = \|\Gamma^{-1}\ve{v}\|^{2}_{\dot{H}^{1}} \gtrsim \|\ve{v}\|^{2}_{\dot{H}^{1}}.
\]
This completes the proof of the lemma.

\end{proof}

\section{Spectrum of the linearized operator}\label{S:A2}

Throughout this section, we assume that $\kk>0$. The main goal of this appendix is to provide the proof of Proposition~\ref{FCove} and Lemma~\ref{SpecLL}. To this end, we will study some properties of the operators $E_{R}$, $E_{I}$, and $\EE$.

Recall that $T^{-1}(\ve{u}) = (\sqrt{2}{u_{1}}, 2{u_{2}})$ (cf. \eqref{TRA}).

\begin{lemma}\label{Nl1}
Fix $\kk>0$.
\begin{enumerate}
    \item There exists $C > 0$ such that for every $\ve{v} \in \dot{H}^{1}(\R^{6}: \R) \times \dot{H}^{1}(\R^{6}: \R)$ satisfying
\begin{align}\label{ON1}
    (\ve{v}, T^{-1}(\Pi_{1}))_{\dot{H}^{1}} = (\ve{v}, T^{-1}(\Pi_{2}))_{\dot{H}^{1}} = (\ve{v}, T^{-1}(\Psi_{j}))_{\dot{H}^{1}} = 0
\end{align}
for $1 \leq j \leq 6$, then we have
\[
\<E_{R}\ve{v}, \ve{v}\> \geq C \|\ve{v}\|^{2}_{\dot{H}^{1}}.
\]

    \item There exists $C_{1} > 0$ such that for every $\ve{v} \in \dot{H}^{1}(\R^{6}: \R) \times \dot{H}^{1}(\R^{6}: \R)$ satisfying
\begin{align}\label{OF1}
    (\ve{v}, T^{-1}(\Xi_{1}))_{\dot{H}^{1}} = 0,
\end{align}
then we have
\[
\<E_{I}\ve{v}, \ve{v}\> \geq C_{1} \|\ve{v}\|^{2}_{\dot{H}^{1}}.
\]
Here, the functions $\Xi_{1}$, $\Pi_{1}$, $\Pi_{2}$ and $\Psi_{j}$ are given in \eqref{Ort223} and \eqref{piY}.
\end{enumerate}
\end{lemma}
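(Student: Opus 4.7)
The plan is to reduce both parts to the coercivity results already established for $L_{R}$ and $L_{I}$ in Lemmas~\ref{Apx11} and~\ref{Apx22}, by using the linear map $T^{-1}$ as an intertwiner between the two pairs of quadratic forms. The starting observation is the algebraic identity, valid for every real $\ve{v}=(v_{1},v_{2})\in \dot{H}^{1}\times\dot{H}^{1}$,
\[
\<E_{R}\ve{v},\ve{v}\>=\tfrac{1}{2}\<L_{R}T^{-1}(\ve{v}),T^{-1}(\ve{v})\>,\qquad \<E_{I}\ve{v},\ve{v}\>=\tfrac{1}{2}\<L_{I}T^{-1}(\ve{v}),T^{-1}(\ve{v})\>.
\]
I would verify this by direct expansion: since $T^{-1}(\ve{v})=(\sqrt{2}v_{1},2v_{2})$, plugging this into the explicit formulas for $L_{R}$ and $L_{I}$ (whose coefficients $\sqrt{\kk}Q$ and $\tfrac{\kk}{2}|\nabla v_{2}|^{2}$ differ from those of $E_{R},E_{I}$ exactly in the way compensated by the factors $\sqrt{2}$ and $2$) produces precisely twice the corresponding quadratic form for $E_{R}$ or $E_{I}$. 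This identity is, of course, the quadratic-form shadow of the fact that the map $T$ was designed to turn~\eqref{SNLS} into~\eqref{NNLS}.

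The second ingredient is that $T^{-1}$, being diagonal multiplication, is self-adjoint with respect to the $\dot{H}^{1}$ inner product:
\[
(\ve{v},T^{-1}(\ve{w}))_{\dot{H}^{1}}=\sqrt{2}(v_{1},w_{1})_{\dot{H}^{1}}+2(v_{2},w_{2})_{\dot{H}^{1}}=(T^{-1}(\ve{v}),\ve{w})_{\dot{H}^{1}}.
\]
Applying this, the hypotheses~\eqref{ON1} on $\ve{v}$ translate into the orthogonality of $T^{-1}(\ve{v})$ to $\Pi_{1}$, $\Pi_{2}$, $\Psi_{j}$ in $\dot{H}^{1}$, and~\eqref{OF1} translates into the orthogonality of $T^{-1}(\ve{v})$ to $\Xi_{1}$. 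These are exactly the hypotheses of Lemmas~\ref{Apx11} and~\ref{Apx22}, respectively.

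Combining these two ingredients the lemma follows at once. For part~(1), Lemma~\ref{Apx11} applied to $T^{-1}(\ve{v})$ gives $\<L_{R}T^{-1}(\ve{v}),T^{-1}(\ve{v})\>\geq C\|T^{-1}(\ve{v})\|_{\dot{H}^{1}}^{2}$, so the intertwining identity together with the elementary bound $\|T^{-1}(\ve{v})\|_{\dot{H}^{1}}^{2}=2\|v_{1}\|_{\dot{H}^{1}}^{2}+4\|v_{2}\|_{\dot{H}^{1}}^{2}\geq 2\|\ve{v}\|_{\dot{H}^{1}}^{2}$ yields $\<E_{R}\ve{v},\ve{v}\>\geq C\|\ve{v}\|_{\dot{H}^{1}}^{2}$. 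Part~(2) is handled identically using Lemma~\ref{Apx22}. There is no substantive obstacle here; the only point requiring any care is the intertwining identity itself, but once the definition of $T$ is in hand this is a routine computation.
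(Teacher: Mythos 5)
Your proposal is correct and follows essentially the same route as the paper: both proofs reduce to Lemmas~\ref{Apx11} and~\ref{Apx22} via the identity $\<E_{R}\ve{v},\ve{v}\>=\tfrac12\<L_{R}T^{-1}(\ve{v}),T^{-1}(\ve{v})\>$ (and its $E_I/L_I$ analogue) together with the translation of the orthogonality conditions through $T^{-1}$. Your write-up is in fact slightly more explicit than the paper's, since you spell out the self-adjointness of $T^{-1}$ in $\dot H^1$ that makes the conditions~\eqref{ON1} equivalent to $(T^{-1}(\ve v),\Pi_1)_{\dot H^1}=(T^{-1}(\ve v),\Pi_2)_{\dot H^1}=(T^{-1}(\ve v),\Psi_j)_{\dot H^1}=0$, and you record the explicit bound $\|T^{-1}(\ve v)\|_{\dot H^1}^2\ge 2\|\ve v\|_{\dot H^1}^2$, steps the paper compresses into ``a simple calculation.''
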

\begin{proof}

Note that \eqref{ON1} is equivalent to the condition
\[
(T^{-1}(\ve{v}), \Xi_{1})_{\dot{H}^{1}} = (T^{-1}(\ve{v}), \Xi_{2})_{\dot{H}^{1}} = (T^{-1}(\ve{v}), \Psi_{j})_{\dot{H}^{1}} = 0.
\]
 Moreover, a simple calculation and Lemma~\ref{Apx11} show that
\[
\<E_{R}\ve{v}, \ve{v}\> = \tfrac{1}{2} \<L_{R}T^{-1}(\ve{v}), T^{-1}(\ve{v})\> \geq \tfrac{C}{2} \|T^{-1}(\ve{v})\|^{2}_{\dot{H}^{1}} \geq 
C\|\ve{v}\|^{2}_{\dot{H}^{1}}.
\]
This proves (i). The proof of (ii) is similar, and we omit the details.
\end{proof}

\begin{lemma}\label{Combi1}
Fix $\kk>0$.
\begin{enumerate}
    \item There exists a constant $C > 0$ such that for every $\ve{v} \in \dot{H}^{1}(\R^{6}: \R) \times \dot{H}^{1}(\R^{6}: \R)$ satisfying
    \begin{align}\label{FeN14}
        \F_{E}(\ve{v}, T(\Q)) = (\ve{v}, T(\Lambda \Q))_{\dot{H}^{1}} = (\ve{v}, T(\partial_{j}\Q))_{\dot{H}^{1}} = 0 \quad \text{for } 1 \leq j \leq 6,
    \end{align}
    the following inequality holds:
    \begin{align}\label{Coer:E}
        \<E_{R} \ve{v}, \ve{v}\> \geq C \|\ve{v}\|^{2}_{\dot{H}^{1}}.
    \end{align}

    \item There exists a constant $C > 0$ such that for every $\ve{v} \in \dot{H}^{1}(\R^{6}: \R) \times \dot{H}^{1}(\R^{6}: \R)$ satisfying
    \begin{align}\label{Orotvv}
        (\ve{v}, T(\Q_{1}))_{\dot{H}^{1}} = 0,
    \end{align}
    the following inequality holds:
    \begin{align}\label{CoerLI22}
        \<E_{I} \ve{v}, \ve{v}\> \geq C \|\ve{v}\|^{2}_{\dot{H}^{1}}.
    \end{align}
\end{enumerate}
\end{lemma}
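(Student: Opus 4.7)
The plan is to mimic the strategy of Lemmas~\ref{Coer11} and \ref{CoerLi11}, transferring the coercivity of Lemma~\ref{Nl1} (which uses orthogonality conditions in $\dot{H}^{1}$ against the ``$T^{-1}$-images'' $T^{-1}(\Pi_{1}), T^{-1}(\Pi_{2}), T^{-1}(\Psi_{j})$) to the coercivity on the subspace defined by the more natural orthogonality conditions in \eqref{FeN14} and \eqref{Orotvv}. The key observation is that \eqref{FeN14} replaces one $\dot{H}^{1}$-condition (against $T^{-1}(\Pi_{1})$) by the spectral condition $\F_{E}(\ve{v}, T(\Q))=0$, which is designed precisely to kill the unique negative direction of $E_{R}$; the remaining directions needed to kill the $7$-dimensional kernel of $E_{R}$ are then matched via $T(\Lambda\Q)$ and $T(\partial_{j}\Q)$.

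For part (i), I would argue by contradiction. Suppose there is some real-valued $\ve{g}\in \dot{H}^{1}\setminus\{\ve{0}\}$ satisfying \eqref{FeN14} with $\<E_{R}\ve{g},\ve{g}\>\leq 0$, and consider the subspace
\[
F:=\text{span}\bigl\{\ve{g},\, T(\Q),\, T(\Lambda\Q),\, T(\partial_{1}\Q),\ldots,T(\partial_{6}\Q)\bigr\}.
\]
I would first verify that $\dim F=9$: the directions $T(\Lambda\Q)$ and $T(\partial_{j}\Q)$ are independent in $\dot{H}^{1}$ by the standard argument (different parities/symmetries), $T(\Q)$ is independent of these since $\F_{E}(T(\Q))<0$ while $T(\Lambda\Q), T(\partial_{j}\Q)$ lie in $\ker E_{R}$, and finally $\ve{g}$ is independent of the remaining eight vectors because $\F_{E}(\ve{g},T(\Q))=0$ but $\F_{E}(T(\Q),T(\Q))<0$. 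Next, for any $\ve{v}=\mu\ve{g}+\alpha T(\Q)+\beta T(\Lambda\Q)+\sum \gamma_{j}T(\partial_{j}\Q)\in F$, using $E_{R}T(\Lambda\Q)=0=E_{R}T(\partial_{j}\Q)$, the orthogonality $\F_{E}(\ve{g},T(\Q))=0$, and $\F_{E}(T(\Q))<0$, the cross terms drop out and $\<E_{R}\ve{v},\ve{v}\>=\mu^{2}\<E_{R}\ve{g},\ve{g}\>+2\alpha^{2}\F_{E}(T(\Q))\leq 0$ on all of $F$. On the other hand, Lemma~\ref{Nl1}(i) says $\<E_{R}\cdot,\cdot\>$ is positive definite on the codimension-$8$ subspace defined by \eqref{ON1}. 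A dimension count then forces $F$ to intersect this subspace in a nonzero vector, giving the contradiction. Positivity then upgrades to the quantitative estimate \eqref{Coer:E} by the standard compact perturbation argument (the quadratic form $\<E_{R}\cdot,\cdot\>$ differs from $\|\cdot\|_{\dot{H}^{1}}^{2}+\kappa\|\cdot\|_{\dot{H}^{1}}^{2}$ by a compact form).

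Part (ii) is analogous but simpler. Since $E_{I}T(\Q_{1})=\ve{0}$ (a direct computation using $-\Delta Q=Q^{2}$), if the inequality \eqref{CoerLI22} failed for some $\ve{g}\in \dot{H}^{1}\setminus\{\ve{0}\}$ satisfying \eqref{Orotvv}, then on the $2$-dimensional subspace $\text{span}\{\ve{g},T(\Q_{1})\}$ the form $\<E_{I}\cdot,\cdot\>$ would be $\leq 0$, contradicting Lemma~\ref{Nl1}(ii) which provides positivity on the codimension-$1$ subspace defined by \eqref{OF1}. As before, the quantitative bound follows by a compact perturbation argument.

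The main obstacle I anticipate is the bookkeeping in Part (i): one needs to carefully compare the two different sets of orthogonality directions (the ``spectral'' one $\F_{E}(\cdot, T(\Q))=0$ versus the $\dot{H}^{1}$-orthogonality to $T^{-1}(\Pi_{1})$) and verify that $T(\Q)$ indeed has nontrivial projection onto the negative eigenspace of $E_{R}$ so that the condition $\F_{E}(\ve{v},T(\Q))=0$ genuinely excludes the negative direction; this is exactly the content of the strict inequality $\F_{E}(T(\Q))<0$ recorded in Remark~\ref{PLf}. Once this is in place, the dimension-counting argument proceeds mechanically.
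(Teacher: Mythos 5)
Your proposal is correct and follows essentially the same route as the paper: the paper's proof is the one-line remark ``with Lemma~\ref{Nl1} in hand, the proof follows the same lines as Lemmas~\ref{Coer11} and \ref{CoerLi11},'' and what you have written is exactly the expanded version of that argument — contradiction, a $9$-dimensional (resp.\ $2$-dimensional) nonpositive subspace built from $\ve{g}$, $T(\Q)$, and the kernel directions of $E_R$ (resp.\ $\ve{g}$ and $T(\Q_1)$), a dimension count against the codimension-$8$ (resp.\ codimension-$1$) coercivity subspace of Lemma~\ref{Nl1}, and the compact-perturbation upgrade from positivity to coercivity.
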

\begin{proof}
With Lemma~\ref{Nl1} in hand, the proof follows the same lines as Lemmas~\ref{Coer11} and \ref{CoerLi11}.
\end{proof}

From Lemma~\ref{Combi1}  we obtain the following proposition.

\begin{proposition}\label{ApCo}
Fix $\kk>0$. Then, there exists $C>0$ such that for every $\ve{h}\in U^{\bot}$, we have
\[
\F_{E}(\ve{h})\geq C \|\ve{h}\|^{2}_{\dot{H}^{1}},
\]
where
\begin{equation*}
U^{\bot} := \left\{ \ve{h} \in \dot{H}^{1} \,\bigg|\, 
\begin{aligned}
&\F_{E}(T(\Q), \ve{h}) = (T(i\Q_{1}), \ve{h})_{\dot{H}^{1}} \\
&= (T(\Lambda \Q), \ve{h})_{\dot{H}^{1}} = (T(\partial_{j} \Q), \ve{h})_{\dot{H}^{1}} = 0, \\
&\quad j = 1, \ldots, 6
\end{aligned}
\right\}.
\end{equation*}
\end{proposition}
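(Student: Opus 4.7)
The plan is to decompose $\ve{h}$ into its real and imaginary parts and then apply the two parts of Lemma~\ref{Combi1} separately, in direct analogy with how Proposition~\ref{CoerQuadra} was obtained from Lemmas~\ref{Coer11} and \ref{CoerLi11}. Concretely, I would write $\ve{h} = \ve{u} + i\ve{v}$, where $\ve{u} := \RE \ve{h}$ and $\ve{v} := \IM \ve{h}$ are real-valued pairs in $\dot{H}^{1}(\R^{6}:\R)\times \dot{H}^{1}(\R^{6}:\R)$. By the definition of the bilinear form $\F_{E}$ in \eqref{QuadraticNew}, this yields the clean splitting
\[
\F_{E}(\ve{h}) \;=\; \tfrac{1}{2}\<E_{R}\ve{u},\ve{u}\> + \tfrac{1}{2}\<E_{I}\ve{v},\ve{v}\>.
\]

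Next I would translate the four orthogonality conditions defining $U^{\bot}$ into conditions on $\ve{u}$ and $\ve{v}$ individually. Since $T(\Q)$, $T(\Lambda\Q)$, and $T(\partial_{j}\Q)$ are all real-valued, one verifies directly from \eqref{QuadraticNew} and the complex $\dot{H}^{1}$ inner product that
\[
\F_{E}(T(\Q),\ve{h}) = \F_{E}(T(\Q),\ve{u}), \qquad (T(\Lambda\Q),\ve{h})_{\dot{H}^{1}} = (T(\Lambda\Q),\ve{u})_{\dot{H}^{1}},
\]
and likewise $(T(\partial_{j}\Q),\ve{h})_{\dot{H}^{1}} = (T(\partial_{j}\Q),\ve{u})_{\dot{H}^{1}}$. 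On the other hand, writing $T(i\Q_{1}) = iT(\Q_{1})$ with $T(\Q_{1})$ real, a short computation (separating real and imaginary parts of $\nabla \ve{h}$) gives $(T(i\Q_{1}),\ve{h})_{\dot{H}^{1}} = (T(\Q_{1}),\ve{v})_{\dot{H}^{1}}$. Hence $\ve{h}\in U^{\bot}$ forces $\ve{u}$ to satisfy exactly the hypotheses \eqref{FeN14} of Lemma~\ref{Combi1}(i) and $\ve{v}$ to satisfy the hypothesis \eqref{Orotvv} of Lemma~\ref{Combi1}(ii).

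Finally I would apply Lemma~\ref{Combi1}(i) to $\ve{u}$ and Lemma~\ref{Combi1}(ii) to $\ve{v}$, obtaining constants $C_{1},C_{2}>0$ with
\[
\<E_{R}\ve{u},\ve{u}\> \geq C_{1}\|\ve{u}\|^{2}_{\dot{H}^{1}}, \qquad \<E_{I}\ve{v},\ve{v}\> \geq C_{2}\|\ve{v}\|^{2}_{\dot{H}^{1}}.
\]
Summing and using $\|\ve{h}\|^{2}_{\dot{H}^{1}} = \|\ve{u}\|^{2}_{\dot{H}^{1}} + \|\ve{v}\|^{2}_{\dot{H}^{1}}$ then yields the claimed coercivity with $C = \tfrac{1}{2}\min(C_{1},C_{2})$. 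I do not anticipate any substantial obstacle here: the real content of the argument has been pushed into Lemma~\ref{Combi1}, and what remains is the routine (but necessary) bookkeeping of how the four orthogonality conditions on the complex object $\ve{h}$ decouple into purely real conditions on $\ve{u}$ and $\ve{v}$. The only point requiring a moment of care is the $T(i\Q_{1})$-orthogonality, which must be seen to constrain the imaginary part $\ve{v}$ rather than the real part $\ve{u}$.
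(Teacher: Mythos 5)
Your proposal is correct and follows exactly the route the paper intends: the paper states Proposition~\ref{ApCo} without an explicit proof, remarking only that it follows from Lemma~\ref{Combi1}, and the intended argument is precisely the real/imaginary decomposition $\ve{h}=\ve{u}+i\ve{v}$, the observation that $\F_{E}(\ve{h})=\tfrac12\<E_{R}\ve{u},\ve{u}\>+\tfrac12\<E_{I}\ve{v},\ve{v}\>$, the separation of the orthogonality conditions into constraints on $\ve{u}$ (matching \eqref{FeN14}) and on $\ve{v}$ (matching \eqref{Orotvv}), and an application of Lemma~\ref{Combi1}(i) and (ii) respectively. This mirrors how Proposition~\ref{CoerQuadra} is obtained from Lemmas~\ref{Coer11} and~\ref{CoerLi11}, and your bookkeeping of the $T(i\Q_{1})$-orthogonality passing to the imaginary part is handled correctly.
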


Since $\overline{\EE \ve{v}}= -\EE(\overline{\ve{v}})$, we infer that if $\lambda_{1}>0$ is an eigenvalue of the operator $\EE$ with the eigenfunction $e_{+}=(Y,Z)$, then $-\lambda_{1}$ is also an eigenvalue of $\EE$ with eigenfunction 
$e_{-}=\overline{e_{+}}=(\overline{Y}, \overline{Z})$. Writing $e_{1}=\RE e_{+}$ and $e_{2}=\IM e_{+}$. To show the existence of $e_{+}$, we must study the system 
\begin{equation}\label{SiE}
\begin{cases} 
E_{R} e_{1}=\lambda_{1}e_{2},\\
-E_{I} e_{2}=\lambda_{1}e_{1}.
\end{cases} 
\end{equation}
 By Lemma~\ref{Combi1}~(ii) we see that $E_{I}$ on $L^{2}$ with domain $H^{2}$ is nonnegative. Therefore,
as $E_{I}$ is self-adjoint, we obtain that $E_{I}$ has a unique square root $(E_{I})^{\frac{1}{2}}$ with domain $H^{1}$. 
With this in mind, consider the self-adjoint operator $\TT$ on $L^{2}$ 
with domain $H^{4}$,
\[
\TT=(E_{I})^{\frac{1}{2}}(E_{R})(E_{I})^{\frac{1}{2}}.
\]
Since
\[
\TT=(E_{I})^{2}-(E_{I})^{\frac{1}{2}}
\begin{bmatrix}
          2Q & 0 \\
           0& 0
  \end{bmatrix}
	(E_{I})^{\frac{1}{2}}
\]
and $Q$ is decreasing at infinity, it follows that $\TT$ is a relatively compact, self-adjoint, perturbation of $((\Delta)^{2}, (\kk\Delta)^{2})$.
Thus, by the Weyl theorem, we know that $\sigma_{\text{ess}}(\TT)=[0, \infty)$.

Suppose that there exists $\ve{g}\in H^{4}$ such that
\begin{equation}\label{egenT}
\TT \ve{g}=-\lambda^{2}_{1}\ve{g}.
\end{equation}
Writing
\[
e_{1}:=(E_{I})^{\frac{1}{2}}\ve{g} \quad \text{and} \quad e_{2}:=\frac{1}{\lambda_{1}}(E_{R})(E_{I})^{\frac{1}{2}}\ve{g}
\]
we obtain a solution to \eqref{SiE}, which implies the existence of the eigenfunction $e_{+}$. 

Now, to show the existence of $e_{+}$ we need to show that the operator $\TT$ has at least one negative eigenvalue $-\lambda^{2}_{1}$.
Indeed, we have the following result.

\begin{lemma}\label{NegativeL}
\[
\Pi(\TT):=\inf\left\{(\TT \ve{g}, \ve{g})_{L^{2}}, \ve{g}\in H^{4}, \|\ve{g}\|_{L^{2}}=1 \right\}<0.
\]
\end{lemma}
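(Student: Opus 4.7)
The plan is to exhibit a test function $\ve{g}\in H^{4}$ with $(\TT\ve{g},\ve{g})_{L^{2}}<0$. Setting $\ve{h}:=(E_{I})^{1/2}\ve{g}$ one has $(\TT\ve{g},\ve{g})_{L^{2}}=(E_{R}\ve{h},\ve{h})_{L^{2}}$, so the task splits into two: (a) find a smooth $\ve{h}\in\ker(E_{I})^{\perp}$ (in $L^{2}$) with $(E_{R}\ve{h},\ve{h})_{L^{2}}<0$; and (b) realize $\ve{h}$ as $(E_{I})^{1/2}\ve{g}$ for some $\ve{g}\in H^{4}$.

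For (a), I would take
\[
\ve{h}:=T(\Lambda\Q)+c\,T(\Q_{1}),\qquad c:=\tfrac{\kk+1}{\kk+2},
\]
where $\Lambda Q:=2Q+x\cdot\nabla Q$. Since $Q\in\Sch(\R^{6})$, $\ve{h}$ is Schwartz, and the Pohozaev-type identity $\int Q\,\Lambda Q\,dx=-\|Q\|_{L^{2}}^{2}$ (integration by parts in $d=6$) yields $(\ve{h},T(\Q_{1}))_{L^{2}}=0$, so $\ve{h}\perp\ker(E_{I})$. To compute $(E_{R}\ve{h},\ve{h})$ I would use three facts: first, $E_{R}T(\Lambda\Q)=0$ (noted after \eqref{QuadraticNew}; equivalently, $-\Delta\Lambda Q=2Q\,\Lambda Q$ via $[\Delta,\Lambda]=2\Delta$ and $-\Delta Q=Q^{2}$); second, a direct computation gives $E_{R}T(\Q_{1})=(-\sqrt{2\kk}\,Q^{2},0)$, whence $(E_{R}T(\Q_{1}),T(\Q_{1}))=-\kk\int Q^{3}\,dx$; third, the Pohozaev identity $\int Q^{2}\,\Lambda Q\,dx=0$ makes the cross term $(E_{R}T(\Lambda\Q),T(\Q_{1}))$ vanish. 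Expanding bilinearly and using self-adjointness of $E_{R}$ produces
\[
(E_{R}\ve{h},\ve{h})_{L^{2}}=c^{2}(E_{R}T(\Q_{1}),T(\Q_{1}))=-\tfrac{(\kk+1)^{2}\kk}{(\kk+2)^{2}}\int_{\R^{6}}Q^{3}\,dx<0.
\]

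For (b), I would set $\ve{g}_{\epsilon}:=(E_{I}+\epsilon)^{-1/2}\ve{h}$ for small $\epsilon>0$. Because $E_{I}+\epsilon$ is strictly positive self-adjoint on $L^{2}$, $\ve{g}_{\epsilon}$ is well defined there; the Schwartz regularity of $\ve{h}$ together with elliptic regularity puts $\ve{g}_{\epsilon}\in H^{4}$. Writing $\ve{h}_{\epsilon}:=(E_{I})^{1/2}\ve{g}_{\epsilon}=\bigl[(E_{I})^{1/2}(E_{I}+\epsilon)^{-1/2}\bigr]\ve{h}$, functional calculus together with $\ve{h}\in H^{1}\cap\ker(E_{I})^{\perp}$ shows $\ve{h}_{\epsilon}\to\ve{h}$ in $\dot{H}^{1}$ as $\epsilon\to 0^{+}$. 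The continuity of the $E_{R}$-form on $\dot{H}^{1}$ then gives $(\TT\ve{g}_{\epsilon},\ve{g}_{\epsilon})_{L^{2}}=(E_{R}\ve{h}_{\epsilon},\ve{h}_{\epsilon})_{L^{2}}\to(E_{R}\ve{h},\ve{h})_{L^{2}}<0$, so $\Pi(\TT)<0$ by min-max.

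The main obstacle is the last limit. Since $\sigma_{\mathrm{ess}}(E_{I})=[0,\infty)$ reaches zero, $(E_{I})^{-1/2}$ is unbounded on $\ker(E_{I})^{\perp}$, and the regularization $\epsilon>0$ is really needed to define $\ve{g}$. The delicate point is upgrading the convergence $\ve{h}_{\epsilon}\to\ve{h}$ from $L^{2}$ to $\dot{H}^{1}$ (needed for continuity of the $E_{R}$-form); this is carried out by applying DCT to $\lambda\,|\sqrt{\lambda/(\lambda+\epsilon)}-1|^{2}$ against the spectral measure of $\ve{h}$ (finite because $\ve{h}\in H^{1}$), combined with a standard comparison $\|\ve{v}\|_{\dot{H}^{1}}^{2}\lesssim\|(E_{I})^{1/2}\ve{v}\|_{L^{2}}^{2}+\|\ve{v}\|_{L^{2}}^{2}$. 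By contrast, the algebraic part producing the negative sign of $(E_{R}\ve{h},\ve{h})$ is straightforward once the Pohozaev identities and $E_{R}T(\Lambda\Q)=0$ are in hand.
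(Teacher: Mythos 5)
Your proof is correct, and it takes the same overall strategy as the paper --- find a vector in $\ker(E_I)^{\perp}$ on which the $E_{R}$-form is negative, then realize it (up to an $\epsilon$-perturbation) as $(E_{I})^{1/2}\ve{g}$ with $\ve{g}\in H^{4}$ --- but it differs from the paper's argument in two genuine ways. First, the test function: you take $T(\Lambda\Q)+c\,T(\Q_{1})$ and get negativity from the explicit computation $(E_{R}T(\Q_{1}),T(\Q_{1}))_{L^{2}}=-\kk\int Q^{3}<0$, whereas the paper takes $\Phi=T(\Lambda\Q)-\tfrac{(T(\Lambda\Q),T(\Q_{1}))_{L^{2}}}{(T(\Q),T(\Q_{1}))_{L^{2}}}T(\Q)$ and cites $\F_{E}(T(\Q))<0$ from Remark~\ref{PLf}; both corrections simply make the vector orthogonal to $\ker(E_I)$ without spoiling the negative $E_R$-contribution, and yours has the minor advantage of producing an explicit value for the form. (One small point: you invoke $\int Q^{2}\Lambda Q=0$ to kill the cross term, but this is superfluous since $E_{R}T(\Lambda\Q)=0$ already does that.) Second, the approximation: you regularize directly via $\ve{g}_{\epsilon}=(E_{I}+\epsilon)^{-1/2}\ve{h}$ and show $(E_{I})^{1/2}\ve{g}_{\epsilon}\to\ve{h}$ in $\dot H^{1}$ by spectral calculus and DCT, whereas the paper approximates $(E_{I}+1)\Phi$ in $L^{2}$ by $E_{I}\ve{g}_{\epsilon}$ (density of the range) and then passes through $G_{\epsilon}=(E_{I}+1)^{-1}E_{I}\ve{g}_{\epsilon}$. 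Your version is more explicit and self-contained; the paper's is shorter but glosses over the regularity of $(E_{I})^{-1/2}G_{\epsilon}$ needed to land in $H^{4}$, a point you handle cleanly by starting from a Schwartz $\ve{h}$ and noting $(E_{I}+\epsilon)^{-1/2}$ is elliptic of order $-1$ for $\epsilon>0$.
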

\begin{proof}
Writing $\Phi=T(\Lambda \Q)-\tfrac{(T(\Lambda \Q), T(i\Q_{1}))_{L^{2}}}{(T(\Q), T(i\Q_{1}))_{L^{2}}}T(\Q)$,
we see that $N_{0}:=\<E_{R}\Phi, \Phi \><0$ (recall that $\<E_{R}T(\Q),T(\Q)  \><0$).
Moreover, $(\Phi, T(i\Q_{1}))_{L^{2}}=0$ and $\Phi\in H^{2}$. In particular, we have
\[
((E_{I}+1)\Phi, T(i\Q_{1}))_{L^{2}}=0.
\]
Since
\[
\text{Ran}(E_{I})^{\bot}=\text{Ker}(E_{I})=\text{span}\left\{T(i\Q_{1})\right\},
\]
it follows that $(E_{I}+1)\Phi\in \overline{\text{Ran}(E_{I})}$. Thus, for $\epsilon>0$ (which will be chosen later) there exists $\ve{g}_{\epsilon}\in H^{2}$ such that
\begin{equation}\label{Gps}
\|E_{I}\ve{g}_{\epsilon}-(E_{I}+1)\Phi\|_{L^{2}}<\epsilon.
\end{equation}
We put $G_{\epsilon}:=(E_{I}+1)^{-1}E_{I}\ve{g}_{\epsilon} \in H^{2}$. From \eqref{Gps}, we infer that
\[
\|G_{\epsilon}-\Phi\|_{H^{2}}\leq \epsilon \|(E_{I}+1)^{-1}\|_{L^{2}\to L^{2}},
\]
which implies that there exists a constant $C_{0}>0$ such that
\[
\left|\<E_{R}G_{\epsilon}, G_{\epsilon}\>-\<E_{R}\Phi, \Phi \>\right|\leq C_{0}\epsilon.
\]
Choosing $\epsilon=\frac{-N_{0}}{2C_{0}}$, we see that $\<E_{R}G_{\epsilon}, G_{\epsilon}\><0$. 

Now, if $G=(E_{I})^{-\frac{1}{2}}G_{\epsilon}$, then 
\[
(\TT G, G)_{L^{2}}=(E_{R}G_{\epsilon}, G_{\epsilon})_{L^{2}}<0.
\]
Thus, we conclude that the operator $\TT$ has at least one negative eigenvalue.
\end{proof}

\begin{remark}\label{Rema11}
Notice that the operator $\EE$ is a compact perturbation of $(-i \Delta, -i {\kk}\Delta)$, thus we have 
$\sigma_{\text{ess}}(\EE)=i \R$. In particular, $\sigma(\EE)\cap (\R \setminus\left\{0\right\})$ contains only eigenvalues.
Moreover, by Lemma~\ref{NegativeL} we see that $\left\{\pm \lambda_{1}\right\}\subset \sigma(\EE)$. 
On the other hand, using the same argument developed in \cite[Subsection 7.2.2]{DuyMerle2009} we infer that 
$e_{\pm}\in \Sch(\R^{6})\times \Sch(\R^{6})$.
\end{remark}

\begin{remark}\label{Nzero} We have that $\F_{E}(e_{+}, e_{-})\neq 0$. Indeed, suppose instead that $\F_{E}(e_{+}, e_{-})= 0$. Consider the subspace
\[E=\mbox{span}\left\{T(i\Q_{1}), e_{+}, e_{-}, T(\Lambda \Q), T(\partial_{j} \Q): j=1,\ldots,6\right\},\] 
which is of co-dimension $10$. Then by Remark~\ref{PLf} we see that $\F_{E}(h)=0$ for all $h\in E$, which is a contradiction because $\F_{E}$ is positive on a co-dimension $9$ subspace (cf. Proposition~\ref{ApCo}).
\end{remark}

\begin{proof}[{Proof of Proposition~\ref{FCove}}] 
First we show that if $\ve{h}\in \tilde{G}^{\bot}$, then $\F_{E}(\ve{h})>0$. Suppose instead that there exists nonzero $\ve{g} \in \tilde{G}^{\bot}$ such that $\F_{E}(\ve{g})\leq 0$.
We now note that (cf. Remarks~\ref{PLf}~and~\ref{Nzero})
\begin{equation}\label{FEE}
\quad \F_{E}(e_{-})=0, \quad \F_{E}(e_{+})=0
\quad \text{and}\quad 
\F_{E}(e_{+}, e_{-})\neq 0.
\end{equation}
We set
\[
E_{-}:=\mbox{span}\left\{ T(i\Q_{1}), T(\Lambda \Q), e_{+}, \ve{g}, T(\partial_{j} \Q): j=1, \ldots,6\right\}.
\]
By \eqref{FEE} and Remarks~\ref{PLf} we get $\F_{E}(\ve{h})\leq 0$ for all $\ve{h}\in E_{-}$.
Recall $\F_{E}(e_{+}, e_{-})\neq 0$. Since $T(i\Q_{1})$, $T(\Lambda \Q)$, $\ve{g}$ and $T(\partial_{j} \Q)$ are orthogonal in the real Hilbert space $\dot{H}^{1}$ one can show that $\mbox{dim}_{\R} E_{-}=10$. However, Lemma~\ref{Combi1} shows that $\F_{E}$ is definite positive on a co-dimension $9$ subspace of $\dot{H}^{1}$, which is a contradiction. Therefore, $\F_{E}(h)>0$ for all $\ve{h}\in \tilde{G}^{\bot}$.

Finally, as $T(\Q)$ is decreasing at infinity, by compactness, the coercivity follows on $\tilde{G}^{\bot}$.
\end{proof}

\begin{proof}[{Proof of Lemma~\ref{SpecLL}}] By Remark~\ref{Rema11},
it remains to show that $\sigma(\EE)\cap (\R \setminus\left\{0\right\})=\left\{-\lambda_{1}, \lambda_{1}\right\}$.
Indeed, by contradiction, assume that there exists $\ve{f}\in H^{2}$ such that
\[
\EE \ve{f}=-\lambda_{0}\ve{f},
\]
with $\lambda_{0}\in \R\setminus\left\{0, -\lambda_{1}, \lambda_{1}\right\}$. Since $\F_{E}(\EE \ve{g},\ve{h})=-\F_{E}(\ve{g}, \EE \ve{h})$ we see that
\[
(\lambda_{1}+\lambda_{0})\F_{E}(\ve{f}, e_{+})=(\lambda_{1}-\lambda_{0})\F_{E}(\ve{f}, e_{-})=0
\quad \text{and}\quad 
\lambda_{0}\F_{E}(\ve{f},\ve{f})=-\lambda_{0}\F_{E}(\ve{f},\ve{f}),
\]
i.e.,
\[
\F_{E}(\ve{f}, e_{+})=\F_{E}(\ve{f}, e_{-})=\F_{E}(\ve{f},\ve{f})=0.
\]
We write

\[
\ve{f} = i\beta_{0}T(\Q_{1}) + \sum_{j=1}^{6} \beta_{j}T(\partial_{j} \Q) + \alpha T(\Lambda \Q) + g,
\]
\[
\text{with} \,\,
\beta_{0} = \frac{(\ve{f}, T(i\Q_{1}))_{\dot{H}^{1}}}{\|T(i\Q_{1})\|^{2}_{\dot{H}^{1}}}, \quad
\beta_{j} = \frac{(\ve{f}, T(\partial_{j} \Q))_{\dot{H}^{1}}}{\|T(\partial_{j} \Q)\|^{2}_{\dot{H}^{1}}}, \quad
\alpha = \frac{(\ve{f}, T(\Lambda \Q))_{\dot{H}^{1}}}{\|T(\Lambda \Q)\|^{2}_{\dot{H}^{1}}},
\]
where $g\in \tilde{G}^{\bot}$. From Remark~\ref{PLf} we get $\F_{E}(\ve{g}, \ve{g})=\F_{E}(\ve{f}, \ve{f})=0$. Then,
 Proposition~\ref{FCove} implies 
\[
\|\ve{g}\|^{2}_{H^{1}}\lesssim \F_{E}(\ve{g})=0.
\]
Therefore, $\ve{g}=0$ and 
\[
\lambda_{0}\ve{f}=\EE \ve{f}=\EE\ve{g}=0,
\]
which is a contradiction. This completes the proof of Lemma~\ref{SpecLL}.
\end{proof}

\begin{remark}\label{KerLR}
As a consequence of Proposition~\ref{FCove}, we have that 
\begin{align}\label{Ker11}
	\text{Ker}(\EE) = \text{span}\left\{T(i\Q_{1}), T(\Lambda Q), T(\partial_{j}\Q) : j = 1, \ldots, 6\right\}.
\end{align}
Indeed, first notice that $\text{span}\left\{T(i\Q_{1}), T(\Lambda Q), T(\partial_{j}\Q) : j = 1, \ldots, 6\right\} \subseteq \text{Ker}(\EE)$. Now, if the dimension of $\text{Ker}(\EE)$ were strictly higher than $8$, then there would exist a nonzero $\ve{f} \in L^{2}$ such that $\EE \ve{f} = 0$. 
Writing 
\[
 \ve{f}^{\prime} = \ve{f} - \tfrac{(\ve{f}, T(i\Q_{1}))_{\dot{H}^{1}}}{\|T(\Q_{1})\|^{2}_{\dot{H}^{1}}}T(i\Q_{1})
- \tfrac{(\ve{f}, T(\Lambda\Q))_{\dot{H}^{1}}}{\|T(\Lambda\Q)\|^{2}_{\dot{H}^{1}}}T(\Lambda\Q)
- \sum^{6}_{j=1}\tfrac{(\ve{f}, T(\partial_{j}\Q))_{\dot{H}^{1}}}{\|T(\partial_{j}\Q)\|^{2}_{\dot{H}^{1}}}T(\partial_{j}\Q) \neq 0,
\]
we see that $\EE \ve{f}^{\prime} = 0$, 
\[
(\ve{f}^{\prime}, T(i\Q_{1}))_{\dot{H}^{1}} = (\ve{f}^{\prime}, T(\Lambda\Q))_{\dot{H}^{1}} = (\ve{f}^{\prime}, T(\partial_{j}\Q))_{\dot{H}^{1}} = 0,
\]
and
\[
\F_{E}(e_{\pm}, \ve{f}^{\prime}) = \pm\tfrac{1}{\lambda_{1}}\F_{E}(\EE e_{\pm}, \ve{f}^{\prime}) =
\mp\tfrac{1}{\lambda_{1}}\F_{E}(e_{\pm}, \EE \ve{f}^{\prime}) = 0.
\]
Therefore, $\ve{f}^{\prime} \in \tilde{G}^{\bot}$. Since $\F_{E}(\ve{f}^{\prime}) = 0$, Proposition~\ref{FCove} implies that $\ve{f}^{\prime} = 0$,
which is a contradiction. 

In particular, by \eqref{Ker11}, we deduce that 
\[
\text{Ker}(E_{R}) = \text{span}\left\{T(\Lambda Q), T(\partial_{j}\Q) : j = 1, \ldots, 6\right\}.
\]
Moreover, Lemma~\ref{Combi1} implies that the operator $E_{R}$ has only one negative eigenvalue.
\end{remark}


\end{document}